\newtheoremstyle{thmstyleone}
{18pt plus2pt minus1pt}
{18pt plus2pt minus1pt}
{\itshape}
{0pt}
{\bfseries}
{}
{.5em}
{\thmname{#1}\thmnumber{\@ifnotempty{#1}{ }\@upn{#2}}%
  \thmnote{ {\the\thm@notefont(#3)}}}
\newtheoremstyle{thmstylethree}
{18pt plus2pt minus1pt}
{18pt plus2pt minus1pt}
{\normalfont}
{0pt}
{\bfseries}
{}
{.5em}
{\thmname{#1}\thmnumber{\@ifnotempty{#1}{ }\@upn{#2}}%
  \thmnote{ {\the\thm@notefont(#3)}}}
\theoremstyle{thmstyleone}%
\newtheorem{theorem}{Theorem}[section]
\newtheorem{corollary}[theorem]{Corollary}
\newtheorem{lemma}[theorem]{Lemma}
\newtheorem{proposition}[theorem]{Proposition}
\theoremstyle{thmstylethree}%
\newtheorem{definition}[theorem]{Definition}
\newtheorem{assumption}[theorem]{Assumption}
\newtheorem{remark}[theorem]{Remark}
\newtheorem{example}[theorem]{Example}
\newtheorem{problem}[theorem]{Problem}
\newtheorem{framework}[theorem]{Framework}
\newtheorem{notation}[theorem]{Notation}
\newtheorem*{acknowledgements}{Acknowledgements}
\numberwithin{equation}{section}
\renewcommand{\bibname}{References}
\newcommand{\Capa}{\operatorname{Cap}}
\newcommand{\mr}[1]{{\tt \href{http://www.ams.org/mathscinet-getitem?mr=#1}{MR#1}}}
\newcommand{\arxiv}[1]{{\tt \href{http://arxiv.org/abs/#1}{arXiv:#1}}}
\newcommand{\giveset}[1]{\left\{ #1 \right\}}
\newcommand{\abs}[1]{{\left\lvert #1 \right\rvert}}
\newcommand\norm[1]{\left\lVert#1\right\rVert} 
\newcommand{\one}{\mathds{1}} 
\DeclareMathOperator*{\esssup}{ess\,sup}
\DeclareMathOperator*{\essinf}{ess\,inf}
\newcommand{\diam}{\mathop{{\rm diam}}\nolimits}
\newcommand{\supp}{\mathop{{\rm supp}}\nolimits}
\newcommand{\dcw}{d_{\operatorname{cw}}}
\newcommand{\on}[1]{\operatorname{#1}}
\newcommand{\homeo}{\operatorname{\mathrm{Homeo}}}
\newcommand{\id}[1]{\mathrm{id}_{#1}}
\newcommand{\GSC}{\mathrm{GSC}}
\newcommand{\interior}{\operatorname{int}}
\newcommand{\measure}{m}
\newcommand{\contfunc}{C}
\newcommand{\attainsss}{\mathcal{G}}
\def\sA {{\mathcal A}} \def\sB {{\mathcal B}} \def\sC {{\mathcal C}}
\def\sD {{\mathcal D}} \def\sE {{\mathcal E}} \def\sF {{\mathcal F}}
\def\sG {{\mathcal G}} \def\sH {{\mathcal H}} 
\def\sJ {{\mathcal J}}  \def\sL {{\mathcal L}}
 \def\sN {{\mathcal N}} 
\def\sS {{\mathcal S}}
 \def\bN {{\mathbb N}} 
  \def\bR {{\mathbb R}}
\def\bS {{\mathbb S}}  
 \def\bZ {{\mathbb Z}}
\newcommand{\ver}{Version of July 18, 2022} 
\begin{document}

\makeatletter
\renewenvironment{proof}[1][\proofname]{\par\removelastskip
  \pushQED{\qed}%
  \normalfont \topsep7.5\p@\@plus7.5\p@\relax%
  \trivlist%
  \item[\hskip\labelsep%
        \itshape%
    #1\@addpunct{}]\ignorespaces%
}{%
  \popQED\endtrivlist\@endpefalse%
}%
\makeatother

\title[On the conformal walk dimension]{\thispagestyle{plain}\begin{picture}(0,0)\put(-50,89.5){\normalsize\ver}\end{picture}On the conformal walk dimension: Quasisymmetric uniformization for symmetric diffusions\protect\footnotemark[0]}


\author*[1,2]{\fnm{Naotaka} \sur{Kajino}\protect\footnotemark[0]}\email{nkajino@kurims.kyoto-u.ac.jp}

\author[3]{\fnm{Mathav} \sur{Murugan}\protect\footnotemark[0]}\email{mathav@math.ubc.ca}

\affil[1]{\orgdiv{Department of mathematics, Graduate School of Science}, \orgname{Kobe University}, \orgaddress{\city{Kobe} \postcode{657-8501}, \country{Japan}}}

\affil*[2]{\orgdiv{Research Institute for Mathematical Sciences}, \orgname{Kyoto University}, \orgaddress{\city{Kyoto} \postcode{606-8502}, \country{Japan}} (current address)}

\affil[3]{\orgdiv{Department of Mathematics}, \orgname{University of British Columbia}, \orgaddress{\city{Vancouver}, \state{BC} \postcode{V6T 1Z2}, \country{Canada}}}


\abstract{\mathversion{normal}%
		We introduce the notion of conformal walk dimension, which
		serves as a bridge between elliptic and parabolic Harnack inequalities.
		The importance of this notion is due to the fact that,
		for a given strongly local, regular symmetric Dirichlet space
		in which every metric ball has compact closure (\emph{MMD space}),
		the finiteness of the conformal walk dimension characterizes
		the conjunction of the metric doubling property and the elliptic Harnack inequality.
		Roughly speaking, the conformal walk dimension of an MMD space is defined as
		the infimum over all possible values of the walk dimension with which the parabolic Harnack inequality can be made to hold
		by a time change of the associated diffusion and by a quasisymmetric change of the metric. 
		We show that the conformal walk dimension of any MMD space satisfying
		the metric doubling property and the elliptic Harnack inequality is two,
		and provide a necessary condition for a pair of such changes to attain the
		infimum defining the conformal walk dimension when it is attained by the original pair.
		We also prove a necessary condition for the existence of such a pair attaining the infimum in the setting of a self-similar Dirichlet form
		on a self-similar set, and apply it to show that the infimum
		fails to be attained for the Vicsek set and the $N$-dimensional Sierpi\'{n}ski gasket with $N\geq 3$,
		in contrast to the attainment for the two-dimensional Sierpi\'{n}ski gasket due to Kigami
		[\emph{Math.\ Ann.}\ \textbf{340} (2008), no.\ 4, 781--804].}

\keywords{Conformal walk dimension, symmetric diffusion, time change, quasisymmetry, elliptic Harnack inequality, parabolic Harnack inequality, sub-Gaussian heat kernel estimate, walk dimension, self-similar fractal}


\pacs[MSC Classification 2020]{Primary 30L10, 31C25, 31E05, 35K08; Secondary 28A80, 60G30, 60J46, 60J60}

\renewcommand{\thefootnote}{}
\footnotetext{This work was supported by the Research Institute for Mathematical Sciences, an International Joint Usage/Research Center located in Kyoto University.}
\footnotetext{Naotaka Kajino was partially supported by JSPS KAKENHI Grant Numbers JP17H02849, JP18H01123.}
\footnotetext{Mathav Murugan was partially supported by NSERC and the Canada research chairs program.}
\renewcommand{\thefootnote}{\arabic{footnote}}
\setcounter{footnote}{0}

\maketitle

\makeatletter
\def\@oddhead{\hfill On the conformal walk dimension\qquad\thepage}
\def\@evenhead{\thepage\qquad N.\ Kajino and M.\ Murugan\hfill}
\makeatother

\tableofcontents

	\section{Introduction}
	\emph{What is the ``best'' way to parametrize a space?} 
	This vaguely stated question is the motivation for our work and several earlier works.
	By a parametrization, we mean a bijection $f:X \to M$ between the given space $X$ and another ``model space'' $M$ with more desirable properties.
	For example, the Riemann mapping theorem (or more generally, the uniformization theorem for Riemann surfaces) and geometric flows like the Ricci flow can be viewed as an attempt to answer the above question. In the Riemann mapping theorem example, $X$ is a proper simply connected domain in $\mathbb{C}$, $M$ is the unit disk, and $f$ is a conformal map.
	In the Ricci flow example, $X$ is a Riemannian manifold, $M$ is a Riemannian manifold with constant Ricci curvature, and  $f$ is a diffeomorphism. 
	This work aims to formulate and answer this question for spaces satisfying Harnack inequalities. In this work, $X$ is a space equipped with a symmetric diffusion
	that satisfies the elliptic Harnack inequality, $M$ satisfies the stronger parabolic Harnack inequality and $f$ is a quasisymmetry along with a time change of the diffusion on $X$ (quasisymmetry is an analogue of conformal maps for metric spaces).
	
	This paper uses quasiconformal geometry and time change of diffusion processes to understand the relationship between elliptic and parabolic Harnack inequalities. 
	The analysis using quasiconformal geometry also leads to a natural uniformization problem for spaces satisfying the elliptic Harnack inequality.
	Our results can be viewed as a bridge between analysis in smooth and fractal spaces and also as a bridge between elliptic and parabolic Harnack inequalities.

	We informally describe the setup and results.
	A more precise treatment is given in Section \ref{s:fr}.
	The setup of this work is a metric space equipped with a Radon measure $m$
	with full support and an $m$-symmetric diffusion process.
	Equivalently, we consider a metric space $(X,d)$ equipped with such $m$ and
	a strongly local, regular symmetric Dirichlet form $(\sE,\sF)$ on $L^2(X,m)$.
	We always assume that $B(x,r):=B_{d}(x,r):=\{y\in X \mid d(x,y)<r\}$ has compact closure in $X$
	for any $(x,r)\in X\times(0,\infty)$ and that $X$ contains at least two elements, and call $(X,d,m,\sE,\sF)$
	a \emph{metric measure Dirichlet space} or an \emph{MMD space} for short.
	Associated to an MMD space $(X,d,m,\sE,\sF)$ is a non-negative self-adjoint operator $\sL$ on $L^2(X,m)$ such that the corresponding Markov semigroup
	$(P_t)_{t \ge 0}$ is given by $P_t= e^{-t \sL}$. The operator $\sL$ is called the \emph{generator} of $(X,d,m,\sE,\sF)$, which
	is an analog of the Laplace operator in the abstract setting of MMD spaces. We refer to \cite{FOT, CF} for the theory of Dirichlet forms.

	We recall that this setup includes Brownian motion on a Riemannian manifold, where $d$ is the  Riemannian distance function, $m$ is the Riemannian measure,  $\sF$ is the Sobolev space $W^{1,2}$, and $\sE(f,f)= \int \lvert \nabla f \rvert^2 \, dm$, where $\nabla$ denotes the Riemannian gradient.
	In this case, the corresponding generator $\sL$ is the Laplace--Beltrami operator with a minus sign (so that $\sL$ is non-negative definite). This setup also covers non-smooth settings like diffusions on fractals including the Sierpi\'{n}ski gasket and the Sierpi\'{n}ski carpet.
	We refer the reader to \cite{Bar98} for an introduction to diffusions on fractals. Random walks on graphs can also be studied in this framework because the corresponding cable processes share many properties with the original random walks (see \cite{BB04} for this approach).

	An MMD space has an associated sheaf of \emph{harmonic} and \emph{caloric} functions. Roughly speaking, harmonic and caloric functions are generalization of solutions to the ``Laplace equation'' $\Delta h \equiv 0$ and the ``heat equation'' $\partial_t u - \Delta u \equiv 0$, respectively.
	Let $\sL$ denote the generator of an MMD space $(X,d,m,\sE,\sF)$. Let $h : U \to \bR$ be a measurable function in an open set $U$. We say that $h$ is harmonic in $U$, if it satisfies $\sL h \equiv 0$ in $U$ interpreted in a weak sense.
	Similarly, we say that a space-time function $u : (a,b) \times U \to \bR$ is caloric in $(a,b) \times U$ if it satisfies the ``heat equation'' $\partial_t u + \sL u \equiv 0$
	interpreted in a weak sense. 
	
	Harnack inequalities are fundamental regularity estimates that have numerous applications in partial differential equations and probability theory. We refer to \cite{Kas} for a nice survey on Harnack inequality and its variants.
	We recall the (scale-invariant) elliptic and parabolic Harnack inequalities.
	We say that an MMD space $(X,d,m,\sE,\sF)$ satisfies the \emph{elliptic Harnack inequality} (abbreviated as EHI),
	if there exist $C >1$ and $\delta \in (0,1)$ such that for all $x \in X$, $r>0$ and for any non-negative harmonic function $h$ on the ball $B(x,r)$, we have
	\begin{equation} \label{EHI-int} \tag*{EHI}
	\esssup_{B(x,\delta r)} h \le C \essinf_{B(x,\delta r)} h.
	\end{equation}
	We say that  an MMD space $(X,d,m,\sE,\sF)$ satisfies the \emph{parabolic Harnack inequality} with walk dimension $\beta>0$ (abbreviated as $\on{PHI(\beta)}$), if there exist $0<C_1< C_2 < C_3 < C_4 <\infty$, $C_5>1$ and $\delta \in (0,1)$ such that for all $x \in X$, $r>0$ and for any non-negative bounded caloric function $u$ on the space-time cylinder $Q=(a,a+C_4 r^\beta) \times B(x,r)$, we have
	\begin{equation} \label{PHI-int} \tag*{$\on{PHI(\beta)}$}
	\esssup_{Q_-} u \le C_5 \essinf_{Q_+} u,
	\end{equation}
	where $Q_-=(a+C_1 r^\beta,a+C_2 r^\beta) \times B(x,\delta r)$ and  $Q_+=(a+C_3 r^\beta,a+C_4 r^\beta) \times B(x,\delta r)$.
	
	We briefly review some earlier works on Harnack inequalities, referring the reader to \cite{Kas} for a more detailed survey of the literature. In a series of celebrated works, Moser showed EHI and PHI(2) for uniformly elliptic divergence form operators on $\mathbb{R}^n$ \cite{Mo1,Mo2}.
	Cheng, Li and Yau obtained gradient estimates that imply \ref{EHI-int} and PHI(2) for Riemannian manifolds with non-negative Ricci curvature \cite{CY,LY,Yau}. Grigor'yan and Saloff-Coste independently characterized PHI(2) using the volume doubling property and the Poincar\'e inequality \cite{Gri,Sal}.
	This characterization was extended to $\on{PHI(\beta)}$ by Barlow, Bass and Kumagai \cite{BB04, BBK}. A similar characterization of the simpler elliptic Harnack inequality remained open until recently \cite{Bas13,BM,BCM}.

	Note that every harmonic function lifts to a caloric function.
	More precisely, if $h$ is harmonic on $B(x,r)$, then $u(t,x)= h(x)$ is caloric on $(a,b) \times B(x,r)$ for all $b >a$. This lift immediately shows that%
	\footnote{To be precise, while for various purposes it is convenient to formulate
	\ref{EHI-int} without assuming the boundedness of non-negative harmonic functions $h$,
	we follow for simplicity the formulation of \ref{PHI-int} in \cite[Subsection 3.1]{BGK}
	which a priori requires the boundedness of non-negative caloric functions $u$, and then
	\eqref{e:p-ehi} is obvious only under the extra assumption of the boundedness of $h$.
	It turns out that this extra assumption can be dropped, but the proof of
	this fact is non-trivial; see Theorem \ref{t:phichar} and its proof for details.}
	\begin{equation} \label{e:p-ehi}
	\textrm{\ref{PHI-int}} \implies \textrm{\ref{EHI-int}}, \quad \mbox{for all $\beta>0$.}
	\end{equation}
	However, the converse of the above implication fails. Indeed, Delmotte has constructed an example of a space that satisfies EHI but fails to satisfy $\on{PHI(\beta)}$ for any $\beta>0$ \cite{Del} (see also \cite{BCM}).
	Nevertheless, one can characterize the elliptic Harnack inequality in terms of the parabolic Harnack inequality \cite{BM,BCM}.

	The main idea behind the characterization of \ref{EHI-int} is to reparametrize the space and time of the associated diffusion process so that it satisfies \ref{PHI-int} for some $\beta>0$.
	In the theory of regular symmetric Dirichlet forms the \emph{Revuz correspondence} provides a bijection between the time changes of the process and the family of \emph{smooth measures}.
	Roughly speaking, smooth measures are Radon measures that do not charge any set of capacity zero. If $\mu$ is a smooth measure for an MMD space $(X,d,m,\sE,\sF)$, then it defines a ``time-changed'' Dirichlet space $(X,d,\mu,\sE^\mu,\sF^\mu)$ as well as a ``time-changed'' Markov process.
	We say that a measure $\mu$ is \emph{admissible}, if $\mu$ is a smooth measure and has full quasi-support for the Dirichlet form $(\sE,\sF)$, which amounts to saying that $\mu$ represents a ``time change'' keeping the form $\sE$ essentially unchanged (see Definitions \ref{d:admissible} and \ref{d:TCDF}).
	We denote the collection of admissible measures by $\sA(X,d,m,\sE,\sF)$.
	Next, we recall the definition of conformal gauge.
	\begin{definition}[Conformal gauge] \label{d:cgauge}
			Let $(X,d)$ be a metric space and $\theta$ be another metric on $X$. We say that $d$ is \emph{quasisymmetric} to $\theta$, if there exists a homeomorphism $\eta:[0,\infty) \to [0,\infty)$ such that
			\[
			\frac{\theta(x,a)}{\theta(x,b)} \le \eta\left(\frac{d(x,a)}{d(x,b)}\right)   \quad \mbox{for all triples of points $x,a,b \in X$, $x \neq b$.}
			\]
			The \emph{conformal gauge} of a metric space $(X,d)$ is defined as
			\begin{equation} \label{e:cgauge-dfn}
			\sJ(X,d):= \{ \theta \colon X \times X \to [0,\infty) \mid \mbox{$\theta$ is a metric on $X$, $d$ is quasisymmetric to $\theta$} \}.
			\end{equation}
			By \cite[Proposition 10.6]{Hei}, being quasisymmetric is an equivalence relation among metrics. That is,
			\begin{equation} \label{e:cgauge}
			 d\in\sJ(X,d) \qquad \mbox{and} \qquad
			\sJ(X,\theta)= \sJ(X,d) \quad \mbox{for all $\theta \in \sJ(X,d)$}.
			\end{equation}
	\end{definition}
	The notion of quasisymmetry is an extension of conformal map to the context of metric spaces. 
	Quasisymmetric maps on the real line  were introduced by Beurling and Ahlfors, and were studied as boundary values of quasiconformal self-maps of the upper half-plane \cite{BA}.
	The above definition on general metric spaces is due to Tukia and V\"ais\"al\"a \cite{TV}.
	This is the reason behind the terminology ``conformal gauge''. We refer to \cite{Hei, HK} for expositions of the theory of quasisymmetric maps and quasiconformal geometry on metric spaces.

	To characterize the elliptic Harnack inequality, we reparametrize the space by choosing a new metric in the {conformal gauge} of $(X,d)$ and we reparametrize time by choosing a new symmetric measure that is {admissible}. More precisely, 
	given an MMD space $(X,d,m,\sE,\sF)$ satisfying \ref{EHI-int}, we seek to find a metric $\theta \in \sJ(X,d)$ and a measure $\mu \in \sA(X,d,m,\sE,\sF)$ such that
	the corresponding time-changed MMD space $(X,\theta,\mu,\sE^\mu,\sF^\mu)$ equipped with the new metric $\theta$ satisfies \ref{PHI-int} for some $\beta>0$.
	In other words, we seek to \emph{upgrade} \ref{EHI-int} to \ref{PHI-int} by reparametrizing space and time.
	This motivates the notion of conformal walk dimension.
	\begin{definition}[Conformal walk dimension] \label{dfn:dcw}
			The \emph{conformal walk dimension} $\dcw$ of an MMD space $(X,d,m,\sE,\sF)$ is defined as
			\begin{equation} \label{e:dcw}
			d_{\operatorname{cw}}:=\inf \biggl\{ \beta >0 \biggm\vert
			\begin{minipage}{210pt}
			there exist $\mu \in \sA(X,d,m,\sE,\sF)$ and  $\theta \in \sJ(X,d)$ 
			such that $(X,\theta,\mu,\sE^\mu,\sF^\mu)$ satisfies \ref{PHI-int}
			\end{minipage}
			\biggr\},
			\end{equation}
			where $\inf \emptyset := \infty$ and $(\sE^\mu,\sF^\mu)$ denotes the time-changed Dirichlet form on $L^2(X,\mu)$.
	\end{definition}
	We remark that, if $(X,d,m,\sE,\sF)$ satisfies \ref{PHI-int}, then it is easy to see that for any $\alpha \in (0,1]$ the MMD space $(X,d^{\alpha},m ,\sE,\sF)$ satisfies $\on{PHI}(\beta/\alpha)$ and $d^{\alpha} \in \sJ(X,d)$.
	This shows that it is easy to increase the walk dimension by changing the metric to a different one in the conformal gauge, but it is non-trivial to decrease the walk dimension. This explains the ``infimum'' in \eqref{e:dcw}.
	Another remark, which is based on an observation in \cite[Section 1]{Hin02}, is that the lower bound
	\begin{equation} \label{e:dcw-lower}
	d_{\operatorname{cw}} \geq 2
	\end{equation}
	is essentially known to experts; indeed, \eqref{e:dcw-lower} can be obtained from the so-called
	Varadhan-type Gaussian off-diagonal asymptotics of the associated Markov semigroup due to
	\cite[Theorem 2.7]{AH} combined with the characterization of \ref{PHI-int} for $\beta > 1$
	by the \emph{volume doubling property} \ref{VD} and
	the \emph{heat kernel estimates} \hyperlink{hke}{$\on{HKE(\beta)}$} with walk dimension $\beta$
	(see Definitions \ref{d:vd-rvd}, \ref{d:HKE} and Theorem \ref{t:phichar}).%
	\footnote{Since
	$(\mathcal{E},\mathcal{F})$ satisfies the locality assumption in \cite{AH}
	by its strong locality and \cite[Theorem 2.4.3]{CF}, we can apply \cite[Theorem 2.7]{AH},
	which the conjunction of \ref{VD} and \hyperlink{hke}{$\on{HKE(\beta)}$} with $\beta \in (1,2)$ would contradict
	in view of the finiteness of the limit in \cite[Theorem 2.7]{AH} implied by \cite[Proposition 5.1]{AH}
	and \hyperlink{hke}{$\on{HKE(\beta)}$}. Therefore the conjunction of
	\ref{VD} and \hyperlink{hke}{$\on{HKE(\beta)}$} for $\beta\in(1,2)$ cannot hold,
	hence neither can $\on{PHI(\beta/\alpha)}$ for any $\beta\in(0,2)$ and any
	$\alpha\in(\beta/2,\beta)\cap(0,1]$ by Theorem \ref{t:phichar} and thus
	neither can \ref{PHI-int} for any $\beta\in(0,2)$ by the
	previous remark, proving \eqref{e:dcw-lower}.
	\endgraf In Lemma \ref{l:dcw-lowerbound}, we will give an alternative proof of \eqref{e:dcw-lower} based on a relatively simple result from \cite{Mur1}.}

	Two natural questions arise. What is the value of $\dcw$? When is the infimum in \eqref{e:dcw} attained?  The answer to the first question is given below.
	We assume that our metric space $(X,d)$ satisfies the \emph{metric doubling property}, i.e.,
	admits $N \in \bN$ such that for all $x \in X$ and $r>0$, the ball $B(x,r)$ can be covered by $N$ balls of radii $r/2$.
	
	Our first main result (Theorem \ref{t:ehichar}) is that the value of the conformal walk dimension is always two,
	i.e., we always have the equality in \eqref{e:dcw-lower}, for any MMD space
	satisfying the metric doubling property and \ref{EHI-int}.
	In other words, we have the equivalence among the following three conditions,
	sharpening the existing characterization of \ref{EHI-int}
	(more precisely, its conjunction with the metric doubling property):
	\begin{enumerate}[label=\textup{(\alph*)},align=left,leftmargin=*,topsep=5pt,parsep=0pt,itemsep=2pt]
		\item \label{it:ehichar-intro-a}$(X,d,m,\sE,\sF)$ satisfies the metric doubling property and \ref{EHI-int}.
		\item \label{it:ehichar-intro-b}$\dcw <\infty$. 
		\item \label{it:ehichar-intro-c}$\dcw=2$.
	\end{enumerate}
	
	The equivalence between \ref{it:ehichar-intro-a} and \ref{it:ehichar-intro-b} is contained in \cite{BM,BCM}.
	That \ref{it:ehichar-intro-c} implies \ref{it:ehichar-intro-b} is obvious.
	Our contribution to the above equivalence is the proof that \ref{it:ehichar-intro-a} implies \ref{it:ehichar-intro-c}.
	Therefore our result sharpens the characterization of \ref{EHI-int} in \cite{BM,BCM}.
	The result that \ref{it:ehichar-intro-a} implies \ref{it:ehichar-intro-c} is particularly interesting on fractals as we explain below. Diffusions on many regular fractals are known to satisfy \ref{PHI-int} with $\beta>2$.
	These are often called \emph{anomalous diffusions} to distinguish them from the classical smooth settings like the Euclidean space where one often has Gaussian space-time scaling and $\on{PHI(2)}$.
	However, by the above equivalence one can ``improve'' from \ref{PHI-int} to $\on{PHI(2 + \varepsilon)}$ for any $\varepsilon>0$ even on fractals.
	So this result serves as a bridge between anomalous space-time scaling in fractals and Gaussian space-time scaling seen in smooth settings.
	
	It is worth mentioning that the proof that \ref{it:ehichar-intro-a} implies \ref{it:ehichar-intro-b} in \cite{BM,BCM} does not give a universal upper bound for $\dcw$.
	The bound on $\dcw$ obtained there depends on the constants in \ref{EHI-int} and could be arbitrarily large.
	To improve the previous \ref{it:ehichar-intro-a}-implies-\ref{it:ehichar-intro-b} result to the \ref{it:ehichar-intro-a}-implies-\ref{it:ehichar-intro-c} result, we need a new construction of metrics and measures.
	
	We briefly discuss this new construction in the proof that \ref{it:ehichar-intro-a} implies \ref{it:ehichar-intro-c}, which we will achieve in Section \ref{sec:dcw-proof}.
	The inspiration behind our argument is the uniformization theorem for Riemann surfaces.
	In the proof of the uniformization theorem, the Green's function of a Riemann surface (or a subset of the surface) plays an essential role in constructing the uniformizing map \cite[Chapter 15]{Mar19}.
	We use certain cutoff functions across annuli with small Dirichlet energy at different scales and locations as a substitute for the Green's function. 
	It is helpful to think of these cutoff functions as equilibrium potentials across annuli.
	Roughly speaking, the  diameter of a ball under the new metric $\theta \in \sJ(X,d)$ for our construction is
	proportional to the average gradient of the equilibrium potential chosen at a suitable location and scale.

	On a technical level, our proof relies heavily on the theory of Gromov hyperbolic spaces. We view $X$ as the boundary of a Gromov hyperbolic space called the hyperbolic filling.
	The conformal gauge of $X$ is essentially in a bijective correspondence to the bi-Lipschitz changes of the metric on the hyperbolic filling.
	A desired bi-Lipshitz change of the metric on the hyperbolic filling is constructed using equilibrium potentials as described above.
	A major ingredient in the proof is a combinatorial description of the conformal gauge due to Carrasco Piaggio \cite{Car13}, which we will adapt for our purpose in Section \ref{sec:HypFill}.
	
	Our first main result described above (Theorem \ref{t:ehichar}) is a partial converse to the trivial implication \ref{PHI-int}$\implies$\ref{EHI-int} in \eqref{e:p-ehi}.
	The equivalence between \ref{it:ehichar-intro-a} and \ref{it:ehichar-intro-c} clarifies the extent to which the converse of this trivial implication holds.
	Although the value of $\dcw$ has a 
	simple description, the following questions remain open in general. 
	\begin{problem} \label{prb:attainment-Gaussian-unif}
		Given an MMD space $(X,d,m,\sE,\sF)$ that satisfies the metric doubling property and \ref{EHI-int}:
		\begin{enumerate}[label=\textup{(\arabic*)},align=left,leftmargin=*,topsep=5pt,parsep=0pt,itemsep=2pt]
			\item (Attainment problem) Determine whether the infimum in \eqref{e:dcw} is attained.
			\item (Gaussian uniformization problem) Describe all the pairs $(\theta,\mu)$ of metrics $\theta \in \sJ(X,d)$
			and measures $\mu \in \sA(X,d,m,\sE,\sF)$ such that the corresponding time-changed MMD space
			$(X,\theta,\mu,\sE^\mu,\sF^\mu)$ satisfies $\on{PHI(2)}$.
		\end{enumerate}
	\end{problem}
	
	We describe two examples of self-similar fractals for which  a positive answer to the attainment problem is known.
	Kigami has shown in \cite{Kig08} that the MMD space corresponding to the Brownian motion on the two-dimensional Sierpi\'{n}ski gasket attains the infimum, where $\mu$ is the Kusuoka measure and $\theta$ is the associated intrinsic metric.
	Further examples of admissible measures that attain the infimum for the two-dimensional Sierpi\'{n}ski gasket is described in \cite{Kaj12};
	see Theorem \ref{thm:SG2-attained} below and the references in its proof for details.
	In retrospect, Kigami's measurable Riemannian structure on the Sierpi\'{n}ski gasket is the first evidence towards the implication \ref{it:ehichar-intro-a}$\implies$\ref{it:ehichar-intro-c} in Theorem \ref{t:ehichar}.
	Another example of a fractal that attains the infimum in \eqref{e:dcw} is the two-dimensional snowball described in \cite{Mur}. The ``snowball'' fractal can be viewed as a limit of Riemann surfaces and is a two-dimensional analog of the von Koch snowflake.
	In this example, the answer to the attainment problem is  obtained by considering a limit of uniformizing maps to $\bS^2$ and using the conformal invariance of Brownian motion.
	Our terminology ``Gaussian uniformization problem'' is inspired by this example and the classical fact
	from \cite{Stu96} (see also Proposition \ref{p:metmeas} and Theorem \ref{t:phichar} below) that $\on{PHI(2)}$ is equivalent to Gaussian heat kernel estimates.

	Nevertheless, the infimum in \eqref{e:dcw} need not be attained in general. We show in Subsection \ref{ssec:examples} that the Vicsek set and the $N$-dimensional Sierpi\'{n}ski gasket with $N \geq 3$ fail to attain the infimum in \eqref{e:dcw}.
	The examples with non-attainment of $\dcw$ rely on the following result (Theorems \ref{t:attain-harmonic-func} and \ref{t:attain-harmonic-func-GSC}).
	For a ``regular'' self-similar fractal, if the infimum in \eqref{e:dcw} is attained
	by a quasisymmetric metric $\theta$ and an admissible measure $\mu$,
	then it is possible to choose $\mu$ as the energy measure of a function that is harmonic outside a canonical boundary.
	This result immediately implies the non-attainment of $\dcw$ for the Vicsek set, since the energy measure of any such harmonic function fails to have full support.
	The non-attainment of $\dcw$ for the $N$-dimensional Sierpi\'{n}ski gasket with $N \geq 3$ requires a more delicate analysis of the intrinsic metric (see Definition \ref{d:dint})
	associated to the energy measure.

	Next, we mention some progress towards the Gaussian uniformization problem. 
	If $(X,\theta,\mu,\sE^\mu,\sF^\mu)$ satisfies $\on{PHI(2)}$, then
	it easily follows by combining the results in \cite{KM} and \cite{Mur1} (see Proposition \ref{p:metmeas}-\ref{it:conseq-PHI2-met}) that
	the metric $\theta$ is bi-Lipschitz equivalent to the intrinsic metric of $(X,\theta,\mu,\sE^\mu,\sF^\mu)$ and in particular
	that the metric $\theta$ is determined by the measure $\mu$ up to a bi-Lipschitz change.
	Therefore, in order to find a metric $\theta \in \sJ(X,d)$ and a measure $\mu \in \sA(X,d,m,\sE,\sF)$ in the Gaussian uniformization problem, it is enough to find an appropriate measure $\mu$.
	Furthermore, by \cite[Propositions 4.5 and 4.7]{KM}, we know that any such $\mu$ is a minimal energy-dominant measure, i.e., mutually absolutely continuous with respect to the whole family of energy measures (see Definition \ref{d:minimal-energy-dominant}), so that
	any two admissible measures that arise in the Gaussian uniformization problem are mutually absolutely continuous.
	We strengthen this result by showing in Section \ref{sec:GUP} that any two admissible measures that arise in the
	Gaussian uniformization problem are $A_\infty$-related in $(X,d)$ in the sense of Muckenhoupt (Theorem \ref{t:ainf}).
	For the MMD space corresponding to the Brownian motion on $\bR^n$, we also prove that
	the measures $A_{\infty}$-related to the Lebesgue measure are the only ones arising
	in the Gaussian uniformization problem for $n=1$ (Theorem \ref{t:1dgaussian})
	but are not for $n \geq 2$ (Example \ref{x:ainf}).
	
	This last result on $A_\infty$-relation between admissible measures in the Gaussian uniformization problem and its proof are inspired by a similar result for Ahlfors regular conformal dimension on Loewner spaces \cite[Theorem 7.11]{HK}.
	The combinatorial description of conformal gauge used in the proof of Theorem \ref{t:ehichar} was developed in \cite{Car13} for studying Ahlfors regular conformal dimension. Therefore, we find it appropriate to recall the definition of Ahlfors regular conformal dimension and discuss some related questions.
	
	Given a metric space $(X,d)$ and a Borel measure $\mu$ on $X$, we say that $\mu$ is \emph{$p$-Ahlfors regular} if there exists $C>0$ such that
	\[
	C^{-1} r^p \le \mu(B(x,r)) \le C r^p \quad \mbox{for all $x \in X$ and $r>0$ such that $B(x,r) \neq X$.}
	\]
	It is easy to verify that if a $p$-Ahlfors regular Borel measure $\mu$ exists on $(X,d)$, then the $p$-dimensional Hausdorff measure $\sH^p$ is also $p$-Ahlfors regular and the Hausdorff dimension of $(X,d)$ is $p$.
	Therefore, the existence of a $p$-Ahlfors regular measure is a property of the metric $d$.
	The \emph{Ahlfors regular conformal dimension} of a metric space $(X,d)$ is defined as
	\begin{align} \label{e:dARC}
	d_{\on{ARC}}(X,d)&= 
	\inf \Biggl\{ p >0 \Biggm\vert
	\begin{minipage}{170pt}
	there exist $\theta \in \sJ(X,d)$ and a $p$-Ahlfors regular Borel measure $\mu$ on $(X,\theta)$ 
	\end{minipage}
	\Biggr\}.
	\end{align}
	
	The attainment problem for the Ahlfors regular conformal dimension of the standard (two-dimensional) Sierpi\'{n}ski carpet is a well-known open question \cite[Problem 6.2]{BK05}.
	An important motivation for studying this attainment problem is Cannon's conjecture in geometric group theory.
	Cannon's conjecture states that every finitely generated, Gromov-hyperbolic group  $G$
	whose boundary (in the sense of Gromov) is homeomorphic to the $2$-sphere is isomorphic to
	a Kleinian group, i.e., a discrete group of M\"{o}bius transformations on the Riemann sphere.
	Bonk and Kleiner have shown that Cannon's conjecture is equivalent to the attainment of Ahlfors regular conformal dimension of the boundary of such a group \cite[Theorem 1.1]{BK05}.
	Our results and proof techniques will make it clear that there are similarities between the attainment problems for Ahlfors regular conformal dimension and conformal walk dimension.
	We hope that some of the methods we develop towards the attainment problem for conformal walk dimension will have applications to the analogous attainment problem for Ahlfors regular conformal dimension.
	
	Our work suggests that it would be useful to develop a theory of non-linear Dirichlet forms to study Ahlfors regular conformal dimension of fractals.
	In particular, Theorems \ref{t:attain-harmonic-func} and \ref{t:attain-harmonic-func-GSC} show that if the infimum in \eqref{e:dcw} is attained on
	a self-similar fractal, then an optimal admissible measure can be chosen to be the energy measure of a harmonic function. This result
	and its proof suggest that one might be able to construct an optimal Ahlfors regular measure attaining the Ahlfors regular conformal dimension as the ``energy measure'' of a $p$-harmonic function.
	However, the notions of energy measure and $p$-harmonic functions for non-linear Dirichlet energy  remain to be developed on fractals
	(non-linear Dirichlet energy can be formally viewed as $\int \lvert\nabla f\rvert^p $ with $p \neq 2$ and the corresponding $p$-harmonic functions
	can be viewed as minimizers of the non-linear Dirichlet energy). There is a well-developed non-linear potential theory in smooth settings
	(see \cite{HKM} and references therein), but a similar theory is yet to be developed on fractals.
	
\begin{notation} \label{ntn:intro}
Throughout this paper, we use the following notation and conventions.
\begin{enumerate}[label=\textup{(\alph*)},align=left,leftmargin=*,topsep=5pt,parsep=0pt,itemsep=2pt]
\item \label{it:ntn-intro-set-inclusion}The symbols $\subset$ and $\supset$ for set inclusion
	\emph{allow} the case of the equality.
\item \label{it:ntn-intro-ineq-mod-const}For $[0,\infty]$-valued quantities $A$ and $B$, we write $A \lesssim B$
	to mean that there exists an implicit constant $C \in [1,\infty)$ depending on some unimportant
	parameters such that $A \leq CB$. We write $A \asymp B$ if $A \lesssim B$ and $B \lesssim A$.
\item \label{it:ntn-intro-natural-numbers}$\mathbb{N}:=\{n\in\mathbb{Z}\mid n>0\}$, i.e., $0\not\in\mathbb{N}$.
\item \label{it:ntn-intro-cardinality}The cardinality (the number of elements) of a set $A$ is denoted by $\#A\in\mathbb{N}\cup\{0,\infty\}$.
\item \label{it:ntn-intro-max-min}
	We set $a\vee b:=\max\{a,b\}$, $a\wedge b:=\min\{a,b\}$, $a^{+}:=a\vee 0$ and
	$a^{-}:=-(a\wedge 0)$ for $a,b\in[-\infty,\infty]$, and
	we use the same notation also for $[-\infty,\infty]$-valued functions and
	equivalence classes of them. All numerical functions in this paper are assumed
	to be $[-\infty,\infty]$-valued.
\item \label{it:ntn-intro-indicator}Let $X$ be a non-empty set. We define $\one_{A}=\one_{A}^{X}\in\mathbb{R}^{X}$ for $A\subset X$ by
	$\one_{A}(x):=\one_{A}^{X}(x):=\bigl\{\begin{smallmatrix}1 & \textrm{if $x\in A$,}\\ 0 & \textrm{if $x\not\in A$.}\end{smallmatrix}$
\item \label{it:ntn-intro-contfunc}For a topological space $X$, we set
	$\contfunc(X):=\{f\colon X\to\mathbb{R}\mid\textrm{$f$ is continuous}\}$ and
	$\contfunc_{\mathrm{c}}(X):=\{f\in \contfunc(X)\mid\textrm{$X\setminus f^{-1}(0)$ has compact closure in $X$}\}$.
\item \label{it:ntn-intro-measure} Let $(X,\mathcal{B})$ be a measurable space and let $\mu,\nu$ be $\sigma$-finite
	measures on $(X,\mathcal{B})$. We write $\nu \ll \mu$ to mean that
	$\nu$ is absolutely continuous with respect to $\mu$, and $\nu\leq\mu$
	to mean that $\nu(A)\leq\mu(A)$ for any $A\in\mathcal{B}$, or equivalently,
	$\nu \ll \mu$ and $d\nu/d\mu\leq 1$ $\mu$-a.e.
\end{enumerate}
\end{notation}
	
	\section{Framework and results} \label{s:fr}
	
	In this section, we recall the background definitions and state our main results
	for general MMD spaces. Our other main results on the attainment problem
	for self-similar sets are treated separately in Section \ref{sec:pfcsss}.
	
	\subsection{Metric measure Dirichlet space and energy measure}\label{ssec:MMD-EnergyMeas}
	
	Throughout this paper, we consider a metric space $(X,d)$ in which
	$B(x,r):=B_{d}(x,r):=\{y\in X \mid d(x,y)<r\}$ is relatively compact
	(i.e., has compact closure) in $X$ for any $(x,r)\in X\times(0,\infty)$,
	and a Radon measure $m$ on $X$ with full support, i.e., a Borel measure
	$m$ on $X$ which is finite on any compact subset of $X$ and strictly positive on any
	non-empty open subset of $X$. We always assume that $X$ contains at least two elements,
	and such a triple $(X,d,m)$ is referred to as a \emph{metric measure space}.
	We set $\overline{B}(x,r):=\overline{B}_{d}(x,r):=\{y\in X \mid d(x,y)\leq r\}$ for
	$(x,r)\in X\times(0,\infty)$ and $\diam_{d}(A):=\diam(A,d):=\sup_{x,y\in A}d(x,y)$ for $A\subset X$ ($\sup\emptyset:=0$).
	
	Furthermore let $(\mathcal{E},\mathcal{F})$ be a \emph{symmetric Dirichlet form} on $L^{2}(X,m)$;
	by definition, $\mathcal{F}$ is a dense linear subspace of $L^{2}(X,m)$, and
	$\mathcal{E}:\mathcal{F}\times\mathcal{F}\to\mathbb{R}$
	is a non-negative definite symmetric bilinear form which is \emph{closed}
	($\mathcal{F}$ is a Hilbert space under the inner product $\mathcal{E}_{1}:= \mathcal{E}+ \langle \cdot,\cdot \rangle_{L^{2}(X,m)}$)
	and \emph{Markovian} ($f^{+}\wedge 1\in\mathcal{F}$ and $\mathcal{E}(f^{+}\wedge 1,f^{+}\wedge 1)\leq \mathcal{E}(f,f)$ for any $f\in\mathcal{F}$).
	Recall that $(\mathcal{E},\mathcal{F})$ is called \emph{regular} if
	$\mathcal{F}\cap \contfunc_{\mathrm{c}}(X)$ is dense both in $(\mathcal{F},\mathcal{E}_{1})$
	and in $(\contfunc_{\mathrm{c}}(X),\|\cdot\|_{\mathrm{sup}})$, and that
	$(\mathcal{E},\mathcal{F})$ is called \emph{strongly local} if $\mathcal{E}(f,g)=0$
	for any $f,g\in\mathcal{F}$ with $\supp_{m}[f]$, $\supp_{m}[g]$ compact and
	$\supp_{m}[f-a\one_{X}]\cap\supp_{m}[g]=\emptyset$ for some $a\in\mathbb{R}$. Here,
	for a Borel measurable function $f:X\to[-\infty,\infty]$ or an
	$m$-equivalence class $f$ of such functions, $\supp_{m}[f]$ denotes the support of the measure $\lvert f\rvert\,dm$,
	i.e., the smallest closed subset $F$ of $X$ with $\int_{X\setminus F}\lvert f\rvert\,dm=0$,
	which exists since $X$ has a countable open base for its topology; note that
	$\supp_{m}[f]$ coincides with the closure of $X\setminus f^{-1}(0)$ in $X$ if $f$ is continuous.
	The pair $(X,d,m,\mathcal{E},\mathcal{F})$ of a metric measure space $(X,d,m)$ and a strongly local,
	regular symmetric Dirichlet form $(\mathcal{E},\mathcal{F})$ on $L^{2}(X,m)$ is termed
	a \emph{metric measure Dirichlet space}, or an \emph{MMD space} in abbreviation.
	We refer to \cite{FOT,CF} for details of the theory of symmetric Dirichlet forms.
	
	We next recall the definition of energy measure and some relevant notions.
	Note that $fg\in\mathcal{F}$ for any $f,g\in\mathcal{F}\cap L^{\infty}(X,m)$ by \cite[Theorem 1.4.2-(ii)]{FOT}
	and that $\{(-n)\vee(f\wedge n)\}_{n=1}^{\infty}\subset\mathcal{F}$ and
	$\lim_{n\to\infty}(-n)\vee(f\wedge n)=f$ in norm in $(\mathcal{F},\mathcal{E}_{1})$
	by \cite[Theorem 1.4.2-(iii)]{FOT}.
	
	\begin{definition}[{\cite[(3.2.13), (3.2.14) and (3.2.15)]{FOT}}]\label{d:EnergyMeas}
		Let $(X,d,m,\mathcal{E},\mathcal{F})$ be an MMD space.
		The \emph{energy measure} $\Gamma(f,f)$ of $f\in\mathcal{F}$
		associated with $(X,d,m,\mathcal{E},\mathcal{F})$ is defined,
		first for $f\in\mathcal{F}\cap L^{\infty}(X,m)$ as the unique ($[0,\infty]$-valued)
		Borel measure on $X$ such that
		\begin{equation}\label{e:EnergyMeas}
		\int_{X} g \, d\Gamma(f,f)= \mathcal{E}(f,fg)-\frac{1}{2}\mathcal{E}(f^{2},g) \qquad \textrm{for all $g \in \mathcal{F}\cap \contfunc_{\mathrm{c}}(X)$,}
		\end{equation}
		and then by
		$\Gamma(f,f)(A):=\lim_{n\to\infty}\Gamma\bigl((-n)\vee(f\wedge n),(-n)\vee(f\wedge n)\bigr)(A)$
		for each Borel subset $A$ of $X$ for general $f\in\mathcal{F}$. 
	\end{definition}
	
	\begin{definition}[{\cite[Definition 2.1]{Hin10}}]\label{d:minimal-energy-dominant}
		Let $(X,d,m,\mathcal{E},\mathcal{F})$ be an MMD space. A $\sigma$-finite Borel measure
		$\nu$ on $X$ is called a \emph{minimal energy-dominant measure}
		of $(\mathcal{E},\mathcal{F})$ if the following two conditions are satisfied:
		\begin{enumerate}[label=\textup{(\roman*)},align=left,leftmargin=*,topsep=5pt,parsep=0pt,itemsep=2pt]
			\item\label{it:domination}(Domination) For every $f \in \mathcal{F}$, $\Gamma(f,f) \ll \nu$.
			\item\label{it:minimality}(Minimality) If another $\sigma$-finite Borel measure $\nu'$
			on $X$ satisfies condition \ref{it:domination} with $\nu$ replaced
			by $\nu'$, then $\nu \ll \nu'$.
		\end{enumerate}
		Note that by \cite[Lemmas 2.2, 2.3 and 2.4]{Hin10}, a minimal energy-dominant measure of
		$(\mathcal{E},\mathcal{F})$ always exists and is precisely a $\sigma$-finite
		Borel measure $\nu$ on $X$ such that for each Borel subset $A$ of $X$,
		$\nu(A)=0$ if and only if $\Gamma(f,f)(A)=0$ for all $f\in\mathcal{F}$.
		In particular, any two minimal energy-dominant measures are mutually absolutely continuous.
	\end{definition}
	
	\begin{definition}\label{d:dint}
		Let $(X,d,m,\mathcal{E},\mathcal{F})$ be an MMD space. We define its
		\emph{intrinsic metric} $d_{\on{int}}\colon X\times X\to[0,\infty]$ by
		\begin{equation}\label{e:dint}
		d_{\on{int}}(x,y) := \sup \{f(x) -f(y) \mid
		\textrm{$f \in \mathcal{F}_{\on{loc}} \cap \contfunc(X)$, $\Gamma(f,f) \leq m$} \},
		\end{equation}
		where
		\begin{equation}\label{e:Floc}
		\mathcal{F}_{\on{loc}} := \Biggl\{ f \Biggm\vert
		\begin{minipage}{245pt}
		$f$ is an $m$-equivalence class of $\mathbb{R}$-valued Borel measurable functions
		on $X$ such that $f \one_{V} = f^{\#} \one_{V}$ $m$-a.e.\ for some $f^{\#}\in\mathcal{F}$
		for each relatively compact open subset $V$ of $X$
		\end{minipage}
		\Biggr\}
		\end{equation}
		and the energy measure $\Gamma(f,f)$ of $f\in\mathcal{F}_{\on{loc}}$ associated with
		$(X,d,m,\mathcal{E},\mathcal{F})$ is defined as the unique Borel measure on $X$
		such that $\Gamma(f,f)(A)=\Gamma(f^{\#},f^{\#})(A)$ for any relatively compact
		Borel subset $A$ of $X$ and any $V,f^{\#}$ as in \eqref{e:Floc} with $A\subset V$;
		note that $\Gamma(f^{\#},f^{\#})(A)$ is independent of a particular choice of such $V,f^{\#}$
		by \cite[Corollary 3.2.1]{FOT}.
	\end{definition}

We remark that the intrinsic metric need not always be a metric.
In general, it is only a pseudo-metric; that is, $d_{\on{int}}(x,x)=0$ for all $x \in X$, $d_{\on{int}}(x,y) \in [0,\infty], d_{\on{int}}(x,y)= d_{\on{int}}(y,x)$ for all $x,y \in X$ and $ d_{\on{int}}(x,z) \le  d_{\on{int}}(x,y)+ d_{\on{int}}(y,z)$ for all $x,y,z \in X$.
A sufficient condition for when the intrinsic metric is bi-Lipschitz equivalent to the original metric is given in Proposition \ref{p:metmeas}.
On the other hand, \cite[Theorem 2.13-(a)]{KM} gives a family of examples for which the intrinsic metric is identically zero.
In the setting of \cite[Theorem 2.13-(a)]{KM}, $\Gamma(f,f) \le m$ implies that $f$ is identically constant. We refer the reader to \cite[Propostion 5.7]{BeSa} for an example which shows that the intrinsic metric could be infinite.

	\subsection{Harnack inequalities}
	We recall the definition of harmonic and caloric functions.
	\begin{definition}\label{d:harmonic}
		Let $(X,d,m,\mathcal{E},\mathcal{F})$ be an MMD space and let $\sF_e$ denote its extended Dirichlet space.
		Recall that the \emph{extended Dirichlet space} $\sF_e$ of $(X,d,m,\sE,\sF)$ is defined as
		the space of $m$-equivalence classes of functions $f\colon X \to \bR$
		such that $\lim_{n\to \infty} f_n= f$ $m$-a.e.\ on $X$ for some
		$\sE$-Cauchy sequence $(f_n)_{n \in \bN}$ in $\sF$, that the limit
		$\mathcal{E}(f,f):=\lim_{n\to\infty}\mathcal{E}(f_{n},f_{n})\in\mathbb{R}$
		exists, is independent of a choice of such $(f_n)_{n \in \bN}$ for each
		$f \in \sF_e$ and defines an extension of $\mathcal{E}$ to $\sF_e \times \sF_e$,
		and that $\mathcal{F}=\mathcal{F}_{e}\cap L^{2}(X,m)$;
		see \cite[Definition 1.1.4 and Theorem 1.1.5]{CF}. We remark that the definition of
		the energy measure $\Gamma(f,f)$ associated with $(X,d,m,\mathcal{E},\mathcal{F})$
		also extends canonically to $f\in\sF_e$; see \cite[p.\ 123 and Theorem 5.2.3]{FOT}.
		
		A function $h \in \mathcal{F}_e$ is said to be \emph{$\mathcal{E}$-harmonic}
		on an open subset $U$ of $X$, if
		\begin{equation}\label{e:harmonic}
		\mathcal{E}(h,f)=0 \qquad \textrm{for all $f\in\mathcal{F}\cap \contfunc_{\mathrm{c}}(X)$ with $\supp\nolimits_{m}[f]\subset U$.}
		\end{equation}
		
		Let $I$ be an open interval in $\bR$. We say that a function $u: I \to L^2(X,m)$ is \emph{weakly differentiable} at $t_0 \in I$ if for any $f \in L^2(X,m)$ the function $t \mapsto \langle u(t), f \rangle$ is differentiable at $t_0$, where $\langle \cdot, \cdot \rangle$ denotes the inner product in $L^2(X,m)$. If $u$ is weakly differentiable at $t_0$, then by the   uniform boundedness principle, there exists a (unique) function $w \in L^2(X,m)$ such that
		\[
		\lim_{t \to t_0} \left\langle \frac{u(t)-u(t_0)}{t- t_0}, f \right\rangle = \langle w , f \rangle, \quad \mbox{ for all $f \in L^2(X,m)$.}
		\]
		We say that the function $w$ is the \emph{weak derivative} of the function $u$ at $t_0$ and write $w=u'(t_0)$. 
		
		Let $I$ be an open interval in $\bR$ and let $\Omega$ be an open subset of $X$.
		A function $u: I \to \sF$ is said to be \emph{caloric} in $I \times \Omega$ if  $u$ is weakly differentiable in the space $L^2(\Omega)$ at any $t \in I$, and for any $f \in \sF \cap \contfunc_{\mathrm{c}}(\Omega)$, and for any $t \in I$,
		\begin{equation}
		\langle u', f \rangle + \sE(u,f)=0. \label{e:caloric}
		\end{equation}
	\end{definition}

	\begin{definition}[Harnack inequalities] \label{d:harnack}
		We say that  an MMD space $(X,d,m,\sE,\sF)$ satisfies the \emph{elliptic Harnack inequality} (abbreviated as \hypertarget{ehi}{$\on{EHI}$}),
		if there exist $C >1$ and $\delta \in (0,1)$ such that for all $x \in X$, $r>0$ and for any $h \in \sF_e$ that is non-negative on $B(x,r)$ and $\mathcal{E}$-harmonic on $B(x,r)$, we have
		\begin{equation} \label{EHI} \tag*{$\on{EHI}$}
		\esssup_{B(x,\delta r)} h \le C \essinf_{B(x,\delta r)} h.
		\end{equation}
		We say that  an MMD space $(X,d,m,\sE,\sF)$ satisfies the \emph{parabolic Harnack inequality} with walk dimension $\beta$ (abbreviated as \hypertarget{phi}{$\on{PHI(\beta)}$}),
		if there exist $0<C_1< C_2 < C_3 < C_4 <\infty$, $C_5>1$ and $\delta \in (0,1)$ such that for all $x \in X$, $r>0$ and for any non-negative bounded caloric function $u$ on the space-time cylinder $Q=(a,a+C_4 r^\beta) \times B(x,r)$, we have
		\begin{equation} \label{PHI} \tag*{$\on{PHI(\beta)}$}
		\esssup_{Q_-} u \le C_5 \essinf_{Q_+} u,
		\end{equation}
		where $Q_-=(a+C_1 r^\beta,a+C_2 r^\beta) \times B(x,\delta r)$ and $Q_+=(a+C_3 r^\beta,a+C_4 r^\beta) \times B(x,\delta r)$.
	\end{definition}
	
	\begin{remark} \label{rmk:harnack}
	The formulation of \ref{EHI} in Definition \ref{d:harnack} above is slightly different
	from that in \cite[Definition 4.2-(i)]{BCM}, but it is easy to see from the relative
	compactness in $X$ of all balls in $(X,d)$ and \cite[Proposition 2.9-(ii)]{BCM}
	that the former implies the latter, so that we can freely use the implications of
	\ref{EHI} established in \cite{BCM} under the present formulation of \ref{EHI}.
	\end{remark}
	
	\subsection{Admissible measures and time-changed Dirichlet space}
 Given an MMD space $(X,d,m,\sE,\sF)$ and $A\subset X$, we define its $1$-capacity as
	\begin{equation} \label{e:defCap1}
	\Capa_1(A) := \inf \bigl\{ \sE_{1}(f,f) \bigm\vert \textrm{$f \in \sF$, $f \geq 1$ $m$-a.e.\ on a neighborhood of $A$} \bigr\},
	\end{equation}
	where $\sE_{1}:=\mathcal{E}+\langle\cdot,\cdot\rangle_{L^{2}(X,m)}$ as defined before. 
	For disjoint Borel sets $A,B$ such that $B$ is closed and $\overline{A} \Subset B^c$ (by $A \Subset B^c$, we mean that $\overline{A}$ is compact and $\overline{A} \subset B^c$), we
	define $\sF(A,B)$ as the set of function $\phi \in \sF$ such that $\phi \equiv 1$ in an open neighborhood of $A$, and $\supp \phi \subset B^c$.
	For such sets $A$ and $B$, we define the capacity between them as
	\begin{equation} \label{e:defCap}
	\Capa(A,B) := \inf \giveset{\sE(f,f) \mid f \in \sF(A,B)}.
	\end{equation}
	
	\begin{definition}[Smooth measures] \label{d:smooth}
		Let $(X, d,m,\sE,\sF)$ be an MMD space. 
		A Radon measure $\mu$ on $X$, i.e., a Borel measure $\mu$ on $X$ which is finite on
		any compact subset of $X$, is said to be \emph{smooth} if $\mu$ charges no set
		of zero capacity (that is, $\mu(A)=0$ for any Borel subset $A$ of $X$ with $\Capa_{1}(A)=0$).
	\end{definition}
	
	For example, the energy measure $\Gamma(f,f)$ of $f\in\mathcal{F}_{e}$ is smooth by
	\cite[Lemma 3.2.4]{FOT}. An essential feature of a smooth Radon measure $\mu$ on $X$
	is that the $\mu$-equivalence class of each $f\in\mathcal{F}_{e}$ is canonically determined
	by considering a \emph{quasi-continuous} $m$-version of $f$, which exists by \cite[Theorem 2.1.7]{FOT}
	and is unique \emph{q.e.}\ (i.e., up to sets of capacity zero) by \cite[Lemma 2.1.4]{FOT};
	see \cite[Section 2.1]{FOT} and \cite[Sections 1.2, 1.3 and 2.3]{CF} for the definition and
	basic properties of quasi-continuous functions with respect to a regular symmetric Dirichlet form.
	In what follows, we always consider a quasi-continuous $m$-version of $f\in\mathcal{F}_{e}$.
	
	An increasing sequence $\{F_k; k\geq 1\}$ of closed subsets of an MMD space $(X,d,m,\sE,\sF)$ is said to be a \emph{nest} if
	$\bigcup_{k\geq 1} \sF_{F_k} $ is $\sqrt{\sE_1}$-dense in $\sF$, where
	$\sF_{F_k}:=\{f\in \sF \mid \textrm{$f=0$ $m$-a.e.\ on $X\setminus F_k$} \}$. 
	Recall that $D \subset X$ is \emph{quasi-open} if there exists a nest $\giveset{F_n}$ such that $D \cap F_n$ is an open subset of $F_n$ in the relative topology for each $n \in \bN$.
	The complement in $X$ of a quasi-open set is called \emph{quasi-closed}.
	We are interested in the class of admissible measures, namely smooth Radon measures
	having full quasi-support in the sense defined as follows.
	
	\begin{definition}[Full quasi-support, admissible measures; {\cite[(4.6.3) and (4.6.4)]{FOT}}] \label{d:admissible}
			Let $(X, d,m,\sE,\sF)$ be an MMD space. A smooth Radon measure $\mu$ on $X$
			is said to have \emph{full quasi-support} if for any quasi-closed set $F$
			with $\mu (X \setminus F)=0$ we have $\Capa_1(X \setminus F)=0$.
			A Borel measure $\mu$ on $X$ is said to be \emph{admissible} if $\mu$ is a smooth
			Radon measure on $X$ with full quasi-support, and the set of admissible measures
			with respect to $(X,d,m,\sE,\sF)$ is denoted by $\sA(X,d,m,\sE,\sF)$.
	\end{definition}
	
	\begin{definition} [Time-changed Dirichlet form] \label{d:TCDF}
			Let $(X, d,m,\sE,\sF)$ be an MMD space.
			If $\mu$ is a smooth Radon measure, it defines a time change of the process whose associated Dirichlet form is
			called the trace Dirichlet form and denoted by $(\sE^\mu,\sF^\mu)$ (see \cite[Section 6.2]{FOT} and \cite[Section 5.2]{CF}).
			Assume $\mu \in \sA(X,d,m,\sE,\sF)$. Then the trace Dirichlet form $(\sE^\mu,\sF^\mu)$ is given by
			\begin{equation} \label{e:defTC}
			\sF^\mu = \sF_e \cap L^2(X,\mu) \qquad \mbox{and} \qquad
			\sE^\mu(u,v) = \sE(u,v) \quad \mbox{for $u,v \in \sF^\mu$,}
			\end{equation}  
			and $(\sE^\mu,\sF^\mu)$ is a strongly local, regular symmetric Dirichlet form on $L^2(X,\mu)$
			by \cite[Theorems 5.1.5, 6.2.1 and Exercise 3.1.1]{FOT}.
			We also note that by \cite[Theorem 5.2.11]{CF},
			\begin{equation} \label{e:admiss}
			\sA(X,d,m,\sE,\sF)=\sA(X,d,\mu,\sE^\mu,\sF^\mu).
			\end{equation}
			In probabilistic terms, $(\sE^\mu,\sF^\mu)$ is the Dirichlet form of the time-changed process
			$(\omega,t) \mapsto Y_{\tau_t(\omega)}(\omega)$, where $(Y_t)_{t \ge 0}$ is an $m$-symmetric
			diffusion on $X$ whose Dirichlet form is $(\sE,\sF)$ and $\tau_t$ is the right-continuous
			inverse of the positive continuous additive functional $(A_t)_{t \geq 0}$ of $(Y_t)_{t \geq 0}$
			with Revuz measure $\mu$; see \cite[Section 6.2]{FOT} and \cite[Section 5.2]{CF} for details.
	\end{definition}

	\subsection{Main results}
	Our first main result is that the value of the conformal walk dimension is an invariant
	for MMD spaces satisfying the metric doubling property and the elliptic Harnack inequality \ref{EHI}.
	We recall from Definition \ref{dfn:dcw} that the conformal walk dimension $\dcw$ of an MMD space $(X,d,m,\sE,\sF)$ is the infimum over all $\beta>0$
	such that there exist an admissible measure $\mu \in \sA(X,d,m,\sE,\sF)$ and a metric $\theta \in \sJ(X,d)$ such that
	the time-changed MMD space $(X,\theta,\mu,\sE^\mu,\sF^\mu)$ satisfies the parabolic Harnack inequality \ref{PHI};
	note here that $(X,\theta,\mu,\sE^\mu,\sF^\mu)$ is indeed an MMD space since,
	for any $(x,r)\in X\times(0,\infty)$, $\diam(B_{\theta}(x,r),d)<\infty$ by $\theta \in \sJ(X,d)$
	and \cite[Proposition 10.8]{Hei} and hence $B_{\theta}(x,r)$ is relatively compact in $X$.
	\begin{theorem} [Universality of conformal walk dimension] \label{t:ehichar}
		Let $(X,d,m,\sE,\sF)$ be an MMD space
		and let $\dcw$ denote its conformal walk dimension.
		Then the following are equivalent:
		\begin{enumerate}[label=\textup{(\alph*)},align=left,leftmargin=*,topsep=5pt,parsep=0pt,itemsep=2pt]
			\item \label{it:ehichar-a}$(X,d,m,\sE,\sF)$ satisfies the metric doubling property and \ref{EHI}.
			\item \label{it:ehichar-b}$\dcw <\infty$. 
			\item \label{it:ehichar-c}$\dcw=2$.
		\end{enumerate}
	\end{theorem}
	The proof of Theorem \ref{t:ehichar} is concluded in Subsection \ref{ssec:prf-dcw-2}
	after long preparations in Section \ref{sec:HypFill} and Subsection \ref{ssec:conseq-Harnack}.
	We give a brief description of the proof in Subsection \ref{ssec:outline-prf-dcw-2}.
	
	The next question is whether or not the infimum in the definition \eqref{e:dcw} of $\dcw$
	is attained. To this end we first describe the metric and measure.
	The following result is essentially contained in \cite{KM};
	see the beginning of Subsection \ref{ssec:conseq-PHI2} for the proof.
	\begin{proposition} \label{p:metmeas}
		Let $(X,d,m,\sE,\sF)$ be an MMD space that satisfies \hyperlink{phi}{$\on{PHI}(2)$}. Then the following hold:
		\begin{enumerate}[label=\textup{(\alph*)},align=left,leftmargin=*,topsep=5pt,parsep=0pt,itemsep=2pt]
			\item\label{it:conseq-PHI2-met}The metric $d$ is bi-Lipschitz equivalent to the intrinsic metric $d_{\on{int}}$.
			\item\label{it:conseq-PHI2-meas}The symmetric measure $m$ is a minimal energy-dominant measure.
		\end{enumerate}
	\end{proposition}
	By Proposition \ref{p:metmeas}-\ref{it:conseq-PHI2-met}, in order to find a metric $\theta \in \sJ(X,d)$ and a measure
	$\mu \in \sA(X,d,m,\sE,\sF)$ in the Gaussian uniformization problem, it is enough to find an
	appropriate measure $\mu$ as the metric $\theta$ is determined by the measure up to bi-Lipschitz equivalence.
	This observation is useful in studying the Gaussian uniformization problem,
	since constructing measures is typically easier than constructing metrics.
	By Proposition \ref{p:metmeas}-\ref{it:conseq-PHI2-meas}, any such measures are mutually absolutely continuous.
	In fact, we have the following improvement, asserting that any two measures $\mu_1,\mu_2$
	attaining the infimum in the definition of $\dcw$ are $A_\infty$-related in the sense of Muckenhoupt (see Definition \ref{d:ainf}).
	\begin{theorem} \label{t:ainf}
		Let $(X,d,m,\sE,\sF)$ be an MMD space. For $i=1,2$, let $d_i \in \sJ(X,d)$,
		$m_i \in \sA(X,d,m,\sE,\sF)$ and assume that the time-changed MMD space
		$(X,d_i,m_i,\sE^{m_i},\sF^{m_i})$ satisfies \hyperlink{phi}{$\on{PHI}(2)$}.
		Then the measures $m_1$ and $m_2$ are $A_\infty$-related in $(X,d)$.
	\end{theorem}
	The proof of Theorem \ref{t:ainf} is given in the first half of Subsection \ref{ssec:Ainfty}.
	Using Proposition \ref{p:metmeas}, Theorem \ref{t:ainf} and sharp constants of Poincar\'e inequalities in \cite{CW},
	we also answer in Theorem \ref{t:1dgaussian} the Gaussian uniformization problem for the
	MMD space $(\bR,d,m,\sE,\sF)$ corresponding to the Brownian motion on $\bR$,
	by proving that any Borel measure $\mu$ on $\bR$ that is $A_{\infty}$-related to $m$ admits
	$\theta \in \sJ(\bR,d)$ such that $(\bR,\theta,\mu,\sE^{\mu},\sF^{\mu})$ satisfies \hyperlink{phi}{$\on{PHI}(2)$}.
	This result is not true for the MMD space $(\bR^n,d,m,\sE,\sF)$ corresponding
	to the Brownian motion on $\bR^n$ with $n \geq 2$ as shown in Example \ref{x:ainf}, and
	we do not have an exact characterization of the Borel measures on $\bR^n$ which are $A_\infty$-related to $m$
	and admit $\theta \in \sJ(\bR^n,d)$ such that $(\bR^n,\theta,\mu,\sE^{\mu},\sF^{\mu})$ satisfies \hyperlink{phi}{$\on{PHI}(2)$},
	except that we give an implicit one for $n=2$ in Proposition \ref{p:strongainf}.
	
	\subsection{Outline of the proof of Theorem \ref{t:ehichar}} \label{ssec:outline-prf-dcw-2}
	
	As pointed out in the introduction, the equivalence between \ref{it:ehichar-a} and \ref{it:ehichar-b} is already known. Since the implication from \ref{it:ehichar-c} to \ref{it:ehichar-b} is trivial, it suffices to show that \ref{it:ehichar-b} implies \ref{it:ehichar-c}.
	Hence, we may assume that $(X,d,m,\sE,\sF)$ satisfies \hyperlink{phi}{$\on{PHI(\gamma)}$} for some $\gamma>2$. Let $\beta >2$ be arbitrary. We wish to construct a metric $\theta \in \sJ(X,d)$ and $\mu \in \sA(X,d,m,\sE,\sF)$ such that the time-changed MMD space $(X,\theta,\mu,\sE^\mu,\sF^\mu)$ satisfies \hyperlink{phi}{$\on{PHI(\beta)}$}. 
	
	To sketch the main ideas, we further assume that $(X,d)$ is compact and the diameter of $(X,d)$ is normalized to $\frac 1 3$. The non-compact case follows by the same argument as the compact case, by considering $X$ as a limit of compact sets.
	Thanks to the known characterizations of \hyperlink{phi}{$\on{PHI(\beta)}$} as stated in Theorem \ref{t:phichar}
	and the preservation of \ref{EHI} between $(X,d,m,\sE,\sF)$ and $(X,\theta,\mu,\sE^\mu,\sF^\mu)$ as stated in Lemma \ref{l:EHI-TC-QS},
	it is enough to construct a metric $\theta \in \sJ(X,d)$ and a measure $\mu \in \sA(X,d,m,\sE,\sF)$ such that
	\begin{equation} \label{e:capest}
	\mu (B_\theta (x,r)) \asymp r^\beta \Capa(B_\theta(x,r), B_\theta(x,2r)^c),\quad \mbox{for all $x \in X, r \lesssim \diam(X,\theta)$.}
	\end{equation}
	The above estimate  relating the measure $\mu$ and capacity implies that $\mu$ is a smooth measure with full quasi-support and satisfies the following volume doubling and reverse volume doubling properties (see Proposition \ref{p:smooth}): there exists $C_D >0$ and $c_D \in (0,1)$ such that
	\begin{align} \label{e:vd}
	\frac{\mu(B_d(x,2r))}{\mu(B_d(x,r))} &\le C_D, \quad \mbox{for all $x \in X, r>0$, and}\\
	\label{e:rvd} \frac{\mu(B_d(x,r))}{\mu(B_d(x,2r))} &\le c_D, \quad \mbox{for all $x \in X, 0<r\lesssim \diam(X,d)$.}
	\end{align} 
	The estimate \eqref{e:capest} along with Theorem \ref{t:phichar}, implies \ref{PHI}
	because volume doubling, reverse volume doubling, and \hyperlink{ehi}{EHI} are preserved by the quasisymmetric change
	of the metric from $d$ to $\theta$.

	The construction of a metric $\theta$ and a measure $\mu$ is a modification of \cite{Car13}, but instead of the Ahlfors
	regularity required in \cite{Car13} we need to establish \eqref{e:capest}.
	Following \cite{Car13}, we construct the metric $\theta$ and the measure $\mu$ that satisfy \eqref{e:capest} using a multi-scale argument.
	This part of the argument relies on theory of Gromov hyperbolic spaces.
	The basic idea behind the approach is to construct a Gromov hyperbolic graph (called the \emph{hyperbolic filling}) whose boundary (in the sense of Gromov) corresponds to the given metric space $(X,d)$.
	A well-known result in Gromov hyperbolic spaces asserts that any metric in the conformal gauge $\sJ(X,d)$ up to bi-Lipschitz equivalence can be obtained by a bounded perturbation of edge weights on the hyperbolic filling.
	We recall the basic results about Gromov hyperbolic spaces and their boundaries in Subsection \ref{s:gromov}.

	We first sketch the construction of this hyperbolic graph, postponing a more precise definition to Subsection \ref{s:hf}.
	We choose a parameter $a>10^2$ and
	cover the space $X$ using a covering $\sS_n$ with balls of radii $2 a^{-n}$ such that for any two distinct balls $B_{d}(x_1, 2a^{-n}),B_{d}(x_2, 2a^{-n}) \in \sS_n$,
	we have $B_{d}(x_1, a^{-n}/2) \cap B_{d}(x_2, a^{-n}/2)=\emptyset$ (we think of these balls as `approximately pairwise disjoint covering' at scale $a^{-n}$). Therefore the covering $\sS_n$ corresponds to scale $a^{-n}$ for all $n \in \bN_{\ge 0}$.
	In what follows, for a ball $B$, we denote by $x_B$ and $r_B$ the center and radius of $B$. For a ball $B$ and $\lambda >0$, we denote by $\lambda\cdot B$, the ball $B_{d}(x_B,\lambda r_B)$.
	
	We define a \emph{tree of vertical edges} with vertex set $\coprod_{n \ge 0} \sS_n$ by choosing for each ball $B \in \sS_n, n \ge 1$ a `parent ball' $B' \in \sS_{n-1}$ such that $x_{B'}$ is a closest point to $x_B$ in the set $\giveset{x_C: C \in \sS_{n-1}}$. By the assumption on the diameter, $\sS_0$ is a singleton. The edges in this tree are called \emph{vertical edges}. We choose another parameter $\lambda \ge 10$ to define another set of edges on $\coprod_{n \ge 0} \sS_n$ called \emph{horizontal edges}. Two distinct balls $B, \widetilde{B} \in \sS_n, n \ge 0$ share a horizontal edge  if and only if $\lambda \cdot B \cap \lambda \cdot \widetilde{B} \neq \emptyset$. The set of edges of the hyperbolic filling is defined as the union of the sets of horizontal and vertical edges. 
	
	In our construction, the vertical edge weights play a more central role and the values of horizontal edge weights are less important. The weight of the vertical edge between $B \in \sS_n, B' \in \sS_{n-1}$ can be interpreted as the relative diameter under the $\theta$-metric.
	To describe the data required in our construction of the metric,
	suppose for the moment that $\theta$ is a metric on $X$ with the desired properties, and
	let us define the \emph{relative diameter} of $B \in \sS_n, n \ge 1$ as 
	\begin{equation} \label{e:reldiam}
	\rho(B):=\frac{\diam(B,\theta)}{\diam(B',\theta)},
	\end{equation}
	where    $B' \in \sS_{n-1}$ is such that there is a vertical edge between $B'$ and $B$ ($B'$ is the parent of $B$ in the tree of vertical edges). It turns out that the `relative diameter' in \eqref{e:reldiam} contains enough information about $\theta$ to reconstruct the metric $\theta$ (up to bi-Lipschitz equivalence).  Hence we could reduce the problem of construction of $\theta \in \sJ(X,d)$ to constructing the function $\rho(\cdot)$ on $\coprod_{n \ge 1} \sS_n$; see Theorem \ref{t:qscomb}.
	
	It is therefore enough to construct $\rho(\cdot)$ in \eqref{e:reldiam}. Next, we describe two key conditions that the relative diameter $\rho(\cdot)$ defined in \eqref{e:reldiam} must satisfy.
	For a ball $B \in \sS_{n-1}, n \ge 1$, let us denote by $\Gamma_{n}(B)$ the set of horizontal paths in $\sS_n$ defined by
	\[
	\Gamma_n(B)= \Biggl\{ (B_i)_{i=0}^N \Biggm\vert
	\begin{minipage}{210pt}
	$N \in \bN,  B_i \in \sS_n$  for all $i=0,\ldots,N$; $B_i$ and $B_{i+1}$ share a horizontal edge for all $i=0,\ldots,N-1$, $x_{B_0} \in B$, $x_{B_1},\ldots,x_{B_{N-1}} \in 2 \cdot B$, $x_{B_{N}} \not\in 2\cdot B$
	\end{minipage}  \Biggr\}.
	\]
	The first condition on $\rho(\cdot)$ is 
	\begin{equation} \label{e:s1}
	\sum_{i=1}^N \rho(B_i) \gtrsim 1, \quad \mbox{for all $(B_i)_{i=0}^N \in \Gamma_{n}(B)$ and  $B \in \sS_{n-1}, n \ge 1$.}
	\end{equation}
	The condition \eqref{e:s1} is a consequence of the fact that $\theta \in \sJ(X,d)$ and that $(X,d)$ is a uniformly perfect metric space. 
	We like to think of \eqref{e:s1} as a `\emph{no shortcuts condition}' as it disallows the possibility of short cuts in the $\theta$-metric from $B_d(x,r)$ to $B_d(x,2r)^c$.

	The second condition arises from the estimate \eqref{e:capest}. 
	For any ball $B \in \sS_k, k \ge 0$, by $\sD_{k+1}(B)$, we denote its descendants in $\sS_{k+1}$; that is $\sD_{k+1}(B)$ is the set of elements $B' \in \sS_{k+1}$ such that $B'$ and $B$ share a vertical edge.
	The second condition that $\rho$ must satisfy is the following estimate:
	\begin{equation} \label{e:s2}
	\sum_{B' \in \sD_{k+1}(B)} \rho(B')^\beta \Capa(B', (2\cdot B')^c)  \lesssim  \Capa(B, (2\cdot B)^c), 
	\end{equation}
	for all $B \in \sS_k$ and $k$ large enough.
	To explain that \eqref{e:s2} should hold for a metric $\theta$ and a measure $\mu$ with the desired properties,
	we first observe that the volume doubling property \eqref{e:vd} implies that  
	\[
	\sum_{B' \in \sD_{k+1}(B)}  \mu(B') \lesssim \mu(B), \quad \mbox{for all $B \in \sS_k$}.
	\]
	By \eqref{e:capest}, $\theta \in \sJ(X,d)$ and comparing capacity of annuli in $\theta$ and $d$ metrics on the basis of EHI \cite[Lemmas 5.22 and 5.23]{BCM},
	one obtains $\mu(B) \asymp \diam(B,\theta)^\beta \Capa(B, (2\cdot B)^c)$ for all $B \in \sS_k$ and for all large enough $k$. Combining these estimates  with \eqref{e:vd}, we obtain \eqref{e:s2}. 
	To summarize, the conditions \eqref{e:s1} and \eqref{e:s2} arise from the metric and the measure, respectively. It turns out that the necessary conditions \eqref{e:s1} and \eqref{e:s2} on $\rho(\cdot)$ are `almost sufficient' to construct $\rho$; see Theorem \ref{t:suff}. 
	
	We note that there is a tension between the estimates  \eqref{e:s1} and \eqref{e:s2}. On the one hand, in order to satisfy \eqref{e:s1}, the function $\rho(\cdot)$ must be large enough, whereas \eqref{e:s2} imposes that $\rho(\cdot)$ cannot be too large. Next, we sketch how to construct  $\rho$ that satisfies these seemingly conflicting requirements in \eqref{e:s1} and \eqref{e:s2}.  Let $B \in \sS_{k}$, $u \in \contfunc_{\mathrm{c}}(X)$ be such that
	\begin{equation} \label{e:chooseu}
	u \equiv 1 \mbox{ on $B_{d}(x_B, 1.1 r_B)$,} \quad  u \equiv 0 \mbox{ on $B_{d}(x_B,1.9 r_B)^c$,} \quad \sE(u,u) \asymp \frac{m(B)}{r_B^\gamma}.
	\end{equation}
	Such a function exists because of \eqref{e:capest} for $(X,d,m,\sE,\sF)$ with $\beta$ replaced by $\gamma$ and a covering argument (the constants $1.1$ and $1.9$ are essentially arbitrary and could be replaced by $1+\varepsilon$ and $2-\varepsilon$ for small enough $\varepsilon$).
	It is helpful to think of $u$ as the equilibrium potential corresponding to $\Capa(B_{d}(x_B,1.1 r_B), B_{d}(x_B,1.9 r_B)^c)$. Let us define the functions $u_B,\rho_B: \sS_{k+1} \to [0,\infty)$ as 
	\begin{align*}
	u_B(B') &:= \fint_{B'} u\,dm = \frac{1}{m(B')} \int_{B'}u\,dm, \\
	\rho_{B}(B'') &= \sum_{B'' \in \sS_{k+1}, B'' \sim B'} \abs{u_B(B'')-u_B(B')},
	\end{align*}
	where $ B'' \sim B'$ means that $ B''$ and $B'$ share a horizontal edge (or equivalently, $ \lambda\cdot B'' \cap \lambda\cdot B' \neq \emptyset$).
	From the definitions it is clear that
	$u_B$ is a discrete version of $u$ and $\rho_B$ is a discrete version of the gradient of $u$. Using a Poincar\'e inequality on $(X,d,m,\sE,\sF)$ and the bound $\Capa(B', (2\cdot B')^c) \asymp \frac{m(B')}{r_{B'}^\gamma}$, we obtain the following estimate (see \eqref{e:suff3}):
	\begin{equation} \label{e:s2loc}
	\sum_{B' \in \sD_{k+1}(B)} \rho_B(B')^2 \Capa(B', (2\cdot B')^c)  \lesssim \sE(u,u) \lesssim \Capa(B, (2\cdot B)^c).
	\end{equation}
	Since $u_{B}(B_0)=1$ for any $B_0 \in \sS_{k+1}$ with $x_{B_0} \in B$ and $u_{B}(B_N)=0$ for any $B_N \in \sS_{k+1}$ with $x_{B_N} \notin (2 \cdot B)^c$, by the triangle inequality
	\begin{equation} \label{e:s1loc}
	\sum_{i=1}^N \rho_B(B_i) \ge  1, \quad \mbox{for all $(B_i)_{i=0}^N \in \Gamma_{k+1}(B)$.}
	\end{equation}
	Clearly, \eqref{e:s1loc} and \eqref{e:s2loc} are local versions of \eqref{e:s1} and \eqref{e:s2} with $\beta =2$, respectively.
	Here, by ``local version'' we mean that the estimates \eqref{e:s1} and \eqref{e:s2} are satisfied for a function $\rho_{B}$ dependent on $B$ for each fixed $B$.
	To ensure \eqref{e:s1} and \eqref{e:s2} for all scales and locations, we define
	\begin{equation} \label{e:firstdef}
	\rho(B')= \sup_{B \in \sS_k}\rho_B(B'), \quad \mbox{for all $B' \in \sS_{k+1}$,}
	\end{equation}
	where $\rho_B$ is defined as above at all locations and scales. This ensures \eqref{e:s1} and \eqref{e:s2} at all scales with $\beta=2$.
	
	However, $\rho$ should satisfy further additional conditions that the above construction need not obey. Since $\theta \in \sJ(X,d)$, one obtains that
	\begin{equation} \label{e:h1}
	\rho(B) \gtrsim 1 \quad \mbox{for all $B \in \sS_k, k \ge 1$.}
	\end{equation}
	However, the function $\rho(\cdot)$ defined in \eqref{e:firstdef} need not satisfy  the estimate \eqref{e:h1}. This requires us to increase $\rho$ further if necessary. We define the `diameter function'
	\begin{equation} \label{e:pi}
	\pi(B)= \prod_{k=0}^n \rho(B_i), \quad \mbox{for all $B \in \sS_n$,}
	\end{equation}
	where $B_i \in \sS_i$ for all $i=0,\ldots,n$, $B_n=B$ and there is a vertical edge between $B_i$ and $B_{i+1}$ for all $i=0,\ldots,n-1$. If $\rho$ is given by \eqref{e:reldiam}, then clearly $\pi(B)=\diam(B,\theta)/\diam(X,\theta)$. 
	By quasisymmetry, $\diam(B,\theta) \asymp \diam(B',\theta)$ whenever $B$ and $B'$ share a horizontal edge. This suggests the following condition on $\rho$:
	\begin{equation} \label{e:h3} 
	\pi(B) \asymp \pi(B'),\quad \mbox{whenever  $B$ and $B'$ share a horizontal edge,}
	\end{equation}
	where $\pi$ is defined as given in \eqref{e:pi}. 
	Similarly, for constructing measure, we need to ensure that $\rho$ satisfies
	\begin{equation} \label{e:h4}
	\pi(B)^\beta \Capa(B,(2\cdot B)^c) \asymp \sum_{B' \in \sD_n(B)} \pi(B')^\beta \Capa(B',(2\cdot B')^c), 
	\end{equation}
	for all $B \in \sS_k$ and $n > k \ge 0$, where $\sD_n(B)$ denotes the descendants of $B$ in $\sS_n$.
	The conditions \eqref{e:h3} and \eqref{e:h4} are rather delicate because the value of $\pi$ can change drastically if we change $\rho$ by a bounded multiplicative factor. This is due to the fact that the multiplicative `errors' in $\rho$ accumulate as we move to finer and finer scales.
	This suggests that we need to control the constants very carefully. 
	
	To achieve this we need to consider $\beta>2$ (instead of $\beta=2$ considered above).
	We choose $\rho$ defined in \eqref{e:firstdef} uniformly small by picking a function $u$ that satisfies \eqref{e:chooseu}
	along with an additional scale invariant H\"older continuity estimate (see \eqref{e:sufp3},\eqref{e:sufp4} and \eqref{e:sufp5}). Then using
	\begin{align*}
	\lefteqn{\sum_{B' \in \sD_{k+1}(B)} \rho_B(B')^\beta \Capa(B', (2\cdot B')^c) }\\ &\le \norm{\rho}_\infty^{\beta -2}\sum_{B' \in \sD_{k+1}(B)} \rho_B(B')^2  \Capa(B', (2\cdot B')^c)  \lesssim \norm{\rho}_\infty^{\beta -2} \Capa(B, (2\cdot B)^c),
	\end{align*}
	we obtain enough control on the constants in \eqref{e:s2} to ensure \eqref{e:h1} and \eqref{e:h3} after further modification of $\rho$. By the H\"older continuity estimate on $u$, $\norm{\rho}_\infty$ can be made arbitrarily small by increasing the vertical parameter $a$. 
	
	\section{Metric and measure via hyperbolic filling} \label{sec:HypFill}
	Given a metric space, it is often useful to view the space as the boundary of a Gromov hyperbolic space. 
	Such a viewpoint is prevalent but often implicit in various multi-scale arguments in analysis and probability. 
	Roughly speaking, a metric space  viewed simultaneously at different locations and  scales has a natural hyperbolic structure. A nice introduction to this viewpoint can be found in \cite{Sem01}.
	This will be made precise by the notion of hyperbolic filling in Subsection \ref{s:hf}.
	The main tool for the construction of metric is Theorem \ref{t:qscomb}-(a), and the construction of measure uses Lemma \ref{l:meas}.
	To describe the construction, we recall the definition of hyperbolic space in the sense of Gromov.
	\subsection{Gromov hyperbolic spaces and their boundary} \label{s:gromov}
	We briefly  recall the basics of Gromov hyperbolic spaces and refer the reader to \cite{CDP,GH90,Gro,Vai} for a detailed exposition.
	
	Let $(Z,d)$ be a metric space. Given three points $x,y,p \in Z$, we define the \emph{Gromov product} of $x$ and $y$ with respect to the base point $w$ as
	\begin{equation} \label{e:gromovproduct}
	(x\vert y)_w= \frac{1}{2} (d(x,w)+d(y,w)-d(x,y)).
	\end{equation}
	By the triangle inequality, Gromov product is always non-negative.
	We say that a metric space $(Z,d)$ is $\delta$-\emph{hyperbolic}, if for any four points $x,y,z,w \in Z$, we have
	\[
	(x\vert z)_w \ge (x\vert y)_w \wedge (y\vert z)_w - \delta.
	\]
	We say that  $(Z,d)$ is hyperbolic (or $d$ is a hyperbolic metric), if $(Z,d)$ is hyperbolic for some $\delta \in [0,\infty)$.
	If the above condition is satisfied for a fixed base point $w \in Z$, and arbitrary $x,y,z \in Z$, then $(Z,d)$ is $2 \delta$-hyperbolic \cite[Proposition 1.2]{CDP}.
	
	Next, we recall the notion of the boundary of a hyperbolic space. Let $(Z,d)$ be a hyperbolic space and let $w \in Z$.
	A sequence of points $\giveset{x_i} \subset Z$ is said to \emph{converge at infinity}, if
	\[
	\lim_{i,j \to \infty} (x_i\vert x_j)_w = \infty.
	\] 
	The above notion of convergence at infinity does not depend on the choice of the base point $w \in Z$, because by the triangle inequality $\abs{(x\vert y)_w- (x\vert y)_{w'}} \le d(w,w')$.

	Two sequences $\giveset{x_i}, \giveset{y_i}$ that converge at infinity are said to be \emph{equivalent}, if 
	\[
	\lim_{i \to \infty} (x_i\vert y_i)_w = \infty.
	\]
	This defines an equivalence relation among all sequences that converge at infinity.
	As before, is easy to check that the notion of equivalent sequences does not depend on the choice of the base point $w$.
	The \emph{boundary} $\partial Z$ of $(Z,d)$
	is defined as the set of equivalence classes of sequences 
	converging at infinity under the above equivalence relation.
	If there are multiple hyperbolic metrics on the same set $Z$, to avoid confusion, we denote the boundary of $(Z,d)$ by $\partial(Z,d)$ (see Lemma \ref{l:bilip}-(d)).
	The notion of Gromov product can be defined on the boundary as follows: for all $a, b \in \partial Z$
	\[
	(a\vert b)_w = \sup \giveset{ \liminf_{i \to \infty} (x_i\vert y_i)_w : \giveset{x_i} \in a, \giveset{y_i} \in b},
	\]
	and similarly, for $a \in \partial Z, y \in Z$, we define
	\[
	(a\vert y)_w= \sup \giveset{ \liminf_{i \to \infty} (x_i\vert y)_w : \giveset{x_i} \in a}.
	\]
	The boundary $\partial Z$ of the hyperbolic space $(Z,d)$ carries a family of metrics. Let $w \in Z$ be a base point. A metric $\rho:\partial Z \times \partial Z \to [0,\infty)$ on $\partial Z$ is said to be a \emph{visual metric} with \emph{visual parameter} $\alpha >1$  if there exists $k_1,k_2 >0$ such that
	\[
	k_1 \alpha^{-(a\vert b)_w} \le \rho(a,b) \le k_2 \alpha^{-(a\vert b)_w} 
	\]
	Note that visual metrics depend  on the choice of the base point, and on the visual parameter $\alpha$. If a visual metric with base point $w$ and visual parameter $\alpha$ exists, then it can be chosen to be
	\[
	\rho_{\alpha,w}(a,b):= \inf \sum_{i=1}^{n-1}\alpha^{-(a_i\vert a_{i+1})_w},
	\]
	where the infimum is over all finite sequences $\giveset{a_i}_{i=1}^n \subset \partial Z, n \ge 2$ such that $a_1=a, a_n=b$.

	Visual metrics exist as we recall now. A metric space $(Z,d)$ is said to be \emph{proper} if all closed balls are compact.
	For any  $\delta$-hyperbolic space $(Z,d)$, there exists $\alpha_0 >1$ ($\alpha$ depends only on $\delta$) such that if $\alpha \in (1,\alpha_0)$, then there exists a visual metric with parameter $\alpha$ \cite[Chapitre 7]{GH90}, \cite[Lemma 6.1]{BS}. 
	It is well known that quasi-isometry between almost geodesic hyperbolic spaces induces a quasisymmetry on their boundaries (the notion of almost geodesic space is given in Definition \ref{d:almostgeo}). Since this plays a central role in our construction of metric, we recall the relevant definitions and results below.
	
	We say that a map (not necessarily continuous) $f:(X_1,d_1) \to (X_2,d_2)$ between two metric spaces is a \emph{quasi-isometry} if there exists constants $A,B>0$ such that
	\[
	A^{-1} d_1(x,y) - A \le d_2(f(x),f(y)) \le A d_1(x,y) +A,
	\]
	for all $x,y \in X_1$, and
	\[
	\sup_{x_2 \in X_2} d(x_2,f(X_1)) = \sup_{x_2 \in X_2} \inf_{x_1 \in X_1} d(x_2,f(x_1)) \le B.
	\]
	\begin{definition}\label{d:qs}
			A \emph{distortion function} is a homeomorphism of $[0,\infty)$ onto itself. 
			Let $\eta$ be a distortion function. 
			A map $f:(X_1,d_1) \to (X_2,d_2)$ between metric spaces is said to be
			\emph{$\eta$-quasisymmetric}, if $f$ is a homeomorphism and
			\begin{equation} \label{e:qs}
			\frac{d_2(f(x),f(a))}{d_2(f(x),f(b))} \le \eta\left(\frac{d_1(x,a)}{d_1(x,b)}\right)  
			\end{equation}
			for all triples of points $x,a,b \in X_1, x \neq b$. We say $f$ is a \emph{quasisymmetry} if it is $\eta$-quasisymmetric for some distortion function $\eta$.
			We say that metric spaces $(X_1,d_1)$ and $(X_2,d_2)$ are quasisymmetric, if there exists a quasisymmetry $f:(X_1,d_1) \to (X_2,d_2)$.
			We say that  metrics $d_1$ and $d_2$  on $X$ are \emph{quasisymmetric} (or, $d_1$ is quasisymmetric to $d_2$), if the identity map $\operatorname{Id}:(X,d_1) \to (X,d_2)$ is a quasisymmetry.
			A quasisymmetry $f:(X_1,d_1) \to (X_2,d_2)$ is said to be a \emph{power quasisymmetry}, if there exists $\alpha>0, \lambda \ge 1$ such that $f$ is $\eta_{\alpha,\lambda}$-quasisymmetric, where
			\[
			\eta_{\alpha,\lambda}(t)= \begin{cases}
			\lambda t^{1/\alpha}, & \mbox{if $0 \le t <1$},\\
			\lambda t^\alpha, & \mbox{if $t\ge 1$.}
			\end{cases}
			\]
			Recall from Definition \ref{d:cgauge} that the \emph{conformal gauge} of a metric space $(X,d)$ is defined as
			\[
			\sJ(X,d):= \{ \theta \colon X \times X \to [0,\infty) \mid \mbox{$\theta$ is a metric on $X$, $d$ is quasisymmetric to $\theta$} \}.
			\]
			Bi-Lipschitz maps are the simplest examples of quasi-symmetric maps. Recall that a map $f:(X_1,d_1) \to (X_2,d_2)$ is said to be \emph{bi-Lipschitz}, if there exists $C \ge 1$, 
			\[
			C^{-1} d_1(x,y) \le d_2(f(x),f(y)) \le C d_1(x,y), \quad \mbox{for all $x,y \in X_1$.}
			\]
			Two metrics $d_1,d_2:X \times X \to [0,\infty)$ on $X$ are said to be \emph{bi-Lipschitz equivalent} if the identity map $\on{Id}:(X,d_1) \to (X,d_2)$ is bi-Lipschitz.
	\end{definition}
	We collect a few useful facts about quasisymmetric maps.
	\begin{proposition}[{\cite[Lemma 1.2.18]{MT10} and \cite[Proposition 10,8]{Hei}}] \label{p:QS}
		Let the identity map $\operatorname{Id}:(X,d_1) \to (X,d_2)$ be an $\eta$-quasisymmetry 
		for some distortion function $\eta$. By $B_i(x,r)$ we denote the open ball in $(X,d_i)$ with center $x$ and radius $r>0$, for $i=1,2$.
		\begin{enumerate}[label=\textup{(\alph*)},align=left,leftmargin=*,topsep=5pt,parsep=0pt,itemsep=2pt]
			\item For all $A \ge 1,x \in X, r>0$, there exists $s>0$ such that 
			\begin{equation} \label{e:ann1}
			B_2(x,s) \subset B_1(x,r) \subset B_1(x,Ar) \subset B_2(x, \eta(A)s).
			\end{equation}
			In \eqref{e:ann1}, $s$ can be defined as
			\begin{equation} \label{e:defs}
			s= \sup \giveset{0\le s_2 < 2\diam(X,d_2): B_2(x,s_2) \subset B_1(x,r) }
			\end{equation}
			Conversely, for all $A >1,x \in X,r >0$, there exists $t>0$ such that
			\begin{equation} \label{e:ann2}
			B_1(x,r) \subset B_2(x,t) \subset B_2(x,At) \subset B_1(x,A_1r),
			\end{equation}
			where $A_1 = \tilde{\eta}(A)$ and $\tilde{\eta}(t)$ is the distortion function given by $\tilde{\eta}(t)=1/\eta^{-1}(t^{-1})$. In \eqref{e:ann2}, $t$ can be defined as
			\begin{equation} \label{e:deft}
			A t=  \sup \giveset{0 \le r_2 < 2   \diam (X,d_2): B_2(x,Ar_2) \subset B_1(x,A_1 r)}.
			\end{equation}
			\item If $A \subset B \subset X$ such that $0< \diam(A,d_1) \le \diam(B,d_1)<\infty$, then  $0< \diam(A,d_2) \le \diam(B,d_2)<\infty$ and
			\begin{equation} \label{e:diamQS}	
			\frac{1}{2 \eta\left( \frac{\diam(B,d_1)}{\diam(A,d_1)}\right)} \le \frac{\diam(A,d_2)}{\diam(B,d_2)} \le \eta \left(\frac{2 \diam(A,d_1)}{\diam(B,d_1)}\right).
			\end{equation}
		\end{enumerate}
	\end{proposition}
	
	\begin{definition} \label{d:almostgeo}
			A metric space $(X,d)$ is $k$-\emph{almost geodesic}, if
			for every $x, y \in X$ and every $t \in [0, d(x,y)]$, there is some $z \in X$ with
			$\abs{d(x,z) - t} \le k$ and $\abs{d(y,z) - (d(x,y) - t)} \le k$. We say that a metric space is \emph{almost geodesic} if it is $k$-almost geodesic for some $k\ge 0$.
			We recall that quasi-isometry between almost geodesic hyperbolic spaces induces a quasisymmetry between their boundaries.
	\end{definition}
	
	\begin{proposition}[{\cite[Theorem 6.5 and Proposition 6.3]{BS}}] \label{p:bdymap}
		Let $(Z_1,d_1)$ and $(Z_2,d_2)$ be two almost geodesic, $\delta$-hyperbolic metric spaces. Let $f:(Z_1,d_1) \to (Z_2,d_2)$ denote quasi-isometry. 
		\begin{enumerate}[label=\textup{(\alph*)},align=left,leftmargin=*,topsep=5pt,parsep=0pt,itemsep=2pt]
			\item If $\giveset{x_i} \subset Z_1$ converges at infinity, then $\giveset{f(x_i)} \subset Y$ converges at infinity.
			If  $\giveset{x_i}$ and $\giveset{y_i}$ are equivalent sequences in $X$ converging at infinity, then
			$\giveset{f(x_i)}$ and $\giveset{f(y_i)}$ are also equivalent.
			\item If $a \in \partial Z_1$ and $\giveset{x_i} \in a$, let $b \in \partial Z_2$ be the equivalence class of $\giveset{ f(x_i)}$. Then $\partial f: \partial Z_1 \to \partial Z_2$ is well-defined, and is a bijection.
			\item Let $p_1 \in Z_1$ be a base point in $Z_1$, and let $f(p_1)$ be a corresponding base point in $Z_2$. Let $\rho_1, \rho_2$ denote  visual metrics (with not necessarily the same visual parameter) on $\partial Z_1, \partial Z_2$ with base points $p_1, f(p_1)$ respectively. Then the induced boundary map $\partial f: (\partial Z_1, \rho_1) \to (\partial Z_2, \rho_2)$ is a power quasisymmetry. 
		\end{enumerate}
	\end{proposition}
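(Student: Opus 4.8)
The plan is to reduce the three assertions to one quantitative fact: a quasi-isometry $f\colon (Z_1,d_1)\to(Z_2,d_2)$ between almost geodesic $\delta$-hyperbolic spaces \emph{coarsely preserves Gromov products}. Concretely, I would first prove that there exist $A'\ge 1$ and $c\ge 0$, depending only on $\delta$, the almost-geodesic constant, and the quasi-isometry constants $A,B$ of $f$, such that
\[
(A')^{-1}(x|y)_{p_1}-c\ \le\ (f(x)|f(y))_{f(p_1)}\ \le\ A'(x|y)_{p_1}+c\qquad\text{for all }x,y\in Z_1 .
\]
The mechanism is the standard one: in an almost geodesic $\delta$-hyperbolic space the product $(x|y)_w$ equals, up to an additive constant controlled by $\delta$ and the almost-geodesic constant, the distance from $w$ to an almost geodesic joining $x$ and $y$; the $f$-image of such an almost geodesic is a quasi-geodesic, hence by the stability of quasi-geodesics in hyperbolic spaces (the Morse lemma, which holds in the almost geodesic setting) it lies within bounded Hausdorff distance of an almost geodesic in $Z_2$; and since $f$ is a quasi-isometry, the distance from $f(w)$ to this almost geodesic is comparable to the distance from $w$ to the original one. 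I expect this step to be the main obstacle — not because any single ingredient is hard, but because the almost geodesic (rather than geodesic) hypothesis must be carried carefully through the Morse-lemma argument and through the identification of $(x|y)_w$ with a distance to a quasi-geodesic.

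Granting the displayed estimate, part (a) is immediate: if $(x_i|x_j)_{p_1}\to\infty$ then $(f(x_i)|f(x_j))_{f(p_1)}\ge (A')^{-1}(x_i|x_j)_{p_1}-c\to\infty$, and the same applied to a pair of equivalent sequences shows their images are equivalent. For part (b): well-definedness of $\partial f$ on equivalence classes is exactly part (a); injectivity uses the \emph{upper} bound, since $(f(x_i)|f(y_i))_{f(p_1)}\to\infty$ forces $(x_i|y_i)_{p_1}\to\infty$ and hence $[\{x_i\}]=[\{y_i\}]$; and surjectivity uses the $B$-density of $f(Z_1)$. Given $\{w_i\}\subset Z_2$ converging at infinity, choose $x_i\in Z_1$ with $d_2(f(x_i),w_i)\le B$; since replacing a point by one at distance $\le B$ changes any Gromov product by at most $B$, the sequence $\{f(x_i)\}$ converges at infinity and is equivalent to $\{w_i\}$ (here one uses that a sequence converging at infinity satisfies $d(\cdot,w)\to\infty$), and then the displayed estimate gives $(x_i|x_j)_{p_1}\to\infty$, so $\{x_i\}$ defines a point $a\in\partial Z_1$ with $\partial f(a)=[\{w_i\}]$.

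For part (c) I would transport the displayed estimate to the boundary: taking the supremum of limit inferiors that defines the Gromov product on $\partial Z$, one obtains the same two inequalities for all $a,b\in\partial Z_1$ after slightly enlarging $c$. Inserting this into the visual-metric comparisons $\rho_1(a,b)\asymp\alpha_1^{-(a|b)_{p_1}}$ and $\rho_2(\partial f(a),\partial f(b))\asymp\alpha_2^{-(\partial f(a)|\partial f(b))_{f(p_1)}}$ converts the additive-and-multiplicative control of Gromov products into a two-sided power bound relating $\rho_2(\partial f(a),\partial f(b))$ to $\rho_1(a,b)$, with exponents $\tfrac1{A'}\log_{\alpha_1}\alpha_2$ and $A'\log_{\alpha_1}\alpha_2$. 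To upgrade this to the genuine power-quasisymmetry inequality $\rho_2(\partial f(x),\partial f(a))/\rho_2(\partial f(x),\partial f(b))\le\eta_{\alpha,\lambda}\bigl(\rho_1(x,a)/\rho_1(x,b)\bigr)$, one compares, for a fixed triple $x,a,b$, the \emph{difference} of Gromov products $(x|b)_{p_1}-(x|a)_{p_1}$ with its downstairs counterpart, using the $\delta$-inequality among the three Gromov products of $\{x,a,b\}$ and of $\{\partial f(x),\partial f(a),\partial f(b)\}$ together with the uniformity of the estimate over all pairs, so as to absorb the cross terms that the crude two-sided bound leaves uncontrolled; this produces $\eta$ of the power form $\eta_{\alpha,\lambda}$, with $\alpha$ and $\lambda$ depending only on $A'$, $c$, $\alpha_1$, $\alpha_2$ and $\delta$. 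This final bookkeeping is routine but delicate, and it is precisely the content of \cite[Theorem 6.5 and Proposition 6.3]{BS}; in the write-up I would either reproduce it or simply invoke it.
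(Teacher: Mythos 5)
The paper gives no proof of this proposition at all: it is quoted verbatim from Bonk--Schramm (\cite{BS}, Theorem 6.5 and Proposition 6.3) as a known result. Your sketch is a correct outline of the argument behind that citation --- coarse invariance of Gromov products via stability of quasi-geodesics in the almost geodesic setting, then parts (a) and (b) as direct consequences and part (c) by transporting the estimate to visual metrics --- and since you, like the paper, ultimately invoke \cite{BS} for the delicate upgrade to power quasisymmetry, your treatment is consistent with the paper's.
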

	\begin{remark} \label{r:const}
		The distortion function $\eta$ for the quasisymmetry $\partial f$ in (c) above can be chosen to depend only on the constants associated with the quasi-isometry $f:Z_1 \to Z_2$ and the constants associated with the properties of being almost geodesic and Gromov hyperbolic for $Z_1,Z_2$.
	\end{remark}
	
	\subsection{Hyperbolic filling of a compact metric space} \label{s:hf}
	Given a compact metric space $(X,d)$, one can construct an almost geodesic, hyperbolic space whose boundary equipped with a visual metric can be identified with $(X,d)$.
	We assume further that $(X,d)$ is doubling and uniformly perfect.
	Recall that a metric space $(X,d)$ is said to be
	\emph{$K_D$-doubling} if any ball $B(x,r)$ can be covered by $K_D$ balls of radius $r/2$,
	and be \emph{doubling} or satisfy the \emph{metric doubling property}
	if it is $K_D$-doubling for some $K_D \in \mathbb{N}$. A metric space $(X,d)$
	is said to be \emph{$K_P$-uniformly perfect} if $B(x,r)\setminus B(x,r/K_P) \neq \emptyset$ for any ball $B(x,r)$ such that $X \setminus B(x,r) \neq \emptyset$,
	and \emph{uniformly perfect} if it is $K_P$-uniformly perfect for some $K_P \in (1,\infty)$.
	
	We recall the notion of hyperbolic filling due to Bourdon and Pajot \cite{BP}, based on a similar construction due to Elek \cite{Ele}.
	We recall the definition in \cite{Car13}.
	Let $(X,d)$ be a compact, doubling, uniformly perfect, metric space. 
	For a ball $B=B(x,r)$ and $\alpha>0$, by $\alpha\cdot B$ we denote the ball $B(x,\alpha r)$. It is possible that balls with different centers and/or radii denote the same set. For this reason, we adopt the convention that a ball $B$ comes with a predetermined center and radius denoted by $x_B$ and $r_B$ respectively. We fix two parameter $a > 8$ and $\lambda \ge 3$. The parameters $a$ and $\lambda$ are respectively called the \emph{vertical} and \emph{horizontal} parameters of the hyperbolic filling. 
	For each $n \ge 0$, let $\sS_n$ denote a finite covering of $X$ by open balls such that for all $B \in \sS_n$, there exists a center $x_B \in X$ such that
	\begin{equation} \label{e:hyp1} 
	B=B(x_B,2 a^{-n}),
	\end{equation}
	and for any distinct pair $B \neq B'$ in $\sS_n$, we have
	\begin{equation} \label{e:hyp2}
	B(x_B, a^{-n}/2) \cap B(x_{B'}, a^{-n} /2) = \emptyset.
	\end{equation}
	We assume that 
	\begin{equation} \label{e:singleton}
	\sS_0= \giveset{X}
	\end{equation}
	is a singleton (by scaling the metric if necessary). 
	We remark that the assumption  \eqref{e:singleton} is just for convenience. 
	For arbitrary (but finite) diameter, we choose $n_0 \in \bZ$ such that
	$a^{-n_0}>\diam(X,d) \ge a^{-n_0-1}$. For the general compact case  we  replace $0$ with $n_0$, so that we have coverings $\sS_k$ for all $k \ge n_0$ such that $\sS_k$ is a covering by `almost' pairwise disjoint balls of radii roughly $a^{-k}$ as given in \eqref{e:hyp1} and \eqref{e:hyp2}.

	We construct a graph whose vertex set is $\sS= \coprod_{n=0}^\infty \sS_n$.
	Next, we construct a tree structure of \emph{vertical edges} on $\sS$. For each $n \ge 0$, we partition $\sS_{n+1}$ into pairwise disjoint sets $\giveset{T_n(B): B \in \sS_n}$ indexed by $\sS_n$, with $\sS_{n+1}= \coprod_{B \in \sS_n} T_n(B)$ satisfying the following property:
	\begin{equation} \label{e:hyp3}
	\mbox{if }B' \in T_n(B) \mbox{ for some } B \in \sS_n, B' \in \sS_{n+1}, \mbox{ then } d(x_{B'},A_n)= d(x_{B'},x_B),
	\end{equation}
	where $A_n= \{x_B : B \in \mathcal{S}_n\}$ denotes the set of centers of the balls in $\mathcal{S}_n$.
	In other words, if $B' \in T_n(B)$, then $x_B \in A_n$ is a minimizer to the distance between $x_{B'}$ and $A_n$. Since such a minimizer always exists, there exists a (not necessarily unique) partition  $\giveset{T_n(B): B \in \sS_n}$ of $\sS_{n+1}$ for all $n \ge 0$.
	We call the elements of $T_n(B)$ as the \emph{children} of $B$.
	From now on, let us fix one such partition $\giveset{T_n(B): B \in \sS_n}$ for each $n \ge 0$. 
	We say that there exist a \emph{vertical edge} between two sets $B,B' \in \sS$, if there exists $n \ge 0$ such that either $B \in \sS_n, B' \in \sS_{n+1} \cap T_n(B)$ or $B' \in \sS_{n+1}, B \in \sS_{n+1} \cap T_n(B')$; in other words, one of them is a child of the other. 
	Note that the vertical edges form a tree on the vertex set $\sS$, with base point (or root) $w$, where $\sS_0= \giveset{w}$. The unique path from the base point to a vertex $B \in \sS$ denotes the genealogy $g(B)$. More precisely, we define the genealogy $g(B)$ as $(B)$  as
	\[
	g(B)= \begin{cases}
	(B), &\mbox{if $B \in \sS_0$,}\\
	(B_0,B_1,\ldots,B_{n}), &\mbox{if $B=B_n \in \sS_n, n \ge 1$,  and }\\
	  & \quad \mbox{$B_{i+1} \in T(B_i)$, for $i=0,\ldots,n-1$.}
	\end{cases} 
	\]
	In the above definition, if $ 0 \le i <n$, we denote the vertex $B_i \in \sS_i$ by $g(B)_i$.
	If $B\in \sS_n$, and  $l >n$, we define $\sD_l(B)$ as the descendants of $B$ in the generation $l$
	\begin{equation} \label{e:defD}
	\sD_l(B) := \giveset{B' \in \sS_l: g(B')_n=B}.
	\end{equation}
	For $B \in \sS_n$, we denote $\cup_{l \ge n+1} \sD_l(B)$ by $\sD(B)$ which are the descendants of $B$.
	
	Using the horizontal parameter $\lambda \ge 3$, we define another family of edges on the vertex set $\sS$ call the \emph{horizontal edges}.
	We say $B \sim B'$ if there exists $n \ge 0$ such that $B,B' \in \sS_n$ and $\lambda\cdot B \cap \lambda\cdot B' \neq \emptyset$.
	We say that there is a \emph{horizontal edge} between $B,B' \in \sS$, if $B \sim B'$ and they are distinct (so as to avoid self-loops). 
	
	\begin{definition}[Hyperbolic filling] \label{d:hypfilling}
			Let $\sS_d=(\sS,E)$ denote the graph with  vertices in $\sS$ and whose edges $E$ are obtained by the taking the union of horizontal and vertical edges. With a slight abuse of notation, we often view $\sS_d$ as a metric space equipped with the (combinatorial) graph distance, which we denote by $D_\sS: \sS\times \sS \to \bZ_{\ge 0}$. The metric space 
			$\sS_d= (\sS, D_\sS)$ is almost geodesic and hyperbolic \cite[Proposition 2.1]{BP}. The metric space $\sS_d$ is said to be a \emph{hyperbolic filling} of $(X,d)$.
	\end{definition}
	We refer to Subsection \ref{s:construction} for a construction of hyperbolic filling.
	Note that the hyperbolic filling is not unique as we make an arbitrary choice of covering. Even if the covering is fixed, the choice of children $T_n(B)$ is not necessarily unique.  Nevertheless, any two hyperbolic fillings (with possibly different parameters) of a metric space are quasi-isometric to each other \cite[Corollaire 2.4]{BP}.

	We fix the base point of $\sS_d$ to be $w \in \sS$, where $\giveset{w}=\sS_0$.
	We now define a map $p: X \to \partial \sS_d$ that identifies $X$ with the boundary of $\sS_d$ as follows. For each $x \in X$, choose a sequence $\giveset{B_i}$ with $x \in B_i \in \sS_i, i \in \bN$. Then it is easy to see that the sequence $\giveset{B_i}$ converges at infinity. Let $p(x) \in \partial \sS_d$ denote the equivalence class containing $\giveset{B_i}$. 
	
	The map $p$ is a bijection and its inverse  $p^{-1}:\partial \sS_d \to X$ can be described as follows. For any $a \in \partial \sS_d$, and for any $\giveset{B_i} \in a$,  the corresponding sequence of centers $\giveset{x_{B_i}}$ is a convergent sequence in $X$, and the limit is $p^{-1}(a)= \lim_{i \to \infty} x_{B_i}$. The map $p^{-1}$ is well-defined; that is, if $\giveset{B_i}$ and $\giveset{B'_i}$ are equivalent sequences that converge at infinity, then $\lim_{i \to \infty} x_{B_i} = \lim_{i \to \infty} x_{B'_i}$.

	We summarize the properties of the hyperbolic filling $\sS_d$ and its boundary $\partial \sS_d$ as follows:
	\begin{proposition}[{\cite[Proposition 2.1]{BP}}] \label{p:filling}
		Let $(X,d)$ denote a compact, doubling, uniformly perfect metric space. Let $\sS_d$ denote a hyperbolic filling with vertical parameter $a>1$, and horizontal parameter $\lambda \ge 3$. Then $\sS_d$ is almost geodesic Gromov hyperbolic space.
		The map $p:X \to \partial \sS_d$ is a homeomorphism between $X$ and $\partial \sS_d$. If we choose the base point $w \in \sS_d$ as the unique vertex in $\sS_0$, then there exists $K>1$ such that 
		\[
		K^{-1} a^{-(p(x)\vert p(y))_w} \le d(x,y) \le K a^{-(p(x)\vert p(y))_w}
		\]
		for all $x,y \in X$.
	\end{proposition}
	By the above proposition we can recover the metric space $(X,d)$ from its hyperbolic filling $\sS_d$ with horizontal parameter $\lambda$ and vertical parameter $a$ (up to  bi-Lipschitz equivalence) as the boundary $\partial \sS_d$ equipped with a visual metric with base point $w$ and visual parameter $a$. There is a minor gap in \cite{BP} as pointed out in \cite[Section 4]{BoSa} and \cite{BBS}. We remark that the horizontal parameter $\lambda$ was chosen to be $1$ in \cite{BP}. If $\lambda=1$, then the hyperbolic filling need not be Gromov hyperbolic \cite[Example 8.8]{BBS}. As pointed out in \cite{BoSa}, if $\lambda>1$ such problems do not arise.
	
	For technical reasons following \cite[(2.8)]{Car13}, we will often assume that
	\begin{equation}\label{e:tech}
	\lambda \ge 32, \hspace{3mm} a \ge 24 (\lambda \vee K_P),
	\end{equation}
	where $K_P$ is such that $(X,d)$ is $K_P$-uniformly perfect.
	
	\subsection{Construction of hyperbolic fillings} \label{s:construction}
	Since the metric spaces we deal with need not be compact, we need a suitable
	substitute for hyperbolic fillings. 
	To circumvent this difficulty, we view the metric space as an increasing union of compact spaces and construct a sequence of hyperbolic fillings. Quasisymmetric maps and doubling measures have nice compactness properties that persist under such limits.
	
	We recall the notion of net in a metric space.
	\begin{definition} \label{d:net}
		Let $(X,d)$ be a metric space and let $\varepsilon>0$.
		A subset $N$ of $X$ is called an \emph{$\varepsilon$-net} in $(X,d)$
		if the following two conditions are satisfied:
		\begin{enumerate}[label=\textup{\arabic*.},align=left,leftmargin=*,topsep=5pt,parsep=0pt,itemsep=2pt]
			\item (Separation) $N$ is \emph{$\varepsilon$-separated} in $(X,d)$, i.e., $d(x,y) \geq \varepsilon$ for any $x,y \in N$ with $x \not= y$.
			\item (Maximality) If $N\subset M\subset X$ and $M$ is $\varepsilon$-separated in $(X,d)$, then $M=N$.
		\end{enumerate}
	\end{definition}
	In the lemma below, we recall a standard construction of hyperbolic filling and some of its properties. 
	\begin{lemma}[{Cf.\ \cite[Lemma 2.2]{Car13} and \cite[Theorem 2.1]{KRS}}] \label{l:con}
		Let $(X,d)$ be a complete, $K_P$-uniformly perfect, $K_D$-doubling metric space  such that either $\diam(X,d)=\frac 1 2$ or $\infty$.
		Let $a >8$ and let $x_0 \in X$. Let $N_0$ be a $1$-net in $(X,d)$ such that $x_0 \in N_0$. Define inductively the sets $N_k$ for $k \in \bN$ such that
		\[
		\mbox{$N_{k-1} \subset N_k$, and $N_{k}$ is $a^{-k}$-net in $(X,d)$, for all $k \in \bN$,}.
		\]
		For $k <0$ and $k \in \bZ$, we define $N_k$ to be a $a^{-k}$-net in $(N_{k+1},d)$ such that $x_0 \in N_k$ for all $k \in \bZ$ (Note that $N_k$ need not be $a^{-k}$-net in $(X,d)$ for $k<0$). For each $x \in N_k$ and $k \in \bZ$, we pick a \emph{predecessor} $y \in N_{k-1}$ such that $y$ is a closest point to $x$ in $N_{k-1}$ (by making a choice if there is more than one closest point); that is $y \in N_{k-1}$ satisfies
		\[
		d(x,y)= \min_{z \in N_{k-1}} d(x,z).
		\]
		For any $x \in N_k, k \in \bZ$, we denote its predecessor as defined above by $P(x) \in N_{k-1}$.
		\begin{enumerate}[label=\textup{(\alph*)},align=left,leftmargin=*,topsep=5pt,parsep=0pt,itemsep=2pt]
			\item For all $k \in \bZ$, and for any two distinct points $x,y \in N_k$, we have
			\begin{equation} \label{e:con1}
			B(x,a^{-k}/2) \cap B(y,a^{-k}/2) = \emptyset.
			\end{equation}
			We have the following covering property:
			\begin{align}\label{e:con2}
			\cup_{x \in N_k} B(x,a^{-k}) &=X, \quad \mbox{for all $k \ge 0$,}\\
			\cup_{x \in N_k} B(x, (1-a^{-1})^{-1}a^{-k}) &=X,  \quad \mbox{for all $k \in \bZ$.} \label{e:con3}
			\end{align}
			In particular, if $\diam(X,d)=\frac 1 2$, the coverings $\sS_n= \giveset{B(y, (1-a^{-1})^{-1}a^{-n}) \mid y \in N_n}$ for all $n \ge 0$ is a covering that satisfies \eqref{e:hyp1} and \eqref{e:hyp2}. For any $n \ge 0$ and for any $B=B(x_B,a^{-n}) \in \sS_n$, the sets
			\[
			T_n(B)= \giveset{ B(y,a^{-n-1}) \mid y \in N_{k+1} \mbox{such that } x_B=P(y) }
			\]
			forms a partition of $\sS_{n+1}$ as required by \eqref{e:hyp3}.
			
			\item Let $a,\lambda$ satisfy \eqref{e:tech}. Let $y \le N_{k+1}$ be such that 
			\[ 
			B(y,(1-a^{-1})^{-1} a^{-k-1}) \cap B(P(y),a^{-k}/3) \neq \emptyset.
			\]
			Then for any $z \in N_{k+1}$ such that $d(y,z)<2 (1-a^{-1})^{-1}a^{-k-1}$, we have
			$P(y)=P(z)$. (In other words, $y$ corresponds to the center of a non-peripheral  ball in $\sS_{k+1}$ as given in Definition \ref{defE}).
			\item  Let $k \in \bZ$ and $y \in N_k$. Let $D_k(y)$ denote the set of descendants of $y$ defined by
			\begin{equation} \label{e:descendants}
			D_k(y)= \giveset{y} \cup \giveset{ z \in N_l \mid \mbox{such that $l>k$ and $P^{l-k}(z)=y$}}.
			\end{equation}
			Then
			\begin{equation} \label{e:con4}
			\overline{B}(y,(1-a^{-1})^{-1} a^{-k}) \supset \overline{D_k(y)} \supset B(y, (2^{-1}- (a-1)^{-1}) a^{-k}).
			\end{equation}
			The space $\overline{D_k(y)}$ with the restricted metric $d$ is $K_D^2$-doubling and $K_P'$-uniformly perfect, where $K_P'=2a K_P (1-a^{-1})^{-1}(2^{-1}-(a-1)^{-1})^{-1}$.
		\end{enumerate}
	\end{lemma}
	
	\begin{proof}
	\begin{enumerate}[label=\textup{(\alph*)},align=left,leftmargin=*,topsep=5pt,parsep=0pt,itemsep=2pt]
		\item The properties \eqref{e:con1} and \eqref{e:con2} follow from the separation and maximality properties of the $a^{-k}$-net $N_k$ in $X$ respectively.
		We use the notation $P^k(y)$ denote the $k$-predecessor of $y$ (for example, $P^2(y)=P(P(y))$).
		To show \eqref{e:con3}, by \eqref{e:con2} it suffices to consider the case $k<0$. By \eqref{e:con2}, for any $y \in X$ there exists $y_0 \in N_0$ such that $d(y_0,y) <1$. Define $y_{l}=P^{-l}(y)$ for all $l <0$. 
		Since $d(y_l,y_{l+1})< a^{-l}$ for all $l<0$, we have
		\begin{align} \label{e:conp1}
		d(y_k,y) &\le d(y_0,y)+\sum_{l=-1}^{k} d(y_l,y_{l+1}) \nonumber \\
		&< \sum_{l=0}^k a^{-l}= (1-a^{-1})^{-1} (a^{-k}-a^{-1}) <(1-a^{-1})^{-1}  a^{-k}.
		\end{align}
		Since $y \in X$ is arbitrary and $y_k \in N_k$, we have \eqref{e:con3}.
		\item By the triangle inequality, we have
		\begin{align*}
		d(z,P(y)) &\le d(z,y)+d(y,P(y)) \le (2 \lambda+1) (1-a^{-1})^{-1}a^{-k-1}+ \frac 1 3 a^{-k}  \\
		&< a^{-k}/2 \quad \mbox{(by \eqref{e:tech})}
		\end{align*}
		By \eqref{e:con2} and $d(z,P(y))<a^{-k}/2$, we conclude that $P(z)=P(y)$.
		\item  By \eqref{e:con2} and triangle inequality, we have
		\begin{equation} \label{e:conp2} 
		d(y,z) \ge a^{-k}/2, \quad \mbox{for all $z \in N_{k+1} \setminus D(y)$ and for all $y \in N_k$}.
		\end{equation}
		By \eqref{e:conp1}, we have
		\begin{equation} \label{e:conp3} 
		\overline{D(z)} \subset \overline{B}(z,(1-a^{-1})^{-1}a^{-k-1}), \quad \mbox{for all $z \in N_{k+1}$}.
		\end{equation}
		Since $\bigcup_{w \in N_l} \overline{D(w)}$ is dense and closed (by the doubling property), we have
		\begin{equation} \label{e:conp4}
		\bigcup_{w \in N_l} \overline{D(w)}= X, \quad \mbox{for all $l \in \bZ$}.
		\end{equation}
		Combining \eqref{e:conp2}, \eqref{e:conp3}, \eqref{e:conp4} and using triangle inequality, we obtain \eqref{e:con4}.
		
		Next, we show that $\overline{D(y)}$ is $K_D$-doubling.
		More generally, we show that any subset $Y \subset X$ is $K_D^2$-doubling. Let $B(x,r) \cap Y, x \in Y$ be an arbitrary ball in $Y$. Since $(X,d)$ is $K_D$-doubling, the ball $B(x,r)$ can be covered by $N$ balls $B(x_i,r/4), i =1,\ldots, N$, where $N \ge K_D^2$. If $B(x_i ,r/4) \cap Y \neq \emptyset$, we choose $y_i \in B(x_i ,r/4) \cap Y$, so that $B(x_i,r/4) \subset B(y_i,r/2)$.
		Hence all such balls $B(y_i,r/2) \cap Y$ cover $B(x,r) \cap Y$. 
		
		Let $B(x,r) \cap \overline{D(y)}$ be an arbitrary ball in $\overline{D(y)}$ such that $x \in \overline{D(y)}$ and $B(x,r ) \cap \overline{D(y)} \neq  \overline{D(y)}$. Let $n \in \bZ$ be the unique integer such that 
		\[
		(1-a^{-1})^{-1}a^{-n} < r \le 	(1-a^{-1})^{-1}a^{-n+1}. 
		\]
		Since $\overline{D(y)}= \cup_{z \in N_n \cap D(y)} \overline{D(z)}$ for all $n \ge k$, by \eqref{e:con4}, there exists $z \in D(y) \cap N_n$ such that 
		\begin{equation} \label{e:up1}
		d(z,x) \le (1-a^{-1})^{-1} a^{-n} <r.
		\end{equation}
		Since $(X,d)$ is $K_P$-uniformly perfect, and using \eqref{e:con4} and $B(z,(2^{-1}-(a-1)^{-1})  a^{-n}) \neq X$, there exists $w \in \overline{D(y)}$ such that
		\begin{equation} \label{e:up2}
		(2^{-1}-(a-1)^{-1})^{-1} a^{-n} > d(w,z) \ge \frac{1}{K_P}(2^{-1}-(a-1)^{-1})  a^{-n}.
		\end{equation}
		We consider two cases, depending on whether or not $d(z,x)< \frac{1}{2} a^{-n}$. If $d(z,y) \ge \frac{1}{2} a^{-n}$, then
		\begin{equation} \label{e:up3}
		r>	d(z,x) \ge \frac{1}{2} a^{-n} \ge \frac{r}{2a((1-a^{-1})^{-1}}.
		\end{equation}
		On the other hand, if $d(z,x) < \frac{1}{2} a^{-n}$, then
		\begin{align*}
		d(w,x) \vee d(z,x) &\le d(z,w)+ d(z,x) \\
		& < (2^{-1}-(a-1)^{-1})^{-1} a^{-n} + \frac{1}{2} a^{-n} < a^{-n} <r.
		\end{align*}
		Hence by \eqref{e:up2},  if $d(z,x) < \frac{1}{2} a^{-n}$, we have 
		\begin{align} \label{e:up4} 
		d(w,x) \vee d(z,x) &\ge \frac{1}{2} d(w,z) \ge \frac{1}{2K_P}(2^{-1}-(a-1)^{-1})  a^{-n} \nonumber \\
		& \ge \frac{r}{2a K_P (1-a^{-1})^{-1}(2^{-1}-(a-1)^{-1})^{-1} }.
		\end{align}
		By \eqref{e:up3} and \eqref{e:up4}, $\overline{D(y)}$ is $2a K_P (1-a^{-1})^{-1}(2^{-1}-(a-1)^{-1})^{-1}$-uniformly perfect.
	\end{enumerate} 
	\end{proof}
	
	\begin{definition}[Extended hyperbolic filling] \label{d:exhyp}
		Let $(X,d)$ be a complete, $K_P$-uniformly perfect doubling metric space such that either $\diam(X,d)=\frac 1 2$ or $\infty$. Let $a>8$, $\lambda \ge 32$ be constants that satisfy \eqref{e:tech}. Let $x_0 \in X$ and consider the sets $N_k, k \in \bZ$ as defined in Lemma \ref{l:con}. Define
		\[
		\sS_k= \giveset{B(x, 2a^{-k}) : x \in N_k},  k\in \bZ.
		\]
		For any $k \in \bZ$ and for any  pair of distinct balls $B, B' \in \sS_k$, we say that there is a \emph{horizontal edge} between $B$ and $B'$ (denoted as $B \sim B'$) if and only if $\lambda \cdot B \cap \lambda \cdot B' \neq \emptyset$.  For any $k \in \bZ$ and for any  $B(x,2a^{-k}) \in \sS_k, B(y,2a^{-k-1}) \in \sS_{k+1}$, we say that there is a \emph{vertical edge} between $B(x,2a^{-k})$ and  $B(y,2a^{-k-1})$, if $x$ is the predecessor of $y$ (as defined in Lemma \ref{l:con}). 
		We define a graph $(V,E)$ with vertex set $V= \coprod_{k \in \bZ} \sS_k$ and the edge set $E$ defined by the union of horizontal and vertical edges. This graph is called the \emph{extended hyperbolic filling} of $(X,d)$ with horizonal parameter $\lambda$ and vertical parameter $a$.
		
		If $(X,d)$ is compact, the subgraph of the extended hyperbolic filling induced by $\sS= \coprod_{k \in \bZ_{\ge 0}} \sS_k$ forms a hyperbolic filling as given in Definition \ref{d:hypfilling}. 
		
		On the other hand, if $(X,d)$ is non-compact, we view $X$ as an increasing limit of compact spaces $\overline{D_l(x_0)}$  as $l \to -\infty$, where $D_k(x_0)$ is as defined in \eqref{e:descendants}.
		For any $k,l \in \bZ, l \le 0, k \ge l$, we define 
		\[
		\sS_k^l= \giveset{ B(x,2 a^{-k}) \cap \overline{D_l(x_0)}: x \in N_k \cap D_l(x_0)}. 
		\]
		We define a graph with vertex set $\sS^l= \coprod_{k \ge l, k \in \bZ} \sS_k^l$, whose edges are the union of horizonal and vertical edges. In this case, the vertical edges are defined using predecessor relation as above and the horizontal edges are defined with respect to the space $\overline{D_l(x_0)}$. That is $B \cap \overline{D_l(x_0)}, B' \cap \overline{D_l(x_0)} \in 	\sS_k^l$ share a horizontal edge if and only if $\lambda \cdot B \cap \lambda \cdot B' \cap \overline{D_l(x_0)} \neq \emptyset$. This graph with vertex set $\sS^l$ can be viewed as a hyperbolic filling of the compact space $\overline{D_l(x_0)}$. In the non-compact case, we think of  the sequence of hyperbolic fillings defined with vertex set $\sS^l$  as `converging' to the extended hyperbolic filling defined above as $l \to -\infty$.
	\end{definition}
	\subsection{Combinatorial description of the conformal gauge}
	The purpose of this section is to recall a combinatorial description of the conformal gauge essentially due to M.~Carrasco Piaggo \cite{Car13}. In this section, we fix a compact, doubling, uniformly perfect metric space $(X,d)$ and a hyperbolic filling $\sS_d= (\sS,D_\sS)$ with horizontal parameter $\lambda \ge 8$ and vertical parameter $a>1$ that satisfies \eqref{e:singleton}.
	
	Propositions \ref{p:bdymap} and \ref{p:filling} suggest the following strategy to construct
	metrics that are in the conformal gauge of $(X,d)$. By changing the metric of the hyperbolic filling $\sS_d$ to another metric that is almost geodesic and bi-Lipschitz  (in particular, quasi-isometric), every visual metric of its boundary is changed to a metric in the conformal gauge of $(X,d)$. Perhaps surprisingly, all metrics in the conformal gauge can be obtained in this manner (up to a bi-Lipschitz map) as explained in Theorem \ref{t:qscomb}.

	The change of metric in a hyperbolic filling is done using a \emph{weight function} 
	$\rho: \sS \to (0,1)$ on its vertex set. We define 
	\begin{equation}\label{e:defpi}
	\pi(B)= \prod_{B' \in g(B)} \rho(B').
	\end{equation}
	A \emph{path} $\gamma= \giveset{B_i}_{i=1}^N$ in $\sS_d$ is a sequence of vertices such that there is an edge between $B_i$ and $B_{i+1}$ for all $i=1,\ldots,N-1$. In this case, we say that $\gamma$ is a path from $B_1$ to $B_N$. 
	A path is said to be \emph{simple}, if no two vertices in the path are the same.
	A path is said to be \emph{horizontal} (resp. \emph{vertical}), if all the edges in the path are horizontal (resp. vertical).
	We define the $\rho$-length of a path $\gamma=\giveset{B_i}_{i=1}^N$ by 
	\begin{equation} \label{e:defLrho}
	L_\rho(\gamma)= \sum_{i=1}^N \pi(B_i), 
	\end{equation}
	where $\pi$ is as defined in \eqref{e:defpi}.
	For points $x,y \in X$ and $n \in \bN$, the set of paths $\Gamma_n(x,y)$ is defined as
	\begin{equation} \label{e:Gamman}
	\Gamma_n(x,y)= \biggl\{ \gamma=\{B_i\}_{i=1}^k \biggm\vert \begin{minipage}{150pt}$\gamma$ is a path from $B_1$ to $B_k$, $x \in B_1$, $y \in B_k$, $B_1 \in \sS_n$, $B_k \in \sS_n$\end{minipage}\biggr\}.
	\end{equation}
	We remark that a path $\gamma \in \Gamma_n(x,y)$ need not be a horizontal path.\\
	For two distinct points $x,y \in X$ and $\alpha \ge 2$, we define
	\begin{align}
	m_\alpha(x,y)&= \max \giveset{k : B \in \sS_k, x\in \alpha\cdot B,y \in \alpha\cdot B},\nonumber\\
	c_\alpha(x,y)&= \giveset{ B \in \sS_{k} : k=m_\alpha(x,y), x\in \alpha\cdot B,y \in \alpha\cdot B}, \nonumber\\
	\pi(c_\alpha(x,y))&= \max_{B \in c_\alpha(x,y)} \pi(B). \label{e:defpic}
	\end{align}

	\begin{assumption} \label{a:h1-h3}
		A weight function $\rho: \sS \to (0,1)$ may satisfy some of the following hypotheses:
		\begin{enumerate}[label=\textup{(H\arabic*)},align=left,leftmargin=*,topsep=5pt,parsep=0pt,itemsep=2pt]
			\item[\hypertarget{H1}{$\on{(H1)}$}](Quasi-isometry) There exist $0<\eta_-\le \eta_+<1$ so that $\eta_- \le \rho(B) \le \eta_+$ for all $B \in \sS$.
			\item[\hypertarget{H2}{$\on{(H2)}$}](Gromov product) There exists a constant $K_0\ge 1$ such that for all $B,B' \in \sS$ with $B \sim B' \in \sS$, we have
			\[
			\pi(B) \le K_0 \pi(B'),
			\]
			where $\pi$ is as defined in \eqref{e:defpi}.
			\item[\hypertarget{H3}{$\on{(H3)}$}](Visual parameter)
			There exists $\alpha \in [2,\lambda/4]$ and a constant $K_1\ge 1$ such that for any pair of points $x,y\in X$, there exists $n_0 \ge 1$ such that if $n \ge n_0$ and $\gamma$ is a path in $\Gamma_n(x,y)$, then 
			\[
			L_\rho(\gamma) \ge K_1^{-1} \pi(c_\alpha(x,y)),
			\]
			where $\Gamma_n(x,y),L_\rho, \pi(c_\alpha(x,y))$ are as defined in \eqref{e:Gamman}, \eqref{e:defLrho}, and \eqref{e:defpic} respectively. 
		\end{enumerate}
	\end{assumption}
	The following observation concerns the stability of the above assumption under `finite perturbations'.
	\begin{remark}
		Let $\rho, \rho': \sS\to (0,1)$ be two different weight functions such that the set $\giveset{ B \in \sS : \rho(B) \neq \rho'(B)}$ is finite.
		Then if $\rho$ satisfies the hypotheses \hyperlink{H1}{$\on{(H1)}$},  \hyperlink{H2}{$\on{(H2)}$}, and \hyperlink{H3}{$\on{(H3)}$}, then so does $\rho'$ (with possibly different constants).
	\end{remark}

	The weight function $\rho$ can be used to define a metric on $\sS$ that is bi-Lipschitz equivalent to $D_\sS$ as we recall below. We summarize the properties of the metric below.
	\begin{lemma}[{\cite[Lemma 2.3 and Proposition 2.4]{Car13}}] \label{l:bilip}
		Let $(X,d)$ be a compact, doubling, uniformly perfect metric space with $\diam(X,d)=\frac 1 2$, and let $\sS_d=(\sS,D_\sS)$ denote a hyperbolic filling with parameters $\lambda,a$ satisfying \eqref{e:tech} and \eqref{e:singleton}. Let $\rho:\sS\to (0,1)$ be a weight function that satisfies  \hyperlink{H1}{$\on{(H1)}$} and  \hyperlink{H2}{$\on{(H2)}$}. Then there exists a metric $D_\rho$ on $\sS$ such that:
		\begin{enumerate}[label=\textup{(\alph*)},align=left,leftmargin=*,topsep=5pt,parsep=0pt,itemsep=2pt]
			\item  $D_\rho$ is bi-Lipschitz equivalent to $D_\sS$; that is there exists $\Lambda \ge 1$ such that
			\[
			\Lambda^{-1} D_\sS(B,B') \le  D_\rho(B,B') \le \Lambda D_\sS(B,B'), \hspace{3mm}\mbox{for all $B,B'\in\sS$;} 
			\]
			\item any simple vertical path $\gamma= \giveset{B_i}_{i=1}^n$ joining $B \in \sS_m$ and $B' \in \sS_{m'}$ satisfies
			\[
			D_\rho(B,B')=\sum_{i=1}^{n-1} D_\rho(B_i,B_{i+1})= \abs{ \log \frac 1 {\pi(B)} - \log \frac 1 {\pi(B')}};
			\]
			\item $(\sS,D_\rho)$ is almost geodesic and Gromov hyperbolic.
			\item The identity map $\on{Id}:(\sS,D_\sS) \to (\sS,D_\rho)$ induces the identity map on their boundaries as described in Proposition \ref{p:bdymap}. That is, a sequence $\giveset{B_i}$ converges at infinity in $(\sS,D_\sS)$ if and only if it converges  at infinity in $ (\sS,D_\rho)$, and any two sequences that converge at infinity in $(\sS,D_\sS)$ are equivalent if and only if they are equivalent in $(\sS,D_\rho)$. In particular, the bijection
			$p: X \to \partial (\sS,D_\sS)$ described before Proposition \ref{p:bdymap} can be viewed as a bijection $\tilde{p}:X \to \partial (\sS,D_\rho)$ by composing with the induced identity map above.
			\item Assume in addition that  \hyperlink{H3}{$\on{(H3)}$} is also satisfied. Let $(\cdot\vert\cdot)_\rho$ denote the  Gromov product on $(\sS,D_\rho)$ with base point $w \in \sS_0$ extended to its boundary.
			Define
			$\tilde{\theta}_\rho: \partial(\sS,D_\rho) \times \partial(\sS,D_\rho) \to [0,\infty)$  as
			\begin{equation} \label{e:deftiltheta}
			\tilde{\theta}_\rho(\tilde{p}(x),\tilde{p}(y)) = \inf \sum_{i=1}^{n-1}  e^{-(\tilde{p}(x_i)\vert\tilde{p}(x_{i+1}))_\rho},
			\end{equation}
			where the infimum is over all finite sequence of points $\giveset{x_i}_{i=1}^n$ in $X$ such that $n \in \bN$, $x_1= x$, and  $x_n= y$.
			Then $\tilde{\theta}_\rho$ is a visual metric on $\partial(\sS,D_\rho)$ with visual parameter $e$. Moreover, 
			there exists $K>1$  such that
			\begin{align*}
			K^{-1} e^{-(\tilde{p}(x)\vert\tilde{p}(y))_\rho} &\le \tilde{\theta}_\rho(\tilde{p}(x),\tilde{p}(y)) \le K  e^{-(\tilde{p}(x)\vert\tilde{p}(y))_\rho},\\
			K^{-1} \pi(c_\alpha(x,y)) &\le \tilde{\theta}_\rho(\tilde{p}(x),\tilde{p}(y)) \le K \pi(c_\alpha(x,y)).
			\end{align*}
		\end{enumerate}
	\end{lemma}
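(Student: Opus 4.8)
The plan is to obtain $D_\rho$ as the path metric on the graph $\sS_d$ after reweighting its edges by $\rho$, and then to extract (a)--(e) by comparing competing paths, with \hyperlink{H1}{$\on{(H1)}$}, \hyperlink{H2}{$\on{(H2)}$}, \hyperlink{H3}{$\on{(H3)}$} entering as, respectively, two-sided control of edge lengths, control of the multiplicative distortion of $\pi$ across a horizontal edge, and the ``no shortcut'' property. Concretely, I would assign to the vertical edge from a ball $B\in\sS_n$ to a child $B'\in\sS_{n+1}$ the length $\log(1/\rho(B'))$ and to every horizontal edge a single fixed length $\kappa$, a constant depending only on $\eta_-,\eta_+,K_0$ chosen large enough that $\kappa\ge\log K_0$ and that traversing one horizontal edge is costlier than descending and re-ascending any one level; then $D_\rho(B,B')$ is the infimum of $\sum_{e\in\gamma}(\text{length of }e)$ over paths $\gamma$ from $B$ to $B'$. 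Since by \hyperlink{H1}{$\on{(H1)}$} every edge length lies in a fixed interval $[c_-,c_+]\subset(0,\infty)$, the $D_\rho$-length of a path is comparable to its number of edges, and taking infima yields (a) immediately; and as $(\sS,D_\rho)$ is a connected graph with edge lengths bounded above it is $c_+$-almost geodesic, which is half of (c).

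For (b) the point is that along the genealogical descent from $B$ to a descendant $B'$ the vertical edge lengths telescope to $\log(1/\pi(B'))-\log(1/\pi(B))$, so it remains to see this path is $D_\rho$-minimizing. A competitor either descends below the level of $B'$, incurring an extra down-and-up cost proportional to the excess depth, or replaces a stretch of vertical descent by horizontal hops; in the latter case \hyperlink{H2}{$\on{(H2)}$} bounds the change of $\log(1/\pi)$ over $j$ hops by $j\log K_0\le j\kappa$, so by the choice of $\kappa$ a horizontal detour never pays. Hence the genealogical path is a $D_\rho$-geodesic and (b) follows (vertical edges forming a tree, every vertically comparable pair lies on one such path). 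The remaining part of (c) is that $(\sS,D_\rho)$ is Gromov hyperbolic: $\on{Id}\colon(\sS,D_\sS)\to(\sS,D_\rho)$ is a bi-Lipschitz bijection by (a), hence a quasi-isometry, $(\sS,D_\sS)$ is almost geodesic and hyperbolic by Proposition~\ref{p:filling}, and hyperbolicity is a quasi-isometry invariant among almost geodesic spaces. Part (d) is then immediate: Gromov products for $D_\rho$ and $D_\sS$ agree up to a bounded factor, so convergence at infinity and equivalence of sequences are unchanged, $\partial\on{Id}$ is a bijection $\partial(\sS,D_\sS)\to\partial(\sS,D_\rho)$, and $\tilde p$ is $p$ composed with it.

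The real work is (e). Applying (b) along the two genealogical rays representing $\tilde p(x)$ and $\tilde p(y)$, together with the standard fact that in a hyperbolic space $(\xi\mid\eta)_w$ equals, up to an additive constant, the distance from $w$ to a geodesic joining $\xi$ to $\eta$, I would obtain $(\tilde p(x)\mid\tilde p(y))_\rho=\log(1/\pi(B))+O(1)$ for a deepest vertex $B$ through which a near-geodesic joining the two rays passes. The crux is then $\pi(B)\asymp\pi(c_\alpha(x,y))$. The upper bound on the Gromov product --- a \emph{cheap} connection --- is built by descending both rays to the level $m_\alpha(x,y)$, at which $x,y$ both lie in $\alpha\cdot B_0$ for some $B_0\in\sS_{m_\alpha(x,y)}$, and crossing there using only boundedly many horizontal edges: the doubling property bounds the number of elements of $\sS_{m_\alpha(x,y)}$ meeting $\alpha\cdot B_0$, and \hyperlink{H2}{$\on{(H2)}$} keeps $\pi$ within a bounded factor along the crossing. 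The matching lower bound --- no \emph{cheaper} connection --- is precisely \hyperlink{H3}{$\on{(H3)}$}: for large $n$ no path in $\Gamma_n(x,y)$ has $\rho$-length below $K_1^{-1}\pi(c_\alpha(x,y))$, which forbids any shortcut at scales finer than $m_\alpha(x,y)$. Finally, that the chain-infimum formula \eqref{e:deftiltheta} defines a genuine metric comparable to $e^{-(\cdot\mid\cdot)_\rho}$ --- hence a visual metric with visual parameter $e$ --- is the standard construction of visual metrics on Gromov hyperbolic spaces (\cite[Ch.~7]{GH}, \cite{BS}), the quantitative control of the hyperbolicity constant needed to take the parameter equal to $e$ being afforded by the largeness of $a,\lambda$ in \eqref{e:tech}, as in \cite{Car13}. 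Chaining the two comparisons gives $K^{-1}\pi(c_\alpha(x,y))\le\tilde\theta_\rho(\tilde p(x),\tilde p(y))\le K\,\pi(c_\alpha(x,y))$.

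I expect the main obstacle to be exactly this last step, and within it the assertion that a $D_\rho$-geodesic between two boundary rays cannot descend far below the confluence scale $m_\alpha(x,y)$: that is where \hyperlink{H2}{$\on{(H2)}$} and \hyperlink{H3}{$\on{(H3)}$} were designed to be applied, and where the combinatorics of the covering (doubling, uniform perfectness) and the constraints \eqref{e:tech} on the parameters are genuinely used. The delicate bookkeeping is to ensure that the additive $O(1)$ errors in Gromov products never degrade into unbounded multiplicative errors for $\pi$, which is why the constants in \hyperlink{H2}{$\on{(H2)}$}, \hyperlink{H3}{$\on{(H3)}$} must be uniform over all scales and locations.
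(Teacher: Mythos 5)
Your construction and overall line of argument coincide with the paper's, which itself only sketches the proof: it assigns exactly the same edge lengths ($\abs{\log(\pi(B')/\pi(B))}$ on vertical edges and the constant $2\max\set{-\log\eta_+,-\log\eta_-,\log K_0}$ on horizontal ones), notes that (a) is immediate, and defers (b), (c), (e) to \cite[Lemma 2.3 and Proposition 2.4]{Car13}, with (d) obtained from (a), (c) and Proposition \ref{p:bdymap} as you do. One correction to your final step: the hyperbolicity constant of $(\sS,D_\rho)$ is \emph{not} controlled by $a,\lambda$ alone --- it degrades with the bi-Lipschitz constant $\Lambda$, hence with $\eta_-,\eta_+,K_0$ --- so the general theory of visual metrics does not by itself license the parameter $e$; the lower bound $\tilde{\theta}_\rho \ge K^{-1}e^{-(\cdot|\cdot)_\rho}$ for an \emph{arbitrary} chain must come entirely from \hyperlink{H3}{$\on{(H3)}$} (applied after converting a chain of boundary points into a path in $\Gamma_n(x,y)$), exactly as you already use it for the single-pair estimate, and not from any smallness of the hyperbolicity constant.
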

	
	\begin{proof}[Sketch of the proof]
	We briefly recall the construction of the metric $D_\rho$. Let $E$ denote the edge set of the hyperbolic filling and let $\eta_-,\eta_+, K_0$ denote the constants in hypotheses \hyperlink{H1}{$\on{(H1)}$}, and \hyperlink{H2}{$\on{(H2)}$}.
	Define a function $\ell_\rho: E \to (0,\infty)$ as
	\[
	\ell_\rho(e)= \begin{cases}
	2 \max \giveset{- \log (\eta_+), - \log(\eta_-), \log (K_0)}, &\mbox{ if $e$ is a horizontal edge,}\\
	\abs{ \log\frac{\pi(B')}{\pi(B)}}, &\mbox{ if $e=(B',B)$ is  vertical.}
	\end{cases}
	\]
	Then the distance $D_\rho: \sS \times \sS \to [0,\infty)$ is defined 
	as
	\[
	D_\rho(B,B') = \inf_{\gamma} \sum_{i=1}^{N-1} \ell_\rho(e_i),
	\]
	where the infimum is taken over all paths $\gamma=\giveset{B_i}_{i=1}^N$ where $N$ varies over $\bN$, $B_1=B, B_N=B'$ and $e_i=(B_i,B_{i+1})$ is an edge for each $i=1,\ldots,N-1$.
	
	Part (a) is immediate from the definition of $D_\rho$. Part (b) and (c) are proved in \cite[Lemma 2.3]{Car13}. Part (d) follows from (a),(c) and Proposition \ref{p:bdymap}. Part (e) follows  from \cite[Proposition 2.4]{Car13}.
	\end{proof}
	
	The following theorem provides a combinatorial description of the conformal gauge $\sJ(X,d)$.
	In \cite[Theorem 1.1]{Car13}, Carrasco Piaggio has provided a combinatorial description of the \emph{Ahlfors regular} conformal gauge
	\[
	\sJ_{\on{AR}}(X,d)= \{ \theta \in \sJ(X,d) \mid \textrm{an Ahlfors regular measure $\mu$ on $(X,\theta)$ exists} \}.
	\]
	In  \cite[Theorem 1.1]{Car13} the hypotheses  \hyperlink{H1}{$\on{(H1)}$},  \hyperlink{H2}{$\on{(H2)}$},  \hyperlink{H3}{$\on{(H3)}$} correspond to a combinatorial description of $\sJ(X,d)$, whereas the hypothesis (H4) corresponds to the existence of an Ahlfors regular measure. This theorem is essentially contained in \cite{Car13}.
	\begin{theorem}[{Cf.\ \cite[Theorem 1.1]{Car13}}] \label{t:qscomb}
		Let $(X,d)$ be a compact, doubling, uniformly perfect metric space.
		\begin{enumerate}[label=\textup{(\alph*)},align=left,leftmargin=*,topsep=5pt,parsep=0pt,itemsep=2pt]
			\item Let $\sS_d=(\sS,D_\sS)$ denote a hyperbolic filling with parameters $\lambda,a$ satisfying \eqref{e:tech} and \eqref{e:singleton}. Let $\rho:\sS \to (0,1)$ be a weight function that satisfies the conditions  \hyperlink{H1}{$\on{(H1)}$},  \hyperlink{H2}{$\on{(H2)}$}, and  \hyperlink{H3}{$\on{(H3)}$}. 
			Define  the  metric $\theta _\rho:X \times X \to [0,\infty)$  as 
			\begin{equation} \label{e:deftheta}
			\theta_\rho(x,y)= \tilde{\theta}_\rho(\tilde{p}(x),\tilde{p}(y))\quad \mbox{ for $x,y \in X$,}
			\end{equation}
			where $\tilde{\theta}_\rho$ is as defined in \eqref{e:deftiltheta}. 
			Then  $\theta_\rho$ satisfies the following properties:
			\begin{enumerate}[label=\textup{(\roman*)},align=left,leftmargin=*,topsep=5pt,parsep=0pt,itemsep=2pt]
				\item $\theta_\rho \in \sJ(X,d)$; that is $\theta_\rho$ is quasisymmetric to $d$.
				\item there exists $C>0$ such that
				\begin{equation} \label{e:dist}
				C^{-1} \pi(c_\alpha(x,y)) \le \theta_\rho(x,y) \le C \pi(c_\alpha(x,y)),
				\end{equation}
				where $\alpha$ is the constant in  \hyperlink{H3}{$\on{(H3)}$}. Furthermore, there exists $K>1$ such that
				\begin{equation} \label{e:diam}
				K^{-1}\pi(B) \le \diam(B,\theta_\rho) \le K \pi(B) \qquad \mbox{ for all $B \in \sS$.}
				\end{equation}
				\item $\theta_\rho$ is a visual metric of the hyperbolic space $(\sS,D_\rho)$ constructed in Lemma \ref{l:bilip} in the following sense: there exists $C>0$ such that
				\[
				C^{-1} \theta_\rho(x,y)\le e^{-(\tilde{p}(x)\vert\tilde{p}(y))_\rho} \le C \theta_\rho(x,y),
				\]
				where $\tilde{p}: X \to \partial(S,D_\rho)$ is the bijection described in Lemma \ref{l:bilip}(d), and 
				$(\cdot\vert\cdot)_\rho$ denotes the Gromov product (extended to the boundary) on the hyperbolic space $(\sS,D_\rho)$ with base point $w \in \sS_0$.
				\item The distortion function $\eta$ of the power quasisymmetry $\on{Id}:(X,d) \to (X,\theta)$   can be chosen to depend only on the constants in  \hyperlink{H1}{$\on{(H1)}$},  \hyperlink{H2}{$\on{(H2)}$}, and  \hyperlink{H3}{$\on{(H3)}$}.
			\end{enumerate}
			\item  Conversely, let $\theta \in \sJ(X,d)$ be any metric in the conformal gauge. Then there exists a hyperbolic filling $\sS_d=(\sS,D_\sS)$ of $(X,d)$ with horizontal parameter $\lambda$, vertical parameter $a$, and a weight function $\rho:\sS \to (0,1)$ that satisfies the hypotheses  \hyperlink{H1}{$\on{(H1)}$},  \hyperlink{H2}{$\on{(H2)}$},  \hyperlink{H3}{$\on{(H3)}$}, and such that the metric $\theta_\rho$ defined in \eqref{e:deftheta} is bi-Lipschitz equivalent to $\theta$. 
		\end{enumerate}
	\end{theorem}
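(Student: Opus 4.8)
Both parts are obtained by assembling the boundary correspondence for Gromov hyperbolic spaces (Propositions~\ref{p:bdymap} and~\ref{p:filling}) with the analysis of the reweighted filling $(\sS,D_\rho)$ in Lemma~\ref{l:bilip}; beyond the bookkeeping, the only genuinely new point is the verification in part~(b) that the tautological weight $\rho(B)=\diam(B,\theta)/\diam(P(B),\theta)$ satisfies \hyperlink{H1}{$\on{(H1)}$}--\hyperlink{H3}{$\on{(H3)}$}.

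\emph{Part~(a).} Assume \hyperlink{H1}{$\on{(H1)}$}--\hyperlink{H3}{$\on{(H3)}$}. Lemma~\ref{l:bilip} gives a metric $D_\rho$ on $\sS$ that is bi-Lipschitz to $D_\sS$, with $(\sS,D_\rho)$ almost geodesic and Gromov hyperbolic, and such that $\tilde{\theta}_\rho$ is a visual metric of $\partial(\sS,D_\rho)$ with $\tilde{\theta}_\rho(\tilde{p}(x),\tilde{p}(y))\asymp\pi(c_\alpha(x,y))$. Thus $\on{Id}\colon(\sS,D_\sS)\to(\sS,D_\rho)$ is a quasi-isometry between almost geodesic hyperbolic spaces, so by Proposition~\ref{p:bdymap} it induces a power quasisymmetry from $\partial(\sS,D_\sS)$ equipped with the visual metric bi-Lipschitz to $d$ of Proposition~\ref{p:filling} onto $\partial(\sS,D_\rho)$ equipped with $\tilde{\theta}_\rho$; transporting this through $p$ and $\tilde{p}$ and using Lemma~\ref{l:bilip}(d) to identify the boundary map with $\on{Id}_X$ yields that $\on{Id}\colon(X,d)\to(X,\theta_\rho)$ is a power quasisymmetry, which is~(i), while~(iv) follows by tracking constants through Remark~\ref{r:const}, Lemma~\ref{l:bilip}(a) and Proposition~\ref{p:filling}. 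The first estimate in~(ii) is the second pair of inequalities in Lemma~\ref{l:bilip}(e); for \eqref{e:diam}, specialize to $x,y\in B\in\sS_n$, where for the upper bound $x,y\in\alpha\cdot B$ forces $m_\alpha(x,y)\ge n$ and then \hyperlink{H1}{$\on{(H1)}$}--\hyperlink{H2}{$\on{(H2)}$} give $\pi(c_\alpha(x,y))\lesssim\pi(B)$, and for the lower bound $K_P$-uniform perfectness produces $y\in B$ with $d(x_B,y)\asymp a^{-n}$, whence $m_\alpha(x_B,y)=n+O(1)$ and $\pi(c_\alpha(x_B,y))\gtrsim\pi(B)$ by \hyperlink{H2}{$\on{(H2)}$}. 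Finally~(iii) is the first pair of inequalities in Lemma~\ref{l:bilip}(e).

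\emph{Part~(b).} Fix $\theta\in\sJ(X,d)$ with distortion function $\eta$, fix a hyperbolic filling of $(X,d)$ with $\lambda$ satisfying \eqref{e:tech} and $a$ to be chosen, and define
\[
\rho(B):=\frac{\diam(B,\theta)}{\diam(P(B),\theta)}\quad(B\in\sS_n,\ n\ge1),\qquad\rho(w):=\tfrac12,
\]
with the convention $\diam(w,\theta):=\diam(X,\theta)$, so that $\pi(B)\asymp\diam(B,\theta)$ by telescoping. Since $\diam(B,d)\asymp a^{-n}$ for $B\in\sS_n$ (the lower bound being $K_P$-uniform perfectness) and $B\subset P(B)$, Proposition~\ref{p:QS}(b) gives $\rho(B)\le\eta(C_1a^{-1})$ and $\rho(B)\ge(2\eta(C_1a))^{-1}$ for all $n\ge1$, with $C_1=C_1(K_P)$; taking $a$ large enough that $\eta(C_1a^{-1})<1$, and modifying $\rho$ at the finitely many remaining vertices if necessary (harmless by the stability remark after Assumption~\ref{a:h1-h3}), gives \hyperlink{H1}{$\on{(H1)}$}. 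If $B\sim B'$ in $\sS_n$, then $B$ and $B'$ lie in a common ball $\widehat{B}$ with $\diam(\widehat{B},d)\asymp\diam(B,d)$, so Proposition~\ref{p:QS}(b) gives $\pi(B)\asymp\diam(B,\theta)\le\diam(\widehat{B},\theta)\asymp\diam(B',\theta)\asymp\pi(B')$, which is \hyperlink{H2}{$\on{(H2)}$}. For \hyperlink{H3}{$\on{(H3)}$} take $\alpha=2$: a direct comparison of scales at levels $m=m_\alpha(x,y)$ and $m+1$ gives $d(x,y)\asymp a^{-m}\asymp\diam(\widehat{B},d)$ for $\widehat{B}\in c_\alpha(x,y)$, so the annulus estimates of Proposition~\ref{p:QS}(a) give $\theta(x,y)\asymp\diam(\widehat{B},\theta)=\pi(c_\alpha(x,y))$; and for any $\gamma=\{B_i\}_{i=1}^k\in\Gamma_n(x,y)$ the consecutive balls either overlap (vertical edges) or satisfy $\lambda\cdot B_i\cap\lambda\cdot B_{i+1}\neq\emptyset$ (horizontal edges), so along $\gamma$ the $\theta$-diameters are pairwise comparable and the $\theta$-gaps between consecutive balls are controlled by them (Proposition~\ref{p:QS} together with \hyperlink{H1}{$\on{(H1)}$}--\hyperlink{H2}{$\on{(H2)}$}), whence $\theta(x,y)\lesssim\sum_{i=1}^k\diam(B_i,\theta)=L_\rho(\gamma)$; combining these two estimates gives \hyperlink{H3}{$\on{(H3)}$}. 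Now part~(a) applies and yields $\theta_\rho(x,y)\asymp\pi(c_\alpha(x,y))\asymp\theta(x,y)$, i.e.\ $\theta_\rho$ is bi-Lipschitz equivalent to $\theta$.

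\emph{Main obstacle.} The delicate step is \hyperlink{H3}{$\on{(H3)}$} in part~(b): one must exclude ``$\theta$-shortcuts'' through the filling, i.e.\ show that $L_\rho(\gamma)\gtrsim\pi(c_\alpha(x,y))$ uniformly over all connecting paths $\gamma\in\Gamma_n(x,y)$, all large $n$, and all $x,y$, and it is exactly here that uniform perfectness, the doubling property, and the quasisymmetry between $\theta$ and $d$ must be used in concert. This is essentially the content of the relevant portion of \cite{Car13}.
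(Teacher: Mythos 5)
Your proposal is correct and follows essentially the same route as the paper: part (a) is the same assembly of Lemma \ref{l:bilip} with Propositions \ref{p:filling} and \ref{p:bdymap}, and part (b) uses the same tautological weight $\rho(B)=\diam(B,\theta)/\diam(P(B),\theta)$ (so that $\pi(B)\asymp\diam(B,\theta)$), verifying \hyperlink{H1}{$\on{(H1)}$}--\hyperlink{H3}{$\on{(H3)}$} via the quasisymmetric diameter comparison \eqref{e:diamQS} and the same chaining argument for \hyperlink{H3}{$\on{(H3)}$} with $\alpha=2$. The minor variations (your direct derivation of \eqref{e:diam} from \eqref{e:dist}, and avoiding the paper's intermediate ball $(4a)\cdot B$ in the \hyperlink{H1}{$\on{(H1)}$} estimate) do not change the substance of the argument.
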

	
	\begin{proof}
	We begin with the proof of (b).\\
	\noindent (b) Let $\on{Id}: (X,d) \to (X,\theta)$ be an $\eta$-quasisymmetry for some distortion function $\eta$.
	
	The definition of the weight function $\rho$ in \cite{Car13} uses an Ahlfors regular measure. Since there is no such measure available in our setting, the following definition is more suited for our purposes. We normalize the metric $\theta$, so that $\diam(X,\theta)=\frac 1 2$.
	We will define the weight function $\rho:\sS \to (0,1)$ so that
	\[
	\pi(B)= \diam_\theta(B),
	\]
	for all $B \in \sS$, where $\sS$ is a hyperbolic filling of $(X,d)$ with parameters $\lambda,a$. Fix any $\lambda \ge 32$. The vertical parameter $a >1$ will be determined later in the proof.
	Hence we define $\rho:\sS \to (0,1)$ as
	\[
	\rho(B)= \begin{cases}
	\frac 1 2 & \mbox{if $B \in \sS_0$,}\\
	\frac{\diam_\theta(B)}{ \diam_\theta(g(B)_{n-1})} & \mbox{if $B \in \sS_n, n \ge 1$.}
	\end{cases}
	\]
	First, we show  \hyperlink{H2}{$\on{(H2)}$}.
	Let $B \sim B'$ with $B,B' \in \sS_n$. Then choose $y \in \lambda \cdot B \cap \lambda \cdot B'$. By triangle inequality,
	\[
	B  \subset B_d(x_B,2a^{-n}) \subset B_d(y, (\lambda+2)a^{-n}), \hspace{3mm} B' \subset  B_d(y, (\lambda+2)a^{-n}).
	\]
	By uniform perfectness, and triangle inequality, for any $r< \frac 1 2$, $ r/K_P \le \diam_d(B_d(x,r)) \le 2r$. Therefore by \eqref{e:diamQS}, we obtain
	\[
	\frac{1}{2 \eta(4 (\lambda +2)K_P)}\le \frac{\diam_\theta (B)}{\diam_\theta(B_d(y,(\lambda+2)a^{-n}))} \le \eta(8K_P/(\lambda+2)).
	\]
	Since the same inequality holds with $B$ replaced with $B'$, we have  \hyperlink{H2}{$\on{(H2)}$} with constant
	\[
	K_0=2 \eta(4 (\lambda +2)K_P)\eta(8K_P/(\lambda+2)),
	\]
	that depends only on the distortion function $\eta$,  the constant $K_P$ of uniform perfectness, and the horizontal parameter $\lambda$ (in particular, does not depend on the vertical parameter $a$).

	Next, we show  \hyperlink{H1}{$\on{(H1)}$}, which again relies on \eqref{e:diamQS}.
	We will choose $a >2(\lambda +1)$ large enough so that $\eta_+= \frac 1 2$ in  \hyperlink{H1}{$\on{(H1)}$}. Clearly this choice works when $B \in \sS_0$. If $B= \sS_n, n \ge 1$, and by denoting $B'= g(B)_{n-1}$, we have $x_B \in B'$. For $n \ge 2$, we  write (the case $n=1$ is easier and left to the reader)
	\[
	\rho(B)= \frac{ \diam_\theta(B)} {\diam_\theta(B')} = \frac{ \diam_\theta(B)} {\diam_\theta((4a) \cdot B)}  \frac{\diam_\theta((4a)\cdot B)} {\diam_\theta(B')}.
	\]
	Each of the terms can be estimated (from above and below) using \eqref{e:diamQS}, since by the triangle inequality and $d(x_B,x_{B'}) < 2 a^{-n+1}$ we have
	$B \subset (4a)\cdot B$, and $(4a)\cdot B \supset B(x_B, 2a^{-n-1})\supset B'$.
	Hence, we obtain
	\begin{align*}
	\rho(B) &\le  2  \eta\left( \frac{ 2 \diam_d(B) }{\diam_d((4a)\cdot B)}\right)  \eta\left( \frac{ \diam_d((4a)\cdot B) }{\diam_d(B')}\right)  \\
	& \le  2  \eta\left( K_p/a  \right)  \eta\left( 16 K_P \right) \\
	\rho(B) &\ge  \left[ 2  \eta\left( \frac {\diam_d((4a)\cdot B)}{  \diam_d(B) }\right)  \eta\left( 2\frac{\diam_d(B')}{ \diam_d((4a)\cdot B) }\right) \right]^{-1} \\
	&\ge  \left[ 2  \eta\left( 8 K_P\right)  \eta\left( 2K_P /a\right) \right]^{-1}
	\end{align*}
	First we choose $a$ large enough so that  $ 2  \eta\left( K_p/a  \right)  \eta\left( 16 K_P \right)  \le \frac 1 2$ and \eqref{e:tech} are satisfied. We set $\eta_-= \left[ 2  \eta\left( 8 K_P\right)  \eta\left( 2K_P /a\right) \right]^{-1}$. Hence we obtain  \hyperlink{H1}{$\on{(H1)}$}.

	For  \hyperlink{H3}{$\on{(H3)}$}, we once again use \eqref{e:diamQS}, to see that $\pi(B)= \diam_\theta(B)$ is comparable to $\diam_\theta( \lambda\cdot B)$ for all $B \in \sS$. More precisely, we have
	\[
	\diam_\theta (B) \le \diam_\theta (\lambda\cdot B) \le 2 \eta(2 \lambda K_P) \diam_\theta (B)
	\]
	for all $B \in \sS$.
	For any path $\gamma =\giveset{B_i}_{i=1}^m \in \Gamma_n(x,y)$, we choose points $x_i \in \lambda \cdot B_i \cap \lambda \cdot B_{i+1}, i =1,\ldots,m-1$, $x_0=x,x_{m}=y$ so that
	\begin{align} \label{e:cdcg1}
	\theta(x,y) &\le \sum_{i=0}^{m-1} \theta(x_i,x_{i+1}) \le  \sum_{i=1}^{m} \diam_\theta (\lambda\cdot B_i) \nonumber \\
	 &\le \sum_{i=1}^{m} \diam_\theta ( B_i) = 2 \eta(2 \lambda K_P) L_\rho(\gamma).
	\end{align}
	Fixing $\alpha=2$, and let $C \in c_2(x,y)$ such that $\pi(c_2(x,y))= \pi(C)$. Let $m=m_2(x,y)$. Let $B \in \sS_{m+1}$ be such that $x \in B$. By definition of $m_2(x,y)$, $y \notin 2 \cdot B$. Therefore $d(x,y) \ge d(x_B,y) -d(x_B,x) \ge a^{-m-1}$.
	By \eqref{e:diamQS}, and $\pi(c_2(x,y)) \le \diam_\theta(2\cdot C)$ we have
	\begin{align}\label{e:cdcg2}
	\pi(c_2(x,y)) &\le  2 \eta \left( \frac{\diam_d(2 \cdot C)}{d(x,y)}\right)\theta(x,y) \nonumber \\ 
	&\le 2 \eta \left( \frac{8 a^{-m}}{a^{-m-1}}\right)\theta(x,y)=2 \eta(8 a) \theta(x,y).
	\end{align}
	Combining \eqref{e:cdcg1} and \eqref{e:cdcg2} yields  \hyperlink{H3}{$\on{(H3)}$} with $\alpha=2$.
	\smallskip \\
	(a) This part is essentially contained in \cite{Car13}.
	The hypotheses  \hyperlink{H1}{$\on{(H1)}$} and  \hyperlink{H2}{$\on{(H2)}$} are used to construct a metric $D_\rho$ on $\sS$ as given in Lemma \ref{l:bilip}.
	If $\theta_\rho$ were defined using \eqref{e:deftheta}, it clearly satisfies the symmetry $\theta_\rho(x,y)=\theta_\rho(y,x)$, and triangle inequality. 
	The role of  \hyperlink{H3}{$\on{(H3)}$} is to show that $\theta_\rho(x,y)$ is at least  $e^{-(\tilde{p}(x)\vert\tilde{p}(y))_\rho}$ (up to a constant factor) as explained in Lemma \ref{l:bilip}-(e). 
	The fact that $\theta_\rho$ is quasisymmetric to $d$ follows from Lemma \ref{l:bilip}, Propositions \ref{p:filling} and \ref{p:bdymap}-(c). The statement about the dependence of distortion function $\eta$ on the constants follow from Remark \ref{r:const}.
	
	The estimate \eqref{e:diam} is also implicitly contained in \cite{Car13} and is a consequence of \eqref{e:dist}. Choose $x,y \in B$ such that $d(x,y) \ge \diam(B,d)/2$. Since $\on{Id}: (X,d) \to (X,\theta_\rho)$ is an $\eta$-quasisymmetry, by \eqref{e:diamQS}, there exists $C_1>0$ such that
	\[
	\theta_\rho(x,y) \le  \diam(B,\theta_\rho) \le C_1 \theta_\rho(x,y).
	\]
	Since $d(x,y) \ge \diam(B)/2$, $B$ is at a bounded distance in $(\sS,D_\sS)$ from any set $C \in c_\alpha(x,y)$. Combining these estimates along with \eqref{e:dist} and \eqref{e:diamQS}, we obtain
	\[
	\diam(B,\theta_\rho) \asymp \theta_\rho(x,y) \asymp \pi(c_\alpha(x,y)).
	\]
	\end{proof}
	
	\subsection{Construction of measure using the hyperbolic filling}
	
	As in Subsection \ref{s:hf}, we fix a compact, doubling, uniformly perfect metric space $(X,d)$ with $\diam(X,d)=\frac{1}{2}$, and a hyperbolic filling $\sS_d= (\sS,D_\sS)$ with horizontal parameter $\lambda$ and vertical parameter $a$ that satisfy \eqref{e:tech}. 
	\begin{definition}[gentle function] \label{d:gentle}
			Let $\sC \colon \sS \to (0,\infty)$ and $K \ge 1$. We say that $\sC$ is $K$-\emph{gentle} if
			\[
			K^{-1} \sC(B') \le \sC(B) \le K \sC(B),
			\]
			whenever there is an edge between $B$ and $B'$. We say that $\sC \colon \sS \to (0,\infty)$ is \emph{gentle} if it is $K$-gentle for some $K \ge 1$.
			The notion of $K$-gentle function extends to any function $f \colon V \to (0,\infty)$ on a graph $G=(V,E)$. In other words, we say that a function $f \colon V \to (0,\infty)$ is $K$-gentle if $\log f$ is $(\log K)$-Lipschitz with respect to the graph distance metric.
			
			We sometimes need to distinguish between the horizontal and vertical edges (see Theorem \ref{t:suff}). We say that $\sC \colon \sS \to (0,\infty)$
			is $(K_h,K_v)$-\emph{gentle} if 
			\[
			K_h^{-1} \sC(B') \le \sC(B) \le K_h \sC(B'),
			\]
			whenever $B$ and $B'$ share a horizontal edge, and 
			\[
			K_v^{-1} \sC(B') \le \sC(B) \le K_v \sC(B'),
			\]
			whenever $B$ and $B'$ share a vertical edge. Therefore every $(K_h,K_v)$-gentle function is $(K_h \vee K_v)$-gentle.
	\end{definition}
	Given a hyperbolic filling $\sS$, we need to approximate a ball $B(x,r)$ by a ball in the filling $\sS$. We introduce this notion in the following definition.
	\begin{definition} \label{d:AS}
			Let $(X,d)$ be a doubling metric space. Let $(\sS,D_\sS)$ be a hyperbolic filling of $(X,d)$ with parameters $a,\lambda$ that satisfy \eqref{e:tech} as constructed in Lemma \ref{l:con}-(a). By Lemma \ref{l:con}-(a), given a ball $B(x,r) \neq X$, there exists 
			$n \in \bZ$ and $B \in \sS_n$ such that 
			\begin{equation}  \label{e:as1}
			2 a^{-n-1} \le r < 2 a^{-n}, \quad \mbox{and} \quad d(x_B,x) <2 a^{-n}.
			\end{equation}
			We define 
			\begin{equation} \label{e:defAS}
			\sA_\sS(B(x,r))= \giveset{ B \in \sS: \mbox{$n \in \bZ_{\ge 0}$ and $B \in \sS_n$ satisfy \eqref{e:as1}}}.
			\end{equation}
		We remark that if $B, B' \in \sA_\sS(B(x,r))$, then $x \in B \cap B' \neq \emptyset$ and hence $B$ and $B'$ share a horizontal edge.
	\end{definition}

	Often, the measures in this work will satisfy the following volume doubling and reverse volume doubling properties.
	\begin{definition}[Volume doubling and Reverse volume doubling properties] \label{d:vd-rvd}
		Let $(X,d)$ be a metric space and let $\mu$ be a Borel measure on $X$.
		\begin{enumerate}[label=\textup{(\alph*)},align=left,leftmargin=*,topsep=5pt,parsep=0pt,itemsep=2pt]
		\item We say that $\mu$ satisfies the \emph{volume doubling property} \ref{VD},
			or $\mu$ is a doubling measure on $(X,d)$, or $(X,d,\mu)$ is \ref{VD}, if there exists $C_D \in (1,\infty)$ such that
			\begin{equation} \label{VD} \tag*{$\on{VD}$}
			0 < \mu(B(x,2r)) \le C_D \mu(B(x,r)) <\infty \quad \mbox{for all $x \in X$, $r \in (0,\infty)$.}
			\end{equation}
		\item We say that $\mu$ satisfies the \emph{reverse volume doubling property} \ref{RVD},
			or $(X,d,\mu)$ is \ref{RVD}, if there exist $C_1, C_2  \in (1,\infty), \alpha \in (0,\infty)$ such that
			\begin{equation} \label{RVD} \tag*{$\on{RVD}$}
			\mu(B(x,R)) \ge C_1^{-1} \left(\frac{R}{r}\right)^\alpha \mu(B(x,r)) 
			\end{equation}
		for all $x\in X$, $0<r \le R < \diam_{d}(X)/C_2$.
		\end{enumerate}
	\end{definition}
	\begin{remark}\label{r:doubling}
		We recall the following connections between the doubling and uniform perfectness properties of a metric space $(X,d)$
		and the volume doubling and reverse volume doubling properties.
			\begin{enumerate}[label=\textup{(\alph*)},align=left,leftmargin=*,topsep=5pt,parsep=0pt,itemsep=2pt]
				\item\label{it:VDimprove}If $\mu$ satisfies \ref{VD} on $(X,d)$, then $(X,d)$ is a doubling metric space. Conversely, every complete doubling metric space admits a measure that satisfies \ref{VD} \cite[Theorem 13.3]{Hei}. The constant $2$ in the definition of \ref{VD} is essentially arbitrary, as \ref{VD} implies
				\begin{equation} \label{e:VDimprove}
				\frac{\mu(B(x,R))}{\mu(B(x,r))} \le C_D \left(\frac{R}{r}\right)^\alpha, \mbox{for all $x \in X, 0<r \le R$, where $\alpha=\log_2 C_D$.}
				\end{equation}
				\item\label{it:RVD-uniformly-perfect}Let $\mu$ be a measure that satisfies \ref{VD} on $(X,d)$. Then $\mu$ satisfies \ref{RVD} if and only if $(X,d)$ is uniformly perfect \cite[Exercise 13.1]{Hei}.
			\end{enumerate}
	\end{remark}

	We introduce a hypothesis on a weight function $\rho:\sS \to (0,\infty)$ that plays an important role in the construction of a measure.
	\begin{assumption} \label{a:meas}
			Let $\sC:\sS \to (0,\infty)$ be a gentle function, and let $\beta >0$.
			A weight function $\rho: \sS \to (0,1)$ is said to be $(\beta,\sC)$-\emph{compatible} if there exists $K_2 \ge 1$ such that for all $B \in \sS_m$, and $n >m$,
			\[
			K_2^{-1} \pi(B)^\beta \sC(B) \le \sum_{B' \in \sD_n(B)} \pi(B')^\beta \sC(B') \le K_2 \pi(B)^\beta\sC(B),
			\] 
			where $\sD_n(B)$ denotes the descendants of $B$ of generation $n$ as defined in \eqref{e:defD}.
	\end{assumption}
	The above assumption is similar to (H4) in \cite{Car13}. 
	The following lemma is an analogue of \cite[Lemma 2.7]{Car13}. 
	\begin{lemma} \label{l:meas}
			Let $(\sS,D_\sS)$ be a hyperbolic filling of a doubling, $K_P$-uniformly perfect, compact metric space $(X,d)$ as given in Lemma \textup{\ref{l:con}-(a)}. Let $\rho:\sS \to (0,1)$ be a weight function that satisfies  \hyperlink{H1}{$\on{(H1)}$} and  \hyperlink{H2}{$\on{(H2)}$}. 
			Let $\sC:\sS \to (0,\infty)$ be a gentle function, and let $\beta >0$, such that $\rho$ is $(\beta,\sS)$-compatible. For $n \ge 0$, denote
			\[
			\mu_n:= \sum_{B \in \sS_n} \pi(B)^\beta  \sC(B) \delta_{x_B},
			\]
			where $\delta_{x_B}$ denotes the Dirac measure at $x_B$. Let $\mu$ be any weak* subsequential limit of $\mu_n$.
			Then there exists $C_1>1$ such that, for all $x \in X, r\le \diam(X,d)/2$, and for all $B \in \sA_\sS(B(x,r))$, we have
			\begin{equation} \label{e:volest}
			C_1^{-1} \pi(B)^\beta \sC(B) \le \mu((B(x,r)) \le C_1 \pi(B)^\beta \sC(B),
			\end{equation}
			where $\sA_\sS$ is as given in Definition \textup{\ref{d:AS}}.
			Furthermore, $\mu$ satisfies \ref{VD} on $(X,d)$.
	\end{lemma}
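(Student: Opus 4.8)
The plan is to follow the scheme of \cite[Lemma 2.7]{Car13}, comparing $\mu(B(x,r))$ with the weights $\pi(\cdot)^\beta\sC(\cdot)$ carried by the filling balls sitting at the ``right'' location and scale. First I would record that the total masses $\mu_n(X)=\sum_{B\in\sS_n}\pi(B)^\beta\sC(B)$ are uniformly comparable: since the root $w$ (with $\sS_0=\set{w}$) satisfies $\sD_n(w)=\sS_n$, applying the $(\beta,\sC)$-compatibility to $B=w$ gives $K_2^{-1}\pi(w)^\beta\sC(w)\le\mu_n(X)\le K_2\,\pi(w)^\beta\sC(w)$ for all $n\ge1$. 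Hence weak-$*$ subsequential limits exist by Banach--Alaoglu, and any such $\mu$ is a nonzero finite Borel measure on the compact space $X$, thus Radon, with $\mu(X)\asymp\pi(w)^\beta\sC(w)$; fix a subsequence $(n_j)$ with $\mu_{n_j}\to\mu$ weak-$*$. Two elementary facts will be used throughout, all implied constants depending only on $a,\lambda,K_P,K_D,\beta$ and on $\eta_\pm,K_0,K_2$ and the gentleness constant of $\sC$: (i) by Lemma \ref{l:con}(c), every descendant of $B\in\sS_n$ has its centre within $\tfrac{a}{a-1}a^{-n}\le 2a^{-n}$ of $x_B$; and (ii) if $D_\sS(B,B')\le\ell$ then $\sC(B)\asymp\sC(B')$ (gentleness) and $\pi(B)\asymp\pi(B')$, the latter since along such a path each horizontal step changes $\pi$ by at most a factor $K_0$ (by $\on{(H2)}$) and each vertical step by a factor in $[\eta_-,\eta_+]$ (by $\on{(H1)}$), with comparison constant depending on $\ell$, which will always be bounded.

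For the upper bound in \eqref{e:volest} I would fix $x$, $r\le\diam(X)/2$, and $B\in\sA_\sS(B(x,r))$, say $B\in\sS_k$. Since $B(x,r)$ is open, $\mu(B(x,r))\le\liminf_j\mu_{n_j}(B(x,r))$, and it suffices to bound $\mu_{n_j}(B(x,r))=\sum\set{\pi(B')^\beta\sC(B'):B'\in\sS_{n_j},\ x_{B'}\in B(x,r)}$ for $n_j>k$. If $x_{B'}\in B(x,r)$ then, by fact (i) together with $r<2a^{-k}$ and $d(x_B,x)<2a^{-k}$, its level-$k$ ancestor $g(B')_k$ has centre within $6a^{-k}$ of $x_B$; by doubling there are only a bounded number $M$ of such level-$k$ balls $B_1,\dots,B_M$, each $\sim$-adjacent to $B$ (as $6a^{-k}<4\lambda a^{-k}$), so $\pi(B_i)^\beta\sC(B_i)\asymp\pi(B)^\beta\sC(B)$ by fact (ii). Since $\set{B'\in\sS_{n_j}:x_{B'}\in B(x,r)}\subset\bigcup_i\sD_{n_j}(B_i)$, the $(\beta,\sC)$-compatibility applied to each $B_i$ gives $\mu_{n_j}(B(x,r))\le\sum_i K_2\pi(B_i)^\beta\sC(B_i)\lesssim\pi(B)^\beta\sC(B)$, which yields the upper bound on letting $j\to\infty$.

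For the lower bound I would choose a bounded integer $L=L(a)$ with $3a^{1-L}<1$, set $\ell=k+L$, and use \eqref{e:con2} to pick $y\in N_\ell$ with $d(x,y)<a^{-\ell}$; let $B^*\in\sS_\ell$ be the filling ball centred at $y$. Then $\overline{B}(x_{B^*},2a^{-\ell})\subset B(x,3a^{-\ell})\subset B(x,r/2)$ (using $a^{-\ell}\le a^{-L}\cdot\tfrac a2 r$), while the level-$k$ ancestor of $B^*$ is $\sim$-adjacent to $B$, so $D_\sS(B,B^*)\le L+1$ and $\pi(B^*)^\beta\sC(B^*)\asymp\pi(B)^\beta\sC(B)$ by fact (ii). Since $\overline{B}(x_{B^*},2a^{-\ell})$ is closed, $\mu(B(x,r))\ge\mu(\overline{B}(x_{B^*},2a^{-\ell}))\ge\limsup_j\mu_{n_j}(\overline{B}(x_{B^*},2a^{-\ell}))$, and for $n_j>\ell$ fact (i) gives that every descendant of $B^*$ has its centre in $\overline{B}(x_{B^*},2a^{-\ell})$, whence $\mu_{n_j}(\overline{B}(x_{B^*},2a^{-\ell}))\ge\sum_{B'\in\sD_{n_j}(B^*)}\pi(B')^\beta\sC(B')\ge K_2^{-1}\pi(B^*)^\beta\sC(B^*)\asymp\pi(B)^\beta\sC(B)$. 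This step is the one I expect to be the main obstacle: the weak-$*$ limit might lose all mass near $\partial B(x,r)$, so one must retreat to a definite fraction of the radius and yet locate a \emph{single} filling ball $B^*$ whose whole descendant cluster sits inside $B(x,r)$ while keeping $\pi(B^*)^\beta\sC(B^*)$ comparable to $\pi(B)^\beta\sC(B)$, and it is precisely $\on{(H1)}$ that controls the multiplicative errors in $\pi$ accumulated over the $O(L)$ vertical edges with universal constants.

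Finally, \ref{VD} would follow formally from \eqref{e:volest}. If $2r<\diam(X)$, choosing $B_1\in\sA_\sS(B(x,r))$ and $B_2\in\sA_\sS(B(x,2r))$ one has $D_\sS(B_1,B_2)=O(1)$ (their levels differ by $O(1)$ and their centres are $O(a^{-k})$ apart), whence $\mu(B(x,2r))\asymp\pi(B_2)^\beta\sC(B_2)\asymp\pi(B_1)^\beta\sC(B_1)\asymp\mu(B(x,r))$ by \eqref{e:volest}. If $2r\ge\diam(X)$, then $\mu(B(x,2r))\le\mu(X)\asymp\pi(w)^\beta\sC(w)$ while $\mu(B(x,r))\ge\mu(B(x,\diam(X)/4))\asymp\pi(B_0)^\beta\sC(B_0)\asymp\pi(w)^\beta\sC(w)$ for $B_0\in\sA_\sS(B(x,\diam(X)/4))$, which sits at a bounded level and hence at bounded combinatorial distance from $w$ along its genealogy; so $\mu(B(x,2r))\lesssim\mu(B(x,r))$ in this case too, and $\mu$ satisfies \ref{VD}.
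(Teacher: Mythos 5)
Your proof is correct and follows essentially the same scheme as the paper's (which defers to Carrasco Piaggio's Lemma 2.7): portmanteau inequalities for open/closed sets, passing to level-$k$ ancestors adjacent to $B$ for the upper bound, locating a single filling ball near $x$ whose whole descendant cluster lies in $B(x,r/2)$ for the lower bound, and reading off \ref{VD} from \eqref{e:volest} together with gentleness and \hyperlink{H1}{$\on{(H1)}$}--\hyperlink{H2}{$\on{(H2)}$}. The only cosmetic point is that your case split for \ref{VD} formally leaves $r\in(\diam(X)/4,\diam(X)/2)$ uncovered, since \eqref{e:volest} applies to $B(x,2r)$ only when $2r\le\diam(X)/2$; your second case's argument handles that range verbatim, so splitting instead at $r=\diam(X)/4$ closes it.
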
 
	
	\begin{proof}[Sketch of the proof]
	We only sketch the proof and skip the details as it follows from almost the same argument as  \cite[Lemma 2.7]{Car13}.
	
	Let $x \in X, r\le \diam(X)/2, B \in \sA_\sS(B(x,r))$ and $B=B(x_B,2a^{-m})$. Choose $B_1 \in \sS_{m+2}$ such that $x \in B_1=B(x_{B_1},2a^{-m-2})$.
	By \cite[(2.10)]{Car13}, the centers of all the descendants of $B_1$ belong to $\overline B(x,r/2)$. This along with $(\beta,\sC)$-compatibility implies that
	\begin{align*}
	\mu(B(x,r)) &\ge \mu(\overline B(x,r/2)) \\
	&\ge \liminf_{n \to \infty} \mu_n\left(\giveset{x_C: C \in \sD_n(B_1)}\right) \\
	&\quad \mbox{(since $\overline B(x,r/2) \supset \giveset{x_C: C \in \sD_n(B_1)}$)}\\
	&\gtrsim \liminf_{n \to \infty} \sum_{B' \in \sD_n(B_1)} \pi(B')^\beta \sC(B') \\
	&\gtrsim \pi(B_1)^\beta \sC(B_1) \gtrsim \pi(B)^\beta \sC(B)\\
	&\quad \mbox{(by $(\beta,\sC)$-compatibility and gentleness of $\sC$)}. 
	\end{align*}
	By the argument in \cite[proof of Lemma 2.7]{Car13}, for any $B' \in \sS_n, n \ge m$ satisfying $x_{B'} \in B(x,r+a^{-m})$, we have 
	$g(B')_n \sim g(B_1)_m$, where $B_1$ is as  defined above.
	For the upper bound, for any $B' \in \sS_n, n \ge m$ such that $x_{B'} \in B(x,r+a^{-m})$, we have that $g(B')_m \sim B$. Therefore, we estimate
	\begin{align*}
	\mu(B(x,r)) &\le \mu(B(x,r+a^{-m})) \\
	&\le \limsup_{n \to \infty} \mu_n(B(x,r+a^{-m})) \\
	&\le \sum_{C \sim g(B_1)_m} \sum_{B' \in \sD_n(C)}  \pi(B')^\beta \sC(B')  \\
	&\lesssim   \sum_{C \sim g(B_1)_m} \pi(C)^\beta \sC(C).
	\end{align*}
	Since $C \sim g(B_1)_m$ and $B \sim g(B_1)_m$, by gentleness of $\sC$, we have $\sC(C) \asymp \sC(B)$. By \hyperlink{H2}{$\on{(H2)}$}, we have $\pi(C) \asymp \pi(B)$.
	Furthermore, by doubling the number of such $C \in \sS_m$ such that  $C \sim g(B_1)_m$ is bounded by a constant that depends only on the parameters of the filling. Combining these estimates, we obtain the desired upper bound $\mu(B(x,r)) \lesssim \pi(B)^\beta \sC(B)$. This completes the proof of \eqref{e:volest}.
	
	The conclusion that $\mu$ satisfies \ref{VD} follows from \eqref{e:volest} and the gentleness of $\sC$.
	\end{proof}
	
	In the following proposition, we express the measure in Lemma \ref{l:meas} using the metric in Theorem \ref{t:qscomb}-(a).
	\begin{proposition}\label{p:measure}
		Let $(X,d)$ be a compact, doubling, uniformly perfect metric space. Let $(\sS,D_\sS)$ be a hyperbolic filling with parameters $\lambda,a$ satisfying \eqref{e:tech}, \eqref{e:singleton} as given in Lemma \textup{\ref{l:con}-(a)}.
		Let $\sC: \sS \to (0,\infty)$ be a gentle function and let $\beta>0$.
		Let $\rho:\sC\to (0,1)$ be a weight function that satisfies \hyperlink{H1}{$\on{(H1)}$}, \hyperlink{H2}{$\on{(H2)}$},  \hyperlink{H3}{$\on{(H3)}$}, and $(\beta,\sC)$-compatibility.
		Let $\theta=\theta_\rho \in \sJ(X,d)$ denote the metric in Theorem \textup{\ref{t:qscomb}-(a)} and $\mu$ denote the measure on $X$ constructed in Lemma \textup{\ref{l:meas}}.
		Then, there exist $C_1>1$ such that
		\begin{equation} \label{e:meas} 
		C_1^{-1} r^\beta \sC(B) \le \mu(B_\theta(x,r))  \le   C_1 r^\beta \sC(B),
		\end{equation}
		for all $x \in X, r < \diam(X,\theta) ,B \in \sA_\sS(B_d(x,s))$, where $s$ is the largest number in $[0,2 \diam(X,d)]$ such that $B_d(x,s) \subset B_\theta(x,r)$ (as defined in \eqref{e:defs}) and $\sA_\sS(B_d(x,s))$ is as given in Definition \textup{\ref{d:AS}}.
	\end{proposition}
	
	\begin{proof}
	By an easy covering argument using the metric doubling property, it suffices to consider the case $r < \diam(X,\theta)/2$, so that $B_\theta(x,r) \neq X$.
	
	Let $x \in X$, $0<r<\diam(X,\theta)/2$ and let $s=\sup\{ s_1>0: B_d(x,s_1) \subset B_\theta(x,r)\}$.
	By Lemma \ref{l:meas}, $\mu$ satisfies \ref{VD} in $(X,d)$.
	By \eqref{e:ann1} and in  \eqref{e:VDimprove},  $\mu$ satisfies \ref{VD} in $(X,\theta)$ and there exists $C_2>1$ such that 
	\begin{equation} \label{e:mms1}
	C_2^{-1} \mu(B_d(x,s)) \le \mu(B_\theta(x,r))  \le  C_2 \mu(B_d(x,s)).
	\end{equation}
	By \eqref{e:ann1}, there exists $A_1>1$ such that $B_\theta(x,r) \subset A_1 \cdot B$ and $B \subset B_\theta(x,A_1r)$ for all $B \in  \sA_\sS(B_d(x,s))$.
	Hence by \eqref{e:diamQS}  and uniform perfectness, there exists $C_3>1$ such that
	\begin{equation} \label{e:mms2}
	C_3^{-1}  r  \le \diam(B,\theta)  \le  C_3  r, \quad \mbox{for all $B \in  \sA_\sS(B_d(x,s))$. }
	\end{equation}
	By \eqref{e:diam}, \eqref{e:mms1}, and \eqref{e:mms2}, we obtain \eqref{e:meas}.
	\end{proof}
	
	\subsection{Simplified hypotheses for construction of metric and measure}
	The goal of this section is to present an analogue of \cite[Theorem 1.2]{Car13} that will be used in the construction of metric measure space.
	Some of the main ideas in the proof of \cite[Theorem 1.2]{Car13} are  inspired by the `weight-loss program' of Keith and Laakso \cite[\textsection 5.2]{KL}.

	We continue to consider  a compact, doubling, uniformly perfect metric space $(X,d)$, and a hyperbolic filling $\sS_d= (\sS,D_\sS)$ with horizontal parameter $\lambda \ge 8$ and vertical parameter $a>1$ that satisfy \eqref{e:singleton}. 
	We consider $\beta>0$, $\sC:\sS \to (0,\infty)$ such that $\sC$ is gentle.
	Theorem \ref{t:suff} provides simpler sufficient conditions (S1), (S2) that allows us to construct a weight function that satisfies  \hyperlink{H1}{$\on{(H1)}$},  \hyperlink{H2}{$\on{(H2)}$},  \hyperlink{H3}{$\on{(H3)}$}, and is $(\beta,\sC)$-compatible.
	To state the sufficient conditions, we recall the following definition. 
	\begin{definition} \label{d:Gamk}
			For $B \in \sS_k, k \ge 0$, we define $\Gamma_{k+1}(B)$ 
			as the set of horizontal paths $\gamma=\giveset{B_i}_{i=1}^N, N \ge 2$ such that $B_i \in \sS_{k+1}$ for all $i=1,\ldots,N$,  $B_i \sim B_{i+1}$ for all $i=1,\ldots,N-1$, $x_{B_1} \in B$, $x_{B_N} \notin 2\cdot B$,  and $x_{B_i} \in 2 \cdot B$ for all $i=1,\ldots,N-1$.
	\end{definition}
	We introduce a subadditive estimate based on \cite[Proposition 3.15]{BM}.
	\begin{definition} \label{defE}
			We say that $B \in \sS_k, k \ge 1$ is {\em non-peripheral} if every horizontal neighbour of $B$ descends from the same parent. More precisely, $B \in \sS_k, k \ge 1$ is non-peripheral if
			\[
			B \sim B' \mbox{ implies that } g(B)_{k-1} = g(B')_{k-1}.
			\]
			By $\sN$ we denote the set of all non-peripheral vertices in $\sS$. We say that a function $\sC:\sS \to (0,\infty)$ satisfies \hypertarget{E}{$\on{(E)}$} if it obeys the following estimate:
			\begin{enumerate}[label=\textup{(E)},align=left,leftmargin=*,topsep=5pt,parsep=0pt,itemsep=2pt]
				\item[(E)] there exists $\delta\in(0,1)$ such that 
				\[
				\sC(B) \le (1-\delta) \sum_{B' \in \sN \cap \sD_{k+1}(B)} \sC(B') 
				\]
				for all $B \in \sS_{k}, k \ge 1$.
			\end{enumerate}
			In particular, the condition (E) implies $\sN \cap \sD_{k+1}(B) \neq \emptyset$ for all $k \ge 1, B \in \sS_k$.
	\end{definition}
	The following result is an analogue of \cite[Theorem 1.2]{Car13}.
	\begin{theorem}  \label{t:suff}
		Let $(X,d)$ be a compact, $K_D$-doubling, $K_P$-uniformly perfect metric space  and let $\beta>0$. 
		Consider a hyperbolic filling $\sS_d= (\sS,D_\sS)$ with horizontal parameter $\lambda \ge 8$ and vertical parameter $a>1$ that satisfies \eqref{e:singleton}. 
		Let $\sC: \sS \to (0,\infty)$ be a $(K_h,K_v)$-gentle function that satisfies  \hyperlink{E}{$\on{(E)}$}.  Then, there exists $\eta_0 \in (0,1)$ that depends only on $\beta, K_D,K_h, \lambda$ (but not on the vertical constants $a, K_v$ or uniform perfectness constant $K_P$) such that
		the following is true.
		If there exists a function $\sigma:\sS\to \left[0,\frac 1 4\right)$ that satisfies:
		\begin{enumerate}[label=\textup{(S\arabic*)},align=left,leftmargin=*,topsep=5pt,parsep=0pt,itemsep=2pt]
			\item[\hypertarget{S1}{$\on{(S1)}$}]for all $B \in \sS_k, k \ge 0$, if $\gamma=\giveset{B_i: 1 \le i \le N}$ is a path in $\Gamma_{k+1}(B)$ (as given in Definition \textup{\ref{d:Gamk}}), then
			\[
			\sum_{i=1}^N \sigma(B_i) \ge 1,
			\]
			\item[\hypertarget{S2}{$\on{(S2)}$}]and for all $k \ge 0$, and all $B \in \sS_k$, we have
			\[
			\sum_{B' \in \sD_{k+1}(B)}\sigma(B')^{\beta}\sC(B') \le \eta_0 \sC(B),
			\]
		\end{enumerate}
		then there exists a weight function $\rho: \sS \to (0,1)$ that satisfies  \hyperlink{H1}{$\on{(H1)}$},  \hyperlink{H2}{$\on{(H2)}$},  \hyperlink{H3}{$\on{(H3)}$}, and is $(\beta,\sC)$-compatible.
	\end{theorem}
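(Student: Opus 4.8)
\emph{Overall plan.} I would prove this by adapting the construction in \cite[\S2, proof of Theorem~1.2]{Car13} (which is in the spirit of the Keith--Laakso weight-loss argument \cite[\S5.2]{KL}), with the gentle function $\sC$ substituted for the Ahlfors-regular volume used there and with \hyperlink{E}{$\on{(E)}$} playing the role of a coercivity estimate that was automatic in the Ahlfors-regular situation. The four conclusions are of three different kinds: \hyperlink{H1}{$\on{(H1)}$} is a pointwise two-sided bound on $\rho$; \hyperlink{H2}{$\on{(H2)}$} is horizontal gentleness of the \emph{product} $\pi(B)=\prod_{B'\in g(B)}\rho(B')$; \hyperlink{H3}{$\on{(H3)}$} is a lower bound on $\rho$-lengths of curves crossing annuli; and $(\beta,\sC)$-compatibility is a two-sided comparison of the descendant $\beta$-mass $\sum_{B'\in\sD_n(B)}\pi(B')^\beta\sC(B')$ with $\pi(B)^\beta\sC(B)$. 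The dictionary I would set up is: \hyperlink{S1}{$\on{(S1)}$} gives \hyperlink{H3}{$\on{(H3)}$}; \hyperlink{S2}{$\on{(S2)}$} gives the upper half of compatibility and, via the pointwise smallness $\sigma(B')\le(\eta_0K_h)^{1/\beta}$ for all $B'\in\sS$ of positive level (which is immediate from $\on{(S2)}$ and gentleness of $\sC$), the slack needed for $\on{(H1)}$; \hyperlink{E}{$\on{(E)}$} gives the lower half of compatibility; and \hyperlink{H2}{$\on{(H2)}$} will be built in.

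\emph{Construction.} Following \cite{Car13}, I would first produce from $\sigma$ a horizontally gentle ``shape'' $\pi_0:\sS\to(0,\infty)$ with $\pi_0(w)=\tfrac12$ that contracts along each vertical edge by a factor lying in a fixed subinterval of $(0,1)$ and that dominates, up to a constant, the $\sigma$-length of every horizontal path reaching a given vertex from the root region; the hypothesis $\sigma<\tfrac14$ is exactly what confines this contraction to a fixed subinterval, and $\on{(S2)}$, forcing $\sigma$ to be uniformly small once $\eta_0$ is small, is what keeps that subinterval short. Writing $\rho_0(B):=\pi_0(B)/\pi_0(g(B)_{n-1})$ for $B\in\sS_n$, $n\ge1$ (and $\rho_0(w):=\pi_0(w)$), one gets a weight with $\prod_{B'\in g(B)}\rho_0(B')=\pi_0(B)$, so $\on{(H2)}$ holds for $\pi_0$ by construction, $\rho_0\ge c\,\sigma$ keeps $\on{(S1)}$, and $\rho_0$ is $M$-gentle with $M$ depending only on horizontal data. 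The new step, needed because $(\beta,\sC)$-compatibility must hold with constants that do not deteriorate over scales, is to renormalize the weight one generation at a time so that the single-scale mass balance is \emph{exact}: for $B\in\sS_k$ set $r(B):=\bigl(\sC(B)/\sum_{B'\in\sD_{k+1}(B)}\rho_0(B')^\beta\sC(B')\bigr)^{1/\beta}$ and define $\rho(B'):=r\bigl(g(B')_{k}\bigr)\,\rho_0(B')$ for $B'\in\sS_{k+1}$. Then $\sum_{B'\in\sD_{k+1}(B)}\rho(B')^\beta\sC(B')=\sC(B)$ for every $B$, and iterating over generations gives $\sum_{B'\in\sD_n(B)}\pi(B')^\beta\sC(B')=\pi(B)^\beta\sC(B)$ for all $n>k$, i.e.\ $(\beta,\sC)$-compatibility with constant $1$. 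Any single-scale comparison with a constant $\neq 1$ would compound geometrically in $n$, so exactness here is not a luxury.

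\emph{Checking the conclusions.} The renormalizing factors $r(B)$ must be controlled from above and below. From $\on{(E)}$, $\sum_{B'\in\sD_{k+1}(B)}\rho_0(B')^\beta\sC(B')\ge(\inf_\sS\rho_0)^\beta\sum_{B'\in\sD_{k+1}(B)}\sC(B')\ge(1-\delta)^{-1}(\inf_\sS\rho_0)^\beta\sC(B)$, so $r(B)\le(1-\delta)^{1/\beta}/\inf_\sS\rho_0$; an upper bound for the same sum, hence a lower bound for $r(B)$, follows from gentleness of $\sC$ and the covering-type fact that the level-$(k+1)$ vertices feeding into $\pi_0$ over $\sD_{k+1}(B)$ lie in boundedly many pieces, each controlled by $\on{(S2)}$ at a neighbour of $B$. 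Since $\rho_0$ lies in a \emph{short} fixed subinterval of $(0,1)$, the resulting two-sided bound on $\rho(B')=r(g(B')_k)\rho_0(B')$ falls inside $(0,1)$, which is $\on{(H1)}$; the width of the admissible subinterval for $\rho_0$ is what dictates how small $\eta_0$ must be taken. For $\on{(H2)}$, multiplication by $r$ preserves horizontal gentleness of the product because $r$ is itself gentle (being built from $\sC$ and from $B\mapsto\sum_{B'\in\sD_{k+1}(B)}\rho_0(B')^\beta\sC(B')$), and the parents of horizontally adjacent level-$(k+1)$ balls are horizontally adjacent at level $k$; an induction on the level then transfers gentleness of $\pi_0$ to $\pi$. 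For $\on{(H3)}$, given $\gamma\in\Gamma_n(x,y)$ I would cut it at level $m_\alpha(x,y)$, where it must cross an annulus about some $C\in c_\alpha(x,y)$; along that sub-path $\pi\asymp\pi(C)=\pi(c_\alpha(x,y))$ by $\on{(H2)}$ and $\rho\gtrsim\sigma$, so $L_\rho(\gamma)\ge\sum_i\pi(B_i)\gtrsim\pi(c_\alpha(x,y))\sum_i\sigma(B_i)\gtrsim\pi(c_\alpha(x,y))$ by $\on{(S1)}$, the coarser vertices of $\gamma$ only adding to $L_\rho(\gamma)$.

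\emph{Main obstacle.} The genuinely hard part is the quantitative bookkeeping: I must verify that the threshold $\eta_0$ can be chosen depending \emph{only} on $\beta,K_D,K_h,\lambda$ — not on $a$, $K_v$, or $K_P$ — which forces every estimate used for $\on{(H2)}$ and for the bound $\rho<1$ in $\on{(H1)}$ to be expressed through horizontal data alone, and I must keep the multiplicative distortions in the infinite product $\pi=\prod\rho$ from accumulating. These two demands pull against each other — compatibility forces the generation-wise renormalization to be exact, while $\on{(H2)}$ forces $\pi$, and not merely $\rho$, to stay gently varying — and reconciling them (already the crux of \cite{Car13}) while simultaneously carrying the weight $\sC$ and drawing the lower mass bound from $\on{(E)}$ rather than from Ahlfors regularity is where the real work lies; the remaining estimates are a lengthy but routine transcription of those in \cite{Car13}.
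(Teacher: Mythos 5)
Your overall architecture matches the paper's: build from $\sigma$ a preliminary gentle weight (the paper's $\hat\rho$, obtained from $\tilde\tau=2\max\{\sigma\vee\eta_-\text{ over a $2$-neighbourhood}\}$ via Lemmas \ref{l:car-2.14} and \ref{l:car2.13}), then renormalize one generation at a time so that the single-scale mass balance $\sum_{B'\in\sD_{k+1}(B)}\rho(B')^{\beta}\sC(B')=\sC(B)$ is \emph{exact}, with \hyperlink{E}{$\on{(E)}$} supplying the coercivity that bounds the renormalization and \hyperlink{S2}{$\on{(S2)}$} supplying the room to renormalize upward. You correctly identify that exactness is indispensable. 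The gap is in \emph{how} you renormalize: you multiply \emph{all} children of $B$ by a common factor $r(B)=\bigl(\sC(B)/\sum_{B'}\rho_0(B')^{\beta}\sC(B')\bigr)^{1/\beta}$, and this breaks both \hyperlink{H2}{$\on{(H2)}$} and the upper bound in \hyperlink{H1}{$\on{(H1)}$}. For \hyperlink{H2}{$\on{(H2)}$}: with your definition $\pi(B)=\mathrm{const}\cdot\pi_0(B)\prod_{j=0}^{|B|-1}r(g(B)_j)$, and two horizontally adjacent balls at level $n$ generically have genealogies that are merely adjacent (not equal) at every level back to the root, so even if $r(g(B)_j)/r(g(B')_j)\in[K^{-1},K]$ at each level the product ratio can be $K^{n}$; "gentleness of $r$ plus induction on the level" is exactly the accumulation of multiplicative errors you name as the main obstacle, and your proposal does not actually defeat it. For \hyperlink{H1}{$\on{(H1)}$}: \hyperlink{E}{$\on{(E)}$} only gives $r(B)\le(1-\delta)^{1/\beta}/\min_{B'\in\sD_{k+1}(B)}\rho_0(B')$, so $r(B)\rho_0(B'')$ can exceed $1$ unless $\max\rho_0/\min\rho_0$ is close to $1$; but the lower end of the admissible range for $\rho_0$ is forced to be of order $(\eta_0/(K_vD_v))^{1/\beta}$ (note also that your pointwise bound on $\sigma$ from \hyperlink{S2}{$\on{(S2)}$} involves $K_v$, not $K_h$), and $K_v$ and $D_v$ depend on $a$, which $\eta_0$ is expressly forbidden to depend on. So you cannot shrink the interval by choosing $\eta_0$.

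The paper's resolution is structurally different at exactly this point: instead of multiplying all children, it sets $\rho(B')=\omega_B\vee\hat\rho(B')$ for the \emph{non-peripheral} children $B'\in\sD_{k+1}(B)\cap\sN$ only, leaving peripheral children untouched, with $\omega_B$ chosen by the intermediate value theorem to make the mass balance exact (\hyperlink{E}{$\on{(E)}$} guarantees $\sD_{k+1}(B)\cap\sN\neq\emptyset$ and forces $\omega_B\le(1-\delta)^{1/\beta}$, giving $\rho\le\frac12\vee(1-\delta)^{1/\beta}<1$ with no dependence on $a$ or $K_v$). The point of restricting to non-peripheral children is precisely to kill the accumulation you worry about: if $B\sim B'$ at level $k$ and their genealogies first diverge after level $n$, then $g(B)_i\sim g(B')_i$ with distinct parents for all $i\ge n+2$, so these ancestors are all peripheral and hence carry the \emph{unmodified} weight $\hat\rho$; the modification therefore enters the ratio $\pi(B)/\pi(B')$ at the single generation $n+1$, and \hyperlink{H2}{$\on{(H2)}$} for $\pi$ reduces to \hyperlink{H2}{$\on{(H2)}$} for $\hat\pi$ times two bounded factors. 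You would need to replace your uniform renormalization by this (or an equivalent) device for the proof to close.
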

	We recall some results from \cite{Car13} that goes into the proof of Theorem \ref{t:suff}.
	
	Let $\rho:\sS \to (0,\infty)$ be a function, we define $\rho^*:\sS \to (0,\infty)$ as 
	\[
	\rho^*(B)= \min_{B' \sim B} \rho(B), \hspace{3mm} \mbox{for $B \in \sS$.}
	\]
	We recall that $B \sim B'$ if there exists $k \ge 0$ such that $B, B' \in \sS_k$ and $(\lambda\cdot B) \cap(\lambda \cdot B') \neq \emptyset$.
	If $\gamma=\giveset{B_i}_{i=1}^N$ is a horizontal path, we define
	\[
	L_h(\gamma,\rho)= \sum_{j=1}^{N-1} \rho^*(B_j) \wedge \rho^*(B_{j+1}).
	\]
	\begin{proposition}[{\cite[Proposition 2.9]{Car13}}] \label{p:car2.9}
		Let $(X,d)$ be a compact, doubling, and uniformly perfect metric space. Let $\sS$ be a hyperbolic filling with parameters $a$ and $\lambda$ satisfying \eqref{e:tech}.
		Assume that $\rho: \sS \to (0,1)$ satisfies  \hyperlink{H1}{$\on{(H1)}$},  \hyperlink{H2}{$\on{(H2)}$}, and
		also the condition
		\begin{enumerate}[label=\textup{(H\arabic*')},align=left,leftmargin=*,topsep=5pt,parsep=0pt,itemsep=2pt]
			\item[\hypertarget{H3'}{$\on{(H3')}$}]for all $k \ge 1$, all $B \in \sS_k$ and all $\gamma \in \Gamma_{k+1}(B)$, it holds that $L_h(\gamma,\rho) \ge 1$.
		\end{enumerate}
		Then the function $\rho$ also satisfies  \hyperlink{H3}{$\on{(H3)}$}.
	\end{proposition}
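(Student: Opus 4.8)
Fix distinct points $x,y\in X$, put $m:=m_\alpha(x,y)$, and choose $C\in c_\alpha(x,y)$ realizing $\pi(c_\alpha(x,y))=\pi(C)$. I would first record the elementary consequences of \hyperlink{H1}{$\on{(H1)}$} and \hyperlink{H2}{$\on{(H2)}$}: along a genealogy $\pi$ is multiplied at each step by a factor in $[\eta_-,\eta_+]\subset(0,1)$, and across a horizontal edge it changes by a factor in $[K_0^{-1},K_0]$; hence $\pi$ is $\kappa$-gentle on $\sS_d$ with $\kappa:=\max(\eta_-^{-1},K_0)$, and in particular $\pi(B)\asymp\pi(B')$ whenever $B,B'$ are joined in $\sS$ by a path of length bounded by an absolute constant. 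Using the definition of $m_\alpha$ together with the covering and separation properties of the filling and the bound $d(x,y)\ge 2(\alpha-1)a^{-m-1}$, I would fix an integer $j_0=j_0(a,\alpha)$ and verify: for every $\gamma=\{B_i\}_{i=1}^N\in\Gamma_n(x,y)$ with $n$ large enough in terms of $m$, the ancestor $A:=g(B_1)_{m+j_0}\in\sS_{m+j_0}$ satisfies $x_{B_1}\in A$, $x_{B_N}\notin 2\cdot A$, and is joined to $C$ by a path in $\sS$ of bounded length, so that $\pi(A)\asymp\pi(C)$. Thus \hyperlink{H3}{$\on{(H3)}$} follows (with $n_0$ that large number) once we produce $K_1$, depending only on the filling parameters, $\eta_\pm$, $K_0$ and the constant in \hyperlink{H3'}{$\on{(H3')}$}, such that $L_\rho(\gamma)\ge K_1^{-1}\pi(A)$ for all such $\gamma$.

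This is the case $B=A$, $\gamma'=\gamma$ of the following \emph{crossing estimate}: there is $c_0>0$ (same dependence) such that, for every $k\ge 1$, every $B\in\sS_k$ and every path $\gamma'=\{B'_i\}_{i=1}^M$ in $\sS$ with $x_{B'_1}\in B$ and $x_{B'_M}\notin 2\cdot B$, one has $L_\rho(\gamma')\ge c_0\,\pi(B)$. To prove it I would pass to a simple subpath (this only decreases $L_\rho$) and note, via $\kappa$-gentleness, that if $B'_1$ already lies at a level $\le k$, or if $\gamma'$ reaches a level $\le k$ before its centre leaves $2\cdot B$, then $\pi$ at the relevant vertex is $\gtrsim\pi(B)$ and we are done. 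Truncating $\gamma'$ at the first exit of its centre from $2\cdot B$, we are left with a \emph{crossing of $B$ at levels $\ge k+1$}, which I would bound by downward induction on the \emph{span} $s:=\bigl(\max_i\mathrm{level}(B'_i)\bigr)-k\ge 1$. For $s=1$ the crossing is horizontal at level $k+1$ and contains a subpath $\gamma'_0\in\Gamma_{k+1}(B)$; since the parent of every vertex $B'_j$ of $\gamma'_0$ is $\sim B$, one has $\pi(B'_j)=\rho(B'_j)\,\pi(g(B'_j)_k)\gtrsim\rho^*(B'_j)\,\pi(B)$, whence, by \hyperlink{H3'}{$\on{(H3')}$},
\[
L_\rho(\gamma')\ \ge\ \sum_{B'_j\in\gamma'_0}\pi(B'_j)\ \gtrsim\ \pi(B)\,L_h(\gamma'_0,\rho)\ \ge\ \pi(B).
\]
For $s\ge 2$, group the vertices of $\gamma'$ at level $k+s$ into maximal consecutive runs sharing a parent in $\sS_{k+s-1}$, replace each run by that parent (a routine adjustment of the dilation constants, as in Definition~\ref{d:Gamk}, absorbs the resulting centre-shift), and obtain a crossing $\gamma''$ of $B$ of span $s-1$; the inductive bound gives $L_\rho(\gamma'')\ge c(s-1)\pi(B)$, and this is transported back to $L_\rho(\gamma')$ via the relation between a run and its parent.

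The step I expect to be the main obstacle is exactly this transport. The crude estimate $\sum_{B'_i\in\,\mathrm{run}}\pi(B'_i)=\pi(\mathrm{parent})\sum_{B'_i\in\,\mathrm{run}}\rho(B'_i)\ge\eta_-\,\pi(\mathrm{parent})$ loses a factor $\eta_-$ at every scale, so over the range $[m,n]$ of scales crossed by $\gamma$ — which is unbounded — the constant would degenerate. Avoiding this requires using \hyperlink{H3'}{$\on{(H3')}$} simultaneously at the intermediate scales together with $\kappa$-gentleness, to show that a run doing more than stepping between adjacent children of its parent must contain proportionally many vertices, so that $\sum_{\mathrm{run}}\pi(B'_i)\gtrsim\pi(\mathrm{parent})$ with a scale-independent constant; this is the discrete counterpart of the quasi-subadditivity of combinatorial modulus across scales and is the heart of the ``weight-loss'' argument of Keith--Laakso exploited in \cite[Theorem~1.2]{Car13}. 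Once the crossing estimate holds with a uniform $c_0$, the reduction of the first paragraph completes the proof. One may alternatively run the whole argument on the boundary of the hyperbolic space $(\sS,D_\rho)$ provided by Lemma~\ref{l:bilip}(a)--(d) (which needs only \hyperlink{H1}{$\on{(H1)}$} and \hyperlink{H2}{$\on{(H2)}$}), the point being that \hyperlink{H3'}{$\on{(H3')}$} is precisely what forces the visual parameter there to be taken equal to $e$.
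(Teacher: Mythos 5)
Your overall strategy---reduce \hyperlink{H3}{$\on{(H3)}$} to a scale-uniform ``crossing estimate'' for a fixed ancestor $A\in\sS_{m+j_0}$ of the initial cell, and then lift the one-level condition \hyperlink{H3'}{$\on{(H3')}$} to crossings of arbitrary depth by induction on the span---is a reasonable plan, and your reduction and the base case $s=1$ are fine modulo routine checks. But the proof is not complete: the inductive ``transport'' step that you yourself flag is exactly where it breaks, and it is not a technicality. Replacing a maximal run of level-$(k+s)$ vertices with a common parent by that parent only gives $\sum_{\mathrm{run}}\pi(B_i')=\pi(\mathrm{parent})\sum_{\mathrm{run}}\rho(B_i')\ge\eta_-\,\pi(\mathrm{parent})$, so the constant in your induction decays like $\eta_-^{\,s}$, and since the span $n-k$ is unbounded over $\gamma\in\Gamma_n(x,y)$ this yields nothing. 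The patch you sketch---that a run must contain ``proportionally many vertices'' so that $\sum_{\mathrm{run}}\rho(B_i')\gtrsim 1$ with a scale-independent constant---is neither proved nor true as stated: a run can consist of a single cell (the path may dip down one level for a single step, or hop between children of distinct adjacent parents one at a time), and \hyperlink{H3'}{$\on{(H3')}$} only bounds $L_h$ from below for paths that genuinely cross the parent's double in the sense of Definition \ref{d:Gamk}; a deep-level crossing of $B$ can weave between children of different intermediate cells without ever crossing any single intermediate cell. Producing a lower bound uniform in the number of intermediate scales is precisely the nontrivial combinatorial content of \cite[Proposition 2.9]{Car13}, and your outline does not supply it. Your closing alternative via the boundary of $(\sS,D_\rho)$ is also circular as a proof device: Lemma \ref{l:bilip}(e), where the visual-metric lower bound is obtained, already assumes \hyperlink{H3}{$\on{(H3)}$}.

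For comparison, the paper does not reprove this statement at all: Proposition \ref{p:car2.9} is invoked directly from \cite[Proposition 2.9]{Car13}, with only two remarks---that the extra hypothesis (H4) appearing there is not used in Carrasco Piaggio's proof, and that stating \hyperlink{H3'}{$\on{(H3')}$} for $k\ge1$ is equivalent to stating it for $k\ge0$ since $\Gamma_1(B)=\emptyset$ for $B\in\sS_0$. So an acceptable write-up here would either cite \cite{Car13} with those observations or give a genuinely self-contained argument; what you have is an outline whose decisive step is acknowledged but left open.
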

	\cite[Propostion 2.9]{Car13} also assumes an additional assumption (H4) which was not used in the proof. In \cite{Car13}, the condition  \hyperlink{H3'}{$\on{(H3')}$} was stated for $k \ge 0$ but it is equivalent to the above condition because $\Gamma_1(B) = \emptyset$ for $B \in \sS_0$.
	\begin{lemma}[{\cite[Lemma 2.13]{Car13}}] \label{l:car2.13}
		Suppose we have a function $\pi_0 :\sS_k \to (0,\infty)$ such that 
		\[
		\forall B \sim B' \in \sS_k, \hspace{2mm}\frac 1 K \le \frac{\pi_0(B)}{\pi_0(B')} \le K,
		\]
		where $K \ge 1$ is a constant. Suppose that we have a function $\pi_1:\sS_{k+1} \to (0,\infty)$ which satisfies the following property:
		\[
		\forall B \in \sS_{k+1}, \exists A \in \sS_k \mbox{ with } d(x_B,x_A) \le 4a^{-k} \mbox{ and } 1\le \frac{\pi_0(A)}{\pi_1(B)} \le K.
		\]
		Define $\hat{\pi}_1:\sS_{k+1} \to (0,\infty)$ as 
		\[
		\hat{\pi}_1(B') = \pi_1(B') \vee \left(\frac 1 K \max \giveset{\pi_1(B): B \sim B'}\right).
		\]
		Then, for all $B \sim B' \in \sS_{k+1}$,  we have
		\[
		\frac 1 K \le \frac{\hat\pi_1(B)}{\hat\pi_1(B')} \le K.
		\]
		
	\end{lemma}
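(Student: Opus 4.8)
The plan is to follow the elementary case analysis behind \cite[Lemma 2.13]{Car13}, reducing everything to a single distance estimate in the filling. For $C\in\sS_{k+1}$ write $M(C):=\max\{\pi_1(B):B\sim C\}$, so that $\hat\pi_1(C)=\pi_1(C)\vee\tfrac1K M(C)$. Since $\sim$ is reflexive on $\sS_{k+1}$ one has $M(C)\ge\pi_1(C)$, hence $\tfrac1K M(C)\le\hat\pi_1(C)\le M(C)$; this two-sided bound, together with the definition of $M$, is all I will use about $\hat\pi_1$. Because the asserted inequality is symmetric under interchanging $B$ and $B'$, it suffices to prove $\hat\pi_1(B)\le K\,\hat\pi_1(B')$ for every pair $B\sim B'$ in $\sS_{k+1}$.

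I would then split into two cases according to which term realizes $\hat\pi_1(B)$. If $\hat\pi_1(B)=\pi_1(B)$, then since $B\sim B'$ the value $\pi_1(B)$ is among those defining $M(B')$, so $\hat\pi_1(B)=\pi_1(B)\le M(B')\le K\,\hat\pi_1(B')$; this case uses nothing about $\pi_0$. If instead $\hat\pi_1(B)=\tfrac1K M(B)$, choose $D\sim B$ in $\sS_{k+1}$ with $M(B)=\pi_1(D)$, so that $\hat\pi_1(B)=\tfrac1K\pi_1(D)$, and now the hypothesis linking $\pi_1$ to $\pi_0$ enters: applying it to $D$ and to $B'$ produces $A_D,A_{B'}\in\sS_k$ with $d(x_D,x_{A_D})\le 4a^{-k}$, $d(x_{B'},x_{A_{B'}})\le 4a^{-k}$, $\pi_1(D)\le\pi_0(A_D)$ and $\pi_0(A_{B'})\le K\,\pi_1(B')$.

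The crux is to show that $A_D$ and $A_{B'}$ either coincide or are horizontal neighbours in $\sS_k$. From $D\sim B\sim B'$ and the fact that $\lambda\cdot C$ has radius $2\lambda a^{-(k+1)}$ for $C\in\sS_{k+1}$, the triangle inequality gives $d(x_D,x_{B'})\le 8\lambda a^{-(k+1)}=(8\lambda/a)\,a^{-k}$; combining with $d(x_D,x_{A_D})\le 4a^{-k}$ and $d(x_{B'},x_{A_{B'}})\le 4a^{-k}$ yields $d(x_{A_D},x_{A_{B'}})\le(8+8\lambda/a)\,a^{-k}$. Under the standing parameter constraints \eqref{e:tech} ($\lambda\ge 32$ and $a\ge 24\lambda$) this is far below $4\lambda a^{-k}$, so the balls $\lambda\cdot A_D$ and $\lambda\cdot A_{B'}$ (each of radius $2\lambda a^{-k}$) intersect, i.e.\ $A_D\sim A_{B'}$ in $\sS_k$. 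The $\pi_0$-gentleness hypothesis then gives $\pi_0(A_D)\le K\,\pi_0(A_{B'})$, and chaining the inequalities, $\pi_1(D)\le\pi_0(A_D)\le K\,\pi_0(A_{B'})\le K^2\,\pi_1(B')\le K^2\,\hat\pi_1(B')$, so that $\hat\pi_1(B)=\tfrac1K\pi_1(D)\le K\,\hat\pi_1(B')$. This finishes the second case and the proof.

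I expect the only delicate point to be the last distance bookkeeping: one must check that the accumulated constant in the bound for $d(x_{A_D},x_{A_{B'}})$ (whose exact numerical value depends on the precise radius and covering conventions of \textsection\ref{s:hf}) stays strictly below $4\lambda a^{-k}$, so that $A_D$ and $A_{B'}$ are genuinely one horizontal step apart. If one only knew they lay at bounded combinatorial distance $N$ in $\sS_k$, the argument would produce $\hat\pi_1$ gentle with the inflated constant $K^{N}$ instead of the sharp constant $K$ asserted; it is precisely the generous lower bound on $a$ in \eqref{e:tech} that rules this out. Everything else is the direct two-case computation above.
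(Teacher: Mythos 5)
Your proof is correct, and since the paper states this lemma as a direct citation of \cite[Lemma 2.13]{Car13} without reproducing the argument, your two-case analysis (the case $\hat\pi_1(B)=\pi_1(B)$ handled by the definition of the maximum, and the case $\hat\pi_1(B)=\tfrac1K\pi_1(D)$ handled by passing through $\pi_0(A_D)\le K\pi_0(A_{B'})$) is exactly the standard route. Your one flagged concern is fine: the bound $d(x_{A_D},x_{A_{B'}})\le(8+8\lambda/a)a^{-k}<4\lambda a^{-k}$ already holds under the basic filling parameters $a>8$, $\lambda\ge 3$ of \textsection\ref{s:hf} (since then $8+8\lambda/a\le 8+\lambda<4\lambda$), so one does not even need the full strength of \eqref{e:tech}, only that $a$ is not taken close to $1$.
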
 
	The following is a slight modification of \cite[Lemma 2.14]{Car13}.
	\begin{lemma}[{Cf.\ \cite[Lemma 2.14]{Car13}}] \label{l:car-2.14}
			Let $G=(V,E)$ be a graph whose vertices have  degree bounded by $D$. Let $\sC:V \to (0,\infty)$ be $K$-gentle; that is, 
			\[
			K^{-1} \le \frac{\sC(z)}{ \sC(z')} \le K , \mbox{ whenever there is an edge between $z$ and $z'$.}
			\]
			Let $\Gamma$ be a family of paths in $G$ and let $\beta>0$. Suppose that  $\tau:V \to (0,\infty)$ is a function satisfying
			\begin{equation} \label{e:adm}
			\sum_{i=1}^{N-1}\tau(z_1) \ge 1, \mbox{ for all paths $\gamma=\giveset{z_i}_{i=1}^N \in \Gamma$.}
			\end{equation}
			Define   $\tilde{\tau}:V \to (0,\infty)$  as
			\[
			\hat{\tau}(x)= 2 \max\giveset{ \tau(y): y \in V_2(x)},
			\]
			where $V_2(x)$ denotes the set of all vertices whose graph distance from $x$ is less than or equal to $2$.
			Then $\tilde{\tau}$ satisfies
			\[
			\sum_{i=1}^{N-1} \tilde{\tau}^*(z_i) \wedge \tilde{\tau}^*(z_{i+1}) \ge 1 \mbox{ for all paths  $\gamma=\giveset{z_i}_{i=1}^N \in \Gamma$,}
			\]
			where $\tilde{\tau}^*(x)= \min \giveset{\tilde{\tau}(y): y \sim x}$, and such that
			\begin{equation} \label{e:cont}
			\sum_{z \in V} \tilde{\tau}(z)^\beta \sC(z) \le 2^\beta  D^2 K^2 \sum_{z \in V} \tau(z)^\beta \sC(z).
			\end{equation}
	\end{lemma}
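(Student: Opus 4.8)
The plan is to prove the two assertions of the lemma separately; both follow from one elementary observation about the enlargement $\hat\tau$ (the function denoted $\tilde\tau$ in the conclusion of the statement), which is well defined since $G$ has bounded degree, so that each $V_2(x)$ is a finite set and the maximum makes sense. Throughout write $\hat\tau^{*}(x)=\min\set{\hat\tau(y):y\sim x}$ as in the statement.

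First I would record the key estimate: for \emph{any} two adjacent vertices $z,z'$ of $G$,
\[
\hat\tau^{*}(z)\wedge\hat\tau^{*}(z')\ \ge\ 2\max\set{\tau(z),\tau(z')}.
\]
To see this, fix $y\sim z$; then the graph distance from $y$ to $z$ equals $1$ and the graph distance from $y$ to $z'$ is at most $2$, so both $z$ and $z'$ lie in $V_2(y)$, whence $\hat\tau(y)=2\max_{w\in V_2(y)}\tau(w)\ge 2\max\set{\tau(z),\tau(z')}$. Taking the minimum over all $y\sim z$ gives $\hat\tau^{*}(z)\ge 2\max\set{\tau(z),\tau(z')}$, and the same argument with the roles of $z$ and $z'$ exchanged yields the bound for $\hat\tau^{*}(z')$. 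The path inequality is then immediate: given $\gamma=\set{z_i}_{i=1}^{N}\in\Gamma$, apply this to each consecutive pair and use $2\max\set{a,b}\ge a+b$ to obtain
\[
\sum_{i=1}^{N-1}\hat\tau^{*}(z_i)\wedge\hat\tau^{*}(z_{i+1})\ \ge\ \sum_{i=1}^{N-1}\bigl(\tau(z_i)+\tau(z_{i+1})\bigr)\ \ge\ \sum_{i=1}^{N}\tau(z_i)\ \ge\ 1,
\]
the last step being the admissibility hypothesis \eqref{e:adm}.

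For the content bound \eqref{e:cont}, for each $z\in V$ choose $w_z\in V_2(z)$ realising the maximum defining $\hat\tau(z)$, so that $\hat\tau(z)^{\beta}=2^{\beta}\tau(w_z)^{\beta}$. Since the graph distance from $z$ to $w_z$ is at most $2$ and $\sC$ is $K$-gentle, chaining the gentleness inequality along a geodesic gives $\sC(z)\le K^{2}\sC(w_z)$, hence $\hat\tau(z)^{\beta}\sC(z)\le 2^{\beta}K^{2}\,\tau(w_z)^{\beta}\sC(w_z)$. Summing over $z$ and grouping the terms according to the value of $w_z$,
\[
\sum_{z\in V}\hat\tau(z)^{\beta}\sC(z)\ \le\ 2^{\beta}K^{2}\sum_{w\in V}\#\set{z\in V:w_z=w}\;\tau(w)^{\beta}\sC(w).
\]
Finally $\set{z:w_z=w}\subset\set{z:w\in V_2(z)}$, which is a ball of radius $2$ in a graph of degree at most $D$ and therefore contains at most $D^{2}$ vertices; this gives \eqref{e:cont}.

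The argument is short, and its only real content is the choice of a radius-$2$ (rather than radius-$1$) enlargement: this is exactly what forces $\hat\tau^{*}$, a minimum over neighbours and hence potentially much smaller than $\hat\tau$ itself, to still dominate $\tau$ across every edge; with a radius-$1$ enlargement the path inequality would fail for a path along which $\tau$ oscillates. The one point that needs a little care is the bookkeeping of constants — in particular the uniform bound $\#\set{z:d_G(z,x)\le 2}\le D^{2}$ obtained from the degree bound — since a careless count there would produce a worse constant than the one claimed.
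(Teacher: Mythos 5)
Your proof is correct: for \eqref{e:cont} your argument (pick a maximizer $w_z\in V_2(z)$, chain the $K$-gentleness along a path of length at most $2$ to get $\sC(z)\le K^2\sC(w_z)$, and double-count using $\#\set{z: w_z=w}\le \#V_2(w)\le D^2$) is the same swap-the-order-of-summation computation the paper gives, and for the path inequality you supply explicitly the radius-$2$ argument (every $y\sim z_i$ has both $z_i,z_{i+1}$ in $V_2(y)$, whence $\hat\tau^*(z_i)\wedge\hat\tau^*(z_{i+1})\ge 2\max\set{\tau(z_i),\tau(z_{i+1})}\ge\tau(z_i)+\tau(z_{i+1})$) that the paper only cites from \cite[Lemma 2.14]{Car13}. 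The one harmless imprecision, which you share with the paper's own proof, is that a ball of radius $2$ in a graph of maximum degree $D$ has at most $1+D+D(D-1)=1+D^{2}$ vertices rather than $D^{2}$; this affects only the unimportant numerical constant in \eqref{e:cont}.
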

	
	\begin{proof}
	As shown in \cite[Lemma 2.14]{Car13} the function $\tilde{\tau}$ satisfies \eqref{e:adm}.
	
	Since  $\sC$ is $K$-gentle and $\sup_{x \in V} \abs{V_2(x)} \le D^2$, we obtain
	\begin{align*}
	\sum_{x \in V} \tilde{\tau}(x)^\beta \sC(x) &\le 2^\beta \sum_{x \in V} \sum_{z \in V_2(x)} \tau(z)^\beta \sC(x) \\
	& \le 2^\beta  K^2 \sum_{x \in V} \sum_{z \in V_2(x)} \tau(z)^\beta \sC(z) = 2^\beta  \sum_{z \in X}\sum_{x \in V_2(z)} \tau(z)^\beta \sC(z)\\
	&\le 2^\beta  K^2 D^2 \sum_{z \in X}\tau(z)^\beta \sC(z).
	\end{align*}
	\end{proof}
	
	\begin{proof}[Proof of Theorem \textup{\ref{t:suff}}]
	Let $D_h$ be such that
	\begin{equation} \label{e:Dh}
	D_h \ge  \sup_{k \ge 0} \max_{B \in \sS_k} \abs{\giveset{B' \in \sS_k: B'\sim B}}
	\end{equation}
	By $K_D$-doubling, $D_h$ and can be chosen to depend only on $\lambda$ and $K_D$ \cite[Exercise 10.17]{Hei}. 
	Similarly, the number of children can be bounded by a constant $D_v$ that depends only on $a$ and $K_D$ with
	\begin{equation} \label{e:Dv}
	D_v \ge \sup_{k \ge 0} \max_{B \in \sS_k}   \abs{\sD_{k+1}(B)}.
	\end{equation}
	Take $\eta_0 \in (0,1)$ be a constant which will be fixed later, and set
	\[
	\eta_-:= \left(\eta_0 K_v^{-1} D_v^{-1}\right)^{1/\beta} \wedge \frac 1 4.
	\]
	
	Let $\sigma:\sS \to [0,\frac 1 4)$ satisfy  \hyperlink{S1}{$\on{(S1)}$} and  \hyperlink{S2}{$\on{(S2)}$}.
	Define $\tau= \sigma \vee \eta_-$.
	Then
	\[
	\sum_{B' \in \sD_{k+1}(B)} \tau(B')^\beta \sC(B') \le \sum_{B' \in \sD_{k+1}(B)} \sigma(B')^\beta \sC(B') +\eta_-^\beta D_v K_v \sC(B) \le  2 \eta_0 \sC(B).
	\]
	For $B \in \sS_k$ define $V_{2,k}(B)= \giveset{B' \in \sS_k: \exists B'' \in \sS_k \mbox{ such that } B \sim B'' \sim B}$. Then by Lemma \ref{l:car-2.14}, the function
	\[
	\tilde{\tau}(B)=2 \max\giveset{\tau(B'): B' \in V_{2,k}(B)}, \hspace{2mm} \mbox{ for all $B \in \sS_k$}
	\]
	satisfies  \hyperlink{H3'}{$\on{(H3')}$} and
	\begin{align} \label{e:sf1}
	\sum_{B' \in \sD_{k+1}(B)} \tilde{\tau}(B')^\beta \sC(B') &\le 2^\beta K_h^2 D_h^2 \sum_{B' \in \sD_{k+1}(B)} {\tau}(B')^\beta \sC(B') \nonumber \\
	&\le 2^{\beta +1} K_h^2 D_h^2 \eta_0 \sC(B),  
	\end{align}
	for all $B \in \sS_k$.
	
	We construct $\hat{\rho}:\sS \to (0,1)$ satisfying
	\begin{enumerate}[label=\textup{(\arabic*)},align=left,leftmargin=*,topsep=5pt,parsep=0pt,itemsep=2pt]
		\item\label{it:t:suff-1} $\hat \rho \ge \tilde \tau$. In particular $\hat \rho$ satisfies  \hyperlink{H3'}{$\on{(H3')}$} and $\hat \rho(B) \ge \eta_-$ for all $B \in \sS$.
		\item\label{it:t:suff-2} \hyperlink{H2}{$\on{(H2)}$} with constant $K$, where $K=\eta_-^{-1}$.
		\item\label{it:t:suff-3} $\hat \rho (B) \le \max \giveset{\tilde \tau(B'): B' \sim B}$. In particular, $\hat \rho (B) \le \frac 1 2$ for all $B \in \sS$.
	\end{enumerate}
	We briefly recall the construction in \cite{Car13}. Set $\hat \rho(w)= \frac 1 2$, where $w \in \sS_0$. Note that $\tilde{ \tau } \le \frac 1 2 \le 1$ (since $\eta_- \le \frac 1 2$ and $\sigma \le \frac 1 4$). We construct $\hat \rho$ inductively on $\sS_k$.
	Suppose we have constructed $\hat \rho_i$ for $i=1,\ldots,k$. 
	We construct $\hat{\rho}_{k+1}$ 
	using Lemma \ref{l:car2.13}. We denote
	\[
	\pi_0(A)= \prod_{i=0}^k \hat{\rho}_i(g(A)_i) \mbox{ for $A \in \sS_k$ and, } \pi_1(B)= \tilde{\tau}(B) \pi_0(g(B)_j)\mbox{ for $B \in \sS_{k+1}$}.
	\]
	By the induction hypothesis along with Lemma \ref{l:car2.13}, we obtain a function $\hat{\pi}_1:\sS_{j+1} \to (0,\infty)$ that satisfies $K^{-1} \hat \pi_1(B') \le \hat \pi_1(B) \le K \hat \pi_1(B')$ for all $B \sim B' \in \sS_{j+1}$.
	We define $\hat \rho: \sS_{j+1} \to (0,\infty)$ as
	\[
	\hat \rho_{k+1}(B)= \frac{\hat \pi_1(B)}{\pi_0(g(B)_j)}.
	\]
	Carrasco Piaggio's proof of \cite[Theorem 1.2]{Car13} shows that  $\hat \rho$ satisfies properties \ref{it:t:suff-1}, \ref{it:t:suff-2}, and \ref{it:t:suff-3} above.
	For any $B \in \sS_k, k \ge 0$, using \eqref{e:sf1}  we estimate 
	\begin{align}
	\lefteqn{ \sum_{B' \in \sD_{k+1}} \hat \rho (B')^\beta \sC(B')} \nonumber\\
	&\le \sum_{B' \in \sD_{k+1}(B)} \sum_{B'' \sim B'} \tilde{ \tau}(B'')^\beta \sC(B') \mbox{ (by property \ref{it:t:suff-3} above)} \nonumber\\ 
	& \le K_h \sum_{B' \in \sD_{k+1}(B)} \sum_{B'' \sim B'} \tilde{ \tau}(B'')^\beta \sC(B'') \nonumber\\
	&\le K_h D_h \sum_{C \sim B} \sum_{B'' \in \sD_{k+1}(C)}  \tilde{ \tau}(B'')^\beta \sC(B'') \mbox{ ($\because B'' \sim B'$ implies $g(B'')_{k} \sim B$)} \nonumber\\
	&\le 2^{\beta +1} K_h^3 D_h^3 \eta_0  \sum_{C \sim B} \sC(C) \le 2^{\beta +1} K_h^4 D_h^4 \eta_0  \sC(B).  \label{e:sf2}
	\end{align}
	Now choose $\eta_0$ 
	\begin{equation} \label{e:defeta0}
	2^{\beta +1} K_h^3 D_h^3 \eta_0 =\frac 1 2,
	\end{equation}
	so that \eqref{e:sf2} yields
	\begin{equation} \label{e:sf3}
	\sum_{B' \in \sD_{k+1}} \hat \rho (B')^\beta \sC(B') \le \frac 1 2 \sC(B) \hspace{2mm} \mbox{ for all $B \in \sS_k, k \ge 0$}.
	\end{equation}
	Note from \eqref{e:defeta0} that $\eta_0$ depends only on $\beta, K_h, K_D, \lambda$ but not on  constants $K_v, a, K_P$.

	Next, we modify $\hat \rho$ so that it becomes $(\beta,\sC)$-compatible.
	For each $B \in \sS_{k}, k \ge 0$, we choose $\omega_B \ge 0$ such that 
	\begin{equation} \label{e:modify}
	\rho(B')= \begin{cases}
	\omega_B \vee \hat \rho(B') & \mbox{if $B' \in   \sD_{k+1}(B)\cap\sN$,} \\
	\hat \rho(B') &\mbox{if $B' \in  \sD_{k+1}(B) \setminus \sN$.}
	\end{cases}
	\end{equation}
	satisfies
	\begin{equation} \label{e:sf4}
	\sum_{B' \in \sD_{k+1}(B)} \rho(B')^\beta \sC(B')= \sC(B).
	\end{equation}
	The existence of an $\omega_B \in (0,\infty)$ that satisfies \eqref{e:sf4} follows from the intermediate value theorem. In particular, we use \eqref{e:sf3}, the continuity of the map
	\[
	\omega_B \mapsto \sum_{B' \in \sD_{k+1} \cap \sN} (\omega_B \vee \rho (B'))^\beta \sC(B') + \sum_{B' \in \sD_{k+1} \setminus \sN} \hat \rho (B')^\beta \sC(B'),
	\]
	along with the fact that $\sD_{k+1} \cap \sN$ is non-empty.
	The equality \eqref{e:sf4} implies that $\rho$ is $(\beta,\sC)$-compatible since
	\[
	\sum_{B' \in \sD_n(B)} \pi(B')^\beta \sC(B')= \pi(B)^\beta\sC(B)
	\]
	for all $B \in \sS_k$ and for all $n \ge k$. 
	
	It remains to show that $\rho$ satisfies  \hyperlink{H1}{$\on{(H1)}$},  \hyperlink{H2}{$\on{(H2)}$} and  \hyperlink{H3}{$\on{(H3)}$}.
	We start by verifying  \hyperlink{H1}{$\on{(H1)}$}. Clearly $\rho(B) \ge \hat \rho (B) \ge \eta_-$ for all $B \in \sS_k$. On the other hand, \hyperlink{E}{$\on{(E)}$} implies that $\omega_B \le (1- \delta)^{1/\beta}$, since
	\begin{align*}
	\sum_{B' \in \sD_{k+1}(B) \cap \sN} \omega_B^\beta \sC(B') &\le \sum_{B' \in \sD_{k+1}(B)} \rho(B')^\beta \sC(B')= \sC(B)\\
	&\le (1-\delta) \sum_{B'  \in \sD_{k+1}(B) \cap \sN} \sC(B').
	\end{align*}
	This combined with $\sigma \le \frac 1 4$ and property \ref{it:t:suff-3} of $\hat \rho$ implies that
	\[
	\eta_- \le \rho(B) \le (1/2)\vee (1-\delta)^{1/\beta}.
	\]
	By setting $\eta_+=(1/2)\vee (1-\delta)^{1/\beta} \in (0,1)$, we obtain   \hyperlink{H1}{$\on{(H1)}$}.
	
	Since $\rho \ge \hat \rho$, $\rho$ satisfies  \hyperlink{H3'}{$\on{(H3')}$}. Therefore by Proposition \ref{p:car2.9} it suffices to show  \hyperlink{H2}{$\on{(H2)}$}.
	Let $B \sim B' \in \sS_k, k \ge 1$. We consider two cases.\\
	{\em Case 1}: $g(B)_{k-1}=g(B')_{k-1}$. Then $\frac{\pi(B)}{\pi(B')} = \frac{ \rho(B)}{\rho(B')}$ which thanks to  \hyperlink{H1}{$\on{(H1)}$} satisfies
	\[
	\eta_- \le \frac{\pi(B)}{\pi(B')}\le \eta_-^{-1}.
	\]
	{\em Case 2}: $g(B)_{k-1} \neq g(B')_{k-1}$. Let $n \ge 0$ be the maximal integer such that $g(B)_n=g(B')_n$. In this case for $i=n+1, \ldots,k$, we have $g(B)_i \sim g(B')_i$. Hence for $i=n+2,\ldots,k$, $g(B)_i$ and $g(B')_i$ must both be peripheral (belong to $\sN^c$). Therefore
	\begin{align*}
	\frac{\pi(B)}{\pi(B')} &= \frac{\rho(g(B)_{n+1})}{ \rho(g(B')_{n+1})} \prod_{i=n+2}^k  \frac{\rho(g(B)_{i})}{ \rho(g(B')_{i})} = \frac{\rho(g(B)_{n+1})}{ \rho(g(B')_{n+1})} \prod_{i=n+2}^k  \frac{\hat \rho(g(B)_{i})}{ \hat \rho(g(B')_{i})}\\
	&=\frac{\rho(g(B)_{n+1})}{ \rho(g(B')_{n+1})} \frac{ \hat \pi(B)} {\hat \pi (B')} \frac{ \hat \rho(g(B')_{n+1})}{\hat \rho(g(B)_{n+1})}
	\end{align*}
	By combining property \ref{it:t:suff-2} of $\hat \rho$ to estimate $\frac{ \hat \pi(B)} {\hat \pi (B')} $ and $\eta_- \le \hat \rho \le \rho \le 1$ for the remaining terms, we obtain
	\[
	\eta_-^3\le \frac{\pi(B)}{\pi(B')} \le \eta_-^{-3}.
	\] 
	Combining the two cases, we obtain  \hyperlink{H2}{$\on{(H2)}$} with constant $K_0=\eta_-^{-3}$. 
	\end{proof}

	\begin{remark} \label{r:diff}
		One of the key differences between the construction in \cite{Car13} and our work is the proof of Theorem \ref{t:suff}.
		In the construction in \cite{Car13}, a similar modification as defined in \eqref{e:modify} was done but $\sN$ was chosen to be a singleton set. However, that choice does not work in our context because we need to ensure that $\omega_B \le \eta_+$, where $\eta_+ \in (0,1)$. This is because $\sC(B')$ can be strictly smaller than $\sC(B)$. The construction in \cite{Car13} can be interpreted as the particular case $\sC(B)=1$ for all $B \in \sS$.
		The requirement $\eta_+ \in (0,1)$  is the motivation behind the notion of non-peripheral vertices and the enhanced subadditive estimate \hyperlink{E}{$\on{(E)}$}.
	\end{remark}
	
	The following `patching lemma' allows us to combine functions that satisfy local versions of  \hyperlink{S1}{$\on{(S1)}$} and  \hyperlink{S2}{$\on{(S2)}$} into a global one. This is an adaptation of the construction in \cite[pp.\ 533--534]{Car13}. 
	\begin{lemma}[Patching lemma] \label{l:patch}
		Let $\sS$ denote a hyperbolic filling of a $K_D$-doubling, uniformly perfect, compact metric space, and let $\beta, \eta_1>0$. Let $\sS_d= (\sS,D_\sS)$ be a hyperbolic filling  with horizontal parameter $\lambda \ge 8$ and vertical parameter $a>1$ that satisfies \eqref{e:singleton}. 
		Let $\sC: \sS \to (0,\infty)$ be a $(K_h,K_v)$-gentle function.
		Assume that for all $B \in \sS_k, k \ge 1$, there exists $\sigma_B: \sS_{k+1} \to  [0,\frac 1 4)$. such that
		\begin{enumerate}[label=\textup{(\alph*)},align=left,leftmargin=*,topsep=5pt,parsep=0pt,itemsep=2pt]
			\item if we set $V_B=\giveset{ B' \in \sS_{k+1}: B' \cap 3 \cdot B \neq \emptyset}$, then $\sigma_B(B') =0$ for all $B' \in \sS_{k+1} \setminus V_B$.
			\item for any path $\gamma=\giveset{B_i}_{i=1}^N \in \Gamma_{k+1}(B)$, we have
			\[
			\sum_{i=1}^N \sigma_B(B_i) \ge 1,
			\]
			\item and $\sum_{B' \in \sS_{k+1}} \sigma_B (B')^\beta \sC(B') \le \eta_1 \sC(B)$.
		\end{enumerate}
		
		Let $\sigma:\sS \to [0,\frac 1 4)$  be  defined as
		\[
		\sigma(B') = \max \giveset{\sigma_A(B'): A \in \sS_k} 
		\]
		for all $B' \in \sS_{k+1}$ and for all $k \ge 1$, and $\sigma(B')=0$ for all $B'  \in \sS_0 \cup \sS_1$. Then there exists $C_{\ref{l:patch}} \ge 1$ that depends only on $K_D, K_h$ such that $\sigma$ satisfies  \hyperlink{S1}{$\on{(S1)}$} and the estimate
		\[
		\sum_{B' \in \sD_{k+1}(B)} \sigma(B')^\beta \sC(B') \le  C_{\ref{l:patch}} \eta_1 \sC(B).
		\]
	\end{lemma}
	
	\begin{proof}
	For any path $\gamma=\giveset{B_i}_{i=1}^N \in \Gamma_{k+1}(B), B \in \sS_k$, we have 
	$\sum_{i=1}^N \sigma(B_i) \ge \sum_{i=1}^N \sigma_B(B_i) \ge 1$. Therefore $\sigma$ satisfies  \hyperlink{S1}{$\on{(S1)}$}.
	
	For any $B \in \sS_{k+1}$, we have
	\begin{align*}
	\sum_{B' \in \sD_{k+1}(B)} \sigma(B')^\beta \sC(B') &=	\sum_{B' \in \sD_{k+1}(B)} \max \giveset{\sigma_A(B')^\beta : A \in \sS_k}  \sC(B')\\
	& \le 	\sum_{B' \in \sD_{k+1}(B)} \sum_{A: B' \in V_A} \sigma_A(B')^\beta \sC(B') \\
	& \le \sum_{A: V_B \cap V_A \neq \emptyset} \sum_{B' \in V_A} \sigma_A(B')^\beta \sC(B')\\ 
	&\le \sum_{A: V_B \cap V_A \neq \emptyset} \eta_1 \sC(A) \\
	& \le \sum_{A: A \sim B} \eta_1 \sC(A) \mbox{ \hspace{2mm}($\because V_B \cap V_A \neq \emptyset \implies A \sim B$)}\\
	& \le \sum_{A: A \sim B} \eta_1 K_h \sC(B)
	\le D_h K_h \eta_1 \sC(B),
	\end{align*}
	where $D_h$ is chosen as \eqref{e:Dh}.
	\end{proof}
	
	\section{Universality of the conformal walk dimension} \label{sec:dcw-proof}
	
	\subsection{Consequences of Harnack inequalities} \label{ssec:conseq-Harnack}
	In this subsection, we recall some previous results concerning the   elliptic and parabolic Harnack inequalities.
	We start with recalling the definition of the heat kernel and its sub-Gaussian estimates.
	
	\begin{definition}[\hypertarget{hke}{$\on{HKE}(\beta)$}]\label{d:HKE}
		Let $(X,d,m,\mathcal{E},\mathcal{F})$ be an MMD space, and let $\giveset{P_t}_{t>0}$
		denote its associated Markov semigroup. A family $\giveset{p_t}_{t>0}$ of non-negative
		Borel measurable functions on $X \times X$ is called the
		\emph{heat kernel} of $(X,d,m,\mathcal{E},\mathcal{F})$, if $p_t$ is the integral kernel
		of the operator $P_t$ for any $t>0$, that is, for any $t > 0$ and for any $f \in L^2(X,m)$,
		\[
		P_{t} f(x) = \int_X p_{t}(x,y) f (y)\, dm (y) \qquad \mbox{for $m$-a.e.\ $x \in X$.}
		\]
		Let $\beta \in (1,\infty)$.
		We say that $(X,d,m,\mathcal{E},\mathcal{F})$ satisfies the \emph{heat kernel estimates}
		\hyperlink{hke}{$\on{HKE}(\beta)$} with walk dimension $\beta$, if there exist
		$C_{1},c_{1},c_{2}, \delta\in(0,\infty)$ and a heat kernel $\giveset{p_t}_{t>0}$ such that for any $t>0$,
		\begin{align}\label{e:uhke}
		p_{t}(x,y) &\leq \frac{C_{1}}{m\bigl(B(x,t^{1/\beta})\bigr)} \exp \biggl( -c_{1} \Bigl( \frac{d(x,y)^{\beta}}{t} \Bigr)^{\frac{1}{\beta-1}} \biggr)
		\qquad \mbox{for $m$-a.e.\ $x,y \in X$,}\\
		p_{t}(x,y) &\ge \frac{c_{2}}{m\bigl(B(x,t^{1/\beta})\bigr)}
		\qquad \mbox{for $m$-a.e.\ $x,y\in X$ with $d(x,y) \leq \delta t^{1/\beta}$.}
		\label{e:nlhke}
		\end{align}
	\end{definition}
	
	The following condition is the key to establishing the parabolic Harnack inequality
	in the presence of the elliptic Harnack inequality.
	
	\begin{definition}[Capacity estimate] \label{d:cap}
		Let $\beta \in (1,\infty)$. We say that an MMD space $(X,d,m,\sE,\sF)$ satisfies the \emph{capacity estimate} \hypertarget{cap}{$\operatorname{cap}(\beta)$} 	
		if there exist $C_1,A_1,A_2>1$ such that for all $R\in (0,\diam(X,d)/A_2)$, $x \in X$ 
		\begin{equation} \label{eq:capbeta} \tag*{$\operatorname{cap}(\beta)$}
		C_1^{-1} \frac{m(B(x,R))}{R^\beta} \le \Capa(B(x,R),B(x,A_1R)^c) \le C_1 \frac{m(B(x,R))}{R^\beta}.
		\end{equation}
	\end{definition}
Poincar\'e and cutoff Sobolev inequalties are important functional inequalties for obtaining the stability of Harnack inequalties, which we recall below. 
\begin{definition} \label{d:pi-cs}
	Let $(X,d,m,\sE,\sF)$ be an MMD space and let $\beta \in (1,\infty)$.
	\begin{enumerate}[label=\textup{(\roman*)},align=left,leftmargin=*,topsep=5pt,parsep=0pt,itemsep=2pt]
	\item
	We say that $(X,d,m,\sE,\sF)$ satisfies the \emph{Poincar\'e inequality} 
	\hypertarget{pi}{$\operatorname{PI}(\beta)$}, if there exist constants $C>0$ and
	$A\geq 1$ such that for all $x\in X$, $R \in (0,\infty)$ and $f \in \sF$,
	\begin{equation}  \tag*{$\operatorname{PI}(\beta)$}
	\int_{B(x,R)} (f - f_{B(x,R)})^2 \,dm  \le C R^\beta \int_{ B(x,A R)} d \Gamma(f,f) , 
	\end{equation}
	where $f_{B(x,R)} := \frac1{\mu(B(x,R))} \int_{B(x,R)} f\, dm$. 
	\item Let $B_1 \subset B_2$ be open subsets of $X$. We say that $\varphi \in \sF$ is a \emph{cutoff function}
	for $B_1 \subset B_2$ if $0 \leq \varphi \leq 1$ $m$-a.e., $\varphi \equiv 1$ $m$-a.e.\ on $B_1$ and $\supp_{m}[\varphi] \subset B_2$.
	\item We say that $(X,d,m,\sE,\sF)$ satisfies the 
	\emph{cutoff Sobolev inequality} \hypertarget{cs}{$\operatorname{CS}(\beta)$}, 
	if there exist $C_1,C_2,C_3, \eta>0$ and $A>1$ such that the following holds.
	For all $x \in X$, $R > 0$ with $B_1=B(x,R)$, $B_2=B(x,A R)$,
	there exists a cutoff function $\varphi \in \sF \cap \contfunc(X)$ for $B_1 \subset B_2$ such that for any $u \in \sF$,
	\begin{equation} \tag*{$\operatorname{CS}(\beta)$}
	\int_{X} u^2 \,d \Gamma(\varphi,\varphi)  \le C_1  \int_{B_2 \setminus B_1} \Gamma(u,u)
	+ \frac{C_2}{R^\beta} \int_{B_2 \setminus B_1} u^2 \,dm,
	\end{equation}
	and such that the following scale invariant H\"older continuity estimate holds:
	\begin{equation} \label{e:Holder}
	\lvert \varphi(x_1)-\varphi(x_2) \rvert \le C_3 \biggl(\frac{d(x_1,x_2)}{R} \biggr)^\eta
	\end{equation}	for all $x_1,x_2 \in X$.
	\item We say that $(X,d,m,\sE,\sF)$ satisfies the 
	\emph{weak cutoff Sobolev inequality} \hypertarget{csweak}{$\operatorname{CS}_{\operatorname{weak}}(\beta)$}, 
	if \hyperlink{cs}{$\on{CS(\beta)}$} with ``$\varphi \in \sF \cap \contfunc(X)$'' replaced by
	``$\varphi \in \sF$'' and with the H\"older continuity estimate \eqref{e:Holder} dropped holds.
	\end{enumerate}
\end{definition}

	The following lemma shows that, under the above Poincar\'e and cutoff Sobolev inequalities,
	the extended Dirichlet space $\sF_e$ is contained in the space $\sF_{\on{loc}}$ as defined
	in \eqref{e:Floc} of functions locally in the domain $\sF$ of the Dirichlet form $(\sE,\sF)$.
	\begin{lemma} \label{l:extended}
		Let $(X,d,m,\sE,\sF)$ be an MMD space that satisfies \hyperlink{pi}{$\on{PI}(\beta_{1})$} and
		\hyperlink{csweak}{$\on{CS}_{\on{weak}}(\beta_{2})$} for some $\beta_{1},\beta_{2} \in (1,\infty)$.
		Then $\sF_e \subset \sF_{\on{loc}}$.
	\end{lemma}
	
	\begin{proof}
	Let $g \in \sF_e$. Then there exists an $\sE$-Cauchy sequence $\{g_n\}_{n} \subset \sF$
	such that $g_n$ converges to $g$ $m$-a.e. Let $B=B(x,R)$ be any ball.
	By the Poincar\'e inequality \hyperlink{pi}{$\on{PI}(\beta_{1})$}
	the sequence $g_n - (g_n)_B$ is $L^2(B,m)$-Cauchy. Since $g_n$ converges to $g$ $m$-a.e.\ and
	$g_n - (g_n)_B$ is $L^2(B,m)$-Cauchy, we have that $\lim_{n \to \infty} (g_n)_B= g_B$ and that $g_n$ converges to $g$ in $L^2(B,m)$.
	
	Let $A > 1$ be as in \hyperlink{csweak}{$\on{CS}_{\on{weak}}(\beta_{2})$} and let $\varphi$ be a cutoff function
	for $B = B(x,R) \subset B(x,A R)$ as in \hyperlink{csweak}{$\on{CS}_{\on{weak}}(\beta_{2})$}.
	By \cite[Theorem 2.1.7]{FOT} we may assume that $g_n$ is bounded, so that
	$g_n \varphi \in \sF$ by \cite[Theorem 1.4.2-(ii)]{FOT}.
	Noting that $\varphi^{2}\leq 1$ q.e.\ by \cite[Lemma 2.1.4]{FOT},
	by the Leibniz rule \cite[Lemma 3.2.5]{FOT} and the Cauchy--Schwarz inequality
	\cite[Lemma 5.6.1]{FOT} for $\Gamma(\cdot,\cdot)$, we obtain 
	\[
	\frac{1}{2} \sE(\varphi(g_n-g_m), \varphi(g_n-g_m)) \le \sE(g_n-g_m,g_n-g_m) + \int_{X} (g_n-g_m)^2 \,d\Gamma(\varphi,\varphi),
	\]
	which together with \hyperlink{csweak}{$\on{CS}_{\on{weak}}(\beta_{2})$} for $u=g_n-g_m$
	and the previous paragraph with $B(x,A R)$ in place of $B=B(x,R)$
	shows that $g_n \varphi$ is a Cauchy sequence in $(\sF, \sE_{1})$
	converging in $L^{2}(X,m)$ to $g \varphi$. Thus $g \varphi \in \sF$
	by the completeness of $(\sF, \sE_{1})$, and since $g \varphi = g$ in $B$ and
	$B=B(x,R)$ is an arbitrary ball in $(X,d)$, we conclude that $g \in \sF_{\on{loc}}$.
	\end{proof}
	
	We record the following theorem which relates the elliptic and parabolic Harnack inequalities.
	The equivalence of \ref{it:phichar-phi}, \ref{it:phichar-EHI-cap} and \ref{it:phichar-HKE} is due to
	Grigor'yan and Telcs \cite[Theorem 3.1]{GT02} in the context of random walks on graphs.
	This was later extended to the MMD space setting by several authors.
	The equivalence between \ref{it:phichar-phi} and \ref{it:phichar-PI-CS} is due to Barlow and Bass \cite{BB04}
	for random walks on graphs and was extended to the current setting in \cite{BBK}.
	\begin{theorem} \label{t:phichar}
		Let $(X,d,m,\sE,\sF)$ be an MMD space
		and let $\beta \in (1,\infty)$. Then the following are equivalent:
		\begin{enumerate}[label=\textup{(\alph*)},align=left,leftmargin=*,topsep=5pt,parsep=0pt,itemsep=2pt]
			\item\label{it:phichar-phi}$(X,d,m,\sE,\sF)$ satisfies \ref{PHI}.
			\item\label{it:phichar-EHI-cap}$(X,d,m,\sE,\sF)$ satisfies \ref{VD}, \ref{EHI} and \hyperlink{cap}{$\on{cap(\beta)}$}. 
			\item\label{it:phichar-HKE}$(X,d,m,\sE,\sF)$ satisfies \ref{VD} and \hyperlink{hke}{$\on{HKE(\beta)}$}. 
			\item\label{it:phichar-PI-CS}$(X,d,m,\sE,\sF)$ satisfies \ref{VD}, \hyperlink{pi}{$\on{PI(\beta)}$} and \hyperlink{cs}{$\on{CS(\beta)}$}.
			\item\label{it:phichar-PI-CSweak}$(X,d,m,\sE,\sF)$ satisfies \ref{VD}, \hyperlink{pi}{$\on{PI(\beta)}$} and \hyperlink{csweak}{$\on{CS}_{\on{weak}}(\beta)$}.
		\end{enumerate}
		Moreover, if $(X,d,m,\sE,\sF)$ satisfies any one of \ref{it:phichar-phi},
		\ref{it:phichar-EHI-cap}, \ref{it:phichar-HKE}, \ref{it:phichar-PI-CS} and \ref{it:phichar-PI-CSweak},
		then $(X,d)$ is arcwise connected and uniformly perfect and $(X,d,m)$ is \ref{RVD}.
	\end{theorem}
	
	\begin{proof}
	First, by Remark \ref{rmk:harnack}, \cite[Proposition 5.6]{GH14} and \cite[Lemma 5.2-(c),(b)]{BCM}
	(see also \cite[Proof of Theorem 5.4, $\textrm{(b)}\Rightarrow\textrm{(a)}$]{BCM}),
	\ref{it:phichar-EHI-cap} implies that $(X,d)$ is arcwise connected and uniformly perfect
	and that $(X,d,m)$ is \ref{RVD}. Then since \ref{it:phichar-EHI-cap} and \ref{RVD}
	together imply \ref{it:phichar-HKE} by \cite[Theorem 1.2]{GHL15},
	it follows that \ref{it:phichar-EHI-cap} implies \ref{it:phichar-HKE}.
	Next, \ref{it:phichar-HKE} implies \ref{it:phichar-phi} by \cite[Theorem 3.1]{BGK},
	\hyperlink{cs}{$\on{CS(\beta)}$} by \cite[Section 3]{BBK}%
	\footnote{We note that the proof of the H\"older continuity estimate of the resolvent kernel
	stated in \cite[Lemma 3.3]{BBK}, from which \eqref{e:Holder} follows, has a gap
	which has been resolved in the arXiv version. The proof of \hyperlink{cs}{$\on{CS(\beta)}$}
	in \cite[Section 3]{BBK} works also in the compact setting with minor modifications
	and it does not use the assumption that the metric $d$ is geodesic.},
	\hyperlink{pi}{$\on{PI(\beta)}$} by \cite[Proof of Theorem 1.2]{GHL15} or
	\cite[Proof of Theorem 3.2]{Lie} (see also \cite[Remark 2.9-(b)]{KM}),
	and thus \ref{it:phichar-PI-CS}. It is obvious that \ref{it:phichar-PI-CS}
	implies \ref{it:phichar-PI-CSweak}. Conversely, since the conjunction of \ref{VD}
	and \hyperlink{pi}{$\on{PI(\beta)}$} implies \ref{RVD} by \cite[Corollary 2.3]{Mur1}
	and Remark \ref{r:doubling}-\ref{it:RVD-uniformly-perfect}, it follows from
	\cite[Theorem 1.1]{GHL15} along with \cite[Proposition 5.11 and Remark 5.12]{BM}
	that \ref{it:phichar-PI-CSweak} implies \ref{it:phichar-EHI-cap}; in \cite[Theorem 1.1]{GHL15}
	the condition \ref{EHI} is stated and proved only for $h \in \mathcal{F}$, but
	by using Lemma \ref{l:extended} and the relative compactness in $X$ of all balls in $(X,d)$
	we obtain our version of \ref{EHI} in Definition \ref{d:harnack}, where $h \in \mathcal{F}_{e}$.
	
	It remains to prove that \ref{it:phichar-phi} implies \ref{it:phichar-HKE}. Since
	the conjunction of \ref{VD} and \ref{PHI} implies \hyperlink{hke}{$\on{HKE(\beta)}$}
	by \cite[Theorem 3.1]{BGK} (see also \cite[Proof of Theorem 3.2]{Lie}),
	it suffices to show that \ref{PHI} implies \ref{VD}. 
	
	The implication from \ref{PHI} to \ref{VD} follows from \cite[Theorem 3.2]{BGK} under the additional assumption that the metric $d$ is geodesic.
	However, this additional assumption is not necessary and we modify the proof in \cite{BGK} as follows.
	By \cite[Lemma 4.6]{BGK}, there exists a heat kernel $p_t(x,y)$ such that $(t,x,y) \mapsto p_t(x,y)$ is continuous on $(0,\infty) \times X \times X$.
	By \cite[(4.52)]{BGK}, there exists $c_1,c_2>0$ such that
	\begin{equation} \label{e:eqv1}
	\sup_{x,y \in B(x_0,r)} p_t(x,y) \ge \frac{c_1}{m(B(x_0,r))} \exp \left(- \frac{c_2 t}{r^\beta}\right) \quad \mbox{for all $x_0 \in X,r >0, t>0$.}
	\end{equation}
	Let $0<C_1<C_2<C_3<C_4$, $\delta\in (0,1)$ and $C_5 >1$ denote the constants in \ref{PHI}. Define $K =  \frac{C_3+C_4}{C_1 + C_2} \in (1,\infty)$.
	
	Let  $x_0 \in X, r>0$ be arbitrary. Fix $t >0$ such that $t=(C_1+C_2) \delta^{-\beta}r^\beta/2$. Using \eqref{e:eqv1}, we choose  $y \in B(x_0,r)$ such that $\sup_{x \in B(x_0,r)} p_t(x,y) \ge \frac{1}{2}  \frac{c_1}{2 m(B(x_0,r))} \exp \left(- \frac{c_2 t}{r^\beta}\right)$. By \ref{PHI}, we obtain
	\begin{equation} \label{e:eqv2}
	p_{Kt}(x_0,y) \ge \frac{C_5^{-1} c_1}{2 m(B(x_0,r))} \exp \left(- \frac{c_2 t}{r^\beta}\right) \quad \mbox{for some $y \in B(x_0,r)$.}
	\end{equation}
	By \ref{PHI} for the caloric function $(t,z) \mapsto p_t(x_0,z)$ on the cylinder $(0,C_5 \delta^{-\beta} r_1^\beta) \times B(x,\delta^{-1}r_1)$, where $r_1 >0$ satisfies $(C_1+C_2) \delta^{-\beta} r_1^\beta/2 = Kt$ (or equivalently, $r_1= K^{1/\beta}r$) and \eqref{e:eqv2}, we obtain
	\begin{equation} \label{e:eqv3}
	p_{K^2 t}(x_0,z) \ge  \frac{C_5^{-2} c_1}{2 m(B(x_0,r))}  \exp \left(- \frac{c_2 t}{r^\beta}\right) \quad \mbox{for all $z \in B(x_0,K^{1/\beta} r)$.}
	\end{equation}
	Using $\int_X p_{K^2 t} (x_0,z) \,m(dz) \le 1$ and  $t=(C_1+C_2) \delta^{-\beta}r^\beta/2$ and \eqref{e:eqv3}, there exists $C_6>1$ such that
	\[
	\frac{m(B(x_0,K^{1/\beta} r))}{m(B(x_0,r))} \le C_6, \quad \mbox{for all $x_0 \in X,r>0$.}
	\]
	By iterating the above estimate $ \lceil \beta \log 2/ \log K\rceil$ times, we obtain the volume doubling property \ref{VD}.
	\end{proof}
	
	\begin{remark}
		Theorem \ref{t:phichar} can be generalized to the case where the space-time scaling function
		$\Psi(r)=r^\beta$ is replaced with a homeomorphism $\Psi:[0,\infty) \to [0,\infty)$ satisfying the following estimates:
		there exist $C_{1}, \beta_{1},\beta_{2} \in (0,\infty)$ with $1 < \beta_{1} \leq \beta_{2}$ such that
		\[
		C_1^{-1} \left( \frac R r \right)^{\beta_1} \le \frac{\Psi(R)}{\Psi(r)} \le  	C_1 \left( \frac R r \right)^{\beta_2}
		\quad \mbox{for all $r,R \in (0,\infty)$ with $r\leq R$.}
		\]
		The generalized version of the relevant properties like \ref{PHI}
		and \hyperlink{cap}{$\on{cap(\beta)}$} for such space-time scale functions can be found in \cite{BGK,GHL15}.	
	\end{remark}
	
	Combining Theorem \ref{t:phichar} with the main result of \cite{Mur1}, we have
	the following alternative proof that \hyperlink{phi}{$\on{PHI(\beta)}$} cannot
	hold for $\beta \in (0,2)$ and thereby that \eqref{e:dcw-lower} holds.
	\begin{lemma} \label{l:dcw-lowerbound}
	If $\beta>0$ and an MMD space $(X,d,m,\sE,\sF)$ satisfies \hyperlink{phi}{$\on{PHI(\beta)}$},
	then $\beta \ge 2$.
	\end{lemma}
	\begin{proof}
	Assume to the contrary that $\beta<2$, so that $\rho:=d^{\beta/2}$ would be
	a metric on $X$ and $(X,\rho,m,\sE,\sF)$ would satisfy \hyperlink{phi}{$\on{PHI(2)}$}.
	For $\varepsilon>0$, we define the $\varepsilon$-chain metric as
	\[
	d_\varepsilon(x,y) = \inf \sum_{i=0}^{N-1} d(x_i,x_{i+1}),
	\] where the infimum is taken over all finite collection of points $\{ x_i \}_{i=0}^{N} \subset X$, $N \in \mathbb{N}$,
	such that $x_0=x$, $x_N=y$ and $d(x_i,x_{i+1})<\varepsilon$ for all $i=0,1,\ldots, N-1$.
	We define $\rho_\varepsilon(x,y)$ analogously for any $x,y \in X$.
	Then since $(X,\rho,m,\sE,\sF)$ would satisfy \hyperlink{cap}{$\on{cap(2)}$} and \hyperlink{pi}{$\on{PI(2)}$}
	by Theorem \ref{t:phichar}, it follows from \cite[Theorem 1.6 and Remark 1.7(a)]{Mur1} that there would exist $C>0$ such that
	\begin{equation} \label{e:dcwlb1}
	\rho_\varepsilon(x,y) \le C \rho(x,y) \quad \mbox{for all $x,y \in X$ and $\varepsilon>0$.}
	\end{equation}
	On the other hand, since $d(x_i,x_{i+1}) < \varepsilon$ is equivalent to $\rho(x_i,x_{i+1}) \le \varepsilon^{\beta/2}$, we would have
	\begin{equation}\label{e:dcwlb2}
	d(x,y) \le 	d_\varepsilon(x,y) \le \varepsilon^{1 - \beta/2} \rho_{\varepsilon^{2/\beta}}(x,y) \quad \mbox{for all $x,y \in X$ and $\varepsilon>0$.}
	\end{equation}
	Combining \eqref{e:dcwlb1} and \eqref{e:dcwlb2}, we would obtain
	\[
	d(x,y) \le C \varepsilon^{1 - \beta/2} \rho(x,y) = C \varepsilon^{1-\beta/2} d(x,y)^{\beta/2}
	\]
	for all $x,y \in X$ and $\varepsilon>0$, and letting $\varepsilon \downarrow 0$ would yield $\diam(X,d)=0$,
	which would contradict our standing assumption that $X$ contains at least two elements.
	\end{proof}

	We next collect some properties of \ref{EHI} in relation to time changes and
	quasisymmetric changes of the metric. The following lemma was observed first by
	Jun Kigami and taught through \cite{Kig16} to the authors of \cite{BM} and of the present paper.
	
	\begin{lemma}[{\cite{Kig16}, cf.\ \cite[Lemma 5.3]{BM}}] \label{l:EHI-TC-QS}
	Let $(X,d,m,\sE,\sF)$ be an MMD space, let $\mu \in\sA(X,d,m,\sE,\sF)$ and let $\theta \in \sJ(X,d)$.
	Then $(X,d,m,\sE,\sF)$ satisfies \ref{EHI} if and only if $(X,\theta,\mu,\sE^\mu,\sF^\mu)$ satisfies \ref{EHI}.
	\end{lemma}
	
	\begin{proof}
	$((\sF^{\mu})_{e},\sE^{\mu})=(\sF_{e},\sE)$ by \cite[Corollary 5.2.12]{CF},
	$\sF^{\mu} \cap \contfunc_{\mathrm{c}}(X) = \sF_e \cap \contfunc_{\mathrm{c}}(X) = \sF \cap \contfunc_{\mathrm{c}}(X)$
	by \eqref{e:defTC} and the equality $\sF_e \cap L^{2}(X,m) = \sF$ from \cite[Theorem 1.1.5-(iii)]{CF},
	and therefore for each open subset $U$ of $X$ we have
	\begin{equation} \label{e:EHI-TC-QS}
	\{h \in (\sF^{\mu})_e \mid \mbox{$h$ is $\sE^{\mu}$-harmonic on $U$} \}
		= \{h \in \sF_e \mid \mbox{$h$ is $\sE$-harmonic on $U$} \}.
	\end{equation}
	Note also that by \cite[Theorem 5.2.11]{CF} we have the following (see \cite[Section 2.1]{FOT}
	and \cite[Sections 1.2, 1.3 and 2.3]{CF} for the definition and basic properties of quasi-continuous functions):
	\begin{enumerate}[label=\textup{(\roman*)},align=left,leftmargin=*,topsep=5pt,parsep=0pt,itemsep=2pt]
	\item A subset $N$ of $X$ has $1$-capacity zero with respect to $(X,d,m,\sE,\sF)$, i.e., satisfies $\Capa_1(N)=0$,
	if and only if $N$ has $1$-capacity zero with respect to $(X,\theta,\mu,\sE^\mu,\sF^\mu)$.
	In other words, the notion of holding \emph{q.e.}, i.e., holding outside a set of $1$-capacity zero,
	with respect to $(X,d,m,\sE,\sF)$ is equivalent to that with respect to $(X,\theta,\mu,\sE^\mu,\sF^\mu)$.
	\item A function $u:X\setminus N \to [-\infty,\infty]$ defined q.e.\ on $X$, where $N$ is a subset of $X$ with $\Capa_1(N)=0$,
	is quasi-continuous with respect to $(X,d,m,\sE,\sF)$ if and only if $u$ is quasi-continuous with respect to $(X,\theta,\mu,\sE^\mu,\sF^\mu)$.
	\end{enumerate}
	
	Now choose a distortion function $\eta$ so that $\operatorname{Id}:(X,\theta) \to (X,d)$ is an $\eta$-quasisymmetry,
	and suppose that $(X,d,m,\sE,\sF)$ satisfies \ref{EHI} with the constants $C > 1$ and $\delta \in (0,1)$.
	Let $B_{d}(x,r)$ and $B_{\theta}(x,r)$ denote open balls of radius $r$ centered at $x$ in $(X,d)$ and $(X,\theta)$,
	respectively. Let $x \in X$, $r > 0$, and let $h \in (\sF^{\mu})_e = \sF_e$ be $\sE^{\mu}$-harmonic on $B_{\theta}(x,r)$
	and satisfy $h \geq 0$ $\mu$-a.e.\ on $B_{\theta}(x,r)$, where we consider only
	quasi-continuous $m$-versions of $h \in \sF_e$ with respect to $(X,d,m,\sE,\sF)$.
	Then $h$ is $\sE$-harmonic on $B_{\theta}(x,r)$ by \eqref{e:EHI-TC-QS},
	$h \geq 0$ q.e.\ on $B_{\theta}(x,r)$ by \cite[Lemma 2.1.4]{FOT} applied to
	$(X,\theta,\mu,\sE^\mu,\sF^\mu)$, hence $h \geq 0$ $m$-a.e.\ on $B_{\theta}(x,r)$, and
	$B_{\theta}(x,\delta' r) \subset B_{d}(x,\delta t) \subset B_{d}(x,t) \subset B_{\theta}(x,r)$
	for some $t > 0$ by \eqref{e:ann2} with $\delta' r$ in place of $r$ and $A=\delta^{-1}$,
	where $\delta':=\eta^{-1}(\delta)$. Thus we obtain
	\begin{equation} \label{e:EHI-pointwise}
	m\mbox{-}\!\esssup_{B_{\theta}(x,\delta'r)} h \leq C\cdot m\mbox{-}\!\essinf_{B_{\theta}(x,\delta'r)} h
	\end{equation}
	by \ref{EHI} for $(X,d,m,\sE,\sF)$ applied to $B_{d}(x,t)$ and $h$.
	On the other hand, we also have
	\begin{equation*}
	m\mbox{-}\!\essinf_{B_{\theta}(x,\delta'r)} h \leq h(y) \leq m\mbox{-}\!\esssup_{B_{\theta}(x,\delta'r)} h,
	\end{equation*}
	first for $m$-a.e.\ $y \in B_{\theta}(x,\delta'r)$,
	then for q.e.\ $y \in B_{\theta}(x,\delta'r)$ by \cite[Lemma 2.1.4]{FOT}
	and hence for $\mu$-a.e.\ $y \in B_{\theta}(x,\delta'r)$, and therefore
	from \eqref{e:EHI-pointwise} we conclude that
	\begin{equation*}
	h(y) \leq m\mbox{-}\!\esssup_{B_{\theta}(x,\delta'r)} h
	\leq C\cdot m\mbox{-}\!\essinf_{B_{\theta}(x,\delta'r)} h \leq C h(z)
	\quad \mbox{for $\mu$-a.e.\ $y,z \in B_{\theta}(x,\delta'r)$,}
	\end{equation*}
	proving \ref{EHI} for $(X,\theta,\mu,\sE^\mu,\sF^\mu)$.
	The converse implication from \ref{EHI} for $(X,\theta,\mu,\sE^\mu,\sF^\mu)$
	to \ref{EHI} for $(X,d,m,\sE,\sF)$ is proved in exactly the same way
	by noting that $\operatorname{Id}:(X,d) \to (X,\theta)$ is an $\tilde{\eta}$-quasisymmetry
	with the distortion function $\tilde{\eta}$ given by $\tilde{\eta}(t):=1/\eta^{-1}(t^{-1})$.
	\end{proof}
	
	As mentioned in the introduction, Delmotte has constructed a space that satisfies \ref{EHI}
	but fails to satisfy \ref{VD} and hence fails to satisfy \ref{PHI} for any $\beta>0$ \cite{Del};
	see also \cite[Example 8.4]{BCM} for a similar construction.
	Nevertheless, it is possible to obtain \ref{PHI} after a time change and a change of the metric.
	We recall the characterization of \ref{EHI} in \cite{BM,BCM}. 
	
	\begin{theorem}[{\cite{BM,BCM}}] \label{t:dcwfinite}
		Let $(X,d,m,\sE,\sF)$ be an MMD space. Then the following are equivalent:
		\begin{enumerate}[label=\textup{(\alph*)},align=left,leftmargin=*,topsep=5pt,parsep=0pt,itemsep=2pt]
			\item\label{it:dcwfinite-EHI}$(X,d,m,\sE,\sF)$ satisfies the metric doubling property and \ref{EHI}.
			\item\label{it:dcwfinite-PHI}There exist $\gamma>2$, $\mu \in\sA(X,d,m,\sE,\sF)$ and $\theta \in \sJ(X,d)$ such that the time-changed MMD space $(X,\theta,\mu,\sE^\mu,\sF^\mu)$ satisfies \hyperlink{phi}{$\on{PHI(\gamma)}$}. In other words, $\dcw <\infty$.
		\end{enumerate}
		Moreover, either of these two conditions implies that
		$(X,d)$ is arcwise connected and uniformly perfect.
	\end{theorem}
	
	\begin{proof}
	The implication from \ref{it:dcwfinite-EHI} to \ref{it:dcwfinite-PHI} follows from
	Remark \ref{rmk:harnack}, \cite[Theorems 5.4 and 7.9]{BCM} and Theorem \ref{t:phichar}.
	On the other hand, if \ref{it:dcwfinite-PHI} holds, then we see from Theorem \ref{t:phichar}
	and Remark \ref{r:doubling}-\ref{it:VDimprove} that $(X,\theta)$ is arcwise connected,
	uniformly perfect and doubling and that $(X,\theta,\mu,\sE^\mu,\sF^\mu)$ satisfies
	\ref{EHI}, and therefore the same hold also for $(X,d)$ and $(X,d,m,\sE,\sF)$ by
	\cite[Theorem 10.18]{Hei} and Lemma \ref{l:EHI-TC-QS}, completing the proof.
	\end{proof}

	The following elementary lemma is used to verify that the function defined in \eqref{e:defsC} on a hyperbolic filling is gentle and satisfies the enhanced subadditive estimate \hyperlink{E}{$\on{(E)}$}.
	\begin{lemma} \label{l:gentle}
		Let $(X,d,m)$ be a metric measure space that satisfies \ref{VD} and let $\gamma>0$. For any ball $B(x,r)$, we define
		\begin{equation} \label{e:defsC}
		\sC(B(x,r))= \frac{m(B(x,r))}{r^{\gamma}}.
		\end{equation}
		\begin{enumerate}[label=\textup{(\alph*)},align=left,leftmargin=*,topsep=5pt,parsep=0pt,itemsep=2pt]
			\item Let $\lambda \ge 1$. There exists $C_1 >0$ (that depends only on the constant of \ref{VD} and $\lambda$) such that for any $x,y \in X$ satisfying $B(x,\lambda r) \cap B(y,\lambda r) \neq \emptyset$, we have
			\[
			\sC(B(x,r)) \le C_1 \sC(B(y,r)).
			\]
			\item Let $a>1$. There exists $C_2 \ge 1$ (that depends only on the constant of \ref{VD}, $\gamma$ and $\lambda$) such that for any $x,y \in X$ satisfying $y \in B(x,r)$, we have
			\[
			C_2^{-1} \sC(B(y,r/a)) \le	\sC(B(x,r)) \le C_2 \sC(B(y,r/a)).
			\]
			\item
			
			There exists $C_3>1$ such that  the following estimate holds: for all $a >1, x \in X, r>0$ and $z_1,\ldots,z_k, k \in \bN$ such that $d(z_i,z_j) \ge r/(2a)$ for all $1 \le i < j \le k$ and satisfying
			$\cup_{i=1}^k B(z_i,r/a) \supset B(x,r/6)$, we have that
			\begin{equation} \label{e:subadd}
			\sC(B(x,r)) \le C_3 a^{-\gamma} \sum_{i=1}^k \sC(B(z_i,r/a)).
			\end{equation}
		\end{enumerate}
	\end{lemma}
	
	\begin{proof}
	We denote $m(B(x,r))$ by $V(x,r)$ in this proof.
	\begin{enumerate}[label=\textup{(\alph*)},align=left,leftmargin=*,topsep=5pt,parsep=0pt,itemsep=2pt]
		\item Let $C_D \in (1,\infty)$ denote the constant associated with \ref{VD} and let $\alpha = \log_2 C_D$,
		so that by \eqref{e:VDimprove} we have
		\begin{equation} \label{e:gen1}
		\frac{V(x,R)}{V(x,r)} \le C_D \left( \frac{R}{r} \right)^\alpha, \quad \mbox{for all $0<r \le R$ and $x \in X$.}
		\end{equation}
		Let $z \in B(x,\lambda r) \cap B(y, \lambda r)$. By using $B(x,r) \subset B(z, (\lambda+1)r)$, $B(z,r) \subset B(y,(\lambda+1)r)$ and \eqref{e:gen1}, we obtain
		\[
		V(x,r) \le V(z,(\lambda+1)r) \le  C_D (\lambda+1)^\alpha V(z,r)  \le C_D^2  (\lambda+1)^{2\alpha} V(y,r).
		\]
		\item Since $B(x,r) \subset B(y,2r)$ and $B(y,r/a) \subset B(x,2r)$, by \eqref{e:gen1} we have
		\begin{align*}
		V(x,r) &\le V(y,2r) \le C_D (2a)^\alpha V(y,r/a),\\ V(y,r/a) &\le V(x,2r) \le C_D V(x,r).
		\end{align*}
		Therefore
		\[
		\sC(B(x,r)) \le C_D 2^\alpha a^{\alpha - \gamma} \sC(B(y,r/a)), \quad \sC(B(y,r/a)) \le C_D a^{\gamma} \sC(B(x,r)).
		\]
		\item  By \ref{VD} and $\cup_{i=1}^k B(z_i,r/a) \supset B(x,r/3)$, we have
		\[
		V(x,r) \le C_D^3 V(x,r/8)  \le C_D^2 \sum_{i=1}^k V(x_i,r/a).
		\]
		Dividing both sides by $r^\gamma$, we obtain \eqref{e:subadd} with $C_3=C_D^3$.
	\end{enumerate}
	\end{proof}

	The elliptic Harnack inequality implies that the capacities across annuli  with similar locations and scales are comparable as we recall below. 
	\begin{lemma}[{\cite[Lemmas 5.22 and 5.23]{BCM}}] \label{l:cap}
		Let $(X,d,m,\sE,\sF)$ be an MMD space that satisfies the metric doubling property and \ref{EHI}. Then for any $A_1,A_2  >1$,
		there exist $C_1,C_2>1$ and $\gamma>0$ such that for all $x,  \in  X$, and for any $0 < s \le r < \diam(X,d)/C_1,$, we have
		\[
		C_2^{-1} \left(\frac{r }{s}\right)^{-\gamma} 		 \le	\frac{ \Capa(B(x,r), B(x,A_2 r)^c)}{\Capa(B(x,s), B(x,A_1s)^c)} \le C_2 \left(\frac{r }{s}\right)^\gamma.	
		\]	
	\end{lemma}
	
	\begin{proof}
	This follows immediately from Remark \ref{rmk:harnack} and \cite[Theorem 5.4, Lemmas 5.22 and 5.23]{BCM}.
	\end{proof}
	
	Using this lemma, we obtain the following comparison of capacity across annuli under a quasisymmetric change of metric.
	\begin{proposition} \label{p:qscap}
		Let $(X,d,m,\sE,\sF)$ be an MMD space that satisfies \hyperlink{phi}{$\on{PHI(\gamma)}$}, where $\gamma \ge 2$. Let $\theta \in \sJ(X,d)$ and $a>1$. Then there exists $C,A>0$ such that the following property holds.
		For any $x, \widetilde{x} \in X, 0<r<\diam(X,\theta)/A ,s>0$, $n \in \bZ$ such that
		\begin{equation} \label{e:qsc1}
		s= \sup \giveset{0 <t <2 \diam(X,d) : B_d(x,t) \subset B_\theta (x,r)},
		\end{equation}
		and
		\[
		2 a^{-n-1} \le s < 2a^{-n}, d(\widetilde{x}, x )< 2a^{-n},
		\]
		we have
		\begin{equation} \label{e:qsc2}
		C^{-1} \frac{m(B_d(\widetilde{x}, 2a^{-n}))}{\left[ 2a^{-n}\right]^\gamma}\le \Capa(B_\theta(x,r),B_\theta(x,2r)^c ) \le C \frac{m(B_d(\widetilde{x}, 2a^{-n}))}{\left[ 2a^{-n}\right]^\gamma}.
		\end{equation}
	\end{proposition}
	
	\begin{proof}
	By Theorem \ref{t:phichar}, $(X,d)$ satisfies doubling, uniformly perfect metric space. By Proposition \ref{p:QS}, there exists  $A_1,A_2,A_3>1$ such that for all $x \in X, 0< r< \diam(X,\theta)$, 
	\begin{equation} \label{e:qscap1}
	B_d(x,s) \subset B_\theta(x,r) \subset B_d(x,A_1 s) \subset B_d (s,2 A_1 s) \subset B_\theta(x,A_2 r ) \subset B_d(x,A_3 s),
	\end{equation}
	where $s>0$ is as defined in \eqref{e:qsc1}.
	If $B_d(x,A_3s) \neq X$ in \eqref{e:qscap1}, we have
	\begin{align} \label{e:qscap2}
	\Capa(B_d(x,s),B_{d}(x,A_3 s)^c) &\le \Capa(B_\theta(x,r), B_\theta(x,A_2 r)^c) \nonumber \\
	&\le \Capa(B_d(x, A_1 s),B_{d}(x, 2A_1 s)^c).
	\end{align}
	By Lemma \ref{l:cap}, Proposition \ref{p:QS}-(b), and Theorem \ref{t:phichar}, there exist  $C_1,A>1$ such that for all $x \in X, 0< r< \diam(X,\theta)/A$, we have
	\begin{equation} \label{e:qscap3}
	C_1^{-1} \le \frac{\Capa(B_\theta(x,r), B_\theta(x, 2 r)^c)}{\Capa(B_d(x,s),B_{d}(x,A_3 s)^c)}
	 \le C_1,
	\end{equation}
	and
	\begin{equation} \label{e:qscap4}
	C_1^{-1} \frac{m(B_d(x,s))}{s^\gamma} \le \Capa(B_d(x,s),B_{d}(x,A_3 s)^c) \le C_1  \frac{m(B_d(x,s))}{s^\gamma},
	\end{equation}
	where $s>0$ is as given in \eqref{e:qsc1}.
	By \eqref{e:qscap3}, \eqref{e:qscap4} and \ref{VD},  we obtain \eqref{e:qsc2}.
	\end{proof}

	We will use Theorem \ref{t:suff} to construct metrics.
	The following proposition plays a central role in constructing a function on the hyperbolic filling that satisfies the hypotheses \hyperlink{S1}{$\on{(S1)}$} and \hyperlink{S2}{$\on{(S2)}$} in Theorem \ref{t:suff}. 
	\begin{proposition} \label{p:lowenergy}
		Let $(X,d,m,\sE,\sF)$ be an MMD space that satisfies \hyperlink{phi}{$\on{PHI(\gamma)}$} for some $\gamma >2$ and let $\lambda>1$. There exist constants $A,C_1,C_2>1, \eta>0$ (that depend only on $\lambda$ and the constants associated with \hyperlink{phi}{$\on{PHI(\gamma)}$}) such that for any $a>1, x \in X,  0<r< \diam(X,d)/A$, 
		and for any collection of balls $\sB=\giveset{B(y_i,r/a) : i \in I}$ such that $\cup_{i \in I} B(y_i,r/a) = X$ and $\giveset{B(y_i, r/(4a))}$ is pairwise disjoint, there exists a function $\sigma \colon \sB \to [0,\infty)$ that obeys the following properties (note that $\sigma$ depends on $x \in X, r>0$): 
		\begin{enumerate}[label=\textup{(S\arabic*')},align=left,leftmargin=*,topsep=5pt,parsep=0pt,itemsep=2pt]
			\item\label{it:S1prime}for any sequence of balls $\gamma= \giveset{B_i: 1 \le i \le N}$ in $\sB$ such that $x_{B_1} \in B(x,r), x_{B_N} \notin B(x,2r)$ and $\lambda\cdot B_i \cap \lambda\cdot B_{i+1} \neq \emptyset$ for all $i=1,\ldots,N-1$,  we have
			\begin{equation} \label{e:suff1}
			\sum_{i=1}^N \sigma(B_i) \ge 1,
			\end{equation}
			and
			\begin{equation} \label{e:suff2}
			\sigma(B)=0, \quad \mbox{ for any ball $B \in \sB$ such that $x_B \notin B(x,2r)$}.
			\end{equation} 
			\item\label{it:S2prime}$\sigma \colon \sB \to (0,\infty)$ satisfies the following estimates
			\begin{equation} \label{e:suff3}
			\sum_{B \in \sB} \sigma(B)^2  \frac{m(B)}{(r/a)^\gamma} \le C_1 \frac{m(B(x,r))}{r^{\gamma}}
			\end{equation}
			and
			\begin{equation} \label{e:suff4}
			\sup_{B \in \sB} \sigma(B) \le C_2 a^{-\eta}.
			\end{equation}
			In particular, for any $\beta>2$, we have
			\begin{equation} \label{e:suff5}
			\sum_{B \in \sB} \sigma(B)^\beta  \frac{m(B)}{(r/a)^\gamma} \le C_1 C_2^{\beta-2} a^{-(\beta-2)\eta} \frac{m(B(x,r))}{r^{\gamma}}.
			\end{equation}
		\end{enumerate}
	\end{proposition}
	
	\begin{proof}
	For a function $u \in \sF \cap \contfunc(X)$ and $B \in \sB$, we define its `discretization' $u_d:\sB \to \bR$ as
	\begin{equation} \label{e:sufp1}
	u_{d}(B) :=  \fint_{B} u\,dm= \frac{1}{m(B)} \int_B u\,dm,
	\end{equation}
	and its `discrete gradient' $\sigma_u \colon \sB \to [0,\infty)$
	\begin{equation} \label{e:sufp2}
	\sigma_u(B) := \sum_{B' \in \sB: \lambda \cdot B' \cap \lambda\cdot B\neq \emptyset} \abs{u_\sB(B')-u_\sB(B)}.
	\end{equation}
	Our construction of $\sigma$ is the discrete gradient $\sigma_u$ of a well chosen function $u$.
	In particular, we choose a function $u \in \sF \cap \contfunc_{\mathrm{c}}(X)$ that satisfies the following properties: there exists $C_3>1,\eta>0$ (that depends only on the constant associated with  \hyperlink{phi}{$\on{PHI(\gamma)}$}) such that for all $x \in X, r < \diam(X,d)/A$, we have
	\begin{align} \label{e:sufp3}
	u \equiv 1 \mbox{ on $B(x,1.1r)$ } &\mbox{and $u \equiv 0$ on $B(x,1.9 r)$,}\\
	\sE(u,u) &\le C_3 \frac{m(B(x,r))}{r^\gamma},  \label{e:sufp4}\\
	\abs{u(y)-u(z)} &\le C_3 \biggl( \frac{d(y,z)}{r} \biggr)^\eta \quad \mbox{for all $y,z \in X$.}  \label{e:sufp5}
	\end{align}
	The existence of a function $u \in \sF \cap \contfunc_{\mathrm{c}}(X)$ satisfying the above properties follows from the cutoff Sobolev inequality \hyperlink{cs}{$\on{CS(\beta)}$}, Theorem \ref{t:phichar} and a standard covering argument as we recall below.
	By Theorem \ref{t:phichar} we have that $m$ is a doubling measure on $(X,d)$ and hence $(X,d)$ is a $K_D$-doubling metric space for some $K_D>1$.
	Therefore there exists $N_D \in \bN$ that depends only on $K_D$ and  $y_1,\ldots,y_{N_D} \in B(x,1.1 r)$ such that $\cup_{i=1}^{N_D} B(y_i,r/10) \supset B(x,1.1 r)$.
	By the construction of cutoff functions in \cite[Section 3]{BBK}, there exists $C_4>0,\eta>0$ such that for each $i=1,\ldots,N_D$ satisfies
	\begin{align*}
	\phi_i\equiv 1 \mbox{ on $B(y_i,r/10)$},& \quad \phi_i\equiv 0 \mbox{ on $B(y_i,r/5)^c$},\\
	\sE(\phi_i,\phi_i) &\le C_4 \frac{m(B(y_i,r/10))}{r^\gamma}, \\
	  \abs{\phi_i(y)-\phi_i(z)} &\le C_4 \biggl( \frac{d(y,z)}{r} \biggr)^\eta \mbox{for all $y,z \in X$.}
	\end{align*}
	By choosing $u= \max_{1\le i \le N_D}\phi_i$ and using the above estimates along with triangle inequality, $\sE(u,u)\le \sum_{i=1}^{N_D} \sE(\phi_i,\phi_i),\abs{u(y)-u(z)}\le \max_{1 \le i \le N_D} \abs{\phi_i(y)-\phi_i(z)}$, we obtain the desired properties \eqref{e:sufp3}, \eqref{e:sufp4} and \eqref{e:sufp5}.

	Let us show that the function $\sigma=\sigma_u$ as defined by \eqref{e:sufp1} and \eqref{e:sufp2} satisfies the desired conditions \ref{it:S1prime} and \ref{it:S2prime}.
	To this end, we note the following properties of $u_d \colon \sB \to \bR$:
	\begin{align*}
	u_d (B(x_B,r/a)) &=1 \quad \mbox{for any $x_B \in B(x,r/10)$ }\\
	& \mbox{(since $B(x_B,r/a) \subset B(x,1.1 r)$),}\\
	u_d(B(y,r/a)) &=0 \quad \mbox{for any $y \in X$ such that $d(y,y') \le 2 \lambda r/a$,}\\
		& \mbox{ where $y' \in B(x,2r)^c$  ($B(y,r/a) \subset B(x,1.9 r)^c$ }\\
		& \mbox{ because $(2 \lambda +1)r/a < 0.1 r$ by \eqref{e:tech}),}\\
	\sum_{i=1}^N \sigma_u(B_i) &\ge \sum_{i=1}^{N-1} \abs{u_d(B_i)-u_d(B_{i+1})}\ge \abs{u_d(B_1)-u_d(B_N)}
	\end{align*}
	for any sequence of balls $B_1,\ldots,B_N \in \sB$ such that $\lambda\cdot B_i \cap \lambda\cdot B_{i+1}\neq \emptyset$ for all $i=1,\ldots,N-1$.
	The above equations immediately imply \eqref{e:suff1} and \eqref{e:suff2}.

	Since the balls $B(y_i,r/(4a)), i \in I$ are disjoint, by doubling property of $(X,d)$, there exists $C_5>1$ that depends only on $\lambda$ and the doubling constant (but not on $a$) such that
	\begin{equation} \label{e:sufp7}
	\# \giveset{B' \in \sB: \lambda\cdot B \cap \lambda \cdot B' \neq \emptyset} \le C_5, \quad \mbox{for all $B \in \sB$.}
	\end{equation} 
	For any two balls $B, B' \in \sB$ such that $\lambda\cdot B \cap \lambda \cdot B' \neq \emptyset$, by \eqref{e:sufp5} we have
	\begin{equation} \label{e:sufp8}
	\abs{u_d(B)-u_d(B')} \le \sup_{y,z \le 2(\lambda+1)r/a} \abs{u(y)-u(z)} \le  C_3 (2(\lambda+1))^\eta a^{-\eta}.
	\end{equation} 
	Combining \eqref{e:sufp2}, \eqref{e:sufp7} and \eqref{e:sufp8}, we obtain \eqref{e:suff4} for $\sigma=\sigma_u$.
	
	It remains to show that $\sigma=\sigma_u$ satisfies \eqref{e:suff4}. To this end, we recall the following
	Poincar\'e inequality \hyperlink{pi}{$\on{PI(\gamma)}$ implied by \hyperlink{phi}{$\on{PHI(\gamma)}$} and Theorem \ref{t:phichar}}:
	there exist $C_P,A>1$ such that
	\begin{align} \label{e:sufp9}
	&\frac{1}{2 m(B(y,s))} \int_{B(y,s)} \int_{B(y,s)} (f(z)-f(w))^2 \,dm(z)\,dm(w) \nonumber \\
	&= \int_{B(y,s)}(f-f_{B(y,s)})^{2}\,dm \le C_P s^{\gamma} \int_{B(y,As)} d\Gamma(f,f), 
	\end{align}
	for any $f \in \sF,y \in X,s>0$. On the basis of \eqref{e:sufp9},
	the following comparison estimate between discrete and continuous energies is standard \cite{CS,BB04}.  
	Similar to \textsection \ref{s:hf}, for any two balls $B,B' \in \sB$ by $B' \sim B$
	we mean that $\lambda\cdot B \cap \lambda\cdot B' \neq \emptyset$.  We obtain \eqref{e:suff3} by the following estimates:
	\begin{align*}
	\MoveEqLeft{\sum_{B \in \sB} \sigma_u^2(B) \frac{m(B)}{(r/a)^\gamma}}\\
	&\lesssim \sum_{B, B' \in \sB, B' \sim B} \abs{u_d(B')-u_d(B)}^2 \frac{m(B)}{(r/a)^\gamma} \\
	& \quad \mbox{ (by \eqref{e:sufp7} and the Cauchy-Schwarz inequality)}\\
	&\lesssim \sum_{B, B' \in \sB, B' \sim B} \frac{1}{(r/a)^\gamma m(B')} \int_B \int_{B'}(u(y)-u(z))^2\,dm(y)\,dm(z)\\
	& \quad \mbox{ (by Jensen's inequality)}\\
	&\lesssim \sum_{B \in \sB} \frac{1}{m((2\lambda+1)\cdot B) (r/a)^\gamma} \int_B \int_{(2\lambda +1)\cdot B}(u(y)-u(z))^2\,dm(y)\,dm(z)\\
	& \qquad \mbox{ (by \ref{VD})}\\
	&\lesssim \sum_{B \in \sB} (2\lambda+1)^\gamma  \int_{A(2\lambda+1)\cdot B}  d\Gamma(u,u)\qquad \mbox{ (by \eqref{e:sufp9})} \\
	&\lesssim  \sE(u,u) \\
	&\quad \mbox{(since $(X,d)$ is $K_D$-doubling, we have $\sum_{B \in \sS_n} \one_{(2A\lambda+A) \cdot B} \lesssim 1$)}\\
	&\lesssim \frac{m(B(x,r))}{r^\gamma} \quad \mbox{(by \eqref{e:sufp4})}.
	\end{align*}
	Finally, \eqref{e:suff5} follows from \eqref{e:suff3}, \eqref{e:suff4}, and
	\[
	\sum_{B \in \sB} \sigma(B)^\beta  \frac{m(B)}{(r/a)^\gamma}  \le  \left( \sup_{B \in \sB} \sigma(B)\right)^{\beta-2}\sum_{B \in \sB} \sigma(B)^2  \frac{m(B)}{(r/a)^\gamma}. 
	\] 	
	\end{proof}

	The following proposition provides a convenient sufficient condition for a measure $\mu$ to be smooth and have full quasi-support.
	\begin{proposition}  \label{p:smooth}
		Let $(X,d,m,\sE,\sF)$ be an MMD space that satisfies \hyperlink{phi}{$\on{PHI(\gamma)}$} for some $\gamma \ge 2$ and let $\theta \in \sJ(X,d)$.
		Let $\beta>2$ and $\mu$ be a Borel measure on $X$ that satisfies the following estimate: there exist $C_1, A>1$ such that
		for any $x \in X$, $0<r<\diam(X,\theta)/A$, we have
		\begin{equation*}
		C_1^{-1} \frac{\mu(B_\theta (x,r))}{r^\beta} \le \Capa(B_\theta(x,r),B_\theta (x,2r)^c) \le C_1 \frac{\mu(B_\theta(x,r))}{r^\beta}.
		\end{equation*}
		Then $\mu \in \sA(X,d,m,\sE,\sF)$, i.e., $\mu$ is a smooth Radon measure on $X$
		with full quasi-support. Furthermore $(X,\theta,\mu)$ is \ref{VD} and \ref{RVD}.
	\end{proposition}
	
	\begin{proof}
	By Theorem \ref{t:phichar} and \cite[Lemma 5.3]{BM}
	(see also Lemma \ref{l:EHI-TC-QS} above), the MMD space $(X,\theta,m,\sE,\sF)$ satisfies \ref{EHI}
	and $(X,d)$ is a doubling, uniformly perfect metric space,
	so that $(X,\theta)$ is also doubling and uniformly perfect by \cite[Theorem 10.18 and Exercise 11.2]{Hei}.
	The volume doubling property \ref{VD} of $\mu$ in $(X,\theta)$ follows from
	Remark \ref{rmk:harnack}, Lemma \ref{l:cap} and \cite[Lemma 6.3]{BCM}, and
	the \ref{RVD} property of $\mu$ in $(X,\theta)$ follows from \ref{VD} of $(X,\theta,\mu)$,
	the uniform perfectness of $(X,\theta)$ and Remark \ref{r:doubling}-\ref{it:RVD-uniformly-perfect}.
	That $\mu$ is a smooth Radon measure on $X$ follows from
	Remark \ref{rmk:harnack} and \cite[Proposition 6.13]{BCM},
	and $\mu$ has full quasi-support by Remark \ref{rmk:harnack} and \cite[Theorem 5.4 and Proposition 6.16]{BCM}.
	\end{proof}
	
	\subsection{Completion of the proof of Theorem \ref{t:ehichar}} \label{ssec:prf-dcw-2}
	
	Now we are in the stage of completing the proof of our first main theorem (Theorem \ref{t:ehichar}).
	
	\begin{proof}[Proof of Theorem \textup{\ref{t:ehichar}}]
	By Theorem \ref{t:dcwfinite}, it suffices to show that \ref{it:ehichar-a} implies \ref{it:ehichar-c}.
	By Lemma \ref{l:dcw-lowerbound}, it suffices to show that $\dcw \le 2$.
	 To this end, we fix an arbitrary $\beta>2$.
	We shall construct a metric $\theta \in \sJ(X,d)$ and a measure $\mu \in \sA(X,d,m,\sE,\sF)$ such that the time-changed MMD space $(X,\theta,\mu,\sE^\mu,\sF^\mu)$ satisfies \hyperlink{phi}{$\on{PHI(\beta)}$}.

	By Theorem \ref{t:dcwfinite}, \eqref{e:cgauge}, \eqref{e:admiss}, and by changing the metric and measure if necessary,
	we may assume that $(X,d,m,\sE,\sF)$ satisfies \hyperlink{phi}{$\on{PHI(\gamma)}$} for some $\gamma>2$. By Theorem \ref{t:phichar},
	$(X,d,m,\sE,\sF)$ satisfies \ref{VD}, \ref{RVD}, \ref{EHI} and \hyperlink{cap}{$\on{cap(\gamma)}$}.

	If $(X,d)$ is bounded, we scale the metric so that  $\diam (X,d)=\frac 1 2$. By completeness and the metric doubling property, we recall that $(X,d)$ is compact \cite[Definition 10.15 and Exercise 10.17]{Hei}.

	Fix $\lambda \ge 32$  and let $a$ be an arbitrary constant that satisfies \eqref{e:tech}. The choice of $a$ will be made later in the proof. Let $x_0 \in X$.
	Let $\sS = \coprod_{k \in \bZ_{\ge 0}} \sS_k$ denote the vertex set of the hyperbolic filling as defined in Definition \ref{d:exhyp}, where $\sS_k= \giveset{B(x,2 a^{-k}): x \in N_k}$, where $N_k, k \in \bZ$ is a sequence of $a^{-k}$-separated sets such that
	$N_k \subset N_{k+1}$ and $x_0 \in N_k$ for all $k \in \bZ$ (recall from Lemma \ref{l:con} this is  a hyperbolic filling in the sense of Definition \ref{d:hypfilling}).

	We define a function $\sC: \coprod_{k \in \bZ} \sS_k \to (0,\infty)$ on the extended hyperbolic filling by
	\begin{equation} \label{e:univ1}
	\sC(B(x, 2a^{-k})) = \frac{m(B(x,2a^{-k}))}{(2a^{-k})^\gamma}, \mbox{for any $k \in \bZ$ and for any $B(x,2a^{-k}) \in \sS_k$.}
	\end{equation}
	Let us verify that $\sC$ is gentle and satisfies the enhanced subadditivity property  \hyperlink{E}{$\on{(E)}$}.
	By Lemma \ref{l:gentle}-(a),(b), there exist $K_h, K_v$ such that  
	$K_v$ depends only on $a$ and the constant associated with \ref{VD}, $K_h$ depends only on $\lambda$ and the constant associated with \ref{VD} such that
	\begin{align} \label{e:univ2}
	\sC(B_1) &\le K_h \sC(B_2), \quad \mbox{whenever $B_1$ and $B_2$ share a horizontal edge, } \nonumber\\
	\sC(B_1) &\le K_v \sC(B_2), \quad \mbox{whenever $B_1$ and $B_2$ share a vertical edge.} 
	\end{align} 
	Recall that for every ball $B \in \sS_k, k \in \bZ$, there exists an unique ball $g(B)_{k-1} \in \sS_{k-1}$ such that there is a vertical edge between $g(B)_{k-1}$.
	Note that is $B \sim B'$ and $B,B' \in \sS_{k+1}$, then by \eqref{e:tech}, we have $d(x_B,x_{B'}) \le 2 \lambda a^{-k-1} \le  \frac{1}{12}a^{-k}$.
	We denote the set of all non-peripheral elements of $\coprod_{k \in \bZ} \sS_k$ by $\sN$ as given in Definition \ref{defE}.
	Hence if $C= g(B)_k$ and $d(x_C,x_B) < \frac{1}{6} (2a^{-k}) + 2a^{-k-1}$, then $d(x_C,x_{B'}) < (3^{-1}+ 2 a^{-1}+12^{-1}) a^{-k} < \frac 1 2 a^{-k}$, and hence $B \in \sN$.
	This along with Lemma \ref{l:gentle}-(c) imply that, there exists $C_1>0$ such that
	\begin{equation} \label{e:univ3}
	\sC(B) \le  C_1 a^{- \gamma} \sum_{B' \in \sN \cap \sD_{k+1}(B)} \sC(B'), \quad \mbox{for all $B \in \sS_k, k \in \bZ$.}
	\end{equation}
	By \eqref{e:univ2} and \eqref{e:univ3}, we conclude that  $\sC$ is gentle and satisfies the enhanced subadditivity property  \hyperlink{E}{$\on{(E)}$}.

	For $k \in \bZ$ and $B \in \sS_k$, let $\Gamma_{k+1}(B)$ 
	denote the set of horizontal paths $\gamma=\giveset{B_i}_{i=1}^N, N \ge 2$ as given in Definition \ref{d:Gamk}. 
	If  $\diam(X,d)=\frac 1 2$, we note that $\Gamma_k(B)= \emptyset$ for all $k \le 0$.
	If $\diam(X,d)=\frac 1 2$, we assume that 
	\begin{equation} \label{e:univ4}
	a > 2A,
	\end{equation}
	where $A$ is the constant in Proposition \ref{p:lowenergy}.
	If either $k \in \bZ, \diam(X,d)=\infty$ or if $k \in \bN, \diam(X,d)=\frac 1 2$,
	for any $B \in \sS_k$, we define $\sigma_B: \sS_{k+1} \to (0,\infty)$ as the function defined in Proposition \ref{p:lowenergy}, that satisfies
	\[
	\sum_{i=1}^N \sigma_B(B_i) \ge 1, \quad \mbox{for any $\giveset{B_i}_{i=1}^N \in \Gamma_{k+1}(B)$.}
	\]
	Otherwise if $\diam(X,d)=\frac 1 2$ and $k \ge 0$, we simply define $\sigma_B: \sS_{k+1} \to [0,\infty)$ as $\sigma_B \equiv 0$ for all $B \in \sS_k$.
	For any $k \in \bZ$ and for any $B \in \sS_{k}$, we define
	\begin{equation} \label{e:univ5}
	\sigma(B)= \max_{C \in \sS_{k-1}} \sigma_C(B), \quad \mbox{for any $k \in \bZ, B \in \sS_k$.}
	\end{equation}
	Evidently, by Proposition \ref{p:lowenergy}, we have
	\begin{equation} \label{e:univ6}
	\sum_{i=1}^N \sigma(B_i) \ge 1, \quad \mbox{for any $\giveset{B_i}_{i=1}^N \in \Gamma_{k+1}(B)$ and for any $k \in \bZ, B \in \sS_k$.}
	\end{equation}
	In the compact case, the above statement is vacuously true for $k \le 0$. 
	By Proposition \ref{p:lowenergy} and the argument in Lemma \ref{l:patch},
	there exist $C_2, \eta>0$ such that
	\begin{equation} \label{e:univ7}
	\sum_{B' \in \sD_{k+1}(B)} \sigma(B')^\beta \sC(B') \le C_2 a^{-(\beta-2)\eta}  \sC(B), \quad \mbox{for any $k \in \bZ, B \in \sS_k$,}
	\end{equation}
	where $\sD_{k+1}(B)$ denote the set of descendants of $B$ in $\sS_{k+1}$ (that is,  $\sD_{k+1}(B)$ is the set of elements in $\sS_{k+1}(B)$ that share a vertical edge with $B$).
	
	We consider two cases. \\
	\textbf{Case 1}: $(X,d)$ is bounded.
	Let $\sS= \coprod_{k \ge 0} \sS_k$ denote the vertex set of the hyperbolic filling.
	In this case by \eqref{e:univ3}, we can ensure the enhanced subadditivity estimate \hyperlink{E}{(E)} by choosing $a$ large enough. Similarly  by \eqref{e:univ6} and \eqref{e:univ7}, the function $\sigma$ defined above satisfies the hypotheses  \hyperlink{S1}{$\on{(S1)}$} and \hyperlink{S2}{$\on{(S2)}$} of Theorem \ref{t:suff} for all large enough $a$. Therefore by Theorems \ref{t:suff} and \ref{t:qscomb}, and Proposition \ref{p:measure}, there exist a metric $\theta \in \sJ(X,d)$ and a measure $\mu$ on $X$ that satisfies the following estimate: there exists $C_3>0$ such that
	\begin{equation} \label{e:univ8} 
	C_3^{-1} r^\beta \sC(B) \le \mu(B_\theta(x,r))  \le   C_3 r^\beta \sC(B) 
	\end{equation}
for all $x \in X, r < \diam(X,\theta) ,B \in \sA_\sS(B_d(x,s))$,
	where $s$ is the largest number in $(0,2 \diam(X,d)]$ such that $B_d(x,s) \subset B_\theta(x,r)$ (as defined in \eqref{e:defs}) and $\sA_\sS(B_d(x,s))$ is as given in Definition \ref{d:AS}.
	Combining \eqref{e:univ8} and Proposition \ref{p:qscap}, we see that there exist $A_1,C_4>0$ such that
	\begin{equation} \label{e:univ9}
	C_4^{-1}   \frac{ \mu(B_\theta(x,r))}{ r^\beta} \le \Capa(B_\theta(x,r),B_\theta(x,2r)^c ) \le C_4 \frac{ \mu(B_\theta(x,r))}{ r^\beta},
	\end{equation}
	for any $x \in X, 0<r< \diam(X,d)/A_1$. By \eqref{e:univ9} and Proposition \ref{p:smooth}, $\mu \in \sA(X,d,m,\sE,\sF)$,
	$(X,\theta,\mu ,\sE^\mu,\sF^\mu)$ satisfies \ref{VD}, \ref{RVD} and \hyperlink{cap}{$\on{cap(\beta)}$},
	and by Lemma \ref{l:EHI-TC-QS} it also satisfies \ref{EHI}.
	Thus by Theorem \ref{t:phichar} the MMD space $(X,\theta,\mu ,\sE^\mu,\sF^\mu)$ satisfies \hyperlink{phi}{$\on{PHI(\beta)}$}.
	Since $\beta> 2$ is arbitrary, we conclude that the conformal walk dimension is two.
	
	\noindent\textbf{Case 2}: $(X,d)$ is unbounded.
	
	The approach in the unbounded case is to construct metrics and measures on an increasing sequence of compact sets that cover $X$, and to take suitable sub-sequential limit. Let $x_0 \in X$ be the point such that $x_0 \in N_k$ for all $k \in \bZ$ as given in Definition \ref{d:exhyp}. We consider the sequence of  subsets
	\begin{equation} \label{e:nuniv1}
	X_n= \overline{D_{-n}(x_0)}, \quad \mbox {for any $n \in \bN$,}
	\end{equation}
	where $D_{-n}(x_0)$ is as defined in \eqref{e:descendants}. By Lemma \ref{l:con}(c), $X_n$ is compact and satisfies
	\begin{equation} \label{e:nuniv2}
	B(x_0, (2^{-1}- (a-1)^{-1}) a^{n}) \subset X_n \subset  \overline{B}(x_0,(1-a^{-1})^{-1} a^{n}).
	\end{equation}
	For any $k \ge -n$, we define
	\begin{equation*}
	\sS_k^{(n)}= \giveset{ B(x,2 a^{-k}) \cap X_n : x \in N_k \cap D_{-n}(x_0)}. 
	\end{equation*}
	By Lemma \ref{l:con}, $\sS^{(n)}:=\coprod_{k \in \bZ, k \ge -n} \sS_k^{(n)}$ is a hyperbolic filling of the compact space with the same vertical edges induced from the extended hyperbolic filling. Similarly, $B \cap X_n, B' \cap X_n \in \sS_k^{(n)}$ share a horizontal edge if and only if $(\lambda \cdot B \cap X_n)\cap (\lambda \cdot B' \cap X_n) \neq \emptyset$. 
	We define $\sC_n:\sS^{(n)} \to (0,\infty), \sigma_n: \sS^{(n)} \to [0,\infty)$ as
	\[
	\sC_n( B \cap X_n)= \sC(B), \quad \sigma_n(B\cap X_n)= \sigma(B),
	\]
	for any $B \in B \in \coprod_{k \ge -n} \sS_k$, where $\sC$ is as given in \eqref{e:univ1} and $\sigma$ is a s given in \eqref{e:univ5}. 
	Similar to the compact case, by choosing $a>1$ large enough, by  \eqref{e:univ3}, we obtain 
	the enhanced subadditivity estimate \hyperlink{E}{$\on{(E)}$} for $\sC_n$ uniformly over $n$ (that is, the constant $\delta$ in associated with  \hyperlink{E}{$\on{(E)}$} does not depend on $n$) 
	Similarly, by increasing $a$ is necessary, and  by \eqref{e:univ6} and \eqref{e:univ7}, the function $\sigma_n$ defined above satisfies the hypotheses  \hyperlink{S1}{$\on{(S1)}$} and \hyperlink{S2}{$\on{(S2)}$} of Theorem \ref{t:suff}  uniformly in $n$ (that is, the constant $\eta_0$ in associated with  \hyperlink{E}{$\on{(S2)}$} does not depend on $n$). 
	
	Similar to the compact case,  by Theorems  \ref{t:suff} and \ref{t:qscomb}, there exist metrics $\theta_n \in \sJ(X_n,d)$ for each $n \in \bN$, and  a distortion function $\eta:[0,\infty) \to [0,\infty)$ such that the identity map
	\begin{equation} \label{e:nuniv4}
	\on{Id}: (X_n,d) \to (X_n,\theta_n) \mbox{ is an $\eta$-quasisymmetry for each $n \in \bN$.}
	\end{equation}
	By Proposition \ref{p:measure}, there exist
	measures $\mu_n$ on $X_n$ for each $n \in \bN$, constant $C_5>0$ such that 
	\begin{equation} \label{e:nuniv5} 
	C_5^{-1} r^\beta \sC_n(B) \le \mu_n(B_{\theta_n}(x,r))  \le   C_5 r^\beta \sC_n(B),
	\end{equation}
	for all $x \in X_n, r < \diam(X_n,\theta) ,B \in \sA_\sS(B_d(x,s))$,
	where $s$ is the largest number in $[0,2 \diam(X_n,d)]$ such that $B_d(x,s) \cap X_n \subset B_\theta(x,r)$ (as defined in \eqref{e:defs}) and $\sA_\sS(B_d(x,s) \cap X_n)$ is as given in Definition \ref{d:AS} corresponding to the hyperbolic filling $\sS^{(n)}$ of $X_{n}$.
	
	Next, normalize the metrics and measures by choosing a pair of sequences $\beta_n, \gamma_n >0$ such that $\widehat{\theta}_n=\beta_n \theta_n, \widehat{\mu}_n= \gamma_n \mu_n$ satisfy
	\begin{equation} \label{e:nuniv6}
	\diam(B_d(x_0,1), \widehat{\theta}_n)=1, \quad \widehat{\mu}_n(B_d(o,1))= 1.
	\end{equation}
	By \eqref{e:nuniv2} and \eqref{e:tech}, we note that  $B(x_0,1) \subset X_n$ for all $n \in \bN$. 
	
	We choose $p\in X$ such that $d(x_0,p) = \frac 1 2$. Since $\on{Id}:(X_n,d) \to (X_n, \widehat{\theta}_n)$ is a $\eta$-quasisymmetry, by comparing the ratio of the diameter of the sets $\giveset{x_0,p} \subset X_1$ in the metrics $d$ and $\theta_n$ using \eqref{e:diamQS}, there exists $C_{x_0,p} >1$ such that
	\begin{equation*}
	C_{x_0,p}^{-1} \le \widehat{\theta}_n(o,p) \le C_{x_0,p}^{-1}, \mbox{ for all $n \in \bN$.}
	\end{equation*}
	We estimate $\widehat{\theta}_n(x,y)/ \widehat{\theta}_n(x_0,p)$ by writing it as $\frac{\theta_n(x,y)}{\widehat\theta(x_0,x)} \frac{\theta_n(x_0,x)}{\widehat\theta(x_0,p)}$ 
	and using $\eta$-quasisymmetry to estimate each of the factors and their reciprocals. This yields the following estimate; for any $x,y \in X_n, n \in \bN$, there exists $C_{x,y} >1$ ($C_{x,y}$ depends only on $d(x_0,x), d(x,y)$ and $\eta$) such that
	\begin{equation} \label{e:nuniv7}
	C_{x,y}^{-1} \le \widehat{\theta}_n(x,y) \le C_{x,y}^{-1}, \mbox{ for all $n \in \bN$ such that $\giveset{x,y} \subset X_n$.}
	\end{equation}
	By a similar computation, for any $(x,y) \in X \times X$ and for any $\varepsilon >0$, there exists $\delta>0$ such that for any $(x',y') \in X \times X$ with $d(x,x') \vee d(y,y') < \delta$, we have
	\begin{equation} \label{e:nuniv8}
	\abs{\widehat{\theta}_n(x,y) - \widehat{\theta}_n(x',y')} \le  \widehat{\theta}_n(x,x')+\widehat{\theta}_n(y,y') < \varepsilon, 
	\end{equation}
	for all $n \in \bN$ such that $x,x',y,y' \in X_n$.
	By \eqref{e:nuniv8}, \eqref{e:nuniv7} and Arzel\`{a}-Ascoli theorem on the product space $X_n \times X_n$ equipped with the product metric $d_\infty((x,y),(x',y'))= d(x,x') \vee d(y,y')$,
	the sequence of functions $\widehat{\theta}_m, m \ge n$, has a subsequence that converges uniformly to a metric $\theta$ in $X_n$.
	By a diagonalization argument, we obtain a subsequence of  $\giveset{\widehat{\theta}_n : n \in \bN}$, that converges uniformly in compact subsets of $X \times X$.
	The limit metric $\theta \in \sJ(X,d)$ and $\on{Id}:(X,d) \to (X,\theta)$ is an $\eta$-quasisymmetry.
	
	The measures $\widehat{\mu}_n$ constructed using Lemma \ref{l:meas} are uniformly doubling in the following sense: there exists $C_D \ge 1$ such that 
	for all $n \in \bN, x\in X_n, 0 <r <\diam(X_n, \widehat{\theta}_n)$, we have
	\[
	\frac{\widehat{\mu}_n\left(B_{\widehat{\theta}_n}(x,2r)\right)}{\widehat{\mu}_n\left(B_{\widehat{\theta}_n}(x,r)\right)} \le C_D.
	\]
	By the argument in \cite[Theorem 1]{LuS}, by a further diagonalization argument using weak*-compactness of $\giveset{\widehat{\mu}_m : m \ge n}$ on $X_n$ for all $n \in \bN$, we obtain a measure $\mu$ on $X$.
	By \eqref{e:nuniv5} and Proposition \ref{p:qscap}, there exist constants $C_6>1,A_2 >1$ such that
	\begin{equation}
	C_6^{-1}   \frac{ \mu(B_\theta(x,r))}{ r^\beta} \le \Capa(B_\theta(x,r),B_\theta(x,2r)^c ) \le C_4 \frac{ \mu(B_\theta(x,r))}{ r^\beta},
	\end{equation}
	for any $x \in X, r>0$.
	The remainder of the proof is exactly same as the compact case.
	Hence, we conclude that the conformal walk dimension is two.
	\end{proof}

	\section{The attainment and Gaussian uniformization problems} \label{sec:GUP}
	
	In this section, we introduce the attainment problem for the conformal walk dimension and the Gaussian uniformization problem. Then we discuss partial progress towards them.
	
	Let $(X,d,m,\sE,\sF)$ be an MMD space that satisfies the metric doubling property and \hyperlink{ehi}{$\on{EHI}$}. 
	Recall that the Gaussian uniformization problem ask for a description of all metrics $\theta \in \sJ(X,d)$ and measures $\mu \in \sA(X,d,m,\sE,\sF)$ such that the corresponding time-changed MMD space $(X,\theta,\mu,\sE^\mu,\sF^\mu)$ satisfies 
	\hyperlink{phi}{$\on{PHI(2)}$}.
	For any $\beta > 0$, we define
	\begin{equation} \label{e:subgaussadmissible}
	\sG_\beta(X,d,m,\sE,\sF) := \biggl\{ \mu \biggm\vert
	\begin{minipage}{200pt}
		$\mu \in \sA(X,d,m,\sE,\sF)$, there exists $\theta \in \sJ(X,d)$ such that
		$(X,\theta,\mu,\sE^\mu,\sF^\mu)$ satisfies \hyperlink{phi}{$\on{PHI(\beta)}$}
	\end{minipage}
	\biggr\}.
	\end{equation}
	We define the set of \emph{Gaussian admissible measures} as
	\begin{equation} \label{e:gauss}
	\sG(X,d,m,\sE,\sF):= \sG_2(X,d,m,\sE,\sF).
	\end{equation}
	By Theorem \ref{t:ehichar}, we have
	\[
	\sG_\beta(X,d,m,\sE,\sF) = \emptyset \mbox{ for any $\beta < 2$, and } \sG_\beta(X,d,m,\sE,\sF) \neq \emptyset \mbox{ for any $\beta>2$.}
	\]
	This raises the following questions:
	\begin{enumerate}[label=\textup{\arabic*.},align=left,leftmargin=*,topsep=5pt,parsep=0pt,itemsep=2pt]
		\item \textbf{Attainment problem}: Is $\sG(X,d,m,\sE,\sF) \neq \emptyset$? Or equivalently, is the infimum in \eqref{e:dcw} attained?
		\item \textbf{Gaussian uniformization problem}: Describe all measures in the set $\sG(X,d,m,\sE,\sF)$.
	\end{enumerate}
	
	By Proposition \ref{p:metmeas}-\ref{it:conseq-PHI2-met}, the Gaussian admissible measures can be described as
	\begin{equation} \label{e:gaussianadmissible}
	\sG(X,d,m,\sE,\sF) = \biggl\{ \mu \biggm\vert
		\begin{minipage}{160pt}
		$\mu \in \sA(X,d,m,\sE,\sF)$, $d_{\on{int}}^\mu \in \sJ(X,d)$,
		$(X,d_{\on{int}}^\mu,\mu,\sE^\mu,\sF^\mu)$ satisfies \hyperlink{phi}{$\on{PHI(2)}$}
		\end{minipage}
	\biggr\}, 
	\end{equation}
	where $d_{\on{int}}^\mu$ denotes the intrinsic metric of the
	MMD space $(X,d,\mu,\sE^\mu,\sF^\mu)$ (recall Definition \ref{d:dint}).

	In this section, we prove Theorem \ref{t:ainf} and discuss its consequences for the Gaussian uniformization problem. In particular, Theorem \ref{t:ainf} shows that any two measures $\mu_1,\mu_2 \in \sG(X,d,m,\sE,\sF)$ must be $A_\infty$-related in $(X,d)$ (provided such measures exist). 
	
	\subsection{Consequences of PHI(2)} \label{ssec:conseq-PHI2}
	We begin with the proof of Proposition \ref{p:metmeas}, which is essentially contained in \cite[Section 4]{KM}.
	This states that any measure on $\sG(X,d,m,\sE,\sF)$ must be a minimal energy-dominant measure and that
	the metric must be bi-Lipschitz equivalent to the intrinsic metric of the time-changed MMD space.
	
	\begin{proof}[Proof of Proposition \textup{\ref{p:metmeas}}]
	By \hyperlink{phi}{$\on{PHI}(2)$} and Theorem \ref{t:phichar}, we have 
	\ref{VD} and all of the equivalent conditions in \cite[Theorem 1.2]{GHL15}.
	\begin{enumerate}[label=\textup{(\alph*)},align=left,leftmargin=*,topsep=5pt,parsep=0pt,itemsep=2pt]
		\item[\ref{it:conseq-PHI2-met}]We use \cite[Theorem 1.6 and Remark 1.7-(a)]{Mur1} and \cite[Proposition 4.8]{KM} to obtain \ref{it:conseq-PHI2-met}.
		\item[\ref{it:conseq-PHI2-meas}]This follows from \cite[Propositions 4.5 and 4.7]{KM}.
		\qedhere
	\end{enumerate}
	\end{proof}
	
	\begin{definition} \label{d:lip}
			Let $(X,d)$ be a metric space and let $u \colon X \to \mathbb{R}$.
			We define the \emph{pointwise Lipschitz constant} $\on{Lip}u(x)$ of $u$ at $x \in X$ as
			\[
			\on{Lip}u(x) := \limsup_{y \to x} \frac{\abs{u(x)-u(y)}}{d(x,y)},
			\]
			and $\on{Lip}(X)$ denotes the collection of all functions $u \colon X \to \mathbb{R}$ with
			\[
			\norm{u}_{\on{Lip}(X)} := \sup_{x,y \in X, x \neq y} \frac{ \abs{u(x)-u(y)}}{d(x,y)} < \infty.
			\]
			When it is necessary, we also write $\on{Lip}$ as $\on{Lip}_d$ to specify the metric $d$.
	\end{definition}
	We recall the notion of upper gradient and its variants. We refer the reader to \cite{HKST, Hei} for a comprehensive account.
	\begin{definition} \label{d:uppergrad}
			Let $(X,d,m)$ be a metric measure space and let $u \colon X \to \bR$ be a Borel measurable function.  
			A non-negative Borel measurable function $g$ is called an \emph{upper gradient}\footnote{This notion is called \emph{very weak gradient} in \cite{HK}. Our terminology is borrowed from \cite{HKST}.} if 
			\[
			\abs{u(x)-u(y)} \le \int_{\gamma} g\,ds,
			\]	
			for every rectifiable curve  $\gamma$ between $x$ and $y$.
			A non-negative Borel measurable function $g$ is called a $p$-\emph{weak upper gradient of} $u$ with $p \in [1,\infty)$ if
			\[
			\abs{u(x)-u(y)} \le \int_{\gamma} g\,ds,
			\]
			for all $\gamma\in \Gamma_{\on{rect}} \setminus \Gamma_0$, where $x$ and $y$ are the endpoints of $\gamma$, $\Gamma_{\on{rect}}$ denotes the collection of non-constant compact rectifiable curves and $\Gamma_0$ has $p$-modulus zero in the sense that
			\[
			\inf \biggl\{ \norm{\rho}^p_{L^p(X)} \biggm\vert \begin{minipage}{165pt}$\rho\colon X\to[0,\infty]$, $\rho$ is Borel measurable, $\int_{\gamma} \rho\,ds \ge 1$ for all $\gamma \in \Gamma_0$\end{minipage}\biggr\}=0.
			\]
			We denote by $N^{1,p}(X)$ the collection of functions $u \in L^p(X)$ that have a $p$-weak upper gradient $g \in L^p(X)$, and define $\norm{u}_{N^{1,p}(X)}= \norm{u}_{L^p(X)} + \inf_g \norm{g}_{L^p(X)}$, where $g$ is taken over all $p$-weak upper gradients of $u$. 
			We denote by $N_{\on{loc}}^{1,p}(X)$ the class of functions $u \in L^p_{\on{loc}}(X)$ that have a $p$-weak upper gradient that belongs to $L^p(B)$ for each ball $B$.
			If necessary, we denote the spaces $N^{1,p}(X)$ and $N_{\on{loc}}^{1,p}(X)$  by 
			$N^{1,p}(X,d,m)$ and $N_{\on{loc}}^{1,p}(X,d,m)$ respectively.
\end{definition}
\begin{definition} \label{d:p-poincare}
			We say that $(X,d,m)$ supports a $(1,p)$-\emph{Poincar\'e inequality} with $p \in [1,\infty)$ if there exists constants $K \ge 1, C >0$ such that for all $u \in \on{Lip}(X), x \in X$ and $r>0$,
			\[
			\fint_{B(x,r)} \abs{u-u_{B(x,r)}} \, dm \le C r \left[\fint_{B(x,Kr)}\left( \on{Lip}(u) \right)^p\,dm \right]^{1/p},
			\]
			where $\fint_A f\,dm$ denotes $\frac{1}{m(A)} \int_A f\,dm$ and $u_{B(x,r)}= \fint_{B(x,r)} u\,dm$.
			It is known that $(X,d,m)$ supports a $(1,p)$-Poincar\'e inequality if and only if 
			there exists constants $K \ge 1, C >0$ such that for every function $u$ that is integrable on balls and for any upper gradient $g$ of $u$ in $X$, $x \in X$ and $r>0$,
			\[
			\fint_{B(x,r)} \abs{u-u_{B(x,r)}} \, dm \le C r \left[\fint_{B(x,Kr)} g^p\,dm \right]^{1/p},
			\]
			where $u_{B(x,r)}$ is as above \cite[Theorem 8.4.2]{HKST}. 
	\end{definition}
	We need the following self-improvement of Poincar\'e inequality.
	\begin{proposition} \label{p:self-i}
		Let $(X,d,m,\sE,\sF)$ be an MMD space that satisfies \hyperlink{phi}{$\on{PHI}(2)$}. Then we have the following:
		\begin{enumerate}[label=\textup{(\alph*)},align=left,leftmargin=*,topsep=5pt,parsep=0pt,itemsep=2pt]
			\item \textup{(Cf. \cite[Theorem 2.2]{KZ12})} $\sF=N^{1,2}(X)$ with equivalent norms, $\on{Lip}(X) \cap \contfunc_{\mathrm{c}}(X)$ is dense in $\sF$ and $\sF_{\on{loc}} = N^{1,2}_{\on{loc}}(X)$.
			\item \textup{(Cf. \cite[Theorem 1.0.1]{KZ08})} $(X,d,m)$ satisfies $(1,p)$-Poincar\'e inequality for some $p \in [1,2)$.
		\end{enumerate}
	\end{proposition}
	
	\begin{proof}
	(a)
	Let $d_{\on{int}}$ denote the intrinsic metric corresponding to the MMD space \hyperlink{phi}{$\on{PHI}(2)$}.
	Since \ref{VD} and \hyperlink{pi}{$\on{PI(2)}$} are preserved under a bi-Lipschitz change of the metric (cf.\ \cite[Lemma 8.3.18]{HKST}), by Proposition \ref{p:metmeas}-\ref{it:conseq-PHI2-met},
	the MMD space $(X,d_{\on{int}},m,\sE,\sF)$ also satisfies \ref{VD} and \hyperlink{pi}{$\on{PI(2)}$}.
	Therefore by \cite[Theorem 2.2]{KZ12}, $\sF=N^{1,2}(X,d_{\on{int}},m)$ with equivalent norms and
	$\on{Lip}_{d_{\on{int}}}(X) \cap \contfunc_{\mathrm{c}}(X)$ is dense in $\sF$ and $\sF_{\on{loc}} = N^{1,2}_{\on{loc}}(X,d_{\on{int}},m)$.
	Since $d$ and $d_{\on{int}}$ are bi-Lipschitz equivalent, $\on{Lip}_{d_{\on{int}}}(X) = \on{Lip}_{d}(X)$ and $\on{Lip}_d(u)$ is comparable to  $\on{Lip}_{d_{\on{int}}}(u)$; that is there exists $C>0$ such that
	\begin{equation} \label{e:si1}
	C^{-1}\on{Lip}_d(u)(x) \le \on{Lip}_{d_{\on{int}}}(u)(x) \le C \on{Lip}_d(u)(x) \quad \mbox{for all $x \in X, u \in \on{Lip}_{d}(X)$.}
	\end{equation}
	
	(b) By (a), \cite[Proposition 2.1]{KZ08} and \cite[Lemma 8.3.18]{HKST}, $(X,d,m)$ satisfies the $(1,2)$-Poincar\'e inequality. By the self-improving property of \cite[Theorem 1.0.1]{KZ08},  $(X,d,m)$ satisfies $(1,p)$-Poincar\'e inequality for some $p \in [1,2)$. 
	\end{proof}
	
	\subsection{$A_\infty$-weights and the Gaussian uniformization problem} \label{ssec:Ainfty}
	\begin{definition}[$A_\infty$-relation] \label{d:ainf}
			Let $(X,d,m)$ be a complete metric measure space such that $m$ is a doubling measure. Let $m'$ be another doubling Borel measure on $X$. Then $m'$ is said to be \emph{$A_\infty$-related} to $m$ if for each $\varepsilon>0$ there exists $\delta>0$ such that
			\[
			m(E) < \delta m(B)\quad \mbox{implies}\quad m'(E) < \varepsilon m'(B)
			\]
			whenever $E$ is a measurable subset of a ball $B$. Evidently, if $m'$ is $A_\infty$-related to $m$, then $m'$ is absolutely continuous with respect to $m$, so that $dm' = w\,dm$ for some nonnegative locally integrable weight function $w$. It turns out that being $A_\infty$-related is a symmetric relation among doubling measures; that is, if $m'$ is $A_\infty$-related to $m$, then $m$ is $A_\infty$-related to $m'$ \cite[Chapter I]{ST}.
			
			Consider the following \emph{reverse H\"older inequality}:
			there is a locally $m$-integrable function $w$ in $X$ together with constants $C \ge 1$ and $p>1$ such that $dm'=w\,dm$ and
			\begin{equation} \label{e:rh}
			\left(\fint_{B} w^p \,dm \right)^{1/p} \le C \fint_B w \, dm
			\end{equation}
			whenever $B$ is a ball in $(X,d)$. It is well known that a doubling measure $m'$ is $A_\infty$-related to $m$ if and only if $m'$ is non-zero and the reverse H\"older inequality \eqref{e:rh} is satisfied \cite[Chapter I]{ST}.
	\end{definition}
	The $A_\infty$ relation among doubling measures is preserved under quasisymmetric change of metric as we show below.
	\begin{lemma} \label{l:bilip-ainf}
		Let $d_1,d_2$ two quasisymmetric metrics on 
		$X$ such that the metrics $d_1,d_2$ are uniformly perfect. Let $m_1,m_2$ be two doubling Borel measures with respect to $d_1$ such that $m_1$ and $m_2$ are $A_\infty$-related with respect to the metric $d_1$. Then $m_1$ and $m_2$ are $A_\infty$-related with respect to $d_2$.
	\end{lemma}
	
	\begin{proof}
	Let $B_i(x,r)$ denote the open ball in metric $d_i$ for $i=1,2$.
	By \cite[Lemma 1.2.18]{MT10}, there exists $C>0$ such that the following holds: for each $x \in X, r>0$ and $i \in \giveset{1,2}$, there exists $s>0$ such that
	\begin{equation} \label{e:la1}
	B_{3-i}(x,C^{-1}s)\subseteq B_{i}(x,r)\subseteq B_{3-i}(x,Cs), \quad \mbox{for all $x \in X, r>0$.}
	\end{equation}
	Note that since $m_1$ and $m_2$ are doubling with respect to $d_1$, they are also doubling on $(X,d_2)$. Therefore, there exists $C_1>0$ such that
	\begin{equation} \label{e:la3} 
	\frac{m_i(B_j(x,Cr))}{m_i(B_j(x,r))} \le C_1,\quad \mbox{for all $x \in X, r>0$, and $i,j \in \giveset{1,2}$.}
	\end{equation}
	Since $m_1$ and $m_2$ are $A_\infty$-related in $(X,d_1)$, we have $m_2 \ll m_1$, $dm_2= w\,dm_1$, where $w \ge 0$ is a Borel measurable function that satisfies the following reverse H\"older inequality: there exists $C_R \ge 1, p >1$ such that
	\begin{equation} \label{e:la2}
	\left(\fint_{B_1(x,r)} w^p \,dm_1 \right)^{1/p} \le C_R \fint_{B_1(x,r)} w \, dm_1,\quad \mbox{for all $x \in X, r>0$.}
	\end{equation} 
	For all $x \in X$, $r>0$, we estimate
	\begin{align*}
	\left(\fint_{B_2(x,r)} w^p \,dm_1 \right)^{1/p}  &\le 
	\left(\fint_{B_1(x,Cs)} w^p \,dm_1 \right)^{1/p} \left( \frac{m_1(B_1(x,Cs))}{m_1(B_1(x,C^{-1}s))}\right)^{1/p} \\
	&\qquad \mbox{(by \eqref{e:la1})} \\
	&\le C_R C_1^{2/p} \fint_{B_1(x,Cs)} w \, dm_1 \quad \mbox{(by \eqref{e:la2} and \eqref{e:la3})} \\
	&= C_R C_1^{2/p} \frac{m_2(B_1(x,Cs))}{m_1(B_1(x,Cs))} \quad \mbox{(by $dm_2=w\,dm_1$)} \\
	&\le C_R C_1^{2/p}\frac{m_2(B_1(x,Cs))}{m_1(B_2(x,r))}\quad   \mbox{(by \eqref{e:la1})}\\
	&\le C_R C_1^{4/p}\frac{m_2(B_1(x,C^{-1}s))}{m_1(B_2(x,r))}  \quad \mbox{(by \eqref{e:la3})}\\
	&\le C_R C_1^{4/p}\frac{m_2(B_2(x,r))}{m_1(B_2(x,r))}  \quad \mbox{(by \eqref{e:la1})}\\
	&\le   C_R C_1^{4/p}\fint_{B_2(x,r)} w\,dm_1, \quad \mbox{(since $dm_2=w \, dm_1$).}
	\end{align*}
	\end{proof}
	
	
	Let $f \colon (X_1,d_1) \to (X_2,d_2)$ be a homeomorphism between two metric spaces.
	For all $x \in X, r>0$, we define
	\begin{equation} \label{e:Lf}
	L_f(x,r)= \sup \giveset{ d_2(f(x),f(y)): d_1(x,y) \le r}, \quad L_f(x)= \limsup_{r \to 0}\frac{L_f(x,r)}{r}.
	\end{equation}
	For $\varepsilon>0$, we define
	\begin{equation} \label{e:Lfeps}
	L_f^\varepsilon(x)= \sup_{0 < r \le \varepsilon} \frac{L_f(x,r)}{r}.
	\end{equation}
	Clearly, $L_f$ decreases as $\varepsilon$ decreases and
	\begin{equation*}
	\lim_{\varepsilon \downarrow 0} L_f^\varepsilon(x)= L_f(x), \qquad \mbox{for all $x \in X$.}
	\end{equation*}
	\begin{lemma}[{\cite[Lemma 7.16]{HK}}] \label{l:wgrad}
		Let $f \colon (X_1,d_1)\to (X_2,d_2)$ be a $\eta$-quasisymmetry. Let $x_0 \in X$ and $0<R<\diam(X,d_1),$. There is a constant $C$ (that depends only on $\eta$) such that for all $\varepsilon>0$, the function $CL^\varepsilon_f$ is an upper gradient of the function $u(x)= d_2(f(x),f(x_0))$ in $B(x_0,R)$.
	\end{lemma}
	We introduce the notion of $C$-approximation to compare balls in different metrics.
	\begin{definition}
			Let $d_1$ and $d_2$ be two metrics on $X$ such that the identity map $\on{Id}:(X,d_1) \to (X,d_2)$ is a $\eta$-quasisymmetry. Let $C \ge 1$ be a constant. We say that a ball  $B_{d_2}(x_2,r_2)$ is a $C$-approximation of  $B_{d_1}(x_1,r_1)$ if
			\begin{align*}
			d_1(x_1,x_2) \le C r_1, \qquad d_2(x_1,x_2) \le C r_2 \\
			B_{d_1}(x_2, C^{-1} r_1) \subset B_{d_2}(x_2,r_2) \subset B_{d_1}(x_2, Cr_1), \\
			B_{d_2}(x_1, C^{-1} r_2) \subset B_{d_1}(x_1,r_1) \subset B_{d_2}(x_1, Cr_2)
			\end{align*}
	\end{definition}
	By the same argument as Proposition  \ref{p:qscap}, we obtain the following comparison of capacities.
	\begin{lemma} \label{l:Cap-approx}
		Let $(X,d_i,m_i,\sE,\sF_i), i =1,2,$ be two MMD spaces that satisfy \hyperlink{phi}{$\on{PHI(2)}$}
		such that the identity map $\on{Id} \colon (X,d_1) \to (X,d_2)$ is a quasisymmetry.
		Let $B_i(x,r)$ denote a open ball of radius $r$ and center $x$, for $i=1,2$. Let $C_1 \ge 1$ and $A_1, A_2>1$.  There exists $C_2, A_3>1$ such that 
		\[
		\Capa(B_1(x_1,r_1), B_1(x_1,A_1r_1)^c) \le  C_2	\Capa(B_2(x_2,r_2), B_2(x_2,A_2r_2)^c) 
		\]
		for all balls $B_1(x_1,r_1)$ and $B_2(x_2,r_2)$ such that $r_1 < \diam(X,d_1)/A_3, r_2 < \diam(X,d_2)/A_3$ and that $B_1(x_1,r_1)$ is a $C_1$-approximation of $B_2(x_2,r_2)$.
	\end{lemma}
	
	The following is an analogue of \cite[Lemma 7.19]{HK}
	\begin{lemma} \label{l:weakl2}
		Let $(X,d_i,m_i,\sE,\sF_e \cap L^2(m_i)), i=1,2$ be two MMD spaces that satisfy \hyperlink{phi}{$\on{PHI}(2)$} and are time changes of each other with full quasi-support.
		Let the identity map $f \colon (X,d_1)\to (X,d_2)$ be an $\eta$-quasisymmetry. Then the function $L^\varepsilon_f$ defined in \eqref{e:Lfeps} is in weak $L^2$ for any $\varepsilon<R/10$ and for any ball $B_{d_1}(x_0,R), R<\diam(X,d_1)$. Furthermore, there exists $C \ge 1$ such that $L^\varepsilon_f$ satisfies the estimate
		\begin{equation} \label{e:weakl2}
		m_1\left(\giveset{x \in B_{d_1}(x_0,R): L^\varepsilon_f(x)>t}\right) \le C t^{-2} m_2 \left(B_{d_1}(x_0,R)\right), 
		\end{equation}
		for all $t>0, 0<R<\diam(X,d_1),x_0 \in X$. 
		Here $C \ge 1$ depends only on $\eta$ and the constants associated with the MMD spaces $(X,d_i,m_i,\sE,\sF_e \cap L^2(m_i)), i=1,2$.
	\end{lemma}
	
	\begin{proof}
	Let $E_t$ denote the set 
	\[
	E_t:=\giveset{x \in B_{d_1}(x_0,R): L^\varepsilon_f(x)>t}.
	\]
	Then by the $5B$-covering lemma  \cite[Theorem 1.2]{Hei} there exists a countable collection of disjoint balls $B_i=B_{d_1}(x_i,r_i), i \in I$ such that $0 <r_i \le \varepsilon$,
	\begin{equation} \label{e:wk0}
	\frac{L_f(x_i,r_i)}{r_i} >t
	\end{equation}
	and
	\[
	E_t \subset \cup_i 5B_i \subset 2 B.
	\]\\
	Note that  the metrics $d_1,d_2$ are uniformly perfect by Proposition \ref{p:metmeas}-\ref{it:conseq-PHI2-met}.
	Define \[B'_i := B_{d_2}(x_i,L_f(x_i,r_i)/\eta(1)).\] 
	Roughly speaking, the balls $B_i'$ in $d_2$-metric approximate the balls $B_i$ in the $d_1$-metric for each $i \in I$. 
	More precisely, since $f$ is a $\eta$-quasisymmetry and $d_1,d_2$ are uniformly perfect, 
	there exists $C \ge 1$ such that  $B_i$ is a $C$-approximation of $B_i'$ for all $i \in I$. 
	In particular,
	\begin{equation} \label{e:wk1}
	C^{-1}B_i' \subset B_i \subset CB_i', \quad C^{-1} B_i \subset B_i' \subset C B_i, \mbox{ for all $i \in I$.}
	\end{equation}
	Since $(X,d_i,m_2,\sE,\sF_e \cap L^2(m_i)), i=1,2$ satisfies \hyperlink{phi}{$\on{PHI}(2)$}, there exists $A_1,A_2 > 1$ such that 
	\begin{equation} \label{e:wk2}
	\Capa(B_i, (A_1 B_i)^c) \asymp \frac{r_i^2}{m_1(B_i)}, \quad \Capa(B_i', (A_2 B_i')^c) \asymp \frac{L_f(x_i,r_i)^2}{m_2(B_i')} \quad \mbox{for all $i \in I$.}
	\end{equation}
	Furthermore, since by \eqref{e:wk1} and Lemma \ref{l:Cap-approx}, we have
	\begin{equation} \label{e:wk3}
	\Capa(B_i, (A_1 B_i)^c) \asymp \Capa(B_i', (A_2 B_i')^c)\quad \mbox{for all $i \in I$.}
	\end{equation}
	We combine the above estimates, to obtain \eqref{e:weakl2} as follows:
	\begin{align*}
	m_1(E_t) &\le \sum_{i} m_1(5B_i) \lesssim \sum_{i} m(B_i) \quad \mbox{(since $m_1$ is a doubling )}\\
	&\lesssim \sum_i r_i^2 \Capa(B_i, (A_1B_i)^c) \quad  \mbox{(by \eqref{e:wk2})} \\
	&\lesssim  t^{-2} \sum_i L_f(x_i,r_i)^2 \Capa(B_i, (A_1B_i)^c) \quad \mbox{(by \eqref{e:wk0})} \\
	&\lesssim t^{-2} \sum_i  L_f(x_i,r_i)^2  \Capa(B'_i, (A_2 B'_i)^c) \quad \mbox{(by \eqref{e:wk3})} \\
	&\lesssim t^{-2} \sum_i m_2(B_i')  \quad  \mbox{(by \eqref{e:wk2})} \\
	&\lesssim t^{-2} \sum_i m_2(C^{-1}B_i') \quad \mbox{(by \ref{VD} of $(X,d_2,m_2)$)}\\
	& \lesssim t^{-2} \sum_i m_2(B_i)  \quad \mbox{(since $C^{-1}B_i' \subset B_i$)}\\
	&\lesssim t^{-2} m_2 \left(B_{d_1}(x_0,2R)\right) \\
	&\qquad \mbox{(since $B_i$'s are disjoint and $\cup_i B_i \subset B_{d_1}(x_0,2R)$)} \\
	&\lesssim t^{-2} m_2 \left(B_{d_1}(x_0,R)\right)\quad \mbox{(by \ref{VD} of $(X,d_1,m_2)$)}.
	\end{align*}
	The claimed dependence of the constant $C$ in \eqref{e:weakl2} follows from the above argument.
	\end{proof}
	
	\begin{corollary}[{\cite[Corollary 7.21]{HK}}] \label{c:weakl2} 
		Let $(X,d_i,m_i,\sE,\sF_e \cap L^2(m_i)), i=1,2$ be two MMD spaces that satisfy the assumptions of Lemma \textup{\ref{l:weakl2}}. Let $L_f^\varepsilon$ denote the function defined in \eqref{e:Lfeps}. For all $s \in [1,2)$ and $x_0 \in X, 0<\varepsilon<R/10$, $R < \diam(X,d_1)$, the function $L_f^\varepsilon$ is in $L^s(B_{d_1}(x_0,R),m_1)$ with 
		\[
		\left( \int_{B_{d_1}(x_0,R)} \abs{L_f^\varepsilon}^s\, dm_1 \right)^{1/s} \le C m_1(B(x_0,R))^{(2-s)/(2s)} m_2\left(B(x_0,R)\right)^{1/s},
		\]
		where $C$ only depends only on $s, \eta$ and the constants associated with the two MMD spaces. By letting $\varepsilon \downarrow 0$, a similar statement is true for $L_f$.
	\end{corollary}
	
	\begin{proof}[Proof of Theorem \textup{\ref{t:ainf}}]
	By Proposition \ref{p:metmeas}-\ref{it:conseq-PHI2-meas}, both $m_1$ and $m_2$ are minimal energy dominant measures. Therefore, $m_1$ and $m_2$ are mutually absolute continuous.
	By Proposition \ref{p:metmeas}-\ref{it:conseq-PHI2-met}, both $d_1$ and $d_2$ are bi-Lipschitz equivalent to intrinsic metrics, and therefore by Lemma \ref{l:bilip-ainf}, we may assume that $d_1$ and $d_2$ are intrinsic metrics with respect to the symmetric measures $m_1$ and $m_2$ respectively.
	
	Let $f \colon (X,d_1) \to (X,d_2)$ denote the identity map, which is an $\eta$-quasisymmetry.
	Then by the Lebesgue--Radon--Nikodym theorem, the volume derivative 
	\begin{equation} \label{e:ai1}
	\mu_f(x)= \lim_{r \downarrow 0} \frac{m_2\left(B_{d_1}(x,r)\right)}{m_1\left(B_{d_1}(x,r)\right)}
	\end{equation}
	exists and is finite for $m_1$-almost every $x \in X$. Since $m_2 \ll m_1$, we have
	$dm_2= \mu_f\,dm_1$; that is $m_2(E)= \int_E \mu_f \, dm_1$ for all measurable sets $E$.

	Since $(X,d_i,m_i,\sE,\sF \cap L^2(m_i))$ satisfies \hyperlink{phi}{$\on{PHI}(2)$} for $i=1,2$, there exists constants $A_1, A_2,C_1,C_2$ such that
	\begin{equation} \label{e:ai2}
	\Capa(B_{d_i}(x,r), B_{d_i}(x,A_ir)^c) \asymp \frac{r^2}{m_i\left(B_{d_i}(x,r)\right)}, 
	\end{equation}
for all $x \in X$, $r < \diam(X,d_i)/C_i,i=1,2.$ 
	Similar to \eqref{e:wk1}, there exists $C \ge 1$ such that
	for all $r < \diam(X,d_1)$, $x \in X$, $B_{d_1}(x,r)$ is a $C$-approximation of $B_{d_2}(x,L_f(x,r))$. That is, for all $r < \diam(X,d_1)$, $x \in X$, $B_{d_1}(x,r)$,
	\begin{align}\label{e:ai3}
	B_{d_2}(x,C^{-1}L_f(x,r)) &\subset   B_{d_1}(x,r)\subset  B_{d_2}(x, CL_f(x,r)), \nonumber \\  B_{d_1}(x,C^{-1}r)&\subset  B_{d_2}(x,L_f(x,r)) \subset  B_{d_1}(x,Cr).
	\end{align}
	By \eqref{e:ai3}, the $\eta$-quasisymmetry of $f$, and the same argument as Proposition  \ref{p:qscap}, there exists $C_3>1$ such that
	\begin{equation} \label{e:ai4}
	\Capa(B_{d_1}(x,r), B_{d_1}(x,A_1r)^c) \asymp \Capa(B_{d_2}(x,L_f(x,r)), B_{d_2}(x,A_2L_f(x,r))^c),
	\end{equation}
	for all $x \in X, r< \diam(X,d_1)/C_3)$.
	Combining \eqref{e:ai2} and \eqref{e:ai4} shows that
	\begin{equation} \label{e:ai4.5}
	\frac{L_f(x,r)^2}{r^2} \asymp \frac{m_2(B_{d_2}(x,L_f(x,r)))}{m_1(B_{d_1}(x,r))} \asymp \frac{m_2(B_{d_1}(x,r))}{m_1(B_{d_1}(x,r))}, 
	\end{equation}
	for all $x \in X, r<\diam(X,d_1)$.
	Therefore
	\begin{equation} \label{e:ai5}
	\mu_f(x) \asymp L_f(x)^2 \quad \mbox{for almost every $x \in X$.}
	\end{equation}
	Let $p \in [1,2)$ be the constant in Proposition \ref{p:self-i}-(b) so that
	$(X,d_1,m_1)$ satisfies $(1,p)$-Poincar\'e inequality.
	We shall show that $L_f$ satisfies the reverse H\"older inequality
	\begin{equation} \label{e:ai6}
	\left(\fint_{B_{d_1}(x_0,r)} L_f^2 \, dm_1\right)^{1/2} \le C \left(\fint_{B_{d_1}(x_0,r)} L_f^p\, dm_1\right)^{1/p},  
	\end{equation}
	for all $x_0 \in X, r < \diam(X,d_1)$.
	Then by Gehring's lemma \cite[Lemma 7.3]{HK}, H\"older inequality and \eqref{e:ai6}, we obtain the following reverse H\"older inequality for the function $\mu_f$: there exists $\varepsilon>0$ such that
	\[
	\left(\fint_{B_{d_1}(x_0,r)} \mu_f^{1+\varepsilon} \, dm_1\right)^{1/(1+\varepsilon)} \le C \fint_{B_{d_1}(x_0,r)} \mu_f\, dm_1, 
	\]
	for all $x_0 \in X, r < \diam(X,d_1)$.
	By the equivalence between reverse H\"older inequality and $A_\infty$-relation as explained in Definition \ref{d:ainf}, it suffices to show \eqref{e:ai6}.
	
	Since $(X,d_1,m_1)$ satisfies the $(1,p)$-Poincar\'e inequality, by Lemma \ref{l:wgrad} $CL_f^\varepsilon$ is an upper gradient of $u(x)= d_2(x,x_0)$ in $B_{d_1}(x_0,r)$.
	Therefore by the Poincar\'e inequality we have
	\[
	\fint_{B_{d_1}(x_0,K^{-1}r)} \abs{u-u_{B_{d_1}(x_0,K^{-1}r)}} \,dm_1 \lesssim r \left(\fint_{B_{d_1}(x_0,r)} (L_f^{\varepsilon})^p \,dm_1\right)^{1/p}.
	\]
	We let $\varepsilon \downarrow 0$ and use Corollary \ref{c:weakl2} and
	the dominated convergence theorem to obtain
	\begin{equation} \label{e:ai7}
	\fint_{B_{d_1}(x_0,K^{-1}r)} \abs{u-u_{B_{d_1}(x_0,K^{-1}r)}} \,dm_1 \lesssim r \left(\fint_{B_{d_1}(x_0,r)} \abs{L_f}^p \,dm_1\right)^{1/p}.
	\end{equation}
	By the uniform perfectness of $(X,d_1)$ and the volume doubling property, there exists $K_1$ such that $m_1\left(B(x_0,K^{-1}r) \setminus B(x_0,K_1^{-1}K^{-1}r)\right) \gtrsim m_1(B(x_0,r))$. Using the quasisymmetry of $f$, we obtain
	\begin{align*}
	u_{B_{d_1}(x_0,K^{-1}r)} &= \fint_{B_{d_1}(x_0,K^{-1}r)}  d_2(x,x_0) \, m_1(dx) \\
	&\ge \frac{1}{m_1(B_{d_1}(x_0,r))} \int_{B(x_0,K^{-1}r) \setminus B(x_0,K_1^{-1}K^{-1}r)}d_2(x,x_0) \, m_1(dx)\\
	& \gtrsim L_f(x_0,r) \frac{m_1\left(B(x_0,K^{-1}r) \setminus B(x_0,K_1^{-1}K^{-1}r)\right)}{ m_1(B(x_0,r))}\\
	& \ge C_1^{-1} L_f(x_0,r),
	\end{align*}
	because 
	\[
	L_f(x_0,r) \lesssim d_2(x,x_0) \quad\mbox{for all $x \in B(x_0,K^{-1}r) \setminus B(x_0,K_1^{-1}K^{-1}r)$}
	\]
	by the quasisymmetry of $f$ and the uniform perfectness of $(X,d_1)$.
	For sufficiently small $\delta>0$, we similarly have 
	\[
	u(x)=d_2(x,x_0) \le \eta(\delta K_2) L_f(x_0,r) \le (2C_1)^{-1}L_f(x_0,r)
	\]
	for all $x \in B(x_0,\delta K^{-1}r)$. Consequently, using the above estimates and the volume doubling property, we obtain
	\begin{align} \label{e:ai8}
	\fint_{B_{d_1}(x_0,K^{-1}r)} \abs{u-u_{B_{d_1}(x_0,K^{-1}r)}} \,dm_1 &\gtrsim \fint_{B_{d_1}(x_0,\delta K^{-1}r)} \abs{u-u_{B_{d_1}(x_0,K^{-1}r)}} \,dm_1 \nonumber \\
	&\gtrsim L_f(x_0,r).
	\end{align}
	It follows from the above estimates that
	\begin{align*}
	\left(\fint_{B_{d_1}(x_0,r)} L_f^2 \, dm_1\right)^{1/2}  &\lesssim \left(\fint_{B_{d_1}(x_0,r)} \mu_f \, dm_1\right)^{1/2} \quad \mbox{(by \eqref{e:ai5})}\\
	& \lesssim \left( \frac{m_2(B_{d_1}(x_0,r))}{m_2\left(B_{d_1}(x_0,r)\right)} \right)^{1/2} \quad \mbox{(since $dm_2=\mu_f\,dm_1$)}\\
	&\lesssim \frac{L_f(x_0,r)}{r} \quad \mbox{(by \eqref{e:ai4.5})}\\
	& \lesssim \left(\fint_{B_{d_1}(x_0,r)} \abs{L_f}^p \,dm_1\right)^{1/p}\quad \mbox{(by \eqref{e:ai8} and \eqref{e:ai7})}.
	\end{align*}
	This completes the proof of \eqref{e:ai6}, and therefore of Theorem \ref{t:ainf}.
	\end{proof}
	
	Let $(X,d,m,\sE,\sF)$ be an MMD space that satisfies \hyperlink{ehi}{$\on{EHI}$}, where $(X,d)$ is a doubling metric space.
	If $\mu \in \sG(X,d,m,\sE,\sF)$, then by Theorem \ref{t:ainf}
	\begin{equation} \label{e:ainf}
	\sG(X,d,m,\sE,\sF) \subseteq \giveset{ \widetilde{\mu} : \mbox{$\widetilde{\mu}$ is $A_\infty$-related to $\mu$} }.
	\end{equation}
	One might ask if the inclusion in \eqref{e:ainf} is strict. For the Brownian motion on $\bR^n$, the above inclusion is strict if and only if $n \ge 2$ (see Theorem \ref{t:1dgaussian} and Example \ref{x:ainf}).
	We need the definition of a maximal semi-metric.
	\begin{definition} \label{d:maxmetric}
		A function $r \colon X \times X \to [0,\infty)$ is said to be a \emph{semi-metric}, if it satisfies all the properties of a metric except possibly the property that $r(x,y)=0$ implies $x=y$.	
		
		Let $h \colon X \times X \to [0,\infty)$ be an arbitrary function.
		Then there exists a unique maximal semi-metric $d_h \colon X \times X \to [0,\infty)$ such that $d_h(x,y)\le h(x,y)$ for all $x,y \in X$ \cite[Lemma 3.1.23]{BBI}.
		We call $d_h$ the \emph{maximal semi-metric induced by $h$}.
		Equivalently, $d_h$ can be defined as follows. Let $\widetilde{h}(x,y)= \min(h(x,y),h(y,x))$. Then 
		\begin{equation} \label{e:maxmetric}
		d_h(x,y) = \inf \giveset{ \sum_{i=0}^{N-1} \widetilde{h}(x_i,x_{i+1}): N \in \bN, x_0=x, x_N=y}.
		\end{equation}
	\end{definition}
	
	We provide a necessary condition for a measure to be in $\sG(X,d,m,\sE,\sF)$.
	Using this necessary condition, below we obtain examples for which the inclusion \eqref{e:ainf} is strict.
	
	\begin{lemma} \label{l:neccessary}
		Let $(X,d,m,\sE,\sF)$ satisfy \hyperlink{phi}{$\on{PHI(\gamma)}$} for some $\gamma \ge 2$. Let $\mu \in \sG(X,d,m,\sE,\sF)$. Define 
		\begin{equation} \label{e:defh}
		h(x,y) = \sqrt{\frac{\mu(B_d(x,d(x,y))) d(x,y)^\gamma}{m(B_d(x,d(x,y)))}}, \quad \mbox{for any $x, y \in X$ with $x \neq y$,}
		\end{equation}
		and $h(x,x)=0$ for any $x \in X$. Let $d_h$ denote the maximal semi-metric induced by $h$. Then there exists $C>0$ such that
		\[
		h(x,y) \le C d_h(x,y) \quad \mbox{for all $x,y \in X$}.
		\]
	\end{lemma}
	
	\begin{proof}
	Let $\theta \in \sJ(X,d)$ be such that the MMD space $(X,\theta,\mu,\sE^\mu,\sF^\mu)$ satisfies \hyperlink{phi}{$\on{PHI(2)}$}.
	It suffices to show the existence of $C_1>0$ such that
	\begin{equation} \label{e:ns1}
	C_1^{-1} \theta(x,y) \le	h(x,y) \le C_1 \theta(x,y) \quad \mbox{for all $x,y \in X$}.
	\end{equation}
	In particular, due to the triangle inequality for $\theta$,
	\eqref{e:ns1} implies a similar inequality with $h(x,y)$ replaced by $d_h(x,y)$,
	which immediately implies that $h$ is comparable to $d_h$.

	By Theorem \ref{t:phichar}, $m$ and $\mu$ satisfy \ref{VD} on $(X,d)$ and $(X,\theta)$.
	By using Proposition \ref{p:QS} and \ref{VD}, there exists $C_2>0$ such that
	\begin{equation} \label{e:ns2}
	C_2^{-1} \mu(B_\theta(x,\theta(x,y))) \le \mu(B_d(x,d(x,y))) \le  C_2 \mu(B_\theta(x,\theta(x,y)))
	\end{equation}
	for all $x,y \in X$, where $x \neq y$.
	By an argument similar to the proof of Proposition \ref{p:qscap}
	using Lemma \ref{l:cap} and Proposition \ref{p:QS}, there exist $C_3,A>0$ such that
	\begin{equation} \label{e:ns3}
	C_3^{-1 }\le \frac{\Capa(B_\theta(x,\theta(x,y)), B_\theta(x,2\theta(x,y))^c)}{\Capa(B_d(x,d(x,y)), B_d(x,2d(x,y))^c)} \le C_3, 
	\end{equation}
	for any pair $x,y \in X$ such that $0< \theta(x,y) < \diam(X,\theta)/A$.
	By Theorem \ref{t:phichar}, Lemma \ref{l:cap} and Proposition \ref{p:QS}
	and by increasing $A$ if necessary, there exists $C_4>0$ such that
	\begin{align} \label{e:ns4}
	\Capa(B_\theta(x,\theta(x,y)), B_\theta(x,2\theta(x,y))^c) & \ge C_4^{-1} \frac{\mu(B_\theta(x,\theta(x,y)))}{\theta(x,y)^2}, \nonumber \\
	\Capa(B_\theta(x,\theta(x,y)), B_\theta(x,2\theta(x,y))^c) &\le C_4  \frac{\mu(B_\theta(x,\theta(x,y)))}{\theta(x,y)^2},
	\end{align}
	and
	\begin{align} \label{e:ns5}
	\Capa(B_d(x,d(x,y)), B_d(x,2d(x,y))^c) &\ge C_4^{-1} \frac{m(B_d(x,d(x,y)))}{d(x,y)^\gamma}, \nonumber \\
	\Capa(B_d(x,d(x,y)), B_d(x,2d(x,y))^c) &\le C_4  \frac{m(B_d(x,d(x,y)))}{d(x,y)^\gamma},
	\end{align}
	for any  pair $x,y \in X$ such that $0< \theta(x,y) < \diam(X,\theta)/A$.
	Combining \eqref{e:ns2}, \eqref{e:ns3}, \eqref{e:ns4} and \eqref{e:ns5}
	shows that there exists $C_5>0$ such that
	\begin{equation} 
	\label{e:ns6}
	C_5^{-1} \theta(x,y) \le	h(x,y) \le C_5 \theta(x,y),
	\end{equation}
	for all $x,y \in X$ such that $0 \le \theta(x,y) < \diam(X,\theta)/A$.
	Since $(X,\theta)$ is uniformly perfect, by replacing $y$ with a closer point $\widetilde{y}$,
	and using \eqref{e:ns6} and \ref{VD}, we obtain \eqref{e:ns1}.
	\end{proof}

	\begin{example} \label{x:ainf}
			Let $n \ge 2$ and let $(X,d,m,\sE,\sF)$ denote the Dirichlet form corresponding to Brownian motion on $\bR^n$. If $w(x)=\abs{x_1}^t$, where $t \in \bR, x=(x_1,\ldots,x_n)$, then $w\,dm$ is $A_\infty$-related to $m$ if and only if $t > -1$ \cite[p.\ 222, Example (c)]{Sem93}. 
			If $t>0, x=(0,\ldots,0), y= (0,\ldots,0,1)$ and $h$ is as given in \eqref{e:defh} with $\gamma=2$, then using \eqref{e:maxmetric} it is easy to check that $d_h(x,y)=0$. 
			This can be seen by choosing equally spaced points $x_0,\ldots,x_n$ on the straight line joining $x$ and $y$ and letting $n \to \infty$ in \eqref{e:maxmetric}. Therefore, by Lemma \ref{l:neccessary} we obtain
			\[
			w\,dm  \in \giveset{ \mu: \mbox{$\mu$ is $A_\infty$-related to $m$} }\setminus \sG(X,d,m,\sE,\sF), \quad \mbox{for any $n \ge 2, t>0$.}
			\]
			In other words, the inclusion in \eqref{e:ainf} is strict for the Brownian motion on $\bR^n, n \ge 2$. 
	\end{example}
	The above example and Lemma \ref{l:neccessary} illustrate that if a measure is too small in the neighborhood of a curve,
	then it will fail to be in $\sG(X,d,m,\sE,\sF)$. As we will see in Subsubsection \ref{sssec:SGs} below,
	a similar (but more subtle) phenomenon happens in the higher-dimensional Sierpi\'{n}ski gaskets.
	
	We recall the definition of strong $A_\infty$-weights on $\bR^n$ introduced by David and Semmes in \cite{DS} and show its relevance to the Gaussian uniformization problem for the Brownian motion on $\bR^2$.
	The following definition is a slight reformulation of the one in \cite{DS} and the equivalence between the two definitions follows from \cite[Lemma 3.1]{Sem93}.
	\begin{definition} \label{d:strongainf}
			Let $d,m$ denote the Euclidean metric and Lebesgue measure on $\bR^n$, respectively. Let $\mu= w \,dm$ be $A_\infty$-related to $m$.
			Define 
			\[
			h(x,y)= \left( \mu(B_{x,y})\right)^{1/n},
			\]
			where $B_{x,y}$ is the Euclidean ball with center $z=(x+y)/2$ and radius $d(x,y)/2$.
			Let $d_h$ denote the maximal semi-metric induced by $h$.
			We say that $\mu$ is \emph{strong $A_\infty$-related} to $m$, if there exists $C>0$ such that
			\[
			d_h(x,y) \ge C^{-1} h(x,y), \quad \mbox{for all $x,y \in \bR^n$.}
			\]
	\end{definition}
	The following relates the Gaussian uniformization problem in $\bR^2$ in terms of strong $A_\infty$-weights.
	\begin{proposition} \label{p:strongainf}
		Let $(X,d,m,\sE,\sF)$ denote the MMD space corresponding to the Brownian motion on $\bR^2$. Then
		\begin{equation} \label{e:strongainf}
		\sG(X,d,m,\sE,\sF) = \giveset{ \mu : \mbox{$\mu$ is strong $A_\infty$-related to $m$} }.
		\end{equation}
	\end{proposition}
	
	\begin{proof}
	By Lemma \ref{l:neccessary} and Theorem \ref{t:ainf}, we have the inclusion
	\begin{equation} \label{e:sai1}
	\sG(X,d,m,\sE,\sF) \subseteq \giveset{ \mu : \mbox{$\mu$ is strong $A_\infty$-related to $m$} }.
	\end{equation}
	For the reverse inclusion, consider a measure $\mu=w\,dm$ that is $A_\infty$-related to $m$. Since $\mu$ is mutually absolutely continuous with respect to $m$, $\mu \in \sA(X,d,m,\sE,\sF)$.
	
	Let $\theta$ denote the metric $d_h$ in Definition \ref{d:strongainf}. 
	Since $\mu$ is $A_\infty$-related to $m$, $\mu$ satisfies \ref{VD} on $(X,d)$.  Since $\theta(x,y)$ is comparable to $\sqrt{\mu(B_{x,y})}$, where $B_{x,y}= B((x+y)/2,d(x,y)/2))$, by the \ref{VD} and \ref{RVD} for the measure $\mu$, we obtain that
	\begin{equation} \label{e:sai2}
	\theta \in \sJ(X,d).
	\end{equation}
	By \eqref{e:sai2}, Proposition \ref{p:QS} and \ref{VD}, there exists $C_1>0$ such that
	\begin{equation} \label{e:sai3}
	C_1^{-1} r^2 \le \mu(B_\theta(x,r)) \le C_1 r^2 \quad \mbox{for all $x \in X, r>0$.}
	\end{equation}
	By Lemma \ref{l:cap}, Proposition \ref{p:QS}, and \eqref{e:sai2}, there exist $C_2>0$ such that
	\begin{equation} \label{e:sai4}
	C_2^{-1} \le \Capa(B_\theta(x,r),B_\theta(x,2r)^c) \le C_2 \quad \mbox{for all $x \in X,r>0$.}
	\end{equation}
	By Lemma \ref{l:EHI-TC-QS}, the time-changed MMD space $(X,\theta,\mu,\sE^\mu,\sF^\mu)$ satisfies \hyperlink{ehi}{$\on{EHI}$}.
	Combining \eqref{e:sai2}, \eqref{e:sai3} and \eqref{e:sai4}, and using Theorem \ref{t:phichar}, we obtain that $\mu \in \sG(X,d,m,\sE,\sF)$.
	\end{proof}
	
	Proposition \ref{p:strongainf} along with known results on strong $A_\infty$-related measures
	leads to many further examples of measures in $\sG(X,d,m,\sE,\sF)$ for Brownian motion in $\bR^2$. For instance, Bessel potentials can be used to construct strong $A_\infty$-measures \cite[Theorem 3.1]{BHS}.
	
	Unlike the case of Brownian motion on $\mathbb{R}^{n}$ with $n \geq 2$ treated so far,
	it turns out that the inclusion in \eqref{e:ainf} is an equality for Brownian motion on $\mathbb{R}$,
	which we prove in the rest of this subsection as a complete answer to the Gaussian uniformization problem for Brownian motion on $\mathbb{R}$.
	For this purpose, we need the following lemma characterizing $A_\infty$-related weights in terms of a reverse H\"older inequality.
	
	\begin{lemma} \label{l:ainf}
		Let $(X,d)$ be a locally compact metric space and let $m$ be a non-zero Radon measure on $(X,d)$ satisfying
		\ref{VD} and such that the function $(0,\infty) \ni r \mapsto m(B(x,r))$ is continuous for each $x \in X$.
		Then for each $[0,\infty)$-valued $w\in L^{1}_{\on{loc}}(X,m)$, the following are equivalent:
		\begin{enumerate}[label=\textup{(\alph*)},align=left,leftmargin=*,topsep=5pt,parsep=0pt,itemsep=2pt]
			\item\label{it:ainf-ainf}$\mu= w\,dm$ is $A_\infty$-related to $m$.
			\item\label{it:ainf-rhol}$w \not\equiv 0$ and there exists $C>1$ such that the following reverse H\"older inequality holds:
			\[
			\fint_{B} w \, dm \le C \left(\fint_{B} \sqrt{w} \, dm \right)^2 \qquad \mbox{whenever $B$ is a ball in $(X,d)$.}
			\]
		\end{enumerate}
	\end{lemma}
	
	\begin{proof}
	\ref{it:ainf-rhol}$\implies$\ref{it:ainf-ainf}:
	By Gehring's lemma \cite[Lemma 7.3]{HK}, there exist $\varepsilon>0$ and $C_1>0$ such that
	\[
	\left(	\fint_{B} w^{1+\varepsilon} \, dm\right)^{1/(1+\varepsilon)} \le C_1  \left(\fint_{B} \sqrt{w} \, dm \right)^2 \le \fint_B w\, dm
	\]
	for all balls $B$ in $(X,d)$. By \cite[Theorem 18 in Chapter I]{ST}, $\mu$ is $A_\infty$-related to $m$.
	
	\noindent \ref{it:ainf-ainf}$\implies$\ref{it:ainf-rhol}: By  \cite[Theorem 18 in Chapter I]{ST}, there exist $r>1$ and $C_2>1$ such that
	\begin{equation} \label{e:ainf1}
	\left( \fint_{B} w^r \, dm\right)^{1/r} \le C_2 \fint_{B} w\, dm \qquad \mbox{for all balls $B$.}
	\end{equation}
	Choose $\theta \in (0,1)$ such that $\theta r^{-1} + 2(1-\theta)=1$. By H\"older's inequality and \eqref{e:ainf1},
	\begin{align*}
	\fint_B w\, dm &\le \left( \fint_{B} w^r \, dm\right)^{\theta/r} \left( \fint_{B} \sqrt{w} \, dm\right)^{2(1-\theta)}\\ &\le C_2^\theta \left( \fint_{B} w \, dm\right)^{\theta} \left( \fint_{B} \sqrt{w} \, dm\right)^{2(1-\theta)}
	\end{align*}
	for all balls $B$. This immediately implies \ref{it:ainf-rhol} with $C=C_2^{\theta/(1-\theta)}$.
	\end{proof}

	In the following result, we consider the case of Brownian motion in $\bR$; that is $(X,d,m,\sE,\sF)$ is given by $X=\bR$, $d$ is the Euclidean distance, $m$ is the Lebesgue measure, $\sF=W^{1,2}$ and $\sE(f,f)= \int \abs{f'}^2\,dm$.
	\begin{theorem} \label{t:1dgaussian}
		Let $(X,d,m,\sE,\sF)$ denote the MMD space corresponding to the Brownian motion on $\bR$.
		Then the family of Gaussian admissible measures is characterized by the reverse H\"older inequality
		as in Lemma \textup{\ref{l:ainf}-\ref{it:ainf-rhol}}, i.e.,
		\begin{align*}
		&\sG(X,d,m,\sE,\sF) \\
			&= \{ \mu \mid \mbox{$\mu$ is $A_\infty$-related to $m$} \} \\
			&= \{ w\,dm \mid \mbox{$w \in L^{1}_{\on{loc}}(X,m)$, $w$ is $[0,\infty)$-valued and satisfies Lemma \textup{\ref{l:ainf}-\ref{it:ainf-rhol}}} \}.
		\end{align*}
	\end{theorem}
	
	\begin{proof}
	Set
	\[
	\widetilde{\sG}= \biggl\{  g \,dm \biggm\vert
	\begin{minipage}{185pt}
	$g \not\equiv 0$, there exists $C \ge 1$ such that for all $a < b$,
	$(b-a)^{1/2} \bigl( \int_a^b g \, dm \bigr)^{1/2} \le C \int_a^b \sqrt{g}\,dm$
	\end{minipage}\biggr\}.
	\]
	By Theorem \ref{t:ainf} and Lemma \ref{l:ainf}, it suffices to show that $\widetilde{\sG}\subset {\sG}(X,d,m,\sE,\sF)$.
	Let $g \,dm \in \widetilde{\sG}$. Then consider the measures $\mu_1 = \sqrt{g}\,dm$ and $\mu= g\,dm$. For all $a < b$, an easy calculation shows that
	\begin{equation} \label{e:gup0}
	d_{\on{int}}^\mu(a,b)= \mu_1([a,b]) = \int_a^b \sqrt{g}\, dm.
	\end{equation}
	Since $\sqrt{g}$ satisfies a reverse H\"older inequality, by \cite[Lemma 12 and Theorem 17 in Chapter I]{ST}, $\mu_1$ is a doubling measure on $(X,d)$.
	By the correspondence between doubling measures and quasisymmetric maps on $\bR$ described in \cite[Remark 13.20-(b)]{Hei} and \eqref{e:gup0}, we have 
	\begin{equation} \label{e:gup1}
	d_{\on{int}}^\mu \in \sJ(X,d).
	\end{equation}
	By the reverse H\"older inequality assumption on $\sqrt{g}$, we have
	\[
	\mu_1([a,b]) \le (b-a)^{1/2} \left( \mu([a,b])\right)^{1/2} \le C \mu_1([a,b]).
	\]
	Since $\mu_1$ is a doubling measure on $(X,d)$, the above estimate shows that $\mu$ is also a doubling measure on $(X,d)$.
	Since $d_{\on{int}}^\mu \in \sJ(X,d)$, $\mu$ is a doubling measure on $(X,d_{\on{int}}^\mu)$ by \eqref{e:ann1}.
	
	By \cite[Theorem 18 of Chapter I]{ST} and Lemma \ref{l:ainf}, the measures $\mu$ and $m$ are mutually absolutely continuous. This implies that $\mu \in \sA(X,d,m,\sE,\sF)$.
	
	By \cite[Theorem 1.4]{CW}, for any interval $I=[a,b]$ and for all $f \in W^{1,2}$, we have a Poincar\'e inequality
	\[
	\int_I \left(f(x)- \frac{1}{\mu(I)} \int_I f\,d\mu \right)^2 \, \mu(dx)  \le K_{\mu,I}^2 \int_I \abs{f'(x)}^2 \, dm(x),
	\]
	where the optimal constant $K_{\mu,I}$ satisfies the two-sided estimate
	\begin{align}
	K_{\mu,I} &\asymp \frac{1}{\mu(I)} \left( \sup_{a < x< b}  \Biggl\{ \mu([x,b])^{1/2} \left(\int_a^x \mu([a,t])^2  \, dt\right)^{1/2} \Biggr\} \right.  \nonumber \\
	& \qquad \qquad \left. + \sup_{a < x< b}  \Biggl\{ \mu([a,x])^{1/2} \left(\int_x^b \mu([t,b])^2  \, dt\right)^{1/2} \Biggr\} \right). \label{e:gup2}
	\end{align}
	By using the bound $\mu(A) \le \mu(I)$ for all $A \subset I$, \eqref{e:gup2} and reverse H\"older inequality, we have
	\begin{equation} \label{e:gup3}
	K_{\mu,I}^2 \lesssim \mu(I) m(I)  \lesssim  \mu_1(I)^2.
	\end{equation}
	By \eqref{e:gup0}, $\mu_1(I)$ is the diameter of $I$ under the intrinsic metric $d_{\on{int}}^\mu$. Therefore by \eqref{e:gup2} and \eqref{e:gup3},
	we have the Poincar\'e inequality \hyperlink{pi}{$\on{PI}(2)$} for the MMD space 
	$(X,d_{\on{int}}^\mu,\mu,\sE^\mu,\sF^\mu)$. By \cite[Lemma 1]{Stu94}, the MMD space 	$(X,d_{\on{int}}^\mu,\mu,\sE^\mu,\sF^\mu)$ satisfies  \hyperlink{cs}{$\on{CS}(2)$}. 
	Since $\mu$ is a doubling measure on $(X,d_{\on{int}}^\mu)$, we have that $\mu \in \sG(X,d,m,\sE,\sF)$ by Theorem \ref{t:phichar}. Alternately, the claim that $\mu \in \sG(X,d,m,\sE,\sF)$ follows from \cite[Theorem 3.5]{Stu96}.
	\end{proof}
	
	\begin{remark}
			A major obstruction to determining the Gaussian admissible measures
			for multidimensional Brownian motion in $\bR^n, n\ge 2$ is that the intrinsic metric
			with respect to $\mu = g\,dm$ does not admit a simple description unlike the
			one-dimensional case where there is a simple formula \eqref{e:gup0}. As noted in
			Example \ref{x:ainf}, the conclusion of Theorem \ref{t:1dgaussian} fails in higher dimensions.
	\end{remark}

\section{The attainment problem for self-similar sets}\label{sec:pfcsss}

In this section, we study the attainment problem, that of whether the infimum in
\eqref{e:dcw} defining the conformal walk dimension $\dcw=2$ is attained,
in the case of a self-similar Dirichlet form
$(\mathcal{E},\mathcal{F})$ on a post-critically finite self-similar set $K$.
After introducing the framework of such a Dirichlet form
in Subsection \ref{ssec:pfcsss-preliminaries}, we prove in
Subsection \ref{ssec:attain-harmonic-func-pcf} that $\dcw=2$ is attained (if and)
only if $(K,\theta_{h},\Gamma(h,h),\mathcal{E},\mathcal{F})$ satisfies \hyperlink{phi}{$\on{PHI}(2)$}
for some \emph{harmonic function} $h\in\mathcal{F}$ and a metric $\theta_{h}$ on $K$
quasisymmetric to the resistance metric $R_{\mathcal{E}}$ of $(\mathcal{E},\mathcal{F})$,
where $\Gamma(h,h)$ denotes the energy measure of $h$ associated with $(\mathcal{E},\mathcal{F})$.
Then in Subsection \ref{ssec:examples} we present several examples, all of which are
shown NOT to attain $\dcw=2$ except for the two-dimensional standard Sierpi\'{n}ski
gasket, which is known to attain $\dcw=2$ by the results in \cite{Kig08,Kaj12}
as discussed in Theorem \ref{thm:SG2-attained} and its proof below.

The restriction of the framework to post-critically finite self-similar sets is mainly for
the sake of simplicity. In fact, all the results in Subsection \ref{ssec:attain-harmonic-func-pcf}
can be verified, with just slight modifications in the proofs, also for the canonical
self-similar Dirichlet form on any \emph{generalized Sierpi\'{n}ski carpet}
introduced in Subsection \ref{ssec:attain-harmonic-func-GSCs}, which forms
essentially the only class of examples of infinitely ramified self-similar fractals
where the theory of a canonical self-similar Dirichlet form has been established.
We treat the case of generalized Sierpi\'{n}ski carpets in
Subsection \ref{ssec:attain-harmonic-func-GSCs} and explain what changes
are needed for the arguments in Subsections \ref{ssec:pfcsss-preliminaries}
and \ref{ssec:attain-harmonic-func-pcf} to go through in this case.

\subsection{Preliminaries}\label{ssec:pfcsss-preliminaries}

In this subsection, we first introduce our framework of a post-critically finite self-similar
set and a self-similar Dirichlet form on it, for which we mainly follow the presentation of
\cite[Section 3]{Kaj14}, and then present preliminary facts.

Let us start with the standard notions concerning self-similar sets.
We refer to \cite[Chapter 1]{Kig01} for details.
Throughout this subsection, we fix a compact metrizable topological space $K$,
a finite set $S$ with $\#S\geq 2$ and a continuous injective map
$F_{i}\colon K\to K$ for each $i\in S$. We set $\mathcal{L}:=(K,S,\{F_{i}\}_{i\in S})$.
\begin{definition}\label{d:shift}
\begin{enumerate}[label=\textup{(\arabic*)},align=left,leftmargin=*,topsep=5pt,parsep=0pt,itemsep=2pt]
\item Let $W_{0}:=\{\emptyset\}$, where $\emptyset$ is an element
	called the \emph{empty word}, let
	$W_{n}:=S^{n}=\{w_{1}\dots w_{n}\mid w_{i}\in S\textrm{ for }i\in\{1,\dots,n\}\}$
	for $n\in\mathbb{N}$ and let $W_{*}:=\bigcup_{n\in\mathbb{N}\cup\{0\}}W_{n}$.
	For $w\in W_{*}$, the unique $n\in\mathbb{N}\cup\{0\}$ with $w\in W_{n}$
	is denoted by $\lvert w\rvert$ and called the \emph{length of $w$}.
	For $i\in S$ and $n\in\mathbb{N}\cup\{0\}$ we write $i^{n}:=i\dots i\in W_{n}$.
\item We set
	$\Sigma:=S^{\mathbb{N}}=\{\omega_{1}\omega_{2}\omega_{3}\ldots\mid \omega_{i}\in S\textrm{ for }i\in\mathbb{N}\}$,
	which is always equipped with the product topology of the discrete topology on $S$,
	and define the \emph{shift map} $\sigma\colon\Sigma\to\Sigma$ by
	$\sigma(\omega_{1}\omega_{2}\omega_{3}\dots):=\omega_{2}\omega_{3}\omega_{4}\dots$.
	For $i\in S$ we define $\sigma_{i}\colon\Sigma\to\Sigma$  by
	$\sigma_{i}(\omega_{1}\omega_{2}\omega_{3}\dots):=i\omega_{1}\omega_{2}\omega_{3}\dots$.
	For $\omega=\omega_{1}\omega_{2}\omega_{3}\ldots\in\Sigma$ and
	$n\in\mathbb{N}\cup\{0\}$, we write $[\omega]_{n}:=\omega_{1}\dots\omega_{n}\in W_{n}$.
\item For $w=w_{1}\dots w_{n}\in W_{*}$, we set
	$F_{w}:=F_{w_{1}}\circ\dots\circ F_{w_{n}}$ ($F_{\emptyset}:=\id{K}$),
	$K_{w}:=F_{w}(K)$, $\sigma_{w}:=\sigma_{w_{1}}\circ\dots\circ \sigma_{w_{n}}$
	($\sigma_{\emptyset}:=\id{\Sigma}$) and $\Sigma_{w}:=\sigma_{w}(\Sigma)$,
	and if $w\not=\emptyset$ then $w^{\infty}\in\Sigma$ is defined by
	$w^{\infty}:=www\dots$ in the natural manner.
\end{enumerate}
\end{definition}
\begin{definition}\label{d:sss}
$\mathcal{L}=(K,S,\{F_{i}\}_{i\in S})$ is called a \emph{self-similar structure}
if and only if there exists a continuous surjective map $\pi\colon\Sigma\to K$ such that
$F_{i}\circ\pi=\pi\circ\sigma_{i}$ for any $i\in S$.
Note that such $\pi$, if it exists, is unique and satisfies
$\{\pi(\omega)\}=\bigcap_{n\in\mathbb{N}}K_{[\omega]_{n}}$ for any $\omega\in\Sigma$.
\end{definition}
In the rest of this subsection we always assume that $\mathcal{L}$ is a self-similar structure.
\begin{definition}\label{d:V0Vstar}
\begin{enumerate}[label=\textup{(\arabic*)},align=left,leftmargin=*,topsep=5pt,parsep=0pt,itemsep=2pt]
\item We define the \emph{critical set} $\mathcal{C}_{\mathcal{L}}$ and the
	\emph{post-critical set} $\mathcal{P}_{\mathcal{L}}$ of $\mathcal{L}$ by
	\begin{equation}\label{e:C-P}\textstyle
		\mathcal{C}_{\mathcal{L}}:=\pi^{-1}\bigl(\bigcup_{i,j\in S,\,i\not=j}K_{i}\cap K_{j}\bigr)
		\qquad\textrm{and}\qquad
		\mathcal{P}_{\mathcal{L}}:=\bigcup_{n\in\mathbb{N}}\sigma^{n}(\mathcal{C}_{\mathcal{L}}).
	\end{equation}
	$\mathcal{L}$ is called \emph{post-critically finite}, or \emph{p.-c.f.}\ for short,
	if and only if $\mathcal{P}_{\mathcal{L}}$ is a finite set.
\item We set $V_{0}:=\pi(\mathcal{P}_{\mathcal{L}})$, $V_{n}:=\bigcup_{w\in W_{n}}F_{w}(V_{0})$
	for $n\in\mathbb{N}$ and $V_{*}:=\bigcup_{n\in\mathbb{N}\cup\{0\}}V_{n}$.
\end{enumerate}
\end{definition}
$V_{0}$ should be considered as the \emph{``boundary"} of the self-similar set $K$;
indeed, $K_{w}\cap K_{v}=F_{w}(V_{0})\cap F_{v}(V_{0})$ for any $w,v\in W_{*}$ with
$\Sigma_{w}\cap\Sigma_{v}=\emptyset$ by \cite[Proposition 1.3.5-(2)]{Kig01}.
According to \cite[Lemma 1.3.11]{Kig01}, $V_{n-1}\subset V_{n}$ for any $n\in\mathbb{N}$,
and if $V_{0}\not=\emptyset$ then $V_{*}$ is dense in $K$.
Also note that by \cite[Theorem 1.6.2]{Kig01}, $K$ is connected if and only if
for any $i,j\in S$ there exist $n\in\mathbb{N}$ and $\{i_{k}\}_{k=0}^{n}\subset S$
with $i_{0}=i$ and $i_{n}=j$ such that $K_{i_{k-1}}\cap K_{i_{k}}\not=\emptyset$
for any $k\in\{1,\dots,n\}$, and if $K$ is connected then it is arcwise connected.

In the remainder of this subsection our self-similar structure
$\mathcal{L}=(K,S,\{F_{i}\}_{i\in S})$ is always assumed to be post-critically finite
with $K$ connected, so that $2\leq\#V_{0}<\infty$, $K\not=\overline{V_{0}}=V_{0}$
and $V_{*}$ is countably infinite and dense in $K$.

Next we briefly recall the construction and basic properties of
a self-similar Dirichlet form on such $\mathcal{L}$;
see \cite[Chapter 3]{Kig01} for details.
Let $D=(D_{pq})_{p,q\in V_{0}}$ be a real symmetric matrix of size $\#V_{0}$
(which we also regard as a linear operator on $\mathbb{R}^{V_{0}}$) such that
\begin{enumerate}[label=\textup{(D\arabic*)},align=left,leftmargin=*,topsep=5pt,parsep=0pt,itemsep=2pt]
	\item \label{it:D1}$\{u\in\mathbb{R}^{V_{0}}\mid Du=0\}=\mathbb{R}\one_{V_{0}}$,
	\item \label{it:D2}$D_{pq}\geq 0$ for any $p,q\in V_{0}$ with $p\not=q$.
\end{enumerate}
We define
\begin{equation}\label{e:E0-pcf}
\mathcal{E}^{(0)}(u,v):=-\sum_{p,q\in V_{0}}D_{pq}u(q)v(p)
	=\frac{1}{2}\sum_{p,q\in V_{0}}D_{pq}(u(p)-u(q))(v(p)-v(q))
\end{equation}
for $u,v\in\mathbb{R}^{V_{0}}$, so that
$(\mathcal{E}^{(0)},\mathbb{R}^{V_{0}})$ is a Dirichlet form on $L^{2}(V_{0},\#)$.
Furthermore let $\mathbf{r}=(r_{i})_{i\in S}\in(0,\infty)^{S}$ and define
\begin{equation}\label{e:Em-pcf}
\mathcal{E}^{(n)}(u,v):=\sum_{w\in W_{n}}\frac{1}{r_{w}}\mathcal{E}^{(0)}(u\circ F_{w}\vert_{V_{0}},v\circ F_{w}\vert_{V_{0}}),
	\quad u,v\in\mathbb{R}^{V_{n}}
\end{equation}
for each $n\in\mathbb{N}$, where $r_{w}:=r_{w_{1}}r_{w_{2}}\dots r_{w_{n}}$
for $w=w_{1}w_{2}\dots w_{n}\in W_{n}$ ($r_{\emptyset}:=1$).
\begin{definition}\label{d:harmonic-str}
The pair $(D,\mathbf{r})$ of a real symmetric matrix $D=(D_{pq})_{p,q\in V_{0}}$
of size $\#V_{0}$ with the properties \ref{it:D1} and \ref{it:D2} and
$\mathbf{r}=(r_{i})_{i\in S}\in(0,\infty)^{S}$ is called a
\emph{harmonic structure} on $\mathcal{L}$ if and only if
$\mathcal{E}^{(0)}(u,u)=\inf_{v\in\mathbb{R}^{V_{1}},\,v\vert_{V_{0}}=u}\mathcal{E}^{(1)}(v,v)$
for any $u\in\mathbb{R}^{V_{0}}$; note that then
\begin{equation}\label{e:harmonic-str}
\mathcal{E}^{(n_{1})}(u,u)=\min_{v\in\mathbb{R}^{V_{n_{2}}},\,v\vert_{V_{n_{1}}}=u}\mathcal{E}^{(n_{2})}(v,v)
\end{equation}
for any $n_{1},n_{2}\in\mathbb{N}\cup\{0\}$ with $n_{1}\leq n_{2}$ and any $u\in\mathbb{R}^{V_{n_{1}}}$
by \cite[Proposition 3.1.3]{Kig01}. If $\mathbf{r}\in(0,1)^{S}$ in addition,
then $(D,\mathbf{r})$ is called \emph{regular}.
\end{definition}
In the rest of this subsection, we assume that $(D,\mathbf{r})$ is a regular
harmonic structure on $\mathcal{L}$. In this case,
$\{\mathcal{E}^{(n)}(u\vert_{V_{n}},u\vert_{V_{n}})\}_{n\in\mathbb{N}\cup\{0\}}$
is non-decreasing and hence has the limit in $[0,\infty]$ for any $u\in \contfunc (K)$.
Then we define a linear subspace $\mathcal{F}$ of $\contfunc (K)$ and a non-negative definite
symmetric bilinear form $\mathcal{E}\colon\mathcal{F}\times\mathcal{F}\to\mathbb{R}$ by
\begin{align}\label{e:DF-domain}
\mathcal{F}&:=\{u\in \contfunc (K)\mid\lim\nolimits_{n\to\infty}\mathcal{E}^{(n)}(u\vert_{V_{n}},u\vert_{V_{n}})<\infty\},\\
\mathcal{E}(u,v)&:=\lim\nolimits_{n\to\infty}\mathcal{E}^{(n)}(u\vert_{V_{n}},v\vert_{V_{n}})\in\mathbb{R},
	\quad u,v\in\mathcal{F},
\label{e:DF-form}
\end{align}
so that $(\mathcal{E},\mathcal{F})$ is easily seen to possess the following self-similarity
properties (note that $\mathcal{F}\cap \contfunc (K)=\mathcal{F}$ in the present setting):
\begin{align}\label{e:SSDF-domain}
\mathcal{F}\cap \contfunc (K)&=\{u\in \contfunc (K)\mid\textrm{$u\circ F_{i}\in\mathcal{F}$ for any $i\in S$}\},\\
\mathcal{E}(u,v)&=\sum_{i\in S}\frac{1}{r_{i}}\mathcal{E}(u\circ F_{i},v\circ F_{i}),
	\quad u,v\in\mathcal{F}\cap \contfunc (K).
\label{e:SSDF-form}
\end{align}
By \cite[Proposition 2.2.4, Lemma 2.2.5, Theorem 2.2.6, Lemma 2.3.9, Theorems 2.3.10 and 3.3.4]{Kig01},
$(\mathcal{E},\mathcal{F})$ is a resistance form on $K$ and its resistance metric
$R_{\mathcal{E}}\colon K\times K\to[0,\infty)$ is a metric on $K$ compatible with the original
topology of $K$; here $(\mathcal{E},\mathcal{F})$ being a \emph{resistance form} on $K$
means that it has the following properties
(see \cite[Definition 2.3.1]{Kig01} or \cite[Definition 3.1]{Kig12}):
\begin{enumerate}[label=\textup{(RF\arabic*)},align=left,leftmargin=*,topsep=5pt,parsep=0pt,itemsep=2pt]
\item \label{it:RF1}$\{u\in\mathcal{F}\mid\mathcal{E}(u,u)=0\}=\mathbb{R}\one_{K}$.
\item \label{it:RF2}$(\mathcal{F}/\mathbb{R}\one_{K},\mathcal{E})$ is a Hilbert space.
\item \label{it:RF3}$\{u\vert_{V}\mid u\in\mathcal{F}\}=\mathbb{R}^{V}$ for any non-empty finite subset $V$ of $K$.
\item \label{it:RF4}$R_{\mathcal{E}}(x,y):=\sup_{u\in\mathcal{F}\setminus\mathbb{R}\one_{K}}\lvert u(x)-u(y)\rvert^{2}/\mathcal{E}(u,u)<\infty$
	for any $x,y\in K$.
\item \label{it:RF5}$u^{+}\wedge 1\in\mathcal{F}$ and
	$\mathcal{E}(u^{+}\wedge 1,u^{+}\wedge 1)\leq\mathcal{E}(u,u)$
	for any $u\in\mathcal{F}$.
\end{enumerate}
See \cite[Chapter 2]{Kig01} and \cite[Part 1]{Kig12} for further details of resistance forms.

In the present framework, the notion of harmonic functions is defined as follows.

\begin{definition}\label{d:harmonic-pcf}
Let $n\in\mathbb{N}\cup\{0\}$. A continuous function $h\in \contfunc (K)$ is called
\emph{$\mathcal{E}$-harmonic} on $K\setminus V_{n}$, or \emph{$n$-harmonic} for short,
if and only if $h\in\mathcal{F}$ and
\begin{equation}\label{e:harmonic-pcf}
\begin{split}
\mathcal{E}(h,h)&=\inf_{v\in\mathcal{F},\,v\vert_{V_{n}}=h\vert_{V_{n}}}\mathcal{E}(v,v),\\
\mspace{-100mu}\textrm{or equivalently,}\quad
	\mathcal{E}(h,v)&=0\mspace{7.5mu}\textrm{for any $v\in\mathcal{F}^{K\setminus V_{n}}$,}
\end{split}
\end{equation}
where $\mathcal{F}^{K\setminus V_{n}}:=\{u\in\mathcal{F}\mid u\vert_{V_{n}}=0\}$.
We set $\mathcal{H}_{n}:=\{h\in \contfunc (K)\mid\textrm{$h$ is $n$-harmonic}\}$.
\end{definition}
It is obvious that $\mathcal{H}_{n}$ is a linear subspace of $\mathcal{F}$ and
$\mathbb{R}\one_{K}\subset\mathcal{H}_{n}\subset\mathcal{H}_{n+1}$ for any $n\in\mathbb{N}\cup\{0\}$.
Moreover, we easily have the following proposition by \cite[Lemma 2.2.2 and Theorem 3.2.4]{Kig01},
\eqref{e:Em-pcf}, \eqref{e:harmonic-str}, \eqref{e:SSDF-domain} and \eqref{e:SSDF-form}.

\begin{proposition}\label{p:harmonic-pcf}
Let $n\in\mathbb{N}\cup\{0\}$.
\begin{enumerate}[label=\textup{(\arabic*)},align=left,leftmargin=*,topsep=5pt,parsep=0pt,itemsep=2pt]
\item \label{it:harmonic-extension-pcf}For each $u\in\mathbb{R}^{V_{n}}$ there exists a unique
	$H_{n}(u)\in\mathcal{H}_{n}$ such that $H_{n}(u)\vert_{V_{n}}=u$. Moreover,
	$H_{n}\colon \mathbb{R}^{V_{n}}\to\mathcal{H}_{n}$ is linear (and hence it is a linear isomorphism).
\item \label{it:harmonic-pcf}It holds that
	\begin{align}\label{e:harmonic1-pcf}
	\mathcal{H}_{n}
		&=\{h\in\mathcal{F}\mid\mathcal{E}(h,h)=\mathcal{E}^{(n)}(h\vert_{V_{n}},h\vert_{V_{n}})\}\\
		&=\{h\in \contfunc (K)\mid\textrm{$h\circ F_{w}\in\mathcal{H}_{0}$ for any $w\in W_{n}$}\}.
	\label{e:harmonic2-pcf}
	\end{align}
	In particular, for each $w\in W_{*}$, a linear map $F_{w}^{*}\colon\mathcal{H}_{0}\to\mathcal{H}_{0}$
	is defined by $F_{w}^{*}h:=h\circ F_{w}$.
\end{enumerate}
\end{proposition}

Now we equip $K$ with a measure to turn $(\mathcal{E},\mathcal{F})$
into a Dirichlet form. Indeed, we have the following proposition.

\begin{proposition}\label{prop:RF-DF}
Let $\mu$ be a Radon measure on $K$ with full support. Then $(\mathcal{E},\mathcal{F})$
is an irreducible, strongly local, regular symmetric Dirichlet form on $L^{2}(K,\mu)$,
and its extended Dirichlet space $\mathcal{F}_{e}$ coincides with $\mathcal{F}$.
Moreover, the capacity $\Capa_{1}^{\mu}$ associated with $(K,R_{\mathcal{E}},\mu,\mathcal{E},\mathcal{F})$
satisfies $\inf_{x\in K}\Capa_{1}^{\mu}(\{x\})>0$, and in particular
\textup{(recall Definition \ref{d:admissible})}
\begin{equation}\label{e:admiss-pcf}
\mathcal{A}(K,R_{\mathcal{E}},\mu,\mathcal{E},\mathcal{F})
	=\{\nu\mid\textrm{$\nu$ is a Radon measure on $K$ with full support}\}.
\end{equation}
\end{proposition}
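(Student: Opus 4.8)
The plan is to verify the three assertions of Proposition \ref{prop:RF-DF} in turn, using the fact that $(\mathcal{E},\mathcal{F})$ is a resistance form on $K$ with resistance metric $R_{\mathcal{E}}$ compatible with the topology of $K$. First, since $(\mathcal{L}, D, \mathbf{r})$ is a regular harmonic structure with $K$ connected and p.-c.f., the general theory of resistance forms (\cite[Theorem 2.3.10, Theorem 3.3.4]{Kig01} and the self-similarity \eqref{e:SSDF-domain}--\eqref{e:SSDF-form}) gives that $(\mathcal{E},\mathcal{F})$ with any Radon measure $\mu$ of full support is a regular symmetric Dirichlet form on $L^2(K,\mu)$: regularity follows because $\mathcal{F}\subset C(K)$ and $\mathcal{F}$ contains enough functions to separate points (indeed $\{u|_V : u \in \mathcal{F}\} = \mathbb{R}^V$ for finite $V$), so $\mathcal{F} = \mathcal{F}\cap C_c(K)$ when $K$ is compact and this is dense in both $(\mathcal{F},\mathcal{E}_1)$ and $(C(K),\|\cdot\|_{\sup})$ by a Stone--Weierstrass-type argument. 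Strong locality is a standard consequence of the facts that $K$ is connected, $\mathcal{E}(u,u) = \sum_i r_i^{-1}\mathcal{E}(u\circ F_i, u\circ F_i)$, and that on a resistance form arising from a harmonic structure the energy measure has no jump part and charges no point; irreducibility follows from (RF1), i.e. $\{u\in\mathcal{F}\mid \mathcal{E}(u,u)=0\} = \mathbb{R}\ind{K}$ together with connectedness of $K$. That $\mathcal{F}_e = \mathcal{F}$ is immediate since $K$ is compact and $\mathcal{F}\subset C(K)\subset L^2(K,\mu)$, so every $\mathcal{E}$-Cauchy sequence already lies in $L^2$ and its limit lies in $\mathcal{F}$ by closedness.

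Next I would establish $\inf_{x\in K}\Cap^\mu(\{x\}) > 0$. The key input is the defining estimate (RF4): $|u(x)-u(y)|^2 \le R_{\mathcal{E}}(x,y)\,\mathcal{E}(u,u)$ for all $u\in\mathcal{F}$ and $x,y\in K$, which, since $(K,R_{\mathcal{E}})$ is compact with finite diameter $\mathrm{diam}(K, R_{\mathcal{E}}) =: R_0 < \infty$, shows that $\mathcal{F}$ embeds continuously into $C(K)$ with $\|u - u(x_0)\|_{\sup}^2 \le R_0\,\mathcal{E}(u,u)$ for any fixed $x_0$. For a fixed point $x\in K$, take any $u\in\mathcal{F}$ with $u = 1$ on a neighborhood of $x$; I would bound $\mathcal{E}(u,u)+\|u\|_{L^2(\mu)}^2$ from below by a positive constant independent of $x$ and of $u$. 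Concretely, pick $y\in K$ with $R_{\mathcal{E}}(x,y)$ comparable to $R_0$ (possible by compactness and $\#V_0\ge 2$, so $K$ is not a single point); then either $u(y)$ is bounded away from $1$, in which case $\mathcal{E}(u,u) \ge |u(x)-u(y)|^2/R_{\mathcal{E}}(x,y) \gtrsim 1/R_0$, or $u(y)$ is close to $1$, in which case $u$ is close to $1$ on a definite portion of $K$ and $\|u\|_{L^2(\mu)}^2$ is bounded below by a positive constant. Making this dichotomy quantitative and uniform in $x$ gives $\inf_{x\in K}\Cap^\mu(\{x\})>0$. (Alternatively one can invoke \cite[Theorem 2.3.8 or Lemma 3.3.5]{Kig01} that points have positive capacity for resistance forms, together with a compactness argument for uniformity.)

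Finally, \eqref{e:admiss-pcf} follows from the positivity of point capacities. Since $\inf_{x\in K}\Cap^\mu(\{x\})>0$, every set of zero capacity is empty, hence the only $\mu$-null-charging condition on a measure is vacuous: every Radon measure $\nu$ on $K$ charges no set of zero capacity, so every such $\nu$ is smooth. Moreover, a quasi-closed set $F$ with $\nu(X\setminus F) = 0$ for a measure $\nu$ of full support must satisfy $X\setminus F \subset$ a set of $\nu$-measure zero, which (since $\nu$ has full support, so $X\setminus F$ has empty interior in the quasi-topology, and since capacity-zero sets are empty) forces $\Cap_1(X\setminus F) = 0$; thus every Radon measure of full support has full quasi-support. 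Conversely, an admissible measure is by definition a Radon measure with full quasi-support, and full quasi-support together with the fact that the only capacity-zero set is $\emptyset$ forces full topological support. This establishes both inclusions in \eqref{e:admiss-pcf}.

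The main obstacle I anticipate is the uniform lower bound $\inf_{x\in K}\Cap^\mu(\{x\})>0$: pointwise positivity of capacity is standard for resistance forms, but making the bound uniform over all $x\in K$ requires care, since it must be robust against the measure $\mu$ being very small near some points. The dichotomy argument above handles this, but one should double-check that the constant in the $\|u\|_{L^2(\mu)}^2$ lower bound can be taken independent of $x$ — this uses that $\mu$ has full support together with a covering/compactness argument, and one may need to phrase it via the equicontinuity of the family $\{u\in\mathcal{F} : \mathcal{E}(u,u)\le 1\}$ in $(K,R_{\mathcal{E}})$ rather than a crude pointwise estimate.
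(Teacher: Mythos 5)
Your proposal is correct in substance and follows essentially the same route as the paper, which handles most of these steps by citation (regularity via \cite[Corollary 6.4 and Theorem 9.4]{Kig12}, strong locality via the argument of \cite[Lemma 3.12]{Hin05}, irreducibility via (RF1) and \cite[Theorem 2.1.11]{CF}, and $\mathcal{F}_{e}=\mathcal{F}$ and the uniform point capacity bound from (RF1), (RF2), (RF4), $\diam(K,R_{\mathcal{E}})<\infty$ and $\mu(K)<\infty$); your derivation of \eqref{e:admiss-pcf} from $\inf_{x\in K}\Cap^{\mu}(\{x\})>0$ is exactly the paper's (quasi-closed $=$ closed, so smoothness is vacuous and the unique quasi-support is the topological support).

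Two steps should be tightened. For strong locality, the phrase ``the energy measure has no jump part and charges no point'' is circular: absence of a jumping part is essentially what strong locality asserts, and energy measures presuppose the Dirichlet form structure you are establishing. The clean argument (the one in \cite{Hin05} that the paper invokes) iterates \eqref{e:SSDF-domain}--\eqref{e:SSDF-form}: given $u,v\in\mathcal{F}$ with $\supp_{m}[u-a\ind{K}]\cap\supp_{m}[v]=\emptyset$, decompose $\mathcal{E}(u,v)=\sum_{w\in W_{n}}r_{w}^{-1}\mathcal{E}(u\circ F_{w},v\circ F_{w})$ and take $n$ large enough that each cell $K_{w}$ meets at most one of the two supports, so that on each cell one factor is constant and contributes zero by $\mathcal{E}(\ind{K},\ind{K})=0$. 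For the uniform capacity bound, the dichotomy should be run on the size of $\mathcal{E}(u,u)$ rather than on the value of $u$ at a single test point $y$ (closeness of $u(y)$ to $1$ at one point does not by itself give mass to $\{u\ge 1/2\}$): if $u\equiv 1$ near $x$ and $\mathcal{E}(u,u)\leq(4R_{0})^{-1}$ with $R_{0}:=\diam(K,R_{\mathcal{E}})$, then (RF4) applied to the pair $(x,y)$ for \emph{every} $y\in K$ gives $|1-u(y)|\leq\sqrt{R_{0}\mathcal{E}(u,u)}\leq 1/2$, hence $u\geq 1/2$ on all of $K$ and $\|u\|_{L^{2}(\mu)}^{2}\geq\mu(K)/4$; this yields $\Cap^{\mu}(\{x\})\geq\min\{(4R_{0})^{-1},\mu(K)/4\}$ uniformly in $x$, with no need for the equicontinuity or covering refinements you anticipate.
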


\begin{proof}
This proposition is well known to experts on Dirichlet forms on fractals,
but we include a complete proof of it for the reader's convenience.
$(\mathcal{E},\mathcal{F})$ is a regular symmetric Dirichlet form on $L^{2}(K,\mu)$
by \cite[Corollary 6.4 and Theorem 9.4]{Kig12}, strongly local by the same argument
as \cite[Proof of Lemma 3.12]{Hin05} on the basis of \eqref{e:SSDF-domain},
\eqref{e:SSDF-form} and $\mathcal{E}(\one_{K},\one_{K})=0$, and irreducible
by \ref{it:RF1} above and \cite[Theorem 2.1.11]{CF}. The equality
$\mathcal{F}_{e}=\mathcal{F}$ is immediate from \ref{it:RF1}, \ref{it:RF2} and
\ref{it:RF4}. We also easily see from \ref{it:RF4}, $\diam_{R_{\mathcal{E}}}(K)<\infty$
and $\mu(K)<\infty$ that $\inf_{x\in K}\Capa_{1}^{\mu}(\{x\})>0$, so that a subset of $K$
is quasi-closed with respect to $(K,R_{\mathcal{E}},\mu,\mathcal{E},\mathcal{F})$
if and only if it is closed in $K$. In particular, any Radon measure $\nu$ on $K$ is smooth with respect
to $(K,R_{\mathcal{E}},\mu,\mathcal{E},\mathcal{F})$ and $\nu$ having full quasi-support
with respect to $(K,R_{\mathcal{E}},\mu,\mathcal{E},\mathcal{F})$ means that the only closed
subset $F$ of $K$ with $\nu(K\setminus F)=0$ is $F=K$, which together imply \eqref{e:admiss-pcf}.
\end{proof}


Let $d_{\mathrm{H}}\in(0,\infty)$ be such that $\sum_{i\in S}r_{i}^{d_{\mathrm{H}}}=1$,
so that $d_{\mathrm{H}}\geq 1$ since
\begin{equation}\label{e:pcf-dH-geq-1}
\max_{x,y\in V_{0}}R_{\mathcal{E}}(x,y)
	\leq\sum_{i\in S}\max_{x,y\in V_{0}}R_{\mathcal{E}}(F_{i}(x),F_{i}(y))
	\leq\biggl(\sum_{i\in S}r_{i}\biggr)\max_{x,y\in V_{0}}R_{\mathcal{E}}(x,y)
\end{equation}
by the connectedness of $K$, \cite[Theorem 1.6.2 and Lemma 3.3.5]{Kig01} and hence
$\sum_{i\in S}r_{i}\geq 1=\sum_{i\in S}r_{i}^{d_{\mathrm{H}}}$.
Let $\measure$ be the \emph{self-similar measure} on $\mathcal{L}$ with weight $(r_{i}^{d_{\mathrm{H}}})_{i\in S}$,
i.e., the unique Borel measure on $K$ such that $\measure(K_{w})=r_{w}^{d_{\mathrm{H}}}$ for any $w\in W_{*}$.
The measure $\measure$ could be considered as the ``uniform distribution'' on $\mathcal{L}$,
and it is the most typical choice of the reference measure $\mu$ for $(\mathcal{E},\mathcal{F})$.
It is well known that $(K,R_{\mathcal{E}},\measure,\mathcal{E},\mathcal{F})$ satisfies
\hyperlink{phi}{$\on{PHI}(d_{\mathrm{H}}+1)$}; more precisely, the following lemma and proposition hold.

\begin{lemma}\label{l:volume-pcf}
There exist $c_{1},c_{2}\in(0,\infty)$ such that
for any $(x,s)\in K\times(0,\diam_{R_{\mathcal{E}}}(K)]$,
\begin{equation}\label{e:volume-pcf}
c_{1}s^{d_{\mathrm{H}}}\leq \measure(B_{R_{\mathcal{E}}}(x,s))\leq c_{2}s^{d_{\mathrm{H}}}.
\end{equation}
%
\end{lemma}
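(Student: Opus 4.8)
The plan is to establish \eqref{e:volume-pcf} by a standard scaling argument based on the self-similarity of $(\mathcal{E},\mathcal{F})$, the measure $\measure$, and the resistance metric $R_{\mathcal{E}}$, combined with the fact that $V_{0}$ is finite so that $K$ and each cell $K_{w}$ have controlled resistance diameter. The key scaling facts are: (i) $\measure(K_{w})=r_{w}^{d_{\mathrm{H}}}$ by definition of the self-similar measure; (ii) $R_{\mathcal{E}}(F_{w}(x),F_{w}(y))$ is comparable to $r_{w}R_{\mathcal{E}}(x,y)$ for $x,y\in K$, or at least $\operatorname{diam}(K_{w},R_{\mathcal{E}})\asymp r_{w}\operatorname{diam}(K,R_{\mathcal{E}})$, which follows from the self-similarity formula \eqref{e:SSDF-form} for $(\mathcal{E},\mathcal{F})$ together with the regularity of the harmonic structure and \cite[Lemma 3.3.5]{Kig01}; and (iii) since $\mathbf{r}\in(0,1)^{S}$ and the number of words of a given length is finite, the quantities $r_{w}$ for $|w|=n$ shrink geometrically and, more importantly, for any $s\in(0,\operatorname{diam}(K,R_{\mathcal{E}})]$ and any $x\in K$ one can find a word $w$ with $x\in K_{w}$ and $r_{w}\asymp s$; this is the usual ``stopping time'' / finite-scale covering device for p.c.f.\ self-similar sets (cf.\ \cite[Chapter 3]{Kig01}).

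First I would fix $x\in K$ and $s\in(0,\operatorname{diam}(K,R_{\mathcal{E}})]$ and choose, using $\pi:\Sigma\to K$ and the geometric decay of $r_{w}$, a word $w\in W_{*}$ with $x\in K_{w}$ and $c\,s\le r_{w}\le s$ for a fixed constant $c\in(0,1)$ depending only on $\min_{i\in S}r_{i}$. For the upper bound, one checks that $K_{w}\subset B_{R_{\mathcal{E}}}(x,C_{1}r_{w})\subset B_{R_{\mathcal{E}}}(x,C_{1}s)$ using $\operatorname{diam}(K_{w},R_{\mathcal{E}})\le C_{1}r_{w}$; but since the ball could a priori be much larger than a single cell, I would instead bound the number of cells $K_{v}$ with $|v|=|w|$ that intersect $B_{R_{\mathcal{E}}}(x,s)$ by a constant independent of $x,s$ --- this is where the finiteness of $V_{0}$ and the uniform bound $\operatorname{diam}(K_{v},R_{\mathcal{E}})\asymp r_{v}$ enter, giving a uniform bound on the ``number of neighbouring cells of a comparable scale,'' and then $\measure(B_{R_{\mathcal{E}}}(x,s))\le\sum_{v}\measure(K_{v})=\sum_{v}r_{v}^{d_{\mathrm{H}}}\le C_{2}s^{d_{\mathrm{H}}}$. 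For the lower bound, using that $(X,R_{\mathcal{E}})$ is connected and $V_{0}$ finite one shows there is a cell $K_{u}$ with $|u|=|w|+N$ for a fixed $N$ (depending only on $\mathcal{L}$ and $(D,\mathbf{r})$) such that $K_{u}\subset B_{R_{\mathcal{E}}}(x,s)$ and $r_{u}\ge c'r_{w}\ge c'c\,s$, whence $\measure(B_{R_{\mathcal{E}}}(x,s))\ge\measure(K_{u})=r_{u}^{d_{\mathrm{H}}}\ge c_{1}s^{d_{\mathrm{H}}}$.

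The main obstacle I anticipate is step (ii): making precise and quantitative the comparison $\operatorname{diam}(K_{w},R_{\mathcal{E}})\asymp r_{w}\operatorname{diam}(K,R_{\mathcal{E}})$ with constants uniform in $w$. The upper bound $\le$ is the easy direction and follows by iterating the subadditivity estimate already displayed in \eqref{e:pcf-dH-geq-1} (applied to $F_{w}$ in place of $F_{i}$, using $\mathcal{E}(u\circ F_{w},u\circ F_{w})=r_{w}^{-1}\mathcal{E}(u,u)$ on $\mathcal{H}_{|w|}$ restricted appropriately); the matching lower bound $\gtrsim r_{w}$ requires knowing that the resistance across a single cell is not too small, which is exactly \cite[Lemma 3.3.5]{Kig01} combined with regularity of the harmonic structure. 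Once these cell-diameter estimates are in hand with uniform constants, together with the elementary bounded-overlap property of same-generation cells, the two displayed inequalities in \eqref{e:volume-pcf} follow by the covering/packing argument sketched above; I would cite \cite[Chapter 3]{Kig01} and \cite[Theorem 1.3]{Kig12}-type volume estimates for the routine parts rather than reproduce them in full. Since $\diam(K,R_{\mathcal{E}})<\infty$ and $\measure(K)=1$, no issue arises at the largest scale.
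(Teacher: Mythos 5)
Your argument is correct and is essentially the paper's proof: the paper simply observes that the claim is immediate from Lemma \ref{l:adapted-RE-pcf} (a ball $B_{R_{\mathcal{E}}}(x,s)$ is sandwiched between the cell-neighbourhoods $U_{t}(x)$ at scales $t\asymp s$) together with the locally finite property \eqref{e:LF-pcf} from \cite[Lemma 4.2.3]{Kig01}, so that the ball is comparable to a union of boundedly many cells $K_{w}$ with $r_{w}\asymp s$ and $\measure(K_{w})=r_{w}^{d_{\mathrm{H}}}\asymp s^{d_{\mathrm{H}}}$, which is exactly your covering/packing scheme. One literal slip to fix: in the upper bound the covering family must be the stopping-time partition $\Lambda_{s}$ (cells with $r_{v}\asymp s$), not the fixed-generation family $\{v:|v|=|w|\}$, since when the $r_{i}$ are unequal the latter contains cells of arbitrarily smaller resistance scale and the number of them meeting $B_{R_{\mathcal{E}}}(x,s)$ need not be bounded (indeed $\sum_{|v|=n}r_{v}^{d_{\mathrm{H}}}=1$); your own phrase ``cells of a comparable scale'' shows you intend the former.
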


\begin{proof}
This is immediate from Lemma \ref{l:adapted-RE-pcf} below and \cite[Lemma 4.2.3]{Kig01}.
\end{proof}

\begin{proposition}\label{prop:PHI-pcf}
$(K,R_{\mathcal{E}},\measure,\mathcal{E},\mathcal{F})$ satisfies \hyperlink{phi}{$\on{PHI}(d_{\mathrm{H}}+1)$}.
\end{proposition}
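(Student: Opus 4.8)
\textbf{Proof proposal for Proposition \ref{prop:PHI-pcf}.}
The plan is to verify the characterization of $\on{PHI}(\beta)$ provided by Theorem \ref{t:phichar} with $\beta=d_{\mathrm{H}}+1$, namely to show that $(K,R_{\mathcal{E}},\measure,\mathcal{E},\mathcal{F})$ satisfies \ref{VD}, \ref{RVD}, \ref{EHI} and $\on{cap}(d_{\mathrm{H}}+1)$. Lemma \ref{l:volume-pcf} already gives that $\measure$ is $d_{\mathrm{H}}$-Ahlfors regular with respect to $R_{\mathcal{E}}$, and Ahlfors regularity immediately yields both \ref{VD} and \ref{RVD} (the latter because an Ahlfors regular space is uniformly perfect, or directly from the two-sided bound in \eqref{e:volume-pcf}). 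So the substance of the proof is \ref{EHI} together with the capacity estimate $\on{cap}(d_{\mathrm{H}}+1)$, i.e.
\[
\Cap\bigl(B_{R_{\mathcal{E}}}(x,R),B_{R_{\mathcal{E}}}(x,A_{1}R)^{c}\bigr)\asymp \frac{\measure(B_{R_{\mathcal{E}}}(x,R))}{R^{d_{\mathrm{H}}+1}}\asymp R^{-1}
\]
for $R$ up to a fixed fraction of $\diam(K,R_{\mathcal{E}})$.

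First I would record the key feature of a resistance form that makes $\beta=d_{\mathrm{H}}+1$ the correct exponent: for a resistance form, the capacity $\Cap(\{x\},F^{c})$ between a point and the complement of a neighbourhood is comparable to the reciprocal of the resistance metric distance from $x$ to $F^{c}$; more generally $\Cap(B_{R_{\mathcal{E}}}(x,R),B_{R_{\mathcal{E}}}(x,A_{1}R)^{c})\asymp 1/R$ follows from the defining property \textup{(RF4)} of $R_{\mathcal{E}}$ together with the chaining/Ahlfors-regularity structure of $(K,R_{\mathcal{E}})$. Combined with $\measure(B_{R_{\mathcal{E}}}(x,R))\asymp R^{d_{\mathrm{H}}}$ this gives $\on{cap}(d_{\mathrm{H}}+1)$ directly. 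The cleanest route is to invoke the general theory of resistance forms on self-similar sets: by \cite[Chapter 3]{Kig01} (and the resistance-form framework of \cite{Kig12}), a regular harmonic structure on a p.-c.f.\ self-similar set equipped with the self-similar measure $\measure$ of weight $(r_{i}^{d_{\mathrm{H}}})_{i\in S}$ satisfies two-sided heat kernel estimates $\on{HKE}(d_{\mathrm{H}}+1)$; this is Kigami's theorem on sub-Gaussian heat kernel bounds for p.-c.f.\ fractals. Given \ref{VD}, \ref{RVD} and $\on{HKE}(d_{\mathrm{H}}+1)$, Theorem \ref{t:phichar}(c)$\Rightarrow$(a) yields $\on{PHI}(d_{\mathrm{H}}+1)$ and we are done. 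Since Lemma \ref{l:adapted-RE-pcf} (referenced in the proof of Lemma \ref{l:volume-pcf}) asserts that $R_{\mathcal{E}}$ is adapted to the self-similar cell structure with the scale factors $r_{w}$, the scaling $R^{d_{\mathrm{H}}+1}$ for the space-time function is exactly the product of the volume scale $r_{w}^{d_{\mathrm{H}}}$ and the resistance scale $r_{w}$, so all estimates are scale-consistent.

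The step I expect to require the most care is establishing \ref{EHI} (equivalently, the lower capacity bound and the Harnack chaining) directly on $(K,R_{\mathcal{E}})$ without simply citing a black box, should a self-contained argument be wanted. The natural approach here is the one used throughout p.-c.f.\ theory: use the self-similarity \eqref{e:SSDF-domain}--\eqref{e:SSDF-form}, the fact that $V_{0}$ is finite and is the ``boundary'' of $K$, and the regularity $\mathbf{r}\in(0,1)^{S}$, to reduce any ball $B_{R_{\mathcal{E}}}(x,R)$ to a bounded union of cells $K_{w}$ with $r_{w}\asymp R$; on each such cell a harmonic function restricted to $V_{0}$ is controlled by the finite-dimensional matrix $D$, and a non-negative harmonic function on a neighbourhood of $K_{w}$ is bounded above and below on $K_{w}$ by a constant depending only on $(D,\mathbf{r})$ via the maximum principle for resistance forms and the finiteness of $\#V_{0}$. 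Chaining finitely many adjacent cells (the number bounded uniformly by the Ahlfors regularity of $\measure$) then upgrades this to the scale-invariant \ref{EHI}. This is standard but somewhat lengthy; for the purposes of this paper it suffices to cite Kigami's results (\cite[Chapter 3]{Kig01}, \cite{Kig12}) giving $\on{HKE}(d_{\mathrm{H}}+1)$ and invoke Theorem \ref{t:phichar}, which is the route I would actually present.
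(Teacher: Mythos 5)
Your proposal is correct and follows essentially the same route as the paper: Lemma \ref{l:volume-pcf} gives Ahlfors regularity of $\measure$ in $(K,R_{\mathcal{E}})$ (hence \ref{VD} and \ref{RVD}), Kigami's heat kernel estimates for resistance forms (the paper cites \cite[Theorem 15.10]{Kig12} precisely) give \hyperlink{hke}{$\on{HKE}(d_{\mathrm{H}}+1)$}, and Theorem \ref{t:phichar} then yields \hyperlink{phi}{$\on{PHI}(d_{\mathrm{H}}+1)$}. The alternative verification of condition (b) that you sketch is not needed, and the paper does not pursue it.
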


\begin{proof}
Lemma \ref{l:volume-pcf} and \cite[Theorem 15.10]{Kig12} together imply that
$(K,R_{\mathcal{E}},\measure,\mathcal{E},\mathcal{F})$ satisfies 
\hyperlink{hke}{$\on{HKE}(d_{\mathrm{H}}+1)$} as well as \ref{VD} and \ref{RVD}, and therefore
it also satisfies \hyperlink{phi}{$\on{PHI}(d_{\mathrm{H}}+1)$} by Theorem \ref{t:phichar}.
\end{proof}

We conclude this subsection with the following Proposition, which is essentially due to
Kigami \cite{Kig09,Kig12} and gives a simple equivalent condition for the validity of
\hyperlink{phi}{$\on{PHI}(\beta)$} after quasisymmetric change of the metric and time change.

\begin{proposition}\label{p:condition-PHI-pcf}
Let $\theta \in \mathcal{J}(K,R_{\mathcal{E}})$ \textup{(recall \eqref{e:cgauge-dfn})},
let $\mu$ be a Radon measure on $K$ with full support and let $\beta\in(1,\infty)$.
Then the following conditions are equivalent:
\begin{enumerate}[label=\textup{(\alph*)},align=left,leftmargin=*,topsep=5pt,parsep=0pt,itemsep=2pt]
\item \label{it:condition-PHI-pcf-a}$(K,\theta,\mu,\mathcal{E},\mathcal{F})$ satisfies \hyperlink{phi}{$\on{PHI}(\beta)$}.
\item \label{it:condition-PHI-pcf-b}There exists $C\in(1,\infty)$ such that for any $w\in W_{*}$,
	\begin{equation}\label{e:DM2-pcf}
	C^{-1}(\diam_{\theta}(K_{w}))^{\beta}\leq r_{w}\mu(K_{w})\leq C(\diam_{\theta}(K_{w}))^{\beta}.
	\end{equation}
\end{enumerate}
Moreover, if either of these conditions holds, then $\mu(F_{w}(V_{0}))=0$
for any $w\in W_{*}$ and $\mu(\{x\})=0$ for any $x\in K$.
\end{proposition}

The rest of this subsection is devoted to the proof of Proposition \ref{p:condition-PHI-pcf},
which requires the following lemmas and definitions.

\begin{lemma}[{\cite[Lemma 3.3.5]{Kig01}, \cite[Theorem A.1]{Kig04}}]\label{l:RE-scaling-pcf}
There exists $c_{R_{\mathcal{E}}}\in(0,1]$ such that for any $w\in W_{*}$ and any $x,y\in K$,
\begin{equation}\label{e:RE-scaling-pcf}
c_{R_{\mathcal{E}}}r_{w}R_{\mathcal{E}}(x,y)
	\leq R_{\mathcal{E}}(F_{w}(x),F_{w}(y))\leq r_{w}R_{\mathcal{E}}(x,y).
\end{equation}
\end{lemma}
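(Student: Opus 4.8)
\emph{Proof proposal.} I would treat the two inequalities separately. For the \textbf{upper bound} $R_{\mathcal{E}}(F_{w}(x),F_{w}(y))\leq r_{w}R_{\mathcal{E}}(x,y)$, the plan is to iterate the self-similarity identity \eqref{e:SSDF-form}: for $w\in W_{n}$ and $u\in\mathcal{F}$ one obtains $\mathcal{E}(u,u)=\sum_{v\in W_{n}}r_{v}^{-1}\mathcal{E}(u\circ F_{v},u\circ F_{v})\geq r_{w}^{-1}\mathcal{E}(u\circ F_{w},u\circ F_{w})$ by discarding all non-negative terms except $v=w$. Since $u\circ F_{w}\in\mathcal{F}$ by \eqref{e:SSDF-domain} and $|u(F_{w}(x))-u(F_{w}(y))|^{2}=|(u\circ F_{w})(x)-(u\circ F_{w})(y)|^{2}$, dividing by $\mathcal{E}(u,u)$ and taking the supremum over non-constant $u\in\mathcal{F}$ yields the bound from the definition \textup{(RF4)} of $R_{\mathcal{E}}$. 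The same computation with $\mathcal{E}^{(n)}$ in place of $\mathcal{E}$, together with the identity $R_{\mathcal{E}}|_{V_{n}\times V_{n}}=R^{(n)}$ (the resistance metric of $(\mathcal{E}^{(n)},\mathbb{R}^{V_{n}})$, which follows from \eqref{e:harmonic-str} and \eqref{e:harmonic1-pcf}), gives the \emph{boundary-point lower bound} $R_{\mathcal{E}}(F_{w}(p),F_{w}(q))\geq r_{w}R_{\mathcal{E}}(p,q)\geq r_{w}\delta_{0}$ for all $p,q\in V_{0}$, where $\delta_{0}:=\min_{p,q\in V_{0},\,p\neq q}R_{\mathcal{E}}(p,q)>0$.

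The heart of the argument is the \textbf{lower bound for arbitrary $x,y\in K$}, which I would obtain by passing to the trace of $(\mathcal{E},\mathcal{F})$ onto the closed cell $K_{w}$. By the standard theory of resistance forms (using \textup{(RF1)}--\textup{(RF5)}), the trace form $(\mathcal{E}_{K_{w}},\mathcal{F}_{K_{w}})$ defined by $\mathcal{E}_{K_{w}}(u,u)=\inf\{\mathcal{E}(v,v)\mid v\in\mathcal{F},\ v|_{K_{w}}=u\}$ is a resistance form on $K_{w}$ with resistance metric $R_{\mathcal{E}}|_{K_{w}\times K_{w}}$. Splitting $\mathcal{E}(v,v)$ via the iterated \eqref{e:SSDF-form} into the $K_{w}$-contribution $r_{w}^{-1}\mathcal{E}(v\circ F_{w},v\circ F_{w})$ and the contribution of the remaining cells of generation $|w|$, then minimizing the latter over extensions $v$, produces a Dirichlet form $\mathcal{T}_{w}$ on the finite set $F_{w}(V_{0})$ with no killing part (the constant function is an admissible extension of energy $0$), so that $\mathcal{E}_{K_{w}}(u,u)=r_{w}^{-1}\mathcal{E}(u\circ F_{w},u\circ F_{w})+\mathcal{T}_{w}(u|_{F_{w}(V_{0})})$. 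Transporting through the homeomorphism $F_{w}:K\to K_{w}$ and writing $\widehat{\mathcal{T}}_{w}(\psi):=\mathcal{T}_{w}(\psi\circ F_{w}^{-1})$ for $\psi\in\mathbb{R}^{V_{0}}$, this gives the exact formula
\[
R_{\mathcal{E}}(F_{w}(x),F_{w}(y))=r_{w}\sup_{\hat{u}\in\mathcal{F}\setminus\mathbb{R}\ind{K}}\frac{|\hat{u}(x)-\hat{u}(y)|^{2}}{\mathcal{E}(\hat{u},\hat{u})+r_{w}\widehat{\mathcal{T}}_{w}(\hat{u}|_{V_{0}})}.
\]

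It then remains to control $\mathcal{T}_{w}$ uniformly, which I expect to be the \textbf{main obstacle}. The claim is $\sup_{w\in W_{*}}r_{w}C_{w}<\infty$, where $C_{w}$ is the sum of the edge conductances of $\mathcal{T}_{w}$. The key observation is that $\mathcal{T}_{w}$ is the trace onto $F_{w}(V_{0})$ of the electrical network obtained from $\mathcal{E}^{(n)}$ by deleting the $K_{w}$-cell; deleting a cell only decreases energies, hence only increases effective resistances (Rayleigh monotonicity), so the effective resistance of $\mathcal{T}_{w}$ between $F_{w}(p)$ and $F_{w}(q)$ is at least $R_{\mathcal{E}}(F_{w}(p),F_{w}(q))\geq r_{w}\delta_{0}$ by the boundary-point lower bound above (this remains valid, trivially, if the deletion disconnects the two points). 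Since no edge conductance exceeds the corresponding effective conductance, each edge conductance of $\mathcal{T}_{w}$ is at most $(r_{w}\delta_{0})^{-1}$, whence $r_{w}C_{w}\leq\binom{\#V_{0}}{2}\delta_{0}^{-1}=:C_{0}$. To conclude, I would insert into the displayed supremum the extremal function $\phi\in\mathcal{F}$ for $R_{\mathcal{E}}(x,y)$ (it exists by \textup{(RF2)}, \textup{(RF4)}, and may be normalized so that $\mathcal{E}(\phi,\phi)=\phi(x)-\phi(y)=R_{\mathcal{E}}(x,y)$; by the Markovian property \textup{(RF5)} and extremality it takes values in $[\phi(y),\phi(x)]$, so $|\phi(p)-\phi(q)|\leq R_{\mathcal{E}}(x,y)$ for all $p,q$). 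Then $\widehat{\mathcal{T}}_{w}(\phi|_{V_{0}})\leq C_{w}R_{\mathcal{E}}(x,y)^{2}$, and the displayed formula gives
\[
R_{\mathcal{E}}(F_{w}(x),F_{w}(y))\geq r_{w}\frac{R_{\mathcal{E}}(x,y)^{2}}{R_{\mathcal{E}}(x,y)+r_{w}C_{w}R_{\mathcal{E}}(x,y)^{2}}\geq\frac{r_{w}}{1+C_{0}\diam(K,R_{\mathcal{E}})}R_{\mathcal{E}}(x,y),
\]
where $\diam(K,R_{\mathcal{E}})<\infty$ because $R_{\mathcal{E}}$ is finite-valued by \textup{(RF4)}, compatible with the (compact) topology of $K$, and hence bounded. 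This is the asserted estimate with $c_{R_{\mathcal{E}}}=(1+C_{0}\diam(K,R_{\mathcal{E}}))^{-1}\in(0,1]$.

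Besides the uniform conductance bound, the two technical points I would need to justify with some care are the identification $R_{\mathcal{E}}|_{K_{w}\times K_{w}}=R_{\mathcal{E}_{K_{w}}}$ and the no-killing property of $\mathcal{T}_{w}$; both are standard consequences of \textup{(RF1)}--\textup{(RF5)} and the compactness of $K$, and could alternatively be quoted from \cite{Kig12}. Everything else in the plan is elementary network theory (Rayleigh monotonicity, edge conductance $\leq$ effective conductance) combined with the self-similarity identities \eqref{e:SSDF-domain}--\eqref{e:SSDF-form}.
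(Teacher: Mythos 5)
Your upper bound is correct and standard, and the overall architecture for the lower bound --- passing to the trace on $K_{w}$, writing $\mathcal{E}_{K_{w}}(u,u)=r_{w}^{-1}\mathcal{E}(u\circ F_{w},u\circ F_{w})+\mathcal{T}_{w}(u|_{F_{w}(V_{0})})$, and reducing everything to a uniform bound $r_{w}C_{w}\leq C_{0}$ on the conductances of the outside trace network $\mathcal{T}_{w}$ --- is a sensible way to organize a proof. (For the record, the paper does not prove this lemma at all; it quotes the upper bound from \cite{Kig01} and the lower bound from \cite{Kig04}.) The final arithmetic with the extremal function $\phi$ and the step ``edge conductance $\leq$ effective conductance'' are also fine, granted the inputs.

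The genuine gap is the ``boundary-point lower bound'' $R_{\mathcal{E}}(F_{w}(p),F_{w}(q))\geq r_{w}R_{\mathcal{E}}(p,q)$ for $p,q\in V_{0}$, on which your bound $r_{w}C_{w}\leq C_{0}$ entirely rests. First, ``the same computation with $\mathcal{E}^{(n)}$'' cannot yield it: the only inequality that computation gives is $\mathcal{E}^{(n)}(u,u)\geq r_{w}^{-1}\mathcal{E}^{(0)}(u\circ F_{w}|_{V_{0}},u\circ F_{w}|_{V_{0}})$, which bounds the ratio $|u(F_{w}(p))-u(F_{w}(q))|^{2}/\mathcal{E}^{(n)}(u,u)$ from \emph{above} and hence reproves the upper bound $R^{(n)}(F_{w}(p),F_{w}(q))\leq r_{w}R^{(0)}(p,q)$; a lower bound would require exhibiting a test function on $V_{n}$ whose energy in the cells other than $K_{w}$ is controlled, which is exactly the issue at stake. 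Second, the inequality is simply false: for the two-dimensional Sierpi\'{n}ski gasket of Example \ref{exmp:SGs} (where $r_{i}=3/5$ and, by \eqref{e:harmonic-str}, $R_{\mathcal{E}}(q_{1},q_{2})=R^{(0)}(q_{1},q_{2})=2/3$), an explicit computation on the level-one network gives $R_{\mathcal{E}}(F_{0}(q_{1}),F_{0}(q_{2}))=4/15$, whereas $r_{0}R_{\mathcal{E}}(q_{1},q_{2})=r_{0}\delta_{0}=2/5$; so both the bound with constant $1$ and the weaker form $\geq r_{w}\delta_{0}$ fail. What your argument actually needs is the uniform estimate $R_{\mathcal{E}}(F_{w}(p),F_{w}(q))\geq c\,r_{w}$ for all $w\in W_{*}$ and distinct $p,q\in V_{0}$ --- equivalently, that the network outside $K_{w}$ does not short-circuit $F_{w}(V_{0})$ below scale $r_{w}$ --- but this is precisely the lower bound of the lemma at boundary points (your own trace formula shows the boundary case and the general case are equivalent up to constants), so invoking it here is circular. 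That estimate is the nontrivial content of \cite{Kig04} and requires an argument beyond Rayleigh monotonicity and parallel-resistance counting, which your proposal does not supply.
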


\begin{definition}\label{d:partition}
\begin{enumerate}[label=\textup{(\arabic*)},align=left,leftmargin=*,topsep=5pt,parsep=0pt,itemsep=2pt]
\item Let $w,v\in W_{*}$, $w=w_{1}\dots w_{n_{1}}$, $v=v_{1}\dots v_{n_{2}}$.
	We define $wv\in W_{*}$ by $wv:=w_{1}\dots w_{n_{1}}v_{1}\dots v_{n_{2}}$
	($w\emptyset:=w$, $\emptyset v:=v$). We write $w\leq v$ if and only if
	$w=v\tau$ for some $\tau\in W_{*}$; note that
	$\Sigma_{w}\cap\Sigma_{v}=\emptyset$ if and only if neither $w\leq v$ nor $v\leq w$.
\item\label{it:partition}A finite subset $\Lambda$ of $W_{*}$ is called a
	\emph{partition} of $\Sigma$ if and only if $\Sigma_{w}\cap\Sigma_{v}=\emptyset$
	for any $w,v\in\Lambda$ with $w\not=v$ and $\Sigma=\bigcup_{w\in\Lambda}\Sigma_{w}$.
\item Let $\Lambda_{1},\Lambda_{2}$ be partitions of $\Sigma$.
	We say that $\Lambda_{1}$ is a \textit{refinement of $\Lambda_{2}$},
	and write $\Lambda_{1}\leq\Lambda_{2}$, if and only if for each
	$w^{1}\in\Lambda_{1}$ there exists $w^{2}\in\Lambda_{2}$ such that $w^{1}\leq w^{2}$.
\end{enumerate}
\end{definition}
\begin{definition}\label{d:scale-pcf}
\begin{enumerate}[label=\textup{(\arabic*)},align=left,leftmargin=*,topsep=5pt,parsep=0pt,itemsep=2pt]
\item We define $\Lambda_{1}:=\{\emptyset\}$,
	\begin{equation}\label{e:scale}
	\Lambda_{s}:=\{w\mid\textrm{$w=w_{1}\dots w_{n}\in W_{*}\setminus\{\emptyset\}$,
		$r_{w_{1}\dots w_{n-1}}>s\geq r_{w}$}\}
	\end{equation}
	for each $s\in(0,1)$, and $\mathscr{S}:=\{\Lambda_{s}\}_{s\in(0,1]}$. We call $\mathscr{S}$
	the \emph{scale} on $\Sigma$ associated with $\mathbf{r}$.
\item For each $(s,x)\in(0,1]\times K$, we define
	$\Lambda_{s,x}:=\{w\in\Lambda_{s}\mid x\in K_{w}\}$,
	$K_{s}(x):=\bigcup_{w\in\Lambda_{s,x}}K_{w}$,
	$\Lambda^{1}_{s,x}:=\{w\in\Lambda_{s}\mid K_{w}\cap K_{s}(x)\not=\emptyset\}$
	and $U_{s}(x):=\bigcup_{w\in\Lambda^{1}_{s,x}}K_{w}$.
\end{enumerate}
\end{definition}
Clearly $\lim_{s\downarrow 0}\min\{\lvert w\rvert\mid w\in\Lambda_{s}\}=\infty$, and
it is easy to see that $\Lambda_{s}$ is a partition of $\Sigma$ for any $s\in(0,1]$
and that $\Lambda_{s_{1}}\leq\Lambda_{s_{2}}$ for any $s_{1},s_{2}\in(0,1]$ with
$s_{1}\leq s_{2}$. These facts together with \cite[Proposition 1.3.6]{Kig01}
imply that for any $x\in K$, each of $\{K_{s}(x)\}_{s\in(0,1]}$ and
$\{U_{s}(x)\}_{s\in(0,1]}$ is non-decreasing in $s$ and forms
a fundamental system of neighborhoods of $x$ in $K$. Moreover,
$\{U_{s}(x)\}_{(s,x)\in(0,1]\times K}$ can be used as a replacement for the metric balls
$\{B_{R_{\mathcal{E}}}(x,s)\}_{(x,s)\in K\times(0,\diam(K,R_{\mathcal{E}})]}$
in $(K,R_{\mathcal{E}})$ by virtue of the following lemma.

\begin{lemma}\label{l:adapted-RE-pcf}
There exist $\alpha_{1},\alpha_{2}\in(0,\infty)$ such that for any $(s,x)\in(0,1]\times K$,
\begin{equation}\label{e:adapted-RE-pcf}
B_{R_{\mathcal{E}}}(x,\alpha_{1}s)\subset U_{s}(x)\subset B_{R_{\mathcal{E}}}(x,\alpha_{2}s).
\end{equation}
\end{lemma}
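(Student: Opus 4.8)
The plan is to prove Lemma \ref{l:adapted-RE-pcf}, the estimate $B_{R_{\mathcal{E}}}(x,\alpha_{1}s)\subset U_{s}(x)\subset B_{R_{\mathcal{E}}}(x,\alpha_{2}s)$ for $(s,x)\in(0,1]\times K$, by combining the resistance scaling estimate Lemma \ref{l:RE-scaling-pcf} with elementary resistance-form/circuit arguments together with the connectedness of $K$. The two inclusions have genuinely different characters, so I would treat them separately.

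For the right-hand inclusion $U_{s}(x)\subset B_{R_{\mathcal{E}}}(x,\alpha_{2}s)$, the point is a \emph{uniform diameter bound}: there is $\alpha_{2}\in(0,\infty)$ with $\diam(U_{s}(x),R_{\mathcal{E}})\leq\alpha_{2}s$ for all $(s,x)$. First I would record that $\diam(K_{w},R_{\mathcal{E}})\leq r_{w}\diam(K,R_{\mathcal{E}})$ for every $w\in W_{*}$, which is immediate from the upper bound in \eqref{e:RE-scaling-pcf}; since $w\in\Lambda_{s}$ forces $r_{w}\leq s$, each cell $K_{w}$ with $w\in\Lambda_{s}$ has $R_{\mathcal{E}}$-diameter at most $s\diam(K,R_{\mathcal{E}})$. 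Now $U_{s}(x)$ is the union of those $K_{w}$, $w\in\Lambda_{s}$, that meet $K_{s}(x)=\bigcup_{w\in\Lambda_{s,x}}K_{w}$, and all the cells $K_{v}$ with $v\in\Lambda_{s,x}$ contain the common point $x$; hence $U_{s}(x)$ is connected and is covered by at most $N_{s}$ cells each of which is within $R_{\mathcal{E}}$-distance $s\diam(K,R_{\mathcal{E}})$ of $x$ through a chain of overlapping cells, where $N_{s}=\#\Lambda^{1}_{s,x}$. The needed ingredient is therefore that $\sup_{(s,x)}\#\Lambda^{1}_{s,x}<\infty$; this is a standard consequence of post-critical finiteness of $\mathcal{L}$ (the number of cells of a partition meeting a given cell is bounded, cf. \cite[Proposition 1.3.6 and Lemma 4.2.3]{Kig01}), and with it the triangle inequality gives $\diam(U_{s}(x),R_{\mathcal{E}})\leq \alpha_{2}s$ with $\alpha_{2}:=2N\diam(K,R_{\mathcal{E}})$ (say), where $N:=\sup_{(s,x)}\#\Lambda^{1}_{s,x}$.

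For the left-hand inclusion $B_{R_{\mathcal{E}}}(x,\alpha_{1}s)\subset U_{s}(x)$, equivalently I must show that if $y\notin U_{s}(x)$ then $R_{\mathcal{E}}(x,y)\geq\alpha_{1}s$, i.e.\ there is a test function separating $x$ from $y$ with small energy. The natural candidate is (a continuous modification of) the indicator-like function $u$ that equals $1$ on $K_{s}(x)$ and $0$ outside $U_{s}(x)$, obtained as the $\mathcal{E}^{(|w|)}$-harmonic function on $V_{*}$ with these boundary values on the relevant cells. Since $y\notin U_{s}(x)$, we get $|u(x)-u(y)|^{2}=1$, so $R_{\mathcal{E}}(x,y)\geq 1/\mathcal{E}(u,u)$ by (RF4). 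It remains to bound $\mathcal{E}(u,u)$ from above by $C/s$: using the self-similarity \eqref{e:SSDF-form} and the fact that $u$ is non-constant only on the finitely many cells $K_{w}$ with $w\in\Lambda^{1}_{s,x}$ (each with $r_{w}\asymp s$, since $w\in\Lambda_{s}$ implies $r_{w_{1}\dots w_{|w|-1}}>s\geq r_{w}$, so $r_{w}>s\min_{i\in S}r_{i}$), one gets $\mathcal{E}(u,u)\leq\sum_{w\in\Lambda^{1}_{s,x}}r_{w}^{-1}\mathcal{E}(u\circ F_{w},u\circ F_{w})\leq N(s\min_{i\in S}r_{i})^{-1}\max_{v}\mathcal{E}(v\circ F_{w},v\circ F_{w})$, the last max being over a fixed finite-dimensional family of possible boundary data on $V_{0}$ and hence bounded by a constant depending only on $(\mathcal{L},D,\mathbf{r})$. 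This yields $R_{\mathcal{E}}(x,y)\geq\alpha_{1}s$ with $\alpha_{1}$ depending only on $(\mathcal{L},D,\mathbf{r})$.

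The main obstacle I anticipate is the bookkeeping in the left-hand inclusion: one has to construct the separating function $u$ so that it is genuinely in $\mathcal{F}$ (continuity across cell boundaries in $V_{0}$, which is where post-critical finiteness and the structure of $\Lambda^{1}_{s,x}$ enter) and so that its boundary data on each participating cell ranges over a controlled finite set, in order to get a scale-independent energy bound — essentially a quantitative, scale-uniform version of (RF3)+(RF4). Everything else (the diameter bound, the uniform bound $N$ on $\#\Lambda^{1}_{s,x}$, the comparison $r_{w}\asymp s$ for $w\in\Lambda_{s}$) is routine given \cite[Chapter 1, Chapter 4]{Kig01}; in fact this lemma is standard in Kigami's theory and I would, if permissible, simply cite \cite[Chapter 4]{Kig01} or \cite{Kig12} after indicating the above argument.
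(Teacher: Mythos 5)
Your proof is correct and follows essentially the same route as the paper: the upper inclusion comes from the cell-diameter bound $\diam(K_{w},R_{\mathcal{E}})\le r_{w}\diam(K,R_{\mathcal{E}})$ of Lemma \ref{l:RE-scaling-pcf} (where in fact a two-step triangle inequality through a cell of $\Lambda_{s,x}$ containing $x$ already yields $\alpha_{2}>2\diam(K,R_{\mathcal{E}})$, so the uniform bound on $\#\Lambda^{1}_{s,x}$ is not needed for that half), and the lower inclusion from the separation estimate $R_{\mathcal{E}}(x,y)\geq\alpha_{1}s$ for points lying in disjoint cells of $\Lambda_{s}$. The paper simply cites \cite[Proof of Lemma 4.2.4]{Kig01} for the latter, which is exactly the piecewise-harmonic cutoff-function and energy argument you reconstruct.
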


\begin{proof}
This is mentioned in \cite[Subsection 4.1]{Kaj13},
but we include a complete proof of it for the reader's convenience.
By the upper inequality in \eqref{e:RE-scaling-pcf} we have
$\diam_{R_{\mathcal{E}}}(K_{w})\leq r_{w}\diam_{R_{\mathcal{E}}}(K)$ for any $w\in W_{*}$,
which implies the latter inclusion in \eqref{e:adapted-RE-pcf} with
$\alpha_{2}\in(2\diam_{R_{\mathcal{E}}}(K),\infty)$ arbitrary. On the other hand,
by \cite[Proof of Lemma 4.2.4]{Kig01} there exists $\alpha_{1}\in(0,\infty)$
such that $R_{\mathcal{E}}(x,y)\geq\alpha_{1}s$ for any $s\in(0,1]$, any
$w,v\in\Lambda_{s}$ with $K_{w}\cap K_{v}=\emptyset$ and any $(x,y)\in K_{w}\times K_{v}$,
which yields the former inclusion in \eqref{e:adapted-RE-pcf}.
\end{proof}

\begin{proof}[Proof of Proposition \textup{\ref{p:condition-PHI-pcf}}]
This equivalence can be easily concluded by combining Theorem \ref{t:phichar} and
results in \cite{Kig12,Kig09}, as follows. First, by Theorem \ref{t:phichar}
and \cite[Theorem 15.10]{Kig12}, under $\theta \in \mathcal{J}(K,R_{\mathcal{E}})$,
\ref{it:condition-PHI-pcf-a} is equivalent to the following condition \ref{it:condition-PHI-pcf-c}:
\begin{enumerate}[label=\textup{(\alph*)},align=left,leftmargin=*,topsep=5pt,parsep=0pt,itemsep=2pt]
\addtocounter{enumi}{2}
\item \label{it:condition-PHI-pcf-c}$(K,\theta,\mu)$ is \ref{VD} and
	there exists $C\in(1,\infty)$ such that for any $x,y\in K$ with $x\not=y$,
	\begin{equation}\label{e:DM2-RE-pcf}
	C^{-1}\theta(x,y)^{\beta}\leq R_{\mathcal{E}}(x,y)\mu\bigl(B_{\theta}(x,\theta(x,y))\bigr)\leq C\theta(x,y)^{\beta}.
	\end{equation}
\end{enumerate}

Next, by $\theta \in \mathcal{J}(K,R_{\mathcal{E}})$, \eqref{e:cgauge} and
\eqref{e:ann1} with $(d_{1},d_{2})\in\{(R_{\mathcal{E}},\theta),(\theta,R_{\mathcal{E}})\}$,
$(K,\theta,\mu)$ is \ref{VD} if and only if $(K,R_{\mathcal{E}},\mu)$ is \ref{VD},
which in turn is, by Lemma \ref{l:adapted-RE-pcf} and the compactness of $K$,
equivalent to the existence of $C\in(1,\infty)$ such that
\begin{equation}\label{e:scale-VD-pcf}
\mu(U_{s}(x))\leq C\mu(U_{s/2}(x))\qquad\textrm{for any $(s,x)\in(0,1]\times K$.}
\end{equation}
Then by \cite[Theorem 1.3.5]{Kig09} and the fact that $\mathscr{S}$
is \emph{locally finite} with respect to $\mathcal{L}$, i.e.,
\begin{equation}\label{e:LF-pcf}
\sup\nolimits_{(s,x)\in(0,1]\times K}\#\Lambda^{1}_{s,x}<\infty
\end{equation}
by \cite[Lemma 4.2.3]{Kig01} and \cite[Lemma 1.3.6]{Kig09}, we have \eqref{e:scale-VD-pcf}
if and only if there exists $C\in(1,\infty)$ such that the following hold:
\begin{gather}\label{e:ELm-pcf}
\mu(K_{wj})\geq C^{-1}\mu(K_{w})\quad\textrm{for any $(w,j)\in W_{*}\times S$,}\\
\mu(K_{w})\leq C\mu(K_{v})
	\quad\textrm{for any $s\in(0,1]$ and any $w,v\in\Lambda_{s}$ with $K_{w}\cap K_{v}\not=\emptyset$.}
\label{e:GE-pcf}
\end{gather}
Moreover, by \cite[Theorem 1.2.4]{Kig09}, \eqref{e:ELm-pcf} implies that
\begin{equation}\label{e:sss-meas-bdry-pt}
\mu(F_{w}(V_{0}))=0=\mu(\{x\}) \qquad \textrm{for any $w\in W_{*}$ and any $x\in K$.}
\end{equation}
(We remark that \eqref{e:sss-meas-bdry-pt} is part of the assumptions of \cite[Theorem 1.3.5]{Kig09}
but can be dropped; indeed, even without assuming \eqref{e:sss-meas-bdry-pt},
\cite[Proofs of Theorems 1.3.10 and 1.3.11]{Kig09} show that any one of
the three conditions \cite[Theorem 1.3.5-(1),(2),(3)]{Kig09} implies \eqref{e:ELm-pcf},
from which \eqref{e:sss-meas-bdry-pt} also follows by \cite[Theorem 1.2.4]{Kig09}.)

On the other hand, since the quasisymmetry of $R_{\mathcal{E}}$ to $\theta$
yields $\delta_{1},\delta_{2}\in(0,\infty)$ such that
$B_{\theta}(x,\delta_{1}\theta(x,y))\subset B_{R_{\mathcal{E}}}(x,R_{\mathcal{E}}(x,y))
	\subset B_{\theta}(x,\delta_{2}\theta(x,y))$
for any $x,y\in K$ with $x\not=y$ by \eqref{e:ann1},
under \ref{VD} of $(K,\theta,\mu)$ and $(K,R_{\mathcal{E}},\mu)$
we have \eqref{e:DM2-RE-pcf} if and only if
there exists $C\in(0,\infty)$ such that for any $x,y\in K$ with $x\not=y$,
\begin{equation}\label{e:DM2-volume-RE-pcf}
C^{-1}\theta(x,y)^{\beta}
	\leq R_{\mathcal{E}}(x,y)\mu\bigl(B_{R_{\mathcal{E}}}(x,R_{\mathcal{E}}(x,y))\bigr)
	\leq C\theta(x,y)^{\beta}.
\end{equation}
Therefore \ref{it:condition-PHI-pcf-c} is equivalent to the following condition \ref{it:condition-PHI-pcf-d}:
\begin{enumerate}[label=\textup{(\alph*)},align=left,leftmargin=*,topsep=5pt,parsep=0pt,itemsep=2pt]
\addtocounter{enumi}{3}
\item \label{it:condition-PHI-pcf-d}There exists $C\in(1,\infty)$ such that \eqref{e:ELm-pcf},
	\eqref{e:GE-pcf} and \eqref{e:DM2-volume-RE-pcf} hold.
\end{enumerate}

Thus it remains to show that \ref{it:condition-PHI-pcf-d} is equivalent to \ref{it:condition-PHI-pcf-b}. Indeed,
let $w\in W_{*}$ and take $x,y\in K_{w}$ with the property
$\diam_{R_{\mathcal{E}}}(K_{w})=R_{\mathcal{E}}(x,y)$, so that $w\in\Lambda_{r_{w}}$,
$R_{\mathcal{E}}(x,y)/\diam_{R_{\mathcal{E}}}(K)\in[c_{R_{\mathcal{E}}}r_{w},r_{w}]$
by Lemma \ref{l:RE-scaling-pcf}, and $\theta(x,y)^{\beta}=(\diam_{\theta}(\{x,y\}))^{\beta}\asymp(\diam_{\theta}(K_{w}))^{\beta}$
by $\theta \in \mathcal{J}(K,R_{\mathcal{E}})$ and \eqref{e:diamQS}.
If \ref{it:condition-PHI-pcf-d} holds, then since $(K,R_{\mathcal{E}},\mu)$ is \ref{VD} we easily see
from Lemma \ref{l:adapted-RE-pcf}, \eqref{e:LF-pcf} and \eqref{e:GE-pcf} that
\begin{equation}\label{e:meas-cell-REball-comparable}
\mu\bigl(B_{R_{\mathcal{E}}}(x,\diam_{R_{\mathcal{E}}}(K_{w}))\bigr)
	=\mu\bigl(B_{R_{\mathcal{E}}}(x,R_{\mathcal{E}}(x,y))\bigr)\asymp\mu(K_{w})
\end{equation}
and hence \eqref{e:DM2-volume-RE-pcf} implies \ref{it:condition-PHI-pcf-b}.
Conversely suppose that \ref{it:condition-PHI-pcf-b} holds.
Then for any $j\in S$, Lemma \ref{l:RE-scaling-pcf} yields
$\diam_{R_{\mathcal{E}}}(K_{wj})/\diam_{R_{\mathcal{E}}}(K_{w})
	\in[c_{R_{\mathcal{E}}}\min_{k\in S}r_{k},c_{R_{\mathcal{E}}}^{-1}\max_{k\in S}r_{k}]$,
hence
\begin{equation}\label{e:REqcd-diamKwjKw}
(\diam_{\theta}(K_{wj}))^{\beta}\asymp(\diam_{\theta}(K_{w}))^{\beta}
\end{equation}
by $\theta \in \mathcal{J}(K,R_{\mathcal{E}})$ and \eqref{e:diamQS},
and therefore \eqref{e:DM2-pcf} implies $\mu(K_{wj})\asymp\mu(K_{w})$,
i.e., \eqref{e:ELm-pcf} holds. Also for any $s\in(0,1]$ and any $v,\tau\in\Lambda_{s}$
with $K_{v}\cap K_{\tau}\not=\emptyset$,
$\diam_{R_{\mathcal{E}}}(K_{v})\asymp\diam_{R_{\mathcal{E}}}(K_{v}\cup K_{\tau})\asymp\diam_{R_{\mathcal{E}}}(K_{\tau})$
by Lemma \ref{l:RE-scaling-pcf} and hence
\begin{equation}\label{e:REqcd-diamKvKtau}
(\diam_{\theta}(K_{v}))^{\beta}\asymp(\diam_{\theta}(K_{v}\cup K_{\tau}))^{\beta}\asymp(\diam_{\theta}(K_{\tau}))^{\beta}
\end{equation}
by $\theta \in \mathcal{J}(K,R_{\mathcal{E}})$ and \eqref{e:diamQS},
which together with \eqref{e:DM2-pcf} implies $\mu(K_{v})\asymp\mu(K_{\tau})$,
proving \eqref{e:GE-pcf}. In particular, $(K,R_{\mathcal{E}},\mu)$ is \ref{VD}, and
now it follows from Lemma \ref{l:adapted-RE-pcf}, \eqref{e:LF-pcf} and \eqref{e:GE-pcf}
that \eqref{e:meas-cell-REball-comparable} holds, which together with
\eqref{e:DM2-pcf} yields \eqref{e:DM2-volume-RE-pcf}, proving \ref{it:condition-PHI-pcf-d}.
\end{proof}

\subsection{A necessary condition: attainment by the energy measure of some harmonic function}\label{ssec:attain-harmonic-func-pcf}

Throughout this subsection, we assume that $\mathcal{L}=(K,S,\{F_{i}\}_{i\in S})$
is a post-critically finite self-similar structure with $\#S\geq 2$ and $K$ connected
and that $(D,\mathbf{r})$ is a regular harmonic structure on $\mathcal{L}$, and
we follow the notation introduced in Subsection \ref{ssec:pfcsss-preliminaries}.

Proposition \ref{p:condition-PHI-pcf} with $\beta=2$ justifies introducing
the following set of pairs of metrics and Borel probability measures on $K$.
Recall Definition \ref{d:harmonic-pcf} for $\mathcal{H}_{0}$.

\begin{definition}\label{d:GetaC-pcf}
We set
\begin{align}\label{eq:HomeoPlus}
\homeo^{+}&:=\{\eta\mid\textrm{$\eta\colon[0,\infty)\to[0,\infty)$, $\eta$ is a homeomorphism}\},\\
\mathcal{P}(K)&:=\{\mu\mid\textrm{$\mu$ is a Borel probability measure on $K$}\},
\label{eq:PK-pcf}
\end{align}
which is equipped with the topology of weak convergence, and
for each $(\eta,C)\in\homeo^{+}\times(1,\infty)$ we define
\begin{equation}\label{e:GetaC-pcf}
\begin{split}
\attainsss(\eta&,C):=\attainsss_{\mathcal{L},(D,\mathbf{r})}(\eta,C)\\
&:=\biggl\{(\theta,\mu)\biggm\vert
	\begin{minipage}{250pt}
		$\theta$ is a metric on $K$ and $\eta$-quasisymmetric to $R_{\mathcal{E}}$,
		$\mu\in\mathcal{P}(K)$,
		$C^{-1}\leq r_{w}\mu(K_{w})/(\diam_{\theta}(K_{w}))^{2}\leq C$
		for any $w\in W_{*}$
	\end{minipage}
	\biggr\},
\end{split}
\end{equation}
which is considered as a subset of $\contfunc(K\times K)\times\mathcal{P}(K)$. We also set
\begin{equation}\label{e:GLDr}
\attainsss:=\attainsss_{\mathcal{L},(D,\mathbf{r})}
	:=\bigcup_{(\eta,C)\in\homeo^{+}\times(1,\infty)}\attainsss(\eta,C)
\end{equation}
and for each subset $\mathcal{Z}$ of $\attainsss$ define
$\mathcal{H}_{0}(\mathcal{Z})\subset\mathcal{H}_{0}$ and
$\widetilde{\mathcal{H}}_{0}(\mathcal{Z})\subset\mathcal{H}_{0}/\mathbb{R}\one_{K}$ by
\begin{align}\label{e:H0Z-pcf}
\mathcal{H}_{0}(\mathcal{Z})
	&:=\{h\in\mathcal{H}_{0}\mid\textrm{$(\theta_{h},\Gamma(h,h))\in\mathcal{Z}$ for some metric $\theta_{h}$ on $K$}\},\\
\widetilde{\mathcal{H}}_{0}(\mathcal{Z})
	&:=\{h+\mathbb{R}\one_{K}\mid h\in\mathcal{H}_{0}(\mathcal{Z})\}.
\label{e:H0tildeZ-pcf}
\end{align}
\end{definition}

Since $\mu\in\mathcal{A}(K,R_{\mathcal{E}},\measure,\mathcal{E},\mathcal{F})$ for any
$(\theta,\mu)\in\attainsss$ by \eqref{e:admiss-pcf} with $\measure$ in place of $\mu$ and \eqref{e:GetaC-pcf},
it follows from Proposition \ref{p:condition-PHI-pcf} with $\beta=2$ and \eqref{e:cgauge} that
\begin{equation}\label{e:condition-PHI-pcf-G}
\mathcal{G}(K,R_{\mathcal{E}},\measure,\mathcal{E},\mathcal{F})
	=\{a\mu\mid\textrm{$(\theta,\mu)\in\attainsss$, $a\in(0,\infty)$}\}.
\end{equation}
In particular,
\begin{equation}\label{e:condition-PHI-pcf-G-attained}
\begin{minipage}{280pt}
\emph{$\mathcal{G}(K,R_{\mathcal{E}},\measure,\mathcal{E},\mathcal{F})\not=\emptyset$,
i.e., the infimum in \eqref{e:dcw} is attained for $(K,R_{\mathcal{E}},\measure,\mathcal{E},\mathcal{F})$,
if and only if $\attainsss\not=\emptyset$, namely
$\attainsss(\eta,C)\not=\emptyset$ for some $(\eta,C)\in\homeo^{+}\times(1,\infty)$.}
\end{minipage}
\end{equation}
In fact, it turns out that in this case $\mathcal{H}_{0}(\attainsss)\not=\emptyset$, i.e.,
$(\theta_{h},\Gamma(h,h))\in\attainsss$ for some $h\in\mathcal{H}_{0}$ and some $\theta_{h}\in\mathcal{J}(K,R_{\mathcal{E}})$,
which is the main result of this subsection and stated as follows.
\emph{We let $c_{R_{\mathcal{E}}}$ be as in Lemma \textup{\ref{l:RE-scaling-pcf}},
take arbitrary $(\eta,C)\in\homeo^{+}\times(1,\infty)$,
define $\tilde{\eta}\in\homeo^{+}$ by $\tilde{\eta}(t):=1/\eta^{-1}(t^{-1})$
($\tilde{\eta}(0):=0$) and fix them throughout the rest of this subsection.}

\begin{theorem}\label{t:attain-harmonic-func}
If $\attainsss(\eta,C)\not=\emptyset$, then
$\mathcal{H}_{0}(\attainsss(c_{R_{\mathcal{E}}}^{-1}\eta,C))\not=\emptyset$, i.e.,
there exist $h\in\mathcal{H}_{0}$ and a metric $\theta_{h}$ on $K$
such that $(\theta_{h},\Gamma(h,h))\in\attainsss(c_{R_{\mathcal{E}}}^{-1}\eta,C)$.
\end{theorem}

Moreover, a slight addition to our proof of Theorem \ref{t:attain-harmonic-func}
also shows the following proposition, which is used in Subsubsection \ref{sssec:SGs}
to prove the non-attainment of the infimum in \eqref{e:dcw} for the $N$-dimensional
Sierpi\'{n}ski gasket with $N\geq 3$ (Theorem \ref{thm:SGN-NOT-attained}).
Recall \ref{it:RF2} for $(\mathcal{F}/\mathbb{R}\one_{K},\mathcal{E})$.

\begin{proposition}\label{p:H0GetaC-compact-pcf}
\begin{enumerate}[label=\textup{(\arabic*)},align=left,leftmargin=*,topsep=5pt,parsep=0pt,itemsep=2pt]
\item\label{it:H0GetaC-compact-pcf} $\widetilde{\mathcal{H}}_{0}(\attainsss(\eta,C))$ is compact in norm in $(\mathcal{F}/\mathbb{R}\one_{K},\mathcal{E})$.
\item\label{it:H0GetaC-compact-scaling-pcf} If $h\in\mathcal{H}_{0}(\attainsss(\eta,C))$, then
	$\mathcal{E}(h\circ F_{w},h\circ F_{w})^{-1/2}h\circ F_{w}\in\mathcal{H}_{0}(\attainsss(c_{R_{\mathcal{E}}}^{-1}\eta,C))$
	for any $w\in W_{*}$.
\end{enumerate}
\end{proposition}

We remark that in Proposition \ref{p:H0GetaC-compact-pcf}-\ref{it:H0GetaC-compact-scaling-pcf}
we have $\mathcal{E}(h\circ F_{w},h\circ F_{w})=r_{w}\Gamma(h,h)(K_{w})>0$ for any $w\in W_{*}$ by
Lemma \ref{l:scaling-energy-meas} below and the lower inequality in \eqref{e:GetaC-pcf} for $\mu=\Gamma(h,h)$.

The rest of this subsection is devoted to the proof of Theorem \ref{t:attain-harmonic-func}
and Proposition \ref{p:H0GetaC-compact-pcf}, which is reduced to proving a series of
propositions and lemmas concerning the set $\attainsss(\eta,C)$. We start with establishing
its compactness. Note that $\mathcal{P}(K)$ is a compact metrizable topological space
by \cite[Theorems 9.1.5 and 9.1.9]{Str} and hence that
$\contfunc(K\times K)\times\mathcal{P}(K)$ is also metrizable.

\begin{proposition}\label{p:GetaC-compact}
$\attainsss(\eta,C)$ is a compact subset of $\contfunc(K\times K)\times\mathcal{P}(K)$.
\end{proposition}

\begin{proof}
Let $\{(\theta_{n},\mu_{n})\}_{n\in\mathbb{N}}\subset\attainsss(\eta,C)$.
By the metrizability of $\contfunc(K\times K)\times\mathcal{P}(K)$ noted above,
it suffices to show that there exists a subsequence of
$\{(\theta_{n},\mu_{n})\}_{n\in\mathbb{N}}$ converging to some
$(\theta,\mu)\in\attainsss(\eta,C)$ in $\contfunc(K\times K)\times\mathcal{P}(K)$.

First, recalling that the compactness of $K$ implies that of $\mathcal{P}(K)$ by
\cite[Theorem 9.1.9]{Str}, we can choose $\mu\in\mathcal{P}(K)$ and a subsequence
$\{\mu_{n_{k}}\}_{k\in\mathbb{N}}$ of $\{\mu_{n}\}_{n\in\mathbb{N}}$ converging to
$\mu$ in $\mathcal{P}(K)$, and therefore by considering
$\{(\theta_{n_{k}},\mu_{n_{k}})\}_{k\in\mathbb{N}}$ instead of
$\{(\theta_{n},\mu_{n})\}_{n\in\mathbb{N}}$ we may assume that
$\{\mu_{n}\}_{n\in\mathbb{N}}$ itself converges to $\mu$ in $\mathcal{P}(K)$.

Next, $\diam_{\theta_{n}}(K)\in[C^{-1/2},C^{1/2}]$ for any $n\in\mathbb{N}$ by the inequalities
in \eqref{e:GetaC-pcf} and hence $\{\theta_{n}\}_{n\in\mathbb{N}}$ is uniformly bounded.
Moreover, for each $n\in\mathbb{N}$, since the $\eta$-quasisymmetry of $\theta_{n}$ to
$R_{\mathcal{E}}$ yields the $\tilde{\eta}$-quasisymmetry of $R_{\mathcal{E}}$ to $\theta_{n}$
by \cite[Proposition 10.6]{Hei}, it follows from \eqref{e:diamQS} that for any $x,y\in K$,
\begin{gather}\label{e:GetaC-RE-eta-d}
\frac{R_{\mathcal{E}}(x,y)}{\diam_{R_{\mathcal{E}}}(K)}
	=\frac{\diam_{R_{\mathcal{E}}}(\{x,y\})}{\diam_{R_{\mathcal{E}}}(K)}
	\leq\eta\Bigl(\frac{2\diam_{\theta_{n}}(\{x,y\})}{\diam_{\theta_{n}}(K)}\Bigr)
	\leq\eta\bigl(2C^{1/2}\theta_{n}(x,y)\bigr),\\
C^{-1/2}\theta_{n}(x,y)\leq\frac{\diam_{\theta_{n}}(\{x,y\})}{\diam_{\theta_{n}}(K)}
	\leq\tilde{\eta}\Bigl(\frac{2\diam_{R_{\mathcal{E}}}(\{x,y\})}{\diam_{R_{\mathcal{E}}}(K)}\Bigr)
	=\tilde{\eta}\Bigl(\frac{2R_{\mathcal{E}}(x,y)}{\diam_{R_{\mathcal{E}}}(K)}\Bigr),
\label{e:GetaC-d-eta-RE}
\end{gather}
which in turn implies that for any $x_{1},y_{1},x_{2},y_{2}\in K$,
\begin{equation}\label{e:GetaC-equicont}
\begin{split}
\lvert\theta_{n}(x_{1},y_{1})-\theta_{n}(x_{2},y_{2})\rvert
	&\leq \theta_{n}(x_{1},x_{2})+\theta_{n}(y_{1},y_{2})\\
&\leq C^{1/2}\biggl(\tilde{\eta}\Bigl(\frac{2R_{\mathcal{E}}(x_{1},x_{2})}{\diam_{R_{\mathcal{E}}}(K)}\Bigr)
	+\tilde{\eta}\Bigl(\frac{2R_{\mathcal{E}}(y_{1},y_{2})}{\diam_{R_{\mathcal{E}}}(K)}\Bigr)\biggr),
\end{split}
\end{equation}
so that $\{\theta_{n}\}_{n\in\mathbb{N}}\subset \contfunc(K\times K)$ is equicontinuous.
Thus by the Arzel\`{a}--Ascoli theorem (see, e.g., \cite[Theorem 11.28]{Rud})
there exist $\theta\in \contfunc(K\times K)$ and a subsequence $\{\theta_{n_{k}}\}_{k\in\mathbb{N}}$
of $\{\theta_{n}\}_{n\in\mathbb{N}}$ converging to $\theta$ in $\contfunc(K\times K)$,
so that $\{(\theta_{n_{k}},\mu_{n_{k}})\}_{k\in\mathbb{N}}$
converges to $(\theta,\mu)$ in $\contfunc(K\times K)\times\mathcal{P}(K)$.
Then $\diam_{\theta}(K)\in[C^{-1/2},C^{1/2}]$ and for any $x,y,z\in K$
we have \eqref{e:GetaC-RE-eta-d} with $\theta$ in place of $\theta_{n}$,
$\theta(x,x)=0$, $\theta(x,y)=\theta(y,x)\geq 0$ and $\theta(x,y)\leq \theta(x,z)+\theta(z,y)$
by the same properties of $\theta_{n_{k}}$ for $k\in\mathbb{N}$, whence $\theta$ is a metric on $K$.
Furthermore letting $k\to\infty$ in the $\eta$-quasisymmetry of $\theta_{n_{k}}$ to
$R_{\mathcal{E}}$ as defined in \eqref{e:qs} yields that of $\theta$ to $R_{\mathcal{E}}$.

To show the inequalities in \eqref{e:GetaC-pcf} for $(\theta,\mu)$, let $w\in W_{*}$,
choose $x=x_{w}\in K_{w}\setminus F_{w}(V_{0})$ and set $s:=r_{w}$, so that
$w\in\Lambda_{s}$ and $\Lambda_{s,x}=\{w\}$ by \cite[Proposition 1.3.5-(2)]{Kig01}
(recall Definition \ref{d:scale-pcf}).
Note that $K_{w}=F_{w}(K)$ is compact and hence closed in $K$, that
$K_{w}\setminus F_{w}(V_{0})=K\setminus\bigl(F_{w}(V_{0})\cup\bigcup_{v\in W_{\lvert w\rvert}\setminus\{w\}}K_{v}\bigr)$
by \cite[Proposition 1.3.5-(2)]{Kig01} and is thus open in $K$, and that
$K_{w}\subset U^{\circ}_{s}(x)$ with $U^{\circ}_{s}(x)$ the interior of
$U_{s}(x)$ in $K$ by \cite[Proposition 1.3.6]{Kig01}.
By using these facts and the convergence of $\{(\theta_{n_{k}},\mu_{n_{k}})\}_{k\in\mathbb{N}}$
to $(\theta,\mu)$ in $\contfunc(K\times K)\times\mathcal{P}(K)$ to let $k\to\infty$ in the
inequalities in \eqref{e:GetaC-pcf} for $(\theta_{n_{k}},\mu_{n_{k}})$, we obtain
\begin{gather}\label{e:GetaC-limit-lower}
r_{w}\mu(K_{w})
	\geq\limsup_{k\to\infty}r_{w}\mu_{n_{k}}(K_{w})
	\geq C^{-1}(\diam_{\theta}(K_{w}))^{2},\\
r_{w}\mu(K_{w}\setminus F_{w}(V_{0}))
	\leq\liminf_{k\to\infty}r_{w}\mu_{n_{k}}(K_{w}\setminus F_{w}(V_{0}))
	\leq C(\diam_{\theta}(K_{w}))^{2},\label{e:GetaC-limit-upper}\\
\begin{split}
r_{w}\mu(K_{w})\leq r_{w}\mu(U^{\circ}_{s}(x))
	&\leq\liminf_{k\to\infty}r_{w}\mu_{n_{k}}(U^{\circ}_{s}(x))
	\leq \sum_{v\in\Lambda^{1}_{s,x}}C\frac{r_{w}}{r_{v}}(\diam_{\theta}(K_{v}))^{2}\\
&\leq \frac{C}{\min_{k\in S}r_{k}}\sum_{v\in\Lambda^{1}_{s,x}}(\diam_{\theta}(K_{v}))^{2}
	\asymp(\diam_{\theta}(K_{w}))^{2},
\end{split}
\label{e:GetaC-limit-Usx}
\end{gather}
where the last step in \eqref{e:GetaC-limit-Usx} follows from \eqref{e:REqcd-diamKvKtau}
and \eqref{e:LF-pcf}. We now conclude from \eqref{e:GetaC-limit-lower},
\eqref{e:GetaC-limit-Usx} and \eqref{e:REqcd-diamKwjKw} that
$\mu(K_{wj})\asymp r_{wj}^{-1}(\diam_{\theta}(K_{wj}))^{2}
	\asymp r_{w}^{-1}(\diam_{\theta}(K_{w}))^{2}\asymp\mu(K_{w})$
for any $(w,j)\in W_{*}\times S$, which together with \cite[Theorem 1.2.4]{Kig09}
implies that $\mu(F_{w}(V_{0}))=0$ and hence
$\mu(K_{w}\setminus F_{w}(V_{0}))=\mu(K_{w})$ for any $w\in W_{*}$, so that
\eqref{e:GetaC-limit-lower} and \eqref{e:GetaC-limit-upper} yield the inequalities
in \eqref{e:GetaC-pcf} for $(\theta,\mu)$ and thus $(\theta,\mu)\in\attainsss(\eta,C)$.
\end{proof}

\begin{corollary}\label{c:GetaC-compact}
Let $\{(\theta_{n},\mu_{n})\}_{n\in\mathbb{N}}\subset\attainsss(\eta,C)$,
$\mu\in\mathcal{P}(K)$ and suppose that $\{\mu_{n}\}_{n\in\mathbb{N}}$ converges
to $\mu$ in $\mathcal{P}(K)$. Then there exists a metric $\theta$ on $K$ such that
$(\theta,\mu)\in\attainsss(\eta,C)$.
\end{corollary}

\begin{proof}
Since $\attainsss(\eta,C)$ is a compact subset of $\contfunc(K\times K)\times\mathcal{P}(K)$
by Proposition \ref{p:GetaC-compact}, there exist $(\theta,\nu)\in\attainsss(\eta,C)$
and a subsequence $\{(\theta_{n_{k}},\mu_{n_{k}})\}_{k\in\mathbb{N}}$ of
$\{(\theta_{n},\mu_{n})\}_{n\in\mathbb{N}}$ converging to $(\theta,\nu)$ in
$\contfunc(K\times K)\times\mathcal{P}(K)$, but then $\{\mu_{n_{k}}\}_{k\in\mathbb{N}}$
converges in $\mathcal{P}(K)$ to both $\mu$ and $\nu$, hence $\mu=\nu$ and thus
$(\theta,\mu)=(\theta,\nu)\in\attainsss(\eta,C)$.
\end{proof}

We next observe that the set $\attainsss(\eta,C)$ is almost invariant under
the operation of pulling back by $F_{w}$ followed by a suitable normalization,
as stated in the following lemma.

\begin{lemma}\label{l:GetaC-scaling}
Let $(\theta,\mu)\in\attainsss(\eta,C)$, $w\in W_{*}$ and define
$(\theta_{w},\mu_{w})\in \contfunc(K\times K)\times\mathcal{P}(K)$ by
\begin{equation}\label{e:GetaC-scaling}
\theta_{w}(x,y):=\frac{\theta(F_{w}(x),F_{w}(y))}{\sqrt{r_{w}\mu(K_{w})}}\qquad\textrm{and}\qquad
	\mu_{w}(A):=\frac{\mu(F_{w}(A))}{\mu(K_{w})}.
\end{equation}
Then $(\theta_{w},\mu_{w})\in\attainsss(c_{R_{\mathcal{E}}}^{-1}\eta,C)$.
\end{lemma}

\begin{proof}
It is immediate from $(\theta,\mu)\in\attainsss(\eta,C)$ that $\theta_{w}$ and $\mu_{w}$
can be defined by \eqref{e:GetaC-scaling} and are a metric and a Borel probability
measure on $K$, respectively, and that $(\theta_{w},\mu_{w})$ satisfies the inequalities
in \eqref{e:GetaC-pcf}. Moreover, for any $x,y,z\in K$ and $t\in(0,\infty)$ with
$\theta_{w}(x,y)\leq t\theta_{w}(x,z)$, we have $\theta(F_{w}(x),F_{w}(y))\leq t\theta(F_{w}(x),F_{w}(z))$
and hence it follows from the $\eta$-quasisymmetry of $\theta$ to $R_{\mathcal{E}}$ and
Lemma \ref{l:RE-scaling-pcf} that
\begin{equation*}
c_{R_{\mathcal{E}}}r_{w}R_{\mathcal{E}}(x,y)
	\leq R_{\mathcal{E}}(F_{w}(x),F_{w}(y))
	\leq\eta(t)R_{\mathcal{E}}(F_{w}(x),F_{w}(z))
	\leq\eta(t)r_{w}R_{\mathcal{E}}(x,z)
\end{equation*}
and thus that $R_{\mathcal{E}}(x,y)\leq c_{R_{\mathcal{E}}}^{-1}\eta(t)R_{\mathcal{E}}(x,z)$,
proving the $c_{R_{\mathcal{E}}}^{-1}\eta$-quasisymmetry of $\theta_{w}$ to $R_{\mathcal{E}}$.
\end{proof}

The operation as in \eqref{e:GetaC-scaling} of pulling back Borel measures on $K$ by
$F_{w}$ is compatible with the analogous operation on $\mathcal{F}$
(recall \eqref{e:SSDF-domain} and \eqref{e:SSDF-form}) in the following sense.

\begin{lemma}\label{l:scaling-energy-meas}
Let $u\in\mathcal{F}$ and $w\in W_{*}$. Then
$\Gamma(u,u)(F_{w}(A))=r_{w}^{-1}\Gamma(u\circ F_{w},u\circ F_{w})(A)$
for any Borel subset $A$ of $K$, and in particular
$\Gamma(u,u)(K_{w})=r_{w}^{-1}\mathcal{E}(u\circ F_{w},u\circ F_{w})$.
Moreover, if $\Gamma(u,u)(K_{w})>0$, then for any Borel subset $A$ of $K$,
\begin{equation}\label{e:scaling-energy-meas}
\frac{\Gamma(u,u)(F_{w}(A))}{\Gamma(u,u)(K_{w})}=\Gamma(u_{w},u_{w})(A),
	\quad\textrm{where}\quad u_{w}
	:=\mathcal{E}(u\circ F_{w},u\circ F_{w})^{-1/2}u\circ F_{w}.
\end{equation}
\end{lemma}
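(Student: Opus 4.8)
The plan is to reduce to the one-letter case, establish the global self-similarity of the energy measure, and then peel off a single cell.

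\emph{Reductions.} First I would note that it suffices to prove the first identity for $w=i\in S$ a single letter: for general $w=w_{1}\cdots w_{n}$ one has $F_{w}=F_{w_{1}}\circ F_{w_{2}\cdots w_{n}}$, $r_{w}=r_{w_{1}}r_{w_{2}\cdots w_{n}}$ and $u\circ F_{w}=(u\circ F_{w_{1}})\circ F_{w_{2}\cdots w_{n}}$ with $u\circ F_{w_{1}}\in\mathcal{F}$ by \eqref{e:SSDF-domain}, so the general statement follows by induction on $|w|$. Granting the first identity, the ``in particular'' clause is just the special case $A=K$ together with $\Gamma(v,v)(K)=\mathcal{E}(v,v)$ for $v\in\mathcal{F}$, and \eqref{e:scaling-energy-meas} follows from the homogeneity $\Gamma(cv,cv)=c^{2}\Gamma(v,v)$ (a consequence of \eqref{e:EnergyMeas}) applied with $v=u\circ F_{w}$ and $c=\mathcal{E}(u\circ F_{w},u\circ F_{w})^{-1/2}$, combined with $\Gamma(u,u)(K_{w})=r_{w}^{-1}\mathcal{E}(u\circ F_{w},u\circ F_{w})$.

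\emph{Global self-similarity of the energy measure.} Next I would prove the measure identity
\[
\Gamma(u,u)=\sum_{i\in S}\frac{1}{r_{i}}(F_{i})_{*}\Gamma(u\circ F_{i},u\circ F_{i}),
\]
where $(F_{i})_{*}$ denotes push-forward by $F_{i}$ and both sides are finite Borel measures on $K$. Since $K$ is compact and $\mathcal{F}\subset C(K)$, we have $\mathcal{F}=\mathcal{F}\cap L^{\infty}(K,\mu)=\mathcal{F}\cap C_{\mathrm{c}}(K)$, which is $\|\cdot\|_{\infty}$-dense in $C(K)$ by the regularity of $(\mathcal{E},\mathcal{F})$ (Proposition \ref{prop:RF-DF}); so it is enough to check that both sides integrate each $g\in\mathcal{F}$ to the same value. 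For such $g$ I would insert the self-similarity \eqref{e:SSDF-form} into the right-hand side of the defining identity \eqref{e:EnergyMeas} for $\Gamma(u,u)$ and use $(ug)\circ F_{i}=(u\circ F_{i})(g\circ F_{i})$, $(u^{2})\circ F_{i}=(u\circ F_{i})^{2}$ and $u\circ F_{i},g\circ F_{i}\in\mathcal{F}$ to recognise the $i$-th summand as $r_{i}^{-1}\int_{K}(g\circ F_{i})\,d\Gamma(u\circ F_{i},u\circ F_{i})=r_{i}^{-1}\int_{K}g\,d\bigl((F_{i})_{*}\Gamma(u\circ F_{i},u\circ F_{i})\bigr)$; summing over $i\in S$ and applying the Riesz representation theorem gives the identity.

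\emph{Peeling off the $i$-th cell.} Finally, fix $i\in S$ and a Borel set $A\subset K$; since $F_{i}$ is a homeomorphism onto the compact (hence Borel) set $K_{i}$, the set $F_{i}(A)$ is Borel. Evaluating the measure identity at $F_{i}(A)$ and using that $(F_{j})_{*}\Gamma(u\circ F_{j},u\circ F_{j})$ is carried by $K_{j}$, the $j=i$ term contributes $r_{i}^{-1}\Gamma(u\circ F_{i},u\circ F_{i})(A)$ by injectivity of $F_{i}$, while for $j\neq i$ we have $\Sigma_{i}\cap\Sigma_{j}=\emptyset$, so $K_{i}\cap K_{j}=F_{i}(V_{0})\cap F_{j}(V_{0})$ by \cite[Proposition 1.3.5-(2)]{Kig01}; thus $F_{i}(A)\cap K_{j}$ lies in the finite set $F_{j}(V_{0})$ and the corresponding term is the value of $\Gamma(u\circ F_{j},u\circ F_{j})$ on a finite subset of $V_{0}$. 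I expect this last point to be the only real obstacle: because $\inf_{x\in K}\Cap^{\mu}(\{x\})>0$ in the present resistance-form setting, the vanishing of these cross terms does \emph{not} follow from the usual principle that energy measures charge no set of zero capacity, and must instead be obtained from the non-atomicity of the energy measures $\Gamma(v,v)$, $v\in\mathcal{F}$, on post-critically finite self-similar sets — a known fact which could alternatively be derived from the self-similarity established above. Once the cross terms are shown to vanish, the case $w=i$, and hence the lemma, is proved.
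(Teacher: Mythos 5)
Your argument is correct in structure, and it is in effect a reconstruction of the proof of the result that the paper simply cites: the paper disposes of the first identity in one line by declaring it equivalent to \cite[Lemma 4-(i)]{HN}, whereas you rederive it from the self-similar decomposition $\Gamma(u,u)=\sum_{i\in S}r_{i}^{-1}(F_{i})_{*}\Gamma(u\circ F_{i},u\circ F_{i})$ (which is exactly \eqref{e:pre-scaling-energy-meas-GSC}, attributed in the paper to \cite[Lemma 3.11-(ii)]{Hin05} and obtained just as you do, by feeding \eqref{e:SSDF-form} into \eqref{e:EnergyMeas}), together with the finiteness of the overlaps $K_{i}\cap K_{j}=F_{i}(V_{0})\cap F_{j}(V_{0})$. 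The reduction to one-letter words, the identification of the ``in particular'' clause with the case $A=K$, and the derivation of \eqref{e:scaling-energy-meas} from the quadratic homogeneity of $\Gamma$ all match the paper's treatment of the second and third assertions. You have also correctly located where the real content lies: since $\inf_{x\in K}\Cap^{\mu}(\{x\})>0$ here, the vanishing of the cross terms $\Gamma(u\circ F_{j},u\circ F_{j})\bigl(F_{j}^{-1}(F_{i}(A))\bigr)$ with $F_{j}^{-1}(F_{i}(A))\subset V_{0}$ cannot be deduced from the usual ``energy measures charge no sets of zero capacity'' principle and requires the non-atomicity of energy measures.

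The one point I would push back on is your parenthetical suggestion that this non-atomicity ``could alternatively be derived from the self-similarity established above.'' Iterating the decomposition only yields
\begin{equation*}
\Gamma(v,v)(\{x\})\leq\sum_{\tau\in W_{n},\,x\in K_{\tau}}r_{\tau}^{-1}\mathcal{E}(v\circ F_{\tau},v\circ F_{\tau})+\textrm{(contributions of atoms on $V_{0}$)},
\end{equation*}
and even though the number of summands is uniformly bounded, there is no a priori reason for the individual terms $r_{[\omega]_{n}}^{-1}\mathcal{E}(v\circ F_{[\omega]_{n}},v\circ F_{[\omega]_{n}})$ to tend to $0$ as $n\to\infty$ — that decay is essentially equivalent to the non-atomicity you are trying to prove, so the suggestion is circular as stated. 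You should instead invoke an actual proof: either the p.c.f.-specific results of \cite{Hin05,HN}, or the general energy image density property of Bouleau--Hirsch for a single function, which gives $\Gamma(v,v)(\{x\})\leq v_{*}\Gamma(v,v)(\{v(x)\})=0$ because $v_{*}\Gamma(v,v)$ is absolutely continuous with respect to Lebesgue measure on $\mathbb{R}$. With that reference supplied in place of the hand-wave, your proof is complete.
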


\begin{proof}
Since $F_{w}\colon K\to K_{w}$ is a homeomorphism, the first assertion is easily seen to be
equivalent to \cite[Lemma 4-(i)]{HN}, and the second follows by choosing $A=K$ in the first.
Furthermore if $\Gamma(u,u)(K_{w})>0$, then we see from the first and second assertions
and the bilinearity of $\Gamma(f,g)$ in $f,g\in\mathcal{F}$ that
for any Borel subset $A$ of $K$,
\begin{equation*}
\frac{\Gamma(u,u)(F_{w}(A))}{\Gamma(u,u)(K_{w})}
	=\frac{r_{w}^{-1}\Gamma(u\circ F_{w},u\circ F_{w})(A)}{r_{w}^{-1}\mathcal{E}(u\circ F_{w},u\circ F_{w})}
	=\Gamma(u_{w},u_{w})(A),
\end{equation*}
completing the proof.
\end{proof}

At this stage, we can already give the proof of Proposition \ref{p:H0GetaC-compact-pcf} as follows.

\begin{proof}[Proof of Proposition \textup{\ref{p:H0GetaC-compact-pcf}}]
Recall \eqref{e:H0Z-pcf} for $\mathcal{H}_{0}(\mathcal{Z})$ and
\eqref{e:H0tildeZ-pcf} for $\widetilde{\mathcal{H}}_{0}(\mathcal{Z})$.
\begin{enumerate}[label=\textup{(\arabic*)},align=left,leftmargin=*,topsep=5pt,parsep=0pt,itemsep=2pt]
\item[\ref{it:H0GetaC-compact-scaling-pcf}]
	Let $h\in\mathcal{H}_{0}(\attainsss(\eta,C))$ and set $\mu:=\Gamma(h,h)$, so that
	$(\theta,\mu)\in \attainsss(\eta,C)$ for some metric $\theta$ on $K$. Let $w\in W_{*}$,
	set $h_{w}:=\mathcal{E}(h\circ F_{w},h\circ F_{w})^{-1/2}h\circ F_{w}$ and define
	$(\theta_{w},\mu_{w})\in \contfunc(K\times K)\times\mathcal{P}(K)$ by \eqref{e:GetaC-scaling}.
	Then $h_{w}\in\mathcal{H}_{0}$ by Proposition \ref{p:harmonic-pcf}-\ref{it:harmonic-pcf},
	$\Gamma(h_{w},h_{w})=\mu_{w}$ by Lemma \ref{l:scaling-energy-meas},
	$(\theta_{w},\mu_{w})\in\attainsss(c_{R_{\mathcal{E}}}^{-1}\eta,C)$ by Lemma \ref{l:GetaC-scaling}
	and thus $h_{w}\in\mathcal{H}_{0}(\attainsss(c_{R_{\mathcal{E}}}^{-1}\eta,C))$.
\item[\ref{it:H0GetaC-compact-pcf}]Let $\{h_{n}\}_{n\in\mathbb{N}}\subset\mathcal{H}_{0}(\attainsss(\eta,C))$,
	so that $\{\Gamma(h_{n},h_{n})\}_{n\in\mathbb{N}}\subset\mathcal{P}(K)$ and hence
	$\{h_{n}\}_{n\in\mathbb{N}}\subset\{h\in\mathcal{H}_{0}\mid(\Gamma(h,h)(K)=)\,\mathcal{E}(h,h)=1\}$.
	Since $\mathcal{H}_{0}/\mathbb{R}\one_{K}$ is a finite-dimensional linear subspace of
	$(\mathcal{F}/\mathbb{R}\one_{K},\mathcal{E})$ by Proposition \ref{p:harmonic-pcf}-\ref{it:harmonic-extension-pcf},
	$\{h\in\mathcal{H}_{0}/\mathbb{R}\one_{K}\mid\mathcal{E}(h,h)=1\}$ is compact in norm in
	$(\mathcal{F}/\mathbb{R}\one_{K},\mathcal{E})$, and thus there exist $h\in\mathcal{H}_{0}$
	and a strictly increasing sequence $\{n_{k}\}_{k\in\mathbb{N}}\subset\mathbb{N}$ such that
	$\mathcal{E}(h,h)=1$ and $\lim_{k\to\infty}\mathcal{E}(h-h_{n_{k}},h-h_{n_{k}})=0$. Then
	$\Gamma(h,h)\in\mathcal{P}(K)$ and, noting that
	$\mathcal{F}\times\mathcal{F}\ni(u,v)\mapsto\Gamma(u,v)$ is bilinear, symmetric
	and non-negative definite, for any Borel subset $A$ of $K$ we have
	\begin{equation}\label{e:energy-meas-converge}
	\begin{split}
	\bigl\lvert\Gamma(h,h&)(A)^{1/2}-\Gamma(h_{n_{k}},h_{n_{k}})(A)^{1/2}\bigr\rvert^{2}
		\leq\Gamma(h-h_{n_{k}},h-h_{n_{k}})(A)\\
	&\leq\Gamma(h-h_{n_{k}},h-h_{n_{k}})(K)
		=\mathcal{E}(h-h_{n_{k}},h-h_{n_{k}})\xrightarrow{k\to\infty}0.
	\end{split}
	\end{equation}
	In particular, $\{\Gamma(h_{n_{k}},h_{n_{k}})\}_{k\in\mathbb{N}}$
	converges to $\Gamma(h,h)$ in $\mathcal{P}(K)$, and now it follows from
	$\{h_{n_{k}}\}_{k\in\mathbb{N}}\subset\mathcal{H}_{0}(\attainsss(\eta,C))$
	and Corollary \ref{c:GetaC-compact} that $h\in\mathcal{H}_{0}(\attainsss(\eta,C))$,
	which together with $\lim_{k\to\infty}\mathcal{E}(h-h_{n_{k}},h-h_{n_{k}})=0$
	proves that $\widetilde{\mathcal{H}}_{0}(\attainsss(\eta,C))$ is (sequentially)
	compact in norm in $(\mathcal{F}/\mathbb{R}\one_{K},\mathcal{E})$.
	\qedhere
\end{enumerate}
\end{proof}

We continue with the preparation for the proof of Theorem \ref{t:attain-harmonic-func}.
Recall that $\mu$ is a minimal energy-dominant measure of $(\mathcal{E},\mathcal{F})$
for any $(\theta,\mu)\in\attainsss$ by Proposition \ref{p:condition-PHI-pcf} and
Proposition \ref{p:metmeas}-\ref{it:conseq-PHI2-meas}, and hence in particular that $\Gamma(u,u)$ is
absolutely continuous with respect to $\mu$ for any $(\theta,\mu)\in\attainsss$
and any $u\in\mathcal{F}$. The following lemma is a special case of
the well-known Lebesgue differentiation theorem.

\begin{lemma}\label{l:Lebesgue-points}
Let $(\theta,\mu)\in\attainsss$, $u\in\mathcal{F}$ and set $f:=d\Gamma(u,u)/d\mu$.
Then $\mu$-a.e.\ $x\in K$ is an \emph{$(R_{\mathcal{E}},\mu)$-Lebesgue point}
for $f$, i.e., satisfies
\begin{equation}\label{e:Lebesgue-points}
\lim_{s\downarrow 0}\frac{1}{\mu(B_{R_{\mathcal{E}}}(x,s))}\int_{B_{R_{\mathcal{E}}}(x,s)}\lvert f(y)-f(x)\rvert\,d\mu(y)=0.
\end{equation}
\end{lemma}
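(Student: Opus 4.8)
The plan is to reduce the assertion to the Lebesgue differentiation theorem for doubling measures on a metric space, which applies because $\mu$ is doubling with respect to $R_{\mathcal{E}}$.

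First, I would record that $f=d\Gamma(u,u)/d\mu$ is a well-defined element of $L^{1}(K,\mu)$. Indeed, as noted just before the lemma, $(\theta,\mu)\in\mathcal{M}_{2}$ forces $(K,\theta,\mu,\mathcal{E},\mathcal{F})$ to satisfy $\on{PHI}(2)$ by Proposition \ref{p:condition-PHI-pcf} with $\beta=2$, so $\mu$ is a minimal energy-dominant measure of $(\mathcal{E},\mathcal{F})$ by Proposition \ref{p:metmeas}-(b); hence $\Gamma(u,u)\ll\mu$ and $f$ exists as a nonnegative Borel function, and since $\int_{K}f\,d\mu=\Gamma(u,u)(K)=\mathcal{E}(u,u)<\infty$ we get $f\in L^{1}(K,\mu)$.

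Next I would check that $\mu$ is a doubling Borel measure on $(K,R_{\mathcal{E}})$. Since $(\theta,\mu)\in\mathcal{M}_{2}$ means condition (b) of Proposition \ref{p:condition-PHI-pcf} holds, the proof of that proposition already establishes that $(K,R_{\mathcal{E}},\mu)$ satisfies \ref{VD}; that is, there is $C_{D}\in(1,\infty)$ with $\mu(B_{R_{\mathcal{E}}}(x,2s))\leq C_{D}\mu(B_{R_{\mathcal{E}}}(x,s))$ for all $x\in K$ and $s\in(0,\infty)$. (Alternatively, $\mu$ satisfies \ref{VD} on $(K,\theta)$ by $\on{PHI}(2)$, and this transfers to $(K,R_{\mathcal{E}})$ via the quasisymmetry of $\theta$ to $R_{\mathcal{E}}$ and the ball comparisons in Proposition \ref{p:QS}.)

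Finally, $(K,R_{\mathcal{E}})$ is a compact metric space and $\mu$ is a doubling Borel measure on it with $\mu(K)=1$, so the Lebesgue differentiation theorem for doubling measures (see, e.g., \cite[Theorem 1.8]{Hei} or \cite{HKST}) applies to $f\in L^{1}(K,\mu)$ and shows that $\mu$-a.e.\ $x\in K$ satisfies \eqref{e:Lebesgue-points}. There is essentially no obstacle in this argument; the only point needing attention is that the doubling property has to be used for $R_{\mathcal{E}}$ rather than $\theta$, and this is already available from the proof of Proposition \ref{p:condition-PHI-pcf}.
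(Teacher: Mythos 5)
Your proposal is correct and follows essentially the same route as the paper: establish that $\mu$ is volume doubling with respect to $R_{\mathcal{E}}$ (the paper gets \ref{VD} of $(K,\theta,\mu)$ from Proposition \ref{p:condition-PHI-pcf} and Theorem \ref{t:phichar} and transfers it via quasisymmetry and \eqref{e:ann1}, matching your alternative remark), note $f\in L^{1}(K,\mu)$, and invoke the Lebesgue differentiation theorem for doubling metric measure spaces. No gaps.
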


\begin{proof}
We have $R_{\mathcal{E}}\in\mathcal{J}(K,\theta)$ by $(\theta,\mu)\in\attainsss$
and hence $\theta\in\mathcal{J}(K,R_{\mathcal{E}})$ by \eqref{e:cgauge}.
Thus \eqref{e:ann1} holds with $(d_{1},d_{2})=(R_{\mathcal{E}},\theta)$, and
$(K,\theta,\mu)$ is \ref{VD} by Proposition \ref{p:condition-PHI-pcf} and
Theorem \ref{t:phichar}, which together imply that $(K,R_{\mathcal{E}},\mu)$ is \ref{VD}.
Now since $\contfunc (K)$ is dense in $L^{1}(K,\mu)$ (see, e.g., \cite[Theorem 3.14]{Rud})
and $f\in L^{1}(K,\mu)$, the claim follows by Lebesgue's differentiation theorem
\cite[(2.8)]{Hei} for $(K,R_{\mathcal{E}},\mu)$, which requires $(K,R_{\mathcal{E}},\mu)$ to be \ref{VD}.
\end{proof}

\begin{lemma}\label{l:Lebesgue-pt-scaling}
Let $(\theta,\mu)\in\attainsss$, $u\in\mathcal{F}$, let $f\colon K\to[0,\infty)$
be a Borel measurable $\mu$-version of $d\Gamma(u,u)/d\mu$ and let $x\in K$ satisfy
\eqref{e:Lebesgue-points}. Then for any $\omega\in\pi^{-1}(x)$ and any $w\in W_{*}$,
\begin{equation}\label{e:Lebesgue-pt-scaling}
\lim_{n\to\infty}\frac{\Gamma(u,u)(K_{[\omega]_{n}w})}{\mu(K_{[\omega]_{n}w})}=f(x).
\end{equation}
\end{lemma}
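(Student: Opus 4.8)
The plan is to read \eqref{e:Lebesgue-pt-scaling} as a cell–version of the Lebesgue differentiation statement \eqref{e:Lebesgue-points}. Since $f=d\Gamma(u,u)/d\mu$ (which is legitimate because $(\theta,\mu)\in\mathcal{M}_{2}$ forces $\mu$ to be a minimal energy-dominant measure of $(\mathcal{E},\mathcal{F})$, by Proposition \ref{p:condition-PHI-pcf} and Proposition \ref{p:metmeas}-(b)), we have for every $n\in\mathbb{N}$
\[
\biggl|\frac{\Gamma(u,u)(K_{[\omega]_{n}w})}{\mu(K_{[\omega]_{n}w})}-f(x)\biggr|
	\leq\frac{1}{\mu(K_{[\omega]_{n}w})}\int_{K_{[\omega]_{n}w}}|f(y)-f(x)|\,d\mu(y),
\]
and it suffices to dominate the right-hand side by $C'\mu(B_{R_{\mathcal{E}}}(x,s_{n}))^{-1}\int_{B_{R_{\mathcal{E}}}(x,s_{n})}|f(y)-f(x)|\,d\mu(y)$ for a sequence $s_{n}\downarrow 0$ and a constant $C'$ independent of $n$; then \eqref{e:Lebesgue-points} finishes the argument. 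Concretely this needs two ingredients: (i) a containment $K_{[\omega]_{n}w}\subset K_{[\omega]_{n}}\subset B_{R_{\mathcal{E}}}(x,s_{n})$ with $s_{n}\downarrow 0$, and (ii) the uniform reverse volume bound $\mu(B_{R_{\mathcal{E}}}(x,s_{n}))\leq C'\mu(K_{[\omega]_{n}w})$.

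For (i), I would set $s_{n}:=2r_{[\omega]_{n}}\diam(K,R_{\mathcal{E}})$. The upper inequality in \eqref{e:RE-scaling-pcf} applied to $F_{[\omega]_{n}}$ gives $\diam(K_{[\omega]_{n}},R_{\mathcal{E}})\leq r_{[\omega]_{n}}\diam(K,R_{\mathcal{E}})$, and since $x=\pi(\omega)\in K_{[\omega]_{n}}$ this yields $K_{[\omega]_{n}w}\subset K_{[\omega]_{n}}\subset B_{R_{\mathcal{E}}}(x,s_{n})$; moreover $r_{[\omega]_{n}}\to 0$ because $\mathbf{r}\in(0,1)^{S}$, so $s_{n}\downarrow 0$.

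For (ii), the first sub-step is to compare $\mu(K_{[\omega]_{n}w})$ with $\mu(K_{[\omega]_{n}})$. The lower inequality in \eqref{e:RE-scaling-pcf} gives $\diam(K_{[\omega]_{n}w},R_{\mathcal{E}})\geq c_{R_{\mathcal{E}}}r_{[\omega]_{n}}\diam(K_{w},R_{\mathcal{E}})$, so $\diam(K_{[\omega]_{n}w},R_{\mathcal{E}})/\diam(K_{[\omega]_{n}},R_{\mathcal{E}})$ is bounded below by a positive constant depending only on $w$; as $R_{\mathcal{E}}$ is quasisymmetric to $\theta$ (\cite[Proposition 10.6]{Hei}), \eqref{e:diamQS} upgrades this to $\diam(K_{[\omega]_{n}w},\theta)\geq c(w)\diam(K_{[\omega]_{n}},\theta)$, and then the defining inequalities of $\mathcal{M}_{2}$ in \eqref{e:M2etaC} together with $r_{[\omega]_{n}w}=r_{w}r_{[\omega]_{n}}$ give $\mu(K_{[\omega]_{n}w})\geq c'(w)\mu(K_{[\omega]_{n}})$ (the opposite inequality $\mu(K_{[\omega]_{n}w})\leq\mu(K_{[\omega]_{n}})$ being trivial from $K_{[\omega]_{n}w}\subset K_{[\omega]_{n}}$). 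The second sub-step is $\mu(B_{R_{\mathcal{E}}}(x,s_{n}))\lesssim\mu(K_{[\omega]_{n}})$. Here I use that $(K,R_{\mathcal{E}},\mu)$ satisfies \ref{VD} (from Proposition \ref{p:condition-PHI-pcf}, Theorem \ref{t:phichar}, the quasisymmetry, and \eqref{e:ann1}, exactly as in the proof of Lemma \ref{l:Lebesgue-points}), so that by \ref{VD} at a bounded ratio of radii $\mu(B_{R_{\mathcal{E}}}(x,s_{n}))\lesssim\mu(B_{R_{\mathcal{E}}}(x,\alpha_{1}r_{[\omega]_{n}}))\leq\mu(U_{r_{[\omega]_{n}}}(x))$ by Lemma \ref{l:adapted-RE-pcf}; and $U_{r_{[\omega]_{n}}}(x)$ is a union of at most $\sup_{s,y}\#\Lambda^{1}_{s,y}<\infty$ cells $K_{v}$ with $v\in\Lambda_{r_{[\omega]_{n}}}$ by \eqref{e:LF-pcf}, each $K_{v}$ meeting some $K_{w'}$ with $w'\in\Lambda_{r_{[\omega]_{n}},x}$, which meets $K_{[\omega]_{n}}$ through $x$ (note $[\omega]_{n}\in\Lambda_{r_{[\omega]_{n}},x}$ since $r_{\omega_{n}}<1$); two applications of \eqref{e:GE-pcf} then give $\mu(K_{v})\leq C^{2}\mu(K_{[\omega]_{n}})$, whence $\mu(U_{r_{[\omega]_{n}}}(x))\lesssim\mu(K_{[\omega]_{n}})$. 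Combining the two sub-steps gives (ii).

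The only place where genuine work is required is the uniform reverse volume bound of step (ii); everything else is bookkeeping built from \eqref{e:RE-scaling-pcf}, \eqref{e:diamQS}, the \ref{VD} property, and the structural properties \eqref{e:LF-pcf} and \eqref{e:GE-pcf} of the scale $\mathscr{S}$ already isolated in the proof of Proposition \ref{p:condition-PHI-pcf}. I expect the main subtlety to be that $x$ may lie on a cell boundary $F_{[\omega]_{n}}(V_{0})$, so $K_{[\omega]_{n}}$ need not contain a metric ball around $x$; this is precisely why the comparison of $\mu(B_{R_{\mathcal{E}}}(x,s_{n}))$ with $\mu(K_{[\omega]_{n}})$ must be routed through the neighborhoods $U_{s}(x)$ and the gentleness-type estimate \eqref{e:GE-pcf} rather than a direct volume comparison.
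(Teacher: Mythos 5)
Your proof is correct and follows essentially the same route as the paper's: enclose $K_{[\omega]_{n}w}\subset K_{[\omega]_{n}}$ in a ball $B_{R_{\mathcal{E}}}(x,s_{n})$ with $s_{n}\downarrow 0$, show $\mu(K_{[\omega]_{n}w})\gtrsim\mu(B_{R_{\mathcal{E}}}(x,s_{n}))$ uniformly in $n$, and conclude from the Lebesgue point property \eqref{e:Lebesgue-points}. The only difference is presentational: the paper obtains the two volume comparisons by directly citing \eqref{e:ELm-pcf} and \eqref{e:meas-cell-REball-comparable}, which were already established for $(\theta,\mu)\in\mathcal{M}_{2}$ in the proof of Proposition \ref{p:condition-PHI-pcf}, whereas you re-derive them inline from \eqref{e:M2etaC}, \eqref{e:RE-scaling-pcf}, \eqref{e:diamQS}, Lemma \ref{l:adapted-RE-pcf}, \eqref{e:LF-pcf} and \eqref{e:GE-pcf} — the same ingredients.
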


\begin{proof}
Let $\omega\in\pi^{-1}(x)$, $w\in W_{*}$, $n\in\mathbb{N}\cup\{0\}$ and set
$s_{n}:=\diam_{R_{\mathcal{E}}}(K_{[\omega]_{n}})$. Then by \eqref{e:ELm-pcf},
\eqref{e:meas-cell-REball-comparable}, and \ref{VD} of $(K,R_{\mathcal{E}},\mu)$
noted in the above proof of Lemma \ref{l:Lebesgue-points}, we have
\begin{equation}\label{e:meas-Komegamw-REball-comparable}
\mu(K_{[\omega]_{n}w})\geq c^{\lvert w\rvert}\mu(K_{[\omega]_{n}})
	\geq c^{\lvert w\rvert}c'\mu(B_{R_{\mathcal{E}}}(x,2s_{n}))
\end{equation}
for some $c,c'\in(0,\infty)$ determined solely by $\mathcal{L},(D,\mathbf{r}),(\theta,\mu)$.
Now since $K_{[\omega]_{n}w}\subset K_{[\omega]_{n}}\subset B_{R_{\mathcal{E}}}(x,2s_{n})$
and $\lim_{n\to\infty}s_{n}=0$ by Lemma \ref{l:RE-scaling-pcf}, it follows from
\eqref{e:meas-Komegamw-REball-comparable} and \eqref{e:Lebesgue-points} that
\begin{align*}
\biggl\lvert\frac{\Gamma(u,u)(K_{[\omega]_{n}w})}{\mu(K_{[\omega]_{n}w})}&-f(x)\biggr\rvert
	=\biggl\lvert\frac{1}{\mu(K_{[\omega]_{n}w})}\int_{K_{[\omega]_{n}w}}(f(y)-f(x))\,d\mu(y)\biggr\rvert\\
&\leq\frac{1}{\mu(K_{[\omega]_{n}w})}\int_{K_{[\omega]_{n}w}}\lvert f(y)-f(x)\rvert\,d\mu(y)\\
&\leq\frac{(c^{\lvert w\rvert}c')^{-1}}{\mu(B_{R_{\mathcal{E}}}(x,2s_{n}))}\int_{B_{R_{\mathcal{E}}}(x,2s_{n})}\lvert f(y)-f(x)\rvert\,d\mu(y)
	\xrightarrow{m\to\infty}0,
\end{align*}
proving \eqref{e:Lebesgue-pt-scaling}.
\end{proof}

Taking an $(R_{\mathcal{E}},\mu)$-Lebesgue point $x\in K$ for $d\Gamma(u,u)/d\mu$ with
$(d\Gamma(u,u)/d\mu)(x)>0$ and considering the enlargements of infinitesimally small
cells containing $x$ to the original scale as in Lemmas \ref{l:GetaC-scaling}
and \ref{l:scaling-energy-meas}, we arrive at the following proposition.

\begin{proposition}\label{p:Lebesgue-pt-scaling}
Let $(\theta,\mu)\in\attainsss$, $u\in\mathcal{F}$, let $f\colon K\to[0,\infty)$
be a Borel measurable $\mu$-version of $d\Gamma(u,u)/d\mu$, let $x\in K$
satisfy \eqref{e:Lebesgue-points} and $f(x)>0$, and let $\omega\in\pi^{-1}(x)$.
For each $n\in\mathbb{N}\cup\{0\}$,
define $\mu_{n}:=\mu_{[\omega]_{n}}\in\mathcal{P}(K)$ by \eqref{e:GetaC-scaling}
with $w=[\omega]_{n}$ and, noting that $\Gamma(u,u)(K_{[\omega]_{n}})>0$ by
\eqref{e:Lebesgue-pt-scaling}, define $u_{n}:=u_{[\omega]_{n}}\in\mathcal{F}$
by \eqref{e:scaling-energy-meas} with $w=[\omega]_{n}$. If $v\in\mathcal{F}$ and
$\{n_{k}\}_{k\in\mathbb{N}}\subset\mathbb{N}$ is strictly increasing and satisfies
$\lim_{k\to\infty}\mathcal{E}(v-u_{n_{k}},v-u_{n_{k}})=0$, then $\Gamma(v,v)\in\mathcal{P}(K)$
and $\{\mu_{n_{k}}\}_{k\in\mathbb{N}}$ converges to $\Gamma(v,v)$ in $\mathcal{P}(K)$.
\end{proposition}

\begin{proof}
Let $w\in W_{*}$. Then we see from \eqref{e:GetaC-scaling}, \eqref{e:scaling-energy-meas},
\eqref{e:Lebesgue-pt-scaling} and $f(x)\in(0,\infty)$ that
\begin{equation}\label{e:Lebesgue-pt-scaling-Kw}
\frac{\Gamma(u_{n},u_{n})(K_{w})}{\mu_{n}(K_{w})}
	=\frac{\Gamma(u,u)(K_{[\omega]_{n}w})/\mu(K_{[\omega]_{n}w})}{\Gamma(u,u)(K_{[\omega]_{n}})/\mu(K_{[\omega]_{n}})}
	\xrightarrow{n\to\infty}\frac{f(x)}{f(x)}=1.
\end{equation}
On the other hand,
the same argument as \eqref{e:energy-meas-converge} based on
$\lim_{k\to\infty}\mathcal{E}(v-u_{n_{k}},v-u_{n_{k}})=0$ yields
$\lim_{n\to\infty}\Gamma(u_{n_{k}},u_{n_{k}})(K_{w})=\Gamma(v,v)(K_{w})$,
which together with \eqref{e:Lebesgue-pt-scaling-Kw} implies that
\begin{equation}\label{e:Lebesgue-pt-converge-Gammavv}
\lim_{k\to\infty}\mu_{n_{k}}(K_{w})=\Gamma(v,v)(K_{w})
\end{equation}
and in particular that $\Gamma(v,v)(K)=\lim_{k\to\infty}\mu_{n_{k}}(K)=1$,
namely $\Gamma(v,v)\in\mathcal{P}(K)$.
Note that $\mu_{n}(F_{w}(V_{0}))=\mu(K_{[\omega]_{n}})^{-1}\mu(F_{[\omega]_{n}w}(V_{0}))=0$
for any $n\in\mathbb{N}\cup\{0\}$ by Proposition \ref{p:condition-PHI-pcf} and that
$\Gamma(v,v)(F_{w}(V_{0}))=0$ by $\#F_{w}(V_{0})<\infty$ and \cite[Theorem 4.3.8]{CF},
and recall that $K_{w}=F_{w}(K)$ is closed in $K$ and $K_{w}\setminus F_{w}(V_{0})$ is open
in $K$ as noted in the last paragraph of the proof of Proposition \ref{p:GetaC-compact}.
By using these facts and the equality
$K\setminus V_{n}=\bigcup_{v\in W_{n}}(K_{v}\setminus F_{v}(V_{0}))$, with the union disjoint,
implied by \cite[Proposition 1.3.5-(2)]{Kig01} for any $n\in\mathbb{N}$, we easily see
that the validity of \eqref{e:Lebesgue-pt-converge-Gammavv} for any $w\in W_{*}$
is equivalent to the desired convergence of $\{\mu_{n_{k}}\}_{k\in\mathbb{N}}$
to $\Gamma(v,v)$ in $\mathcal{P}(K)$, which completes the proof.
\end{proof}

Now we can conclude the proof of the main result of this subsection (Theorem \ref{t:attain-harmonic-func}).

\begin{proof}[Proof of Theorem \textup{\ref{t:attain-harmonic-func}}]
By the assumption $\attainsss(\eta,C)\not=\emptyset$ we can take
$(\theta,\mu)\in\attainsss(\eta,C)$. Let $u\in\mathcal{H}_{0}\setminus\mathbb{R}\one_{K}$,
which exists by Proposition \ref{p:harmonic-pcf}-\ref{it:harmonic-extension-pcf} and $\#V_{0}\geq 2$, and let
$f\colon K\to[0,\infty)$ be a Borel measurable $\mu$-version of $d\Gamma(u,u)/d\mu$. Then
since $\mu\bigl(f^{-1}((0,\infty))\bigr)>0$ by $\int_{K}f\,d\mu=\Gamma(u,u)(K)=\mathcal{E}(u,u)>0$,
Lemma \ref{l:Lebesgue-points} implies that there exists $x\in K$
with the properties \eqref{e:Lebesgue-points} and $f(x)>0$.
Let $\omega\in\pi^{-1}(x)$, and for each $n\in\mathbb{N}\cup\{0\}$,
as in Proposition \ref{p:Lebesgue-pt-scaling} define
$(\theta_{n},\mu_{n}):=(\theta_{[\omega]_{n}},\mu_{[\omega]_{n}})\in \contfunc(K\times K)\times\mathcal{P}(K)$
by \eqref{e:GetaC-scaling} with $w=[\omega]_{n}$ and $u_{n}:=u_{[\omega]_{n}}\in\mathcal{F}$
by \eqref{e:scaling-energy-meas} with $w=[\omega]_{n}$, so that
$\{(\theta_{n},\mu_{n})\}_{n\in\mathbb{N}\cup\{0\}}\subset\attainsss(c_{R_{\mathcal{E}}}^{-1}\eta,C)$
by Lemma \ref{l:GetaC-scaling} and
$\{u_{n}\}_{n\in\mathbb{N}\cup\{0\}}\subset\{h\in\mathcal{H}_{0}\mid\mathcal{E}(h,h)=1\}$
by Proposition \ref{p:harmonic-pcf}-\ref{it:harmonic-pcf}. Noting that
$\mathcal{H}_{0}/\mathbb{R}\one_{K}$ is a finite-dimensional linear subspace of the Hilbert space
$(\mathcal{F}/\mathbb{R}\one_{K},\mathcal{E})$ by Proposition \ref{p:harmonic-pcf}-\ref{it:harmonic-extension-pcf}
and hence that $\{h\in\mathcal{H}_{0}/\mathbb{R}\one_{K}\mid\mathcal{E}(h,h)=1\}$
is compact in norm in $(\mathcal{F}/\mathbb{R}\one_{K},\mathcal{E})$, for some
$h\in\mathcal{H}_{0}$ and some strictly increasing sequence $\{n_{k}\}_{k\in\mathbb{N}}\subset\mathbb{N}$
we have $\mathcal{E}(h,h)=1$ and $\lim_{k\to\infty}\mathcal{E}(h-u_{n_{k}},h-u_{n_{k}})=0$. Then
$\Gamma(h,h)\in\mathcal{P}(K)$ and $\{\mu_{n_{k}}\}_{k\in\mathbb{N}}$ converges to $\Gamma(h,h)$
in $\mathcal{P}(K)$ by Proposition \ref{p:Lebesgue-pt-scaling}, and it follows from this convergence,
$\{(\theta_{n_{k}},\mu_{n_{k}})\}_{k\in\mathbb{N}}\subset\attainsss(c_{R_{\mathcal{E}}}^{-1}\eta,C)$ and
Corollary \ref{c:GetaC-compact} that $(\theta_{h},\Gamma(h,h))\in\attainsss(c_{R_{\mathcal{E}}}^{-1}\eta,C)$
for some metric $\theta_{h}$ on $K$.
\end{proof}

\subsection{Examples}\label{ssec:examples}

In this subsection, we show that the infimum in \eqref{e:dcw} defining the conformal walk dimension $\dcw=2$
\emph{fails} to be attained for some concrete examples of post-critically finite self-similar sets.
In view of \eqref{e:condition-PHI-pcf-G-attained} and Theorem \ref{t:attain-harmonic-func},
for the proof of the non-attainment $\mathcal{G}(K,R_{\mathcal{E}},\measure,\mathcal{E},\mathcal{F})=\emptyset$
it suffices to verify that the conclusion of Theorem \ref{t:attain-harmonic-func} cannot hold for any
$h\in\mathcal{H}_{0}$. We start with providing a further characterization of
$h\in\mathcal{H}_{0}$ as in the conclusion of Theorem \ref{t:attain-harmonic-func}.
In the following definition and proposition, we continue to assume the setting
specified in the first paragraph of Subsection \ref{ssec:attain-harmonic-func-pcf}.

\begin{definition}\label{d:dint-pcf}
Let $\mu$ be a Borel measure on $K$. Recalling Definition \ref{d:dint}, we define the
\emph{$\mu$-intrinsic metric} $d_{\on{int}}^{\mu}\colon K\times K\to[0,\infty]$ of $(\mathcal{E},\mathcal{F})$ by
\begin{equation}\label{e:dint-pcf}
d_{\on{int}}^{\mu}(x,y):=\sup\{u(x)-u(y)\mid\textrm{$u\in\mathcal{F}$, $\Gamma(u,u)\leq\mu$}\}.
\end{equation}
\end{definition}

\begin{proposition}\label{p:attain-harmonic-func}
\begin{enumerate}[label=\textup{(\arabic*)},align=left,leftmargin=*,topsep=5pt,parsep=0pt,itemsep=2pt]
\item\label{it:attain-harmonic-func1}There exists $C\in(1,\infty)$ such that for any $h\in\mathcal{H}_{0}$ and any $w\in W_{*}$,
	\begin{equation}\label{e:DM2-pcf-dint}
	C^{-1}\bigl(\diam(K_{w},d_{\on{int}}^{\Gamma(h,h)})\bigr)^{2}
		\leq r_{w}\Gamma(h,h)(K_{w})\leq C\bigl(\diam(K_{w},d_{\on{int}}^{\Gamma(h,h)})\bigr)^{2}.
	\end{equation}
\item\label{it:attain-harmonic-func2}Let $h\in\mathcal{H}_{0}$. Then $\Gamma(h,h)\in\mathcal{G}(K,R_{\mathcal{E}},\measure,\mathcal{E},\mathcal{F})$
	if and only if $d_{\on{int}}^{\Gamma(h,h)}\in\mathcal{J}(K,R_{\mathcal{E}})$.
\item\label{it:attain-harmonic-func3}$\mathcal{G}(K,R_{\mathcal{E}},\measure,\mathcal{E},\mathcal{F})=\emptyset$
	if and only if $d_{\on{int}}^{\Gamma(h,h)}\not\in\mathcal{J}(K,R_{\mathcal{E}})$
	for any $h\in\mathcal{H}_{0}$.
\end{enumerate}
\end{proposition}

\begin{proof}
\begin{enumerate}[label=\textup{(\arabic*)},align=left,leftmargin=*,topsep=5pt,parsep=0pt,itemsep=2pt]
\item[\ref{it:attain-harmonic-func1}]Let $h\in\mathcal{H}_{0}$ and $w\in W_{*}$. Since $\lvert h(x)-h(y)\rvert\leq d_{\on{int}}^{\Gamma(h,h)}(x,y)$
	for any $x,y\in K$ by \eqref{e:dint-pcf} with $\mu=\Gamma(h,h)$, we see from
	Lemma \ref{l:scaling-energy-meas}, $h\in\mathcal{H}_{0}$, \eqref{e:harmonic2-pcf},
	\eqref{e:harmonic1-pcf} and \eqref{e:E0-pcf} that
	\begin{align*}
	r_{w}\Gamma(h,h)(K_{w})
		=\mathcal{E}^{(0)}(h\circ F_{w}\vert_{V_{0}},h\circ F_{w}\vert_{V_{0}})
		\leq C\bigl(\diam(K_{w},d_{\on{int}}^{\Gamma(h,h)})\bigr)^{2},
	\end{align*}
	where $C:=\frac{1}{2}\sum_{p,q\in V_{0},\,p\not=q}D_{pq}$.
	On the other hand, setting $C':=\diam_{R_{\mathcal{E}}}(K)$,
	for any $x,y\in K_{w}$ and any $u\in\mathcal{F}$ with $\Gamma(u,u)\leq\Gamma(h,h)$,
	by \ref{it:RF4} and Lemma \ref{l:scaling-energy-meas} we have
	\begin{align*}
	\lvert u(x)-u&(y)\rvert^{2}
		=\bigl\lvert(u\circ F_{w})(F_{w}^{-1}(x))-(u\circ F_{w})(F_{w}^{-1}(y))\bigr\rvert^{2}\\
	&\leq C'\mathcal{E}(u\circ F_{w},u\circ F_{w})
		=C'r_{w}\Gamma(u,u)(K_{w})
		\leq C'r_{w}\Gamma(h,h)(K_{w}),
	\end{align*}
	therefore taking the supremum over such $u$ yields
	$d_{\on{int}}^{\Gamma(h,h)}(x,y)^{2}\leq C'r_{w}\Gamma(h,h)(K_{w})$
	by \eqref{e:dint-pcf} with $\mu=\Gamma(h,h)$ and thus
	$\bigl(\diam(K_{w},d_{\on{int}}^{\Gamma(h,h)})\bigr)^{2}\leq C'r_{w}\Gamma(h,h)(K_{w})$,
	proving \eqref{e:DM2-pcf-dint}.
\item[\ref{it:attain-harmonic-func2}]If $\Gamma(h,h)\in\mathcal{G}(K,R_{\mathcal{E}},\measure,\mathcal{E},\mathcal{F})$ then
	$d_{\on{int}}^{\Gamma(h,h)}\in\mathcal{J}(K,R_{\mathcal{E}})$ by Proposition \ref{p:metmeas}-\ref{it:conseq-PHI2-met}
	(recall \eqref{e:gaussianadmissible}), and conversely if
	$d_{\on{int}}^{\Gamma(h,h)}\in\mathcal{J}(K,R_{\mathcal{E}})$ then
	$\Gamma(h,h)\in\mathcal{G}(K,R_{\mathcal{E}},\measure,\mathcal{E},\mathcal{F})$
	by \eqref{e:DM2-pcf-dint} and Proposition \ref{p:condition-PHI-pcf} with $\beta=2$.
\item[\ref{it:attain-harmonic-func3}]This is immediate from \ref{it:attain-harmonic-func2}
	and the fact that, by \eqref{e:condition-PHI-pcf-G-attained},
	Theorem \ref{t:attain-harmonic-func} and \eqref{e:condition-PHI-pcf-G},
	$\mathcal{G}(K,R_{\mathcal{E}},\measure,\mathcal{E},\mathcal{F})=\emptyset$ if and only if
	$\Gamma(h,h)\not\in\mathcal{G}(K,R_{\mathcal{E}},\measure,\mathcal{E},\mathcal{F})$
	for any $h\in\mathcal{H}_{0}$.\qedhere
\end{enumerate}
\end{proof}

\subsubsection{The Vicsek set}\label{sssec:Vicsek}

\begin{example}[Vicsek set]\label{exmp:Vicsek}
Set $S:=\{0,1,2,3,4\}$, define $\{q_{i}\}_{i\in S}\subset\mathbb{R}^{2}$ by
$q_{0}:=(0,0)$, $q_{1}:=(1,1)$, $q_{2}:=(-1,1)$, $q_{3}:=(-1,-1)$ and $q_{4}:=(1,-1)$,
and define $f_{i}\colon\mathbb{R}^{2}\to\mathbb{R}^{2}$ for each $i\in S$ by
$f_{i}(x):=q_{i}+\frac{1}{3}(x-q_{i})$.
Let $K$ be the \textit{self-similar set} associated with $\{f_{i}\}_{i\in S}$, i.e.,
the unique non-empty compact subset of $\mathbb{R}^{2}$ such that
$K=\bigcup_{i\in S}f_{i}(K)$, which exists and satisfies $K\subsetneq[-1,1]^{2}$
thanks to $\bigcup_{i\in S}f_{i}([-1,1]^{2})\subsetneq[-1,1]^{2}$ by \cite[Theorem 1.1.4]{Kig01},
and set $F_{i}:=f_{i}\vert_{K}$ for each $i\in S$. Then $\mathcal{L}:=(K,S,\{F_{i}\}_{i\in S})$
is a self-similar structure by \cite[Theorem 1.2.3]{Kig01} and called the
\emph{Vicsek set} (see Figure \ref{fig:Vicsek-SGs} below), and it easily follows from
$K\subset[-1,1]^{2}$ that $\mathcal{P}_{\mathcal{L}}=\{1^{\infty},2^{\infty},3^{\infty},4^{\infty}\}$
and $V_{0}=\{q_{1},q_{2},q_{3},q_{4}\}$, so that $\mathcal{L}$ is post-critically finite
and $K$ is connected. Let $d\colon K\times K\to[0,\infty)$ be the Euclidean metric on $K$
given by $d(x,y):=\lvert x-y\rvert$.

Let $r\in\bigl(0,\frac{1}{2}\bigr)$, set $\mathbf{r}=(r_{i})_{i\in S}:=(1-2r,r,r,r,r)$,
and define $D=(D_{pq})_{p,q\in V_{0}}$ and $D'=(D'_{pq})_{p,q\in V_{0}\cup\{q_{0}\}}$ by
\begin{equation*}
D_{pq}:=
	\begin{cases}
	1 & \textrm{if $p\not=q$,}\\
	-3 & \textrm{if $p=q$,}
	\end{cases}
\qquad
D'_{pq}:=
	\begin{cases}
	4 & \textrm{if $p\not=q$ and $q_{0}\in\{p,q\}$,}\\
	0 & \textrm{if $p\not=q$ and $p,q\in V_{0}$,}\\
	-16 & \textrm{if $p=q=q_{0}$,}\\
	-4 & \textrm{if $p=q\in V_{0}$.}
	\end{cases}
\end{equation*}
Then setting $\mathcal{E}'^{(0)}(u,v):=-\sum_{p,q\in V_{0}\cup\{q_{0}\}}D'_{pq}u(q)v(p)$ for
$u,v\in\mathbb{R}^{V_{0}\cup\{q_{0}\}}$, we immediately see that
$\mathcal{E}^{(0)}(u,u)=\inf_{v\in\mathbb{R}^{V_{0}\cup\{q_{0}\}},\,v\vert_{V_{0}}=u}\mathcal{E}'^{(0)}(v,v)$
for any $u\in\mathbb{R}^{V_{0}\cup\{q_{0}\}}$, which in turn easily implies that
$(D,\mathbf{r})$ is a regular harmonic structure on $\mathcal{L}$.
\end{example}

Our concern is whether $\mathcal{G}(K,R_{\mathcal{E}},\measure,\mathcal{E},\mathcal{F})=\emptyset$
for the MMD space $(K,R_{\mathcal{E}},\measure,\mathcal{E},\mathcal{F})$ resulting from this
$\mathcal{L},(D,\mathbf{r})$. We first remark that the resistance metric $R_{\mathcal{E}}$
is quasisymmetric to the Euclidean metric $d$ and that it is bi-Lipschitz equivalent to $d$ when $r=\frac{1}{3}$.

\begin{lemma}\label{lem:Vicsek-d-RE-QS}
Let $\mathcal{L}=(K,S,\{F_{i}\}_{i\in S}),d,(D,\mathbf{r}=(1-2r,r,r,r,r))$ be as in
Example \textup{\ref{exmp:Vicsek}} and let $(K,R_{\mathcal{E}},\measure,\mathcal{E},\mathcal{F})$
be the MMD space resulting from $\mathcal{L},(D,\mathbf{r})$ as introduced in
Subsection \textup{\ref{ssec:pfcsss-preliminaries}}.
Then $R_{\mathcal{E}}\in\mathcal{J}(K,d)$. Moreover,
$R_{\mathcal{E}}$ is bi-Lipschitz equivalent to $d$ if $r=\frac{1}{3}$.
\end{lemma}

\begin{proof}
To see $R_{\mathcal{E}}\in\mathcal{J}(K,d)$, we verify that $R_{\mathcal{E}}$ and $d$
satisfy the assumptions of \cite[Theorem 3.6.6]{Kig20}, i.e., that they are adapted
in the sense of \cite[Definition 2.4.7 and Proposition 2.4.8]{Kig20} and exponential
in the sense of \cite[Definitions 3.1.15 and 3.6.2]{Kig20} and that $R_{\mathcal{E}}$
is gentle with respect to $d$ in the sense of \cite[Definition 3.3.1]{Kig20}.
Indeed, $R_{\mathcal{E}}$ is adapted by Lemma \ref{l:adapted-RE-pcf} and
exponential by Lemma \ref{l:RE-scaling-pcf}, $d$ is obviously exponential, and
$d$ is adapted since Lemma \ref{l:adapted-RE-pcf} is easily seen to hold also with
$d,(\frac{1}{3})_{i\in S}$ in place of $R_{\mathcal{E}},\mathbf{r}$. Moreover,
if $w,v\in W_{*}$ satisfy $w\not=v$, $\lvert w\rvert=\lvert v\rvert$ and $K_{w}\cap K_{v}\not=\emptyset$,
then for some $\tau\in W_{*}$, $n\in\mathbb{N}\cup\{0\}$ and $i,j\in S\setminus\{0\}$
with $\lvert i-j\rvert=2$ we have $\{w,v\}=\{\tau 0i^{n},\tau ij^{n}\}$ and hence
$r_{w}/r_{v}\in\{r_{0}/r_{i},r_{i}/r_{0}\}=\{(1-2r)/r,r/(1-2r)\}$,
which together with Lemma \ref{l:RE-scaling-pcf} shows that $R_{\mathcal{E}}$
is gentle with respect to $d$. Thus \cite[Theorem 3.6.6]{Kig20} is applicable to
$R_{\mathcal{E}}$ and $d$ and yields $R_{\mathcal{E}}\in\mathcal{J}(K,d)$.
Lastly, if $r=\frac{1}{3}$, then we have Lemma \ref{l:adapted-RE-pcf} with $d$
in place of $R_{\mathcal{E}}$, which together with Lemma \ref{l:adapted-RE-pcf}
implies that $R_{\mathcal{E}}$ is bi-Lipschitz equivalent to $d$.
\end{proof}

In the situation of Example \ref{exmp:Vicsek}, it turns out that
the energy measure $\Gamma(h,h)$ of any $\mathcal{E}$-harmonic function
$h\in\mathcal{H}_{0}$ has its support within the union of the diagonals of
$[-1,1]^{2}$, as stated in the following proposition.

\begin{proposition}\label{prop:Vicsek}
Let $\mathcal{L}=(K,S,\{F_{i}\}_{i\in S})$ and $(D,\mathbf{r})$ be as in Example \textup{\ref{exmp:Vicsek}}
and consider the MMD space $(K,R_{\mathcal{E}},\measure,\mathcal{E},\mathcal{F})$ resulting from
$\mathcal{L},(D,\mathbf{r})$ as introduced in Subsection \textup{\ref{ssec:pfcsss-preliminaries}}.
Set $I_{i,i+2}:=\{(1-t)q_{i}+tq_{i+2}\mid t\in[0,1]\}$ for $i\in\{1,2\}$. Then
$\Gamma(h,h)\bigl(K\setminus(I_{1,3}\cup I_{2,4})\bigr)=0$ for any $h\in\mathcal{H}_{0}$.
\end{proposition}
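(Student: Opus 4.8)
The plan is to exploit the special geometry of the Vicsek set: $K$ is a ``tree-like'' fractal, and the two diagonals $I_{1,3}$ and $I_{2,4}$ are the images of the line segments joining opposite boundary vertices. The key structural fact is that any path in $K$ connecting two distinct points of $V_{0}=\{q_{1},q_{2},q_{3},q_{4}\}$ must pass through the center $q_{0}=F_{0}(V_{0})\cap\bigcap_{i}F_{i}(K)$ (more precisely, through $F_{0}(V_{0})$), and iterating the self-similarity, the ``backbone'' $I_{1,3}\cup I_{2,4}$ is precisely the set of points that separate $K$. First I would record the harmonic-structure computation in Example \ref{exmp:Vicsek}: with the symmetric matrix $D$ and resistance weights $\mathbf{r}=(1-2r,r,r,r,r)$, a function $h\in\mathcal{H}_{0}$ is determined by its boundary values $h(q_{1}),\dots,h(q_{4})$, and the harmonic extension algorithm (\cite[Section 3.2]{Kig01}) shows that on each cell $K_{i}$, $i\in S\setminus\{0\}$, the restriction $h\circ F_{i}$ is again harmonic with boundary values that are explicit affine combinations of $h(q_{1}),\dots,h(q_{4})$, while on the central cell $K_{0}$ the function $h\circ F_{0}$ is harmonic with all four boundary values equal to the common value $h$ takes on $F_{0}(V_{0})$.

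The crucial point is this last observation: because of the $D'$ (the ``$Y$-shaped'' network with center $q_{0}$) and the fact that $V_{0}=\{q_{1},q_{2},q_{3},q_{4}\}$ with the four non-center vertices pairwise non-adjacent in $D'$, the harmonic extension forces $h$ to take a single constant value on the set $F_{0}(V_{0})=\{F_{0}(q_{1}),\dots,F_{0}(q_{4})\}$, hence (by harmonicity on $K_0$ and $(\mathrm{D1})$) $h$ is \emph{constant on $K_{0}$}. Consequently $\Gamma(h,h)(K_{0})=r_{0}^{-1}\mathcal{E}(h\circ F_{0},h\circ F_{0})=0$ by Lemma \ref{l:scaling-energy-meas} and $(\mathrm{RF\ref{RF1}})$. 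Next I would set up the induction over scales: for $w\in W_{n}$, the cell $K_{w}$ meets the backbone $I_{1,3}\cup I_{2,4}$ if and only if $w$ contains no letter $0$, i.e. $w\in(S\setminus\{0\})^{n}$; and for such $w$, the restriction $h\circ F_{w}$ is harmonic, so by applying the previous step to $h\circ F_w$ we get $\Gamma(h\circ F_w, h\circ F_w)(K_0)=0$, which via Lemma \ref{l:scaling-energy-meas} gives $\Gamma(h,h)(K_{w0})=0$. Since
\[
K\setminus(I_{1,3}\cup I_{2,4})=\bigcup_{n\geq 1}\ \bigcup_{w\in(S\setminus\{0\})^{n-1}} \bigl(K_{w0}\setminus F_{w0}(V_{0})\bigr)
\]
up to a countable set (the points of $\bigcup_{w}F_w(V_0)$, which has $\Gamma(h,h)$-measure zero by $\#F_w(V_0)<\infty$ and \cite[Theorem 4.3.8]{CF}), countable additivity of $\Gamma(h,h)$ yields $\Gamma(h,h)\bigl(K\setminus(I_{1,3}\cup I_{2,4})\bigr)=0$.

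I would carry out the steps in this order: (i) verify the harmonic extension matrix and show $h|_{F_0(V_0)}$ is constant for $h\in\mathcal{H}_0$; (ii) deduce $h$ constant on $K_0$ and hence $\Gamma(h,h)(K_0)=0$; (iii) apply the same to $h\circ F_w$ for every $w$ avoiding the letter $0$ and use Lemma \ref{l:scaling-energy-meas} to transfer; (iv) identify the complement of the backbone as the countable union of the cells $K_{w0}$ over such $w$, modulo a $\Gamma(h,h)$-null set of boundary points, and conclude by countable additivity. The main obstacle is step (i): one must check carefully that the particular harmonic structure in Example \ref{exmp:Vicsek} genuinely forces $h$ to be constant on $F_0(V_0)$ — this is where the choice of $D'$ (no direct edges among $q_1,\dots,q_4$, only edges through $q_0$) and the symmetry of $D$ enter, and it is really a finite linear-algebra computation with the harmonic extension map $\mathbb{R}^{V_0}\to\mathbb{R}^{V_1}$. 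Everything after that is bookkeeping with the self-similarity of $\Gamma(h,h)$ provided by Lemma \ref{l:scaling-energy-meas}.
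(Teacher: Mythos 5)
Your argument breaks down at step (i), and the error is fatal to the whole strategy. It is \emph{not} true that a $0$-harmonic function is constant on $F_{0}(V_{0})$. You appear to be conflating the single auxiliary vertex $q_{0}$ of the network $D'$ (whose only role is to verify the harmonic structure condition via its trace onto $V_{0}$) with the four \emph{distinct} points $p_{i}:=F_{0}(q_{i})$ of $V_{1}$: in the level-one network the central cell contributes a complete graph on $\{p_{1},\dots,p_{4}\}$ with conductance $(1-2r)^{-1}$ per edge, and each corner cell $K_{i}$ traces to an edge $q_{i}p_{i}$ of conductance $2/r$. Solving the resulting linear system for the boundary data $h|_{V_{0}}=\one_{\{q_{1}\}}$ gives $h(p_{1})=1-\tfrac{3r}{2}$ and $h(p_{2})=h(p_{3})=h(p_{4})=\tfrac{r}{2}$, which are unequal for every $r\in(0,\tfrac12)$ (and one checks $\mathcal{E}^{(1)}=3=\mathcal{E}^{(0)}$, confirming the harmonic structure). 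Hence $h\circ F_{0}$ is a non-constant element of $\mathcal{H}_{0}$ and $\Gamma(h,h)(K_{0})=r_{0}^{-1}\mathcal{E}(h\circ F_{0},h\circ F_{0})>0$. This is as it must be: $K_{0}$ contains $F_{0}(I_{1,3}\cup I_{2,4})$, the middle thirds of both diagonals, so it meets the backbone in a set that is supposed to carry energy. For the same reason your combinatorial claims are reversed: $K_{w}$ carries a nontrivial piece of the backbone precisely when $w\in\{0,1,3\}^{|w|}\cup\{0,2,4\}^{|w|}$ (so $K_{0}$ does, while $K_{12}$ does not), and the proposed identity $K\setminus(I_{1,3}\cup I_{2,4})=\bigcup_{w}(K_{w0}\setminus F_{w0}(V_{0}))$ fails already at $w=\emptyset$, since $K_{0}\setminus F_{0}(V_{0})$ intersects the backbone in a set of positive energy measure.

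The argument that does work avoids all cell-level computation. Take any connected component $U$ of $K\setminus(I_{1,3}\cup I_{2,4})$; the tree-like structure of the Vicsek set forces $\overline{U}$ to meet the backbone at a single cut point $q_{U}$. Given $u\in\mathcal{F}$, replacing $u$ on $U$ by the constant value $u(q_{U})$ produces $u_{U}\in\mathcal{F}$ with $\mathcal{E}(u_{U},u_{U})\leq\mathcal{E}(u,u)$, with equality only if $u=u_{U}$; since $U\cap V_{0}=\emptyset$, this modification does not disturb the boundary data, so any $h\in\mathcal{H}_{0}$ must already be constant on $U$, whence $\Gamma(h,h)(U)=0$. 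Summing over the countably many components gives the claim. If you want to salvage a cell-based bookkeeping, you would have to index the components correctly (they hang off the backbone at the points $F_{w}(q_{j})$ with $w\in\{0,1,3\}^{*}\cup\{0,2,4\}^{*}$ and $j$ ``transverse'' to the diagonal in question), but the energy-comparison argument above makes this unnecessary.
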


\begin{proof}
Let $U$ be a connected component of $K\setminus(I_{1,3}\cup I_{2,4})$. Then
it is immediate from $K=\bigcup_{i\in S}F_{i}(K)\subsetneq[-1,1]^{2}$ that
$\partial_{K}U$ consists of a unique element $q_{U}\in I_{1,3}\cup I_{2,4}$,
where $\partial_{K}U$ denotes the boundary of $U$ in $K$.
Note that $u\one_{U},u\one_{K\setminus U}\in\mathcal{F}$ and
$\mathcal{E}(u\one_{U},u\one_{K\setminus U})=0$ for any $u\in\mathcal{F}$
with $u(q_{U})=0$; indeed, this is clear by \cite[Exercise 1.4.1 and Theorem 1.4.2-(ii)]{FOT}
and the strong locality of $(\mathcal{E},\mathcal{F})$ if $q_{U}\not\in\supp_{m}[u]$,
and the general case follows by approximating $u$ by
$\{u-(-\frac{1}{n})\vee(u\wedge\frac{1}{n})\}_{n\in\mathbb{N}}$
on the basis of \cite[Theorem 1.4.2-(iv)]{FOT}. Therefore
for any $u\in\mathcal{F}$, the function $u_{U}\in \contfunc (K)$ defined by
$u_{U}\vert_{U}:=u(q_{U})\one_{U}$ and $u_{U}\vert_{K\setminus U}:=u\vert_{K\setminus U}$ satisfies
$u_{U}=(u-u(q_{U}))\one_{K\setminus U}+u(q_{U})\one_{K}\in\mathcal{F}$ and $\mathcal{E}(u,u)\geq\mathcal{E}(u_{U},u_{U})$,
where the equality holds if and only if $u=u_{U}$ by \ref{it:RF1}. In particular,
any $u\in\mathcal{F}$ with $u\not=u_{U}$ fails to be $0$-harmonic by $U\cap V_{0}=\emptyset$,
so that any $h\in\mathcal{H}_{0}$ satisfies $h\vert_{U}=h(q_{U})\one_{U}$,
hence $\Gamma(h,h)(U)=0$ by \cite[Corollary 3.2.1]{FOT} and thus
$\Gamma(h,h)\bigl(K\setminus(I_{1,3}\cup I_{2,4})\bigr)=0$ since $U$ is any one
of the countably many connected components of $K\setminus(I_{1,3}\cup I_{2,4})$.
\end{proof}

\begin{corollary}\label{cor:Vicsek}
Let $\mathcal{L}=(K,S,\{F_{i}\}_{i\in S})$ and $(D,\mathbf{r})$ be as in Example \textup{\ref{exmp:Vicsek}}
and consider the MMD space $(K,R_{\mathcal{E}},\measure,\mathcal{E},\mathcal{F})$ resulting from
$\mathcal{L},(D,\mathbf{r})$ as introduced in Subsection \textup{\ref{ssec:pfcsss-preliminaries}}.
Then $\mathcal{G}(K,R_{\mathcal{E}},\measure,\mathcal{E},\mathcal{F})=\emptyset$, i.e., the
infimum in \eqref{e:dcw} is not attained for $(K,R_{\mathcal{E}},\measure,\mathcal{E},\mathcal{F})$.
\end{corollary}

\begin{proof}
For any $h\in\mathcal{H}_{0}$ we have $\Gamma(h,h)\not\in\mathcal{A}(K,R_{\mathcal{E}},\measure,\mathcal{E},\mathcal{F})$
since $\Gamma(h,h)$ does not have full support by Proposition \ref{prop:Vicsek}, and
therefore $\Gamma(h,h)\not\in\mathcal{G}(K,R_{\mathcal{E}},\measure,\mathcal{E},\mathcal{F})$
for any $h\in\mathcal{H}_{0}$, which is equivalent to
$\mathcal{G}(K,R_{\mathcal{E}},\measure,\mathcal{E},\mathcal{F})=\emptyset$
by \eqref{e:condition-PHI-pcf-G-attained}, Theorem \ref{t:attain-harmonic-func}
and \eqref{e:condition-PHI-pcf-G} as already noted in the proof of
Proposition \ref{p:attain-harmonic-func}-\ref{it:attain-harmonic-func3}.
\end{proof}

\begin{figure}[t]\centering
	\includegraphics[height=95pt]{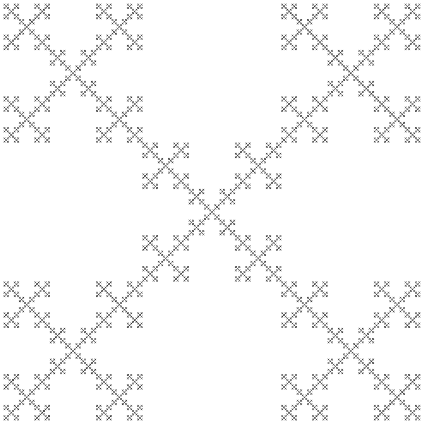}\hspace*{15pt}%
	\includegraphics[height=95pt]{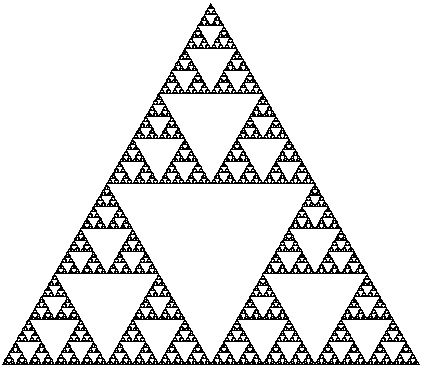}\hspace*{10pt}%
	\includegraphics[height=100pt]{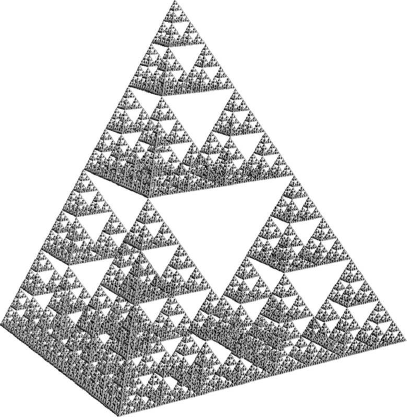}
	\caption{The Vicsek set and the $N$-dimensional Sierpi\'{n}ski gaskets ($N=2,3$)}
	\label{fig:Vicsek-SGs}
\end{figure}

\subsubsection{Higher-dimensional Sierpi\'{n}ski gaskets}\label{sssec:SGs}

\begin{example}[$N$-dimensional Sierpi\'{n}ski gasket]\label{exmp:SGs}
Let $N\in\mathbb{N}$, $N\geq 2$ and let $\{q_{k}\}_{k=0}^{N}\subset\mathbb{R}^{N}$
be the set of the vertices of a regular $N$-dimensional simplex $\triangle_{N}$, so that
$\triangle_{N}$ is the convex hull of $\{q_{k}\mid k\in\{0,1,\ldots,N\}\}$ in $\mathbb{R}^{N}$.
We further set $S:=\{0,1,\ldots,N\}$ and for each $i\in S$ define
$f_{i}\colon\mathbb{R}^{N}\to\mathbb{R}^{N}$ by $f_{i}(x):=q_{i}+\frac{1}{2}(x-q_{i})$.
Let $K$ be the self-similar set associated with $\{f_{i}\}_{i\in S}$,
which exists and satisfies $K\subsetneq\triangle_{N}$
thanks to $\bigcup_{i\in S}f_{i}(\triangle_{N})\subsetneq\triangle_{N}$
by \cite[Theorem 1.1.4]{Kig01}, and set $F_{i}:=f_{i}\vert_{K}$ for each $i\in S$.
Then $\mathcal{L}:=(K,S,\{F_{i}\}_{i\in S})$ is a self-similar structure by \cite[Theorem 1.2.3]{Kig01}
and called the \emph{$N$-dimensional (standard) Sierpi\'{n}ski gasket} (see Figure \ref{fig:Vicsek-SGs} above),
and it easily follows from $K\subset\triangle_{N}$ that $\mathcal{P}_{\mathcal{L}}=\{k^{\infty}\mid k\in S\}$
and $V_{0}=\{q_{k}\mid k\in S\}$, so that $\mathcal{L}$ is post-critically finite and $K$ is connected.
Let $d\colon K\times K\to[0,\infty)$ be the Euclidean metric on $K$ given by $d(x,y):=\lvert x-y\rvert$.

Define $D=(D_{pq})_{p,q\in V_{0}}$ by $D_{pp}:=-N$ and $D_{pq}:=1$ for $p,q\in V_{0}$ with
$p\not=q$. By the symmetry of $\mathcal{L}$ and $D$, there exists a unique $r\in(0,\infty)$
such that $(D,\mathbf{r}=(r_{i})_{i\in S})$ with $r_{i}:=r$ is a harmonic structure
on $\mathcal{L}$, and an elementary calculation shows that $r=\frac{N+1}{N+3}<1$,
so that $(D,\mathbf{r})$ is a regular harmonic structure on $\mathcal{L}$.
\end{example}

In the rest of this subsection, we fix the setting of Example \ref{exmp:SGs} and
consider the MMD space $(K,R_{\mathcal{E}},\measure,\mathcal{E},\mathcal{F})$ resulting from
$\mathcal{L},(D,\mathbf{r})$ as introduced in Subsection \ref{ssec:pfcsss-preliminaries}.
We first remark that the resistance metric $R_{\mathcal{E}}$ is bi-Lipschitz equivalent to
the power $d^{\log_{2}(1/r)}$ of the Euclidean metric $d$ and hence quasisymmetric to $d$.

\begin{lemma}\label{lem:SGs-d-RE-QS}
$R_{\mathcal{E}}$ is bi-Lipschitz equivalent to $d^{\log_{2}(1/r)}$.
In particular, $R_{\mathcal{E}}\in\mathcal{J}(K,d)$.
\end{lemma}

\begin{proof}
Lemma \ref{l:adapted-RE-pcf} with $d^{\log_{2}(1/r)}$ in place of $R_{\mathcal{E}}$
is easily seen to hold and, in combination with Lemma \ref{l:adapted-RE-pcf},
immediately implies the assertions.
\end{proof}

The following result, which is essentially due to Kigami \cite{Kig08},
was the starting point of the whole study of the present paper.

\begin{theorem}[\cite{Kig08,Kaj12}]\label{thm:SG2-attained}
Assume that $N=2$, and let $h_{1},h_{2}\in\mathcal{H}_{0}$ satisfy
$\mathcal{E}(h_{1},h_{1})=\mathcal{E}(h_{2},h_{2})=1$ and $\mathcal{E}(h_{1},h_{2})=0$. Then
$d_{\on{int}}^{\Gamma(h_{1},h_{1})+\Gamma(h_{2},h_{2})},d_{\on{int}}^{\Gamma(h_{1},h_{1})}\in\mathcal{J}(K,R_{\mathcal{E}})$ and
$\Gamma(h_{1},h_{1})+\Gamma(h_{2},h_{2}),\Gamma(h_{1},h_{1})\in\mathcal{G}(K,R_{\mathcal{E}},\measure,\mathcal{E},\mathcal{F})$.
\end{theorem}

\begin{proof}
Set $\mu_{h_{1},h_{2}}:=\Gamma(h_{1},h_{1})+\Gamma(h_{2},h_{2})$ and let
$\mu\in\{\mu_{h_{1},h_{2}},\Gamma(h_{1},h_{1})\}$.
As in the proof of Lemma \ref{lem:Vicsek-d-RE-QS} above, to see
$d_{\on{int}}^{\mu}\in\mathcal{J}(K,R_{\mathcal{E}})$ we apply \cite[Theorem 3.6.6]{Kig20}.
Note that the results in
\cite[Sections 5 and 6]{Kig08} and \cite[Sections 3 and 4]{Kaj12} are applicable to $d_{\on{int}}^{\mu}$
by virtue of the identification of $d_{\on{int}}^{\mu}$ given in \cite[Theorem 4.2]{Kaj12}.
By \cite[Theorem 5.11]{Kig08} for $\mu=\mu_{h_{1},h_{2}}$ and
\cite[Proposition 3.16-(1)]{Kaj12} for $\mu=\Gamma(h_{1},h_{1})$,
$d_{\on{int}}^{\mu}$ is a metric on $K$ compatible with the original topology of $K$ and
adapted in the sense of \cite[Definition 2.4.7 and Proposition 2.4.8]{Kig20}.
Also by \cite[Theorem 5.11 and Proof of Theorem 3.2]{Kig08} for $\mu=\mu_{h_{1},h_{2}}$ and
\cite[Proposition 3.16-(1) and Lemma 3.9]{Kaj12} for $\mu=\Gamma(h_{1},h_{1})$,
$d_{\on{int}}^{\mu}$ is exponential in the sense of \cite[Definitions 3.1.15 and 3.6.2]{Kig20}
and gentle with respect to $d$ in the sense of \cite[Definition 3.3.1]{Kig20}.
Clearly $d$ is exponential in the sense of \cite{Kig20}, and is also adapted in the
sense of \cite{Kig20} by Lemma \ref{l:adapted-RE-pcf} with $d^{\log_{2}(5/3)}$ in
place of $R_{\mathcal{E}}$ mentioned in the above proof of Lemma \ref{lem:SGs-d-RE-QS}.
Thus \cite[Theorem 3.6.6]{Kig20} is applicable to
$d_{\on{int}}^{\mu}$ and $d$ and shows, together with Lemma \ref{lem:SGs-d-RE-QS} and
\eqref{e:cgauge}, that $d_{\on{int}}^{\mu}\in\mathcal{J}(K,d)=\mathcal{J}(K,R_{\mathcal{E}})$.
Finally, $(K,d_{\on{int}}^{\mu},\mu,\mathcal{E},\mathcal{F})$ satisfies
\ref{VD} and \hyperlink{hke}{$\on{HKE(2)}$} by \cite[Theorems 6.2 and 6.3]{Kig08}
for $\mu=\mu_{h_{1},h_{2}}$ and by \cite[Theorem 3.19 and Corollary 4.3]{Kaj12}
for $\mu=\Gamma(h_{1},h_{1})$, and it thus satisfies \hyperlink{phi}{$\on{PHI(2)}$} by
Theorem \ref{t:phichar}, proving $\mu\in\mathcal{G}(K,R_{\mathcal{E}},\measure,\mathcal{E},\mathcal{F})$.
\end{proof}

One of the key observations for the validity of Theorem \ref{thm:SG2-attained} is that
the energy measures of harmonic functions are volume doubling with respect to the resistance
metric $R_{\mathcal{E}}$ (or equivalently, with respect to the Euclidean metric $d$ on $K$),
which in fact extends to the $N$-dimensional Sierpi\'{n}ski gasket with $N\geq 3$ as follows.

\begin{proposition}\label{prop:energy-meas-VD-SGs}
$\bigl(K,R_{\mathcal{E}},\Gamma(h,h)\bigr)$ is \ref{VD} for any $h\in\mathcal{H}_{0}\setminus\mathbb{R}\one_{K}$.
More generally, if $\{h_{n}\}_{n\in\mathbb{N}}\subset\mathcal{H}_{0}$ satisfies
$\sum_{n\in\mathbb{N}}\mathcal{E}(h_{n},h_{n})\in(0,\infty)$, then
$\bigl(K,R_{\mathcal{E}},\sum_{n\in\mathbb{N}}\Gamma(h_{n},h_{n})\bigr)$ is \ref{VD}.
\end{proposition}

\begin{proof}
We follow \cite[Proof of Theorem 3.2]{Kig08}. Let $h\in\mathcal{H}_{0}\setminus\mathbb{R}\one_{K}$.
As noted in the second paragraph of the proof of Proposition \ref{p:condition-PHI-pcf},
$\bigl(K,R_{\mathcal{E}},\Gamma(h,h)\bigr)$ is \ref{VD} if and only if
\eqref{e:ELm-pcf} and \eqref{e:GE-pcf} with $\mu=\Gamma(h,h)$ hold.
To verify \eqref{e:ELm-pcf} and \eqref{e:GE-pcf}, recalling Proposition \ref{p:harmonic-pcf}-\ref{it:harmonic-extension-pcf}
and \ref{it:RF1}, for each $j\in S$ we choose a basis
$\{h^{j}_{k}\}_{k=0}^{N}$ of the linear space $\mathcal{H}_{0}$ such that
$h^{j}_{0}=\one_{K}$, $h^{j}_{1}\vert_{V_{0}}=N^{-1/2}\one_{V_{0}\setminus\{q_{j}\}}$,
$h^{j}_{k}(q_{j})=0$ and $\sum_{q\in V_{0}}h^{j}_{k}(q)=0$ for any $k\in\{2,\ldots,N\}$ and
$\{h^{j}_{k}\}_{k=2}^{N}$ is orthonormal in $(\mathcal{H}_{0}/\mathbb{R}\one_{K},\mathcal{E})$.
Then $\{h^{j}_{k}\}_{k=1}^{N}$ is also orthonormal in $(\mathcal{H}_{0}/\mathbb{R}\one_{K},\mathcal{E})$
by \eqref{e:harmonic1-pcf}, and it easily follows by $\mathcal{H}_{0}\subset\mathcal{H}_{1}$,
\eqref{e:harmonic2-pcf} and solving \cite[(3.2.1)]{Kig01} that
$h^{j}_{1}\circ F_{j}=\frac{N+1}{N+3}h^{j}_{1}$ and that
$h^{j}_{k}\circ F_{j}=\frac{1}{N+3}h^{j}_{k}$ for any $k\in\{2,\ldots,N\}$.
In particular, for each $w\in W_{*}$, a linear map
$F_{w}^{*}\colon\mathcal{H}_{0}/\mathbb{R}\one_{K}\to\mathcal{H}_{0}/\mathbb{R}\one_{K}$
is defined by $F_{w}^{*}(u+\mathbb{R}\one_{K}):=u\circ F_{w}+\mathbb{R}\one_{K}$ and is invertible, and we set
$h_{w}:=\mathcal{E}(h\circ F_{w},h\circ F_{w})^{-1/2}h\circ F_{w}\in\mathcal{H}_{0}\setminus\mathbb{R}\one_{K}$.

Let $(w,j)\in W_{*}\times S$. Then
$h_{w}=\sum_{k=0}^{N}a_{k}h^{j}_{k}$ for some $(a_{k})_{k=0}^{N}\in\mathbb{R}^{N+1}$.
Since $h_{w}\circ F_{j}=a_{0}h^{j}_{0}+\frac{N+1}{N+3}a_{1}h^{j}_{1}+\frac{1}{N+3}\sum_{k=2}^{N}a_{k}h^{j}_{k}$
and $\sum_{k=1}^{N}a_{k}^{2}=\mathcal{E}(h_{w},h_{w})=1$, by Lemma \ref{l:scaling-energy-meas} we have
\begin{equation}\label{e:ELm-SGs-proof}
\begin{split}\mspace{-2.5mu}
\frac{\frac{N+1}{N+3}\Gamma(h,h)(K_{wj})}{\Gamma(h,h)(K_{w})}
	&=\mathcal{E}(h_{w}\circ F_{j},h_{w}\circ F_{j})\\
&=\frac{(N+1)^{2}a_{1}^{2}+\sum_{k=2}^{N}a_{k}^{2}}{(N+3)^{2}}
	\in\biggl[\frac{1}{(N+3)^{2}},\frac{(N+1)^{2}}{(N+3)^{2}}\biggr],
\end{split}
\end{equation}
proving \eqref{e:ELm-pcf} with $\mu=\Gamma(h,h)$.

Next, to show \eqref{e:GE-pcf}, let $w,v\in W_{*}$ satisfy $\lvert w\rvert=\lvert v\rvert$ and
$K_{w}\cap K_{v}\not=\emptyset$. We may assume $w\not=v$, so that there exist
$\tau\in W_{*}$, $n\in\mathbb{N}\cup\{0\}$ and $i,j\in S$ with $i\not=j$
such that $w=\tau ij^{n}$ and $v=\tau ji^{n}$.
Take $(b_{k})_{k=0}^{N},(c_{k})_{k=0}^{N}\in\mathbb{R}^{N+1}$ such that
$h_{\tau}\circ F_{i}=\sum_{k=0}^{N}b_{k}h^{j}_{k}$ and
$h_{\tau}\circ F_{j}=\sum_{k=0}^{N}c_{k}h^{i}_{k}$.
Then $b_{0}=h_{\tau}(F_{i}(q_{j}))=h_{\tau}(F_{j}(q_{i}))=c_{0}$
by $F_{i}(q_{j})=F_{j}(q_{i})$, $Nb_{0}+N^{1/2}b_{1}+Nc_{0}+N^{1/2}c_{1}=2Nb_{0}$
by the harmonicity of $h_{\tau}$ at $F_{i}(q_{j})$ (see \cite[(3.2.1)]{Kig01})
and thus $b_{1}=-c_{1}$. Moreover, by \eqref{e:ELm-SGs-proof},
\begin{equation}\label{e:Gammahtau}
\frac{\sum_{k=1}^{N}b_{k}^{2}}{\sum_{k=1}^{N}c_{k}^{2}}
	=\frac{\mathcal{E}(h_{\tau}\circ F_{i},h_{\tau}\circ F_{i})}{\mathcal{E}(h_{\tau}\circ F_{j},h_{\tau}\circ F_{j})}
	\leq\frac{(N+1)^{2}/(N+3)^{2}}{1/(N+3)^{2}}=(N+1)^{2}.
\end{equation}
Since $h_{\tau}\circ F_{ij^{n}}=b_{0}h^{j}_{0}+\bigl(\frac{N+1}{N+3}\bigr)^{n}b_{1}h^{j}_{1}+\bigl(\frac{1}{N+3}\bigr)^{n}\sum_{k=2}^{N}b_{k}h^{j}_{k}$
and $h_{\tau}\circ F_{ji^{n}}=c_{0}h^{i}_{0}+\bigl(\frac{N+1}{N+3}\bigr)^{n}c_{1}h^{i}_{1}+\bigl(\frac{1}{N+3}\bigr)^{n}\sum_{k=2}^{N}c_{k}h^{i}_{k}$,
we see from Lemma \ref{l:scaling-energy-meas}, $b_{1}=-c_{1}$ and \eqref{e:Gammahtau} that
\begin{align}
\frac{\Gamma(h,h)(K_{w})}{\Gamma(h,h)(K_{v})}
	&=\frac{\mathcal{E}(h_{\tau}\circ F_{ij^{n}},h_{\tau}\circ F_{ij^{n}})}{\mathcal{E}(h_{\tau}\circ F_{ji^{n}},h_{\tau}\circ F_{ji^{n}})}
	=\frac{\bigl(\frac{N+1}{N+3}\bigr)^{2n}b_{1}^{2}+\bigl(\frac{1}{N+3}\bigr)^{2n}\sum_{k=2}^{N}b_{k}^{2}}{\bigl(\frac{N+1}{N+3}\bigr)^{2n}c_{1}^{2}+\bigl(\frac{1}{N+3}\bigr)^{2n}\sum_{k=2}^{N}c_{k}^{2}} \nonumber \\
&=\frac{((N+1)^{2n}-1)b_{1}^{2}+\sum_{k=1}^{N}b_{k}^{2}}{((N+1)^{2n}-1)b_{1}^{2}+\sum_{k=1}^{N}c_{k}^{2}}\nonumber\\
&\leq\frac{((N+1)^{2n}-1)b_{1}^{2}+(N+1)^{2}\sum_{k=1}^{N}c_{k}^{2}}{((N+1)^{2n}-1)b_{1}^{2}+\sum_{k=1}^{N}c_{k}^{2}}
	\leq(N+1)^{2},
\label{e:GE-SGs-proof}
\end{align}
which proves \eqref{e:GE-pcf} with $\mu=\Gamma(h,h)$ and thereby that
$\bigl(K,R_{\mathcal{E}},\Gamma(h,h)\bigr)$ is \ref{VD}.

Finally, if $\{h_{n}\}_{n\in\mathbb{N}}\subset\mathcal{H}_{0}$ satisfies
$\sum_{n\in\mathbb{N}}\mathcal{E}(h_{n},h_{n})\in(0,\infty)$, then
$\sum_{n\in\mathbb{N}}\Gamma(h_{n},h_{n})$ is a Radon measure on $K$,
\eqref{e:ELm-pcf} and \eqref{e:GE-pcf} with $\mu=\sum_{n\in\mathbb{N}}\Gamma(h_{n},h_{n})$
hold by \eqref{e:ELm-SGs-proof} and \eqref{e:GE-SGs-proof}, and hence
$\bigl(K,R_{\mathcal{E}},\sum_{n\in\mathbb{N}}\Gamma(h_{n},h_{n})\bigr)$ is \ref{VD}.
\end{proof}

Despite Proposition \ref{prop:energy-meas-VD-SGs}, Theorem \ref{thm:SG2-attained} does
NOT extend to the case of $N\geq 3$, which is the main result of this subsection and
stated as follows; recall Proposition \ref{p:attain-harmonic-func}-\ref{it:attain-harmonic-func3}.

\begin{theorem}\label{thm:SGN-NOT-attained}
Assume that $N\geq 3$. Then $d_{\on{int}}^{\Gamma(h,h)}\not\in\mathcal{J}(K,R_{\mathcal{E}})$
for any $h\in\mathcal{H}_{0}$. Equivalently,
$\mathcal{G}(K,R_{\mathcal{E}},\measure,\mathcal{E},\mathcal{F})=\emptyset$, i.e., the
infimum in \eqref{e:dcw} is not attained for $(K,R_{\mathcal{E}},\measure,\mathcal{E},\mathcal{F})$.
\end{theorem}

Theorem \ref{thm:SGN-NOT-attained} is proved by deducing a contradiction to the
conclusion of the following proposition through taking scaling limits of
functions in $\mathcal{H}_{0}$ on the basis of Proposition \ref{p:H0GetaC-compact-pcf}.

\begin{proposition}\label{p:SGN-NOT-attained}
Let $i,j\in S$, $i\not=j$ and let $h^{i,j}\in\mathcal{H}_{0}$ be such that
$h^{i,j}\vert_{V_{0}}=\one_{\{q_{i}\}}-\one_{\{q_{j}\}}$. Then
$d_{\on{int}}^{\Gamma(h^{i,j},h^{i,j})}(x,y)=0$ for any
$k,l\in S\setminus\{i,j\}$ and any $x,y\in\{(1-t)q_{k}+tq_{l}\mid t\in[0,1]\}$.
Thus if $N\geq 3$, then $d_{\on{int}}^{\Gamma(h^{i,j},h^{i,j})}$ is not a metric on $K$ and
$\Gamma(h^{i,j},h^{i,j})\not\in\mathcal{G}(K,R_{\mathcal{E}},\measure,\mathcal{E},\mathcal{F})$.
\end{proposition}

\begin{proof}
In the same way as in the proof of Proposition \ref{prop:energy-meas-VD-SGs},
it easily follows by $\mathcal{H}_{0}\subset\mathcal{H}_{1}$,
\eqref{e:harmonic2-pcf} and solving \cite[(3.2.1)]{Kig01} that
$h^{i,j}\circ F_{k}=\frac{1}{N+3}h^{i,j}$ for any $k\in S\setminus\{i,j\}$.
Let $u\in\mathcal{F}$ satisfy $\Gamma(u,u)\leq\Gamma(h^{i,j},h^{i,j})$. Then
setting $C:=\diam_{R_{\mathcal{E}}}(K)$, we see from Lemma \ref{l:scaling-energy-meas}
that for any $w\in\bigcup_{n=1}^{\infty}(S\setminus\{i,j\})^{n}$ and any $x,y\in K_{w}$,
\begin{align}
\lvert u(x)-u(y)\rvert^{2}&=\bigl\lvert(u\circ F_{w})(F_{w}^{-1}(x))-(u\circ F_{w})(F_{w}^{-1}(y))\bigr\rvert^{2}\nonumber\\
&\leq C\mathcal{E}(u\circ F_{w},u\circ F_{w})
	=Cr_{w}\Gamma(u,u)(K_{w})\nonumber\\
&\leq Cr_{w}\Gamma(h^{i,j},h^{i,j})(K_{w})
	=C\mathcal{E}(h^{i,j}\circ F_{w},h^{i,j}\circ F_{w})\nonumber\\
&=(N+3)^{-2\lvert w\rvert}C\mathcal{E}(h^{i,j},h^{i,j}).
\label{eq:SGN-dint-varphi}
\end{align}
Now let $k,l\in S\setminus\{i,j\}$ and $x,y\in\{(1-t)q_{k}+tq_{l}\mid t\in[0,1]\}$.
Then for any $n\in\mathbb{N}$, taking $v^{(n)},\tau^{(n)}\in\{k,l\}^{n}$ such that
$x\in K_{v^{(n)}}$ and $y\in K_{\tau^{(n)}}$, from \eqref{eq:SGN-dint-varphi} we obtain
\begin{align*}
&\lvert u(x)-u(y)\rvert \\
&\leq\bigl\lvert u(x)-u(F_{v^{(n)}}(q_{k}))\bigr\rvert+\bigl\lvert u(F_{v^{(n)}}(q_{k}))-u(F_{\tau^{(n)}}(q_{l}))\bigr\rvert
	+\bigl\lvert u(F_{\tau^{(n)}}(q_{l}))-u(y)\bigr\rvert\\
&\leq 2(N+3)^{-n}C^{1/2}\mathcal{E}(h^{i,j},h^{i,j})^{1/2}+\sum_{w\in\{k,l\}^{n}}\bigl\lvert u(F_{w}(q_{k}))-u(F_{w}(q_{l}))\bigr\rvert\\
&\leq(2+2^{n})(N+3)^{-n}C^{1/2}\mathcal{E}(h^{i,j},h^{i,j})^{1/2}
	\xrightarrow{n\to\infty}0
\end{align*}
and thus $u(x)-u(y)=0$. Since $u\in\mathcal{F}$ satisfying $\Gamma(u,u)\leq\Gamma(h^{i,j},h^{i,j})$
is arbitrary, it follows from \eqref{e:dint-pcf} with $\mu=\Gamma(h^{i,j},h^{i,j})$ that
$d_{\on{int}}^{\Gamma(h^{i,j},h^{i,j})}(x,y)=0$. Finally, if $N\geq 3$, then we can choose
$k,l$ as above so that $k\not=l$, hence $x,y$ as above so that $x\not=y$,
thus $d_{\on{int}}^{\Gamma(h^{i,j},h^{i,j})}$ is not a metric on $K$
by $d_{\on{int}}^{\Gamma(h^{i,j},h^{i,j})}(x,y)=0$, and therefore
$\Gamma(h^{i,j},h^{i,j})\not\in\mathcal{G}(K,R_{\mathcal{E}},\measure,\mathcal{E},\mathcal{F})$
by Proposition \ref{p:attain-harmonic-func}-\ref{it:attain-harmonic-func2}.
\end{proof}

\begin{proof}[Proof of Theorem \textup{\ref{thm:SGN-NOT-attained}}]
Let $h\in\mathcal{H}_{0}$. By Proposition \ref{p:attain-harmonic-func}-\ref{it:attain-harmonic-func2},\ref{it:attain-harmonic-func3}
it suffices to show that $\Gamma(h,h)\not\in\mathcal{G}(K,R_{\mathcal{E}},\measure,\mathcal{E},\mathcal{F})$.
Since this is clear for $h\in\mathbb{R}\one_{K}$ by $\Gamma(h,h)(K)=\mathcal{E}(h,h)=0$,
in the rest of this proof we assume that $h\in\mathcal{H}_{0}\setminus\mathbb{R}\one_{K}$.
Take $j\in S$ such that $h(q_{j})=\min_{q\in V_{0}}h(q)$, let $\{h^{j}_{k}\}_{k=0}^{N}$
be the basis of $\mathcal{H}_{0}$ as introduced in the first paragraph of the
proof of Proposition \ref{prop:energy-meas-VD-SGs}, and set $\psi:=h^{j}_{1}$, so that
$\psi\circ F_{j}=\frac{N+1}{N+3}\psi$, for a unique $a\in\mathbb{R}$ we have
$(h-a\psi-h(q_{j})\one_{K})\circ F_{j}=\frac{1}{N+3}(h-a\psi-h(q_{j})\one_{K})$,
and $a>0$ by $h(q_{j})=\min_{q\in V_{0}}h(q)$. Setting
$h_{j^{n}}:=\mathcal{E}(h\circ F_{j^{n}},h\circ F_{j^{n}})^{-1/2}h\circ F_{j^{n}}$ for
$n\in\mathbb{N}\cup\{0\}$, we easily see from these observations and $\mathcal{E}(\psi,\psi)=1$
that $\lim_{n\to\infty}\mathcal{E}(\psi-h_{j^{n}},\psi-h_{j^{n}})=0$.

Now suppose that $\Gamma(h,h)\in\mathcal{G}(K,R_{\mathcal{E}},\measure,\mathcal{E},\mathcal{F})$,
which means, by \eqref{e:GLDr}, \eqref{e:condition-PHI-pcf-G} and \eqref{e:H0Z-pcf},
that $h_{j^{0}}\in\mathcal{H}_{0}(\attainsss(\eta,C))$ for some
$(\eta,C)\in\homeo^{+}\times(1,\infty)$. Then, with $c_{R_{\mathcal{E}}}$ as in Lemma \ref{l:RE-scaling-pcf},
$\{h_{j^{n}}\}_{n\in\mathbb{N}\cup\{0\}}\subset\mathcal{H}_{0}(\attainsss(c_{R_{\mathcal{E}}}^{-1}\eta,C))$
by Proposition \ref{p:H0GetaC-compact-pcf}-\ref{it:H0GetaC-compact-scaling-pcf} and hence
$\psi\in\mathcal{H}_{0}(\attainsss(c_{R_{\mathcal{E}}}^{-1}\eta,C))$ by
$\lim_{n\to\infty}\mathcal{E}(\psi-h_{j^{n}},\psi-h_{j^{n}})=0$, \eqref{e:H0tildeZ-pcf}
and Proposition \ref{p:H0GetaC-compact-pcf}-\ref{it:H0GetaC-compact-pcf}.
Further, letting $i\in S\setminus\{j\}$ and setting
$\varphi:=\mathcal{E}(\psi\circ F_{i},\psi\circ F_{i})^{-1/2}\bigl(\psi\circ F_{i}-N^{-1/2}\frac{N+2}{N+3}\one_{K}\bigr)$,
we would have $\varphi=(2N+2)^{-1/2}h^{i,j}$ with $h^{i,j}$ as in Proposition \ref{p:SGN-NOT-attained} by
$\mathcal{H}_{0}\subset\mathcal{H}_{1}$, \eqref{e:harmonic2-pcf} and solving \cite[(3.2.1)]{Kig01},
$\varphi\in\mathcal{H}_{0}(\attainsss(c_{R_{\mathcal{E}}}^{-2}\eta,C))$
by $\psi\in\mathcal{H}_{0}(\attainsss(c_{R_{\mathcal{E}}}^{-1}\eta,C))$
and Proposition \ref{p:H0GetaC-compact-pcf}-\ref{it:H0GetaC-compact-scaling-pcf}, and thus
$\Gamma(h^{i,j},h^{i,j})=(2N+2)\Gamma(\varphi,\varphi)\in\mathcal{G}(K,R_{\mathcal{E}},\measure,\mathcal{E},\mathcal{F})$
by \eqref{e:H0Z-pcf}, \eqref{e:GLDr} and \eqref{e:condition-PHI-pcf-G}. This would be
a contradiction to Proposition \ref{p:SGN-NOT-attained} and thereby proves that
$\Gamma(h,h)\not\in\mathcal{G}(K,R_{\mathcal{E}},\measure,\mathcal{E},\mathcal{F})$.
\end{proof}

We conclude this subsection with the following theorem, which is an easy
consequence of the conjunction of Proposition \ref{prop:energy-meas-VD-SGs},
\cite[Corollary 15.12]{Kig12} and Theorem \ref{thm:SGN-NOT-attained}.

\begin{theorem}\label{thm:SGN-NOT-HKE2}
Assume that $N\geq 3$, let $\{h_{n}\}_{n\in\mathbb{N}}\subset\mathcal{H}_{0}$
satisfy $\sum_{n\in\mathbb{N}}\mathcal{E}(h_{n},h_{n})\in(0,\infty)$ and set
$\mu:=\sum_{n\in\mathbb{N}}\Gamma(h_{n},h_{n})$. Then there does not exist a metric
$\theta$ on $K$ compatible with the original topology of $K$ such that the MMD space
$(K,\theta,\mu,\mathcal{E},\mathcal{F})$ satisfies \hyperlink{hke}{$\on{HKE}(2)$}.
\end{theorem}

\begin{proof}
Since $(K,R_{\mathcal{E}},\mu)$ is \ref{VD} by Proposition \ref{prop:energy-meas-VD-SGs},
$\mu$ is a Radon measure on $K$ with full support, hence
$\mu\in\mathcal{A}(K,R_{\mathcal{E}},\measure,\mathcal{E},\mathcal{F})$
by \eqref{e:admiss-pcf} with $\measure$ in place of $\mu$, and
\cite[Corollary 15.12]{Kig12} is applicable to $(K,R_{\mathcal{E}},\mu,\mathcal{E},\mathcal{F})$
and yields $\rho\in\mathcal{J}(K,R_{\mathcal{E}})$ and $\beta\in(1,\infty)$ such that
$(K,\rho,\mu,\mathcal{E},\mathcal{F})$ has a continuous heat kernel
$p^{\mu}=p^{\mu}_{t}(x,y)\colon(0,\infty)\times K\times K\to\mathbb{R}$ and satisfies
\ref{VD} and \hyperlink{hke}{$\on{HKE}(\beta)$} with ``$\mu$-a.e.\ $x,y$''
in \eqref{e:uhke} and \eqref{e:nlhke} replaced by ``any $x,y$''.

Now suppose that there existed a metric $\theta$ on $K$ compatible with the original topology of $K$
such that $(K,\theta,\mu,\mathcal{E},\mathcal{F})$ satisfied \hyperlink{hke}{$\on{HKE}(2)$}.
Then for each $t\in(0,\infty)$, we easily see from the continuity of $p^{\mu}_{t}$,
the lower semi-continuity of $\mu(B_{\theta}(\cdot,t^{1/2}))$,
the upper semi-continuity of $\mu\bigl(\overline{B}_{\theta}(\cdot,t^{1/2})\bigr)$
and \ref{VD} of $(K,\rho,\mu)$ that \hyperlink{hke}{$\on{HKE}(2)$} for
$(K,\theta,\mu,\mathcal{E},\mathcal{F})$ would also hold with the same heat kernel $p^{\mu}$
and with ``$\mu$-a.e.\ $x,y$'' in \eqref{e:uhke} and \eqref{e:nlhke} replaced by ``any $x,y$''.
Now for any $x,y\in K$ with $x\not=y$ we would obtain, first
$\mu(B_{\rho}(x,t^{1/\beta}))^{-1}\asymp p^{\mu}_{t}(x,x)\asymp\mu(B_{\theta}(x,t^{1/2}))^{-1}$
for any $t\in(0,\infty)$, then $\rho(x,y)^{\beta/2}\lesssim\theta(x,y)$
by combining \eqref{e:uhke} for $\rho,\mu,\beta$ and \eqref{e:nlhke} for $\theta,\mu,2$
with $t=(\theta(x,y)/\delta)^{2}$ for a constant $\delta\in(0,\infty)$, and
$\theta(x,y)\lesssim\rho(x,y)^{\beta/2}$ by combining \eqref{e:uhke} for $\theta,\mu,2$
and \eqref{e:nlhke} for $\rho,\mu,\beta$ with $t=(\rho(x,y)/\delta')^{\beta}$
for a constant $\delta'\in(0,\infty)$. Thus $\theta\asymp\rho^{\beta/2}$,
in particular $\theta\in\mathcal{J}(K,\rho)=\mathcal{J}(K,R_{\mathcal{E}})$
by $\rho\in\mathcal{J}(K,R_{\mathcal{E}})$ and \eqref{e:cgauge}, and
\ref{VD} of $(K,\rho,\mu)$ would imply \ref{VD} of $(K,\theta,\mu)$, whence
$(K,\theta,\mu,\mathcal{E},\mathcal{F})$ would satisfy \hyperlink{phi}{$\on{PHI(2)}$}
by Theorem \ref{t:phichar}. Therefore we would get
$\mu\in\mathcal{G}(K,R_{\mathcal{E}},\measure,\mathcal{E},\mathcal{F})$, which
would contradict Theorem \ref{thm:SGN-NOT-attained} and completes the proof.
\end{proof}

\subsection{The case of generalized Sierpi\'{n}ski carpets}\label{ssec:attain-harmonic-func-GSCs}

In this subsection, we treat the case of the canonical self-similar Dirichlet form
on an arbitrary generalized Sierpi\'{n}ski carpet and see that the arguments in
Subsection \ref{ssec:attain-harmonic-func-pcf} go through also in this case
with just slight modifications in the proofs.
We closely follow the presentation of \cite[Section 4]{Kaj14} for the
introduction of the framework of this subsection up to Theorem \ref{thm:BBKT-HK}
below, which we repeat here for the reader's convenience.

We fix the following setting throughout this subsection.
\begin{framework}\label{frmwrk:GSC}
Let $N,l\in\mathbb{N}$, $N\geq 2$, $l\geq 3$ and set $Q_{0}:=[0,1]^{N}$.
Let $S\subsetneq\{0,1,\ldots,l-1\}^{N}$ be non-empty, define
$f_{i}\colon\mathbb{R}^{N}\to\mathbb{R}^{N}$ by $f_{i}(x):=l^{-1}i+l^{-1}x$ for each $i\in S$
and set $Q_{1}:=\bigcup_{i\in S}f_{i}(Q_{0})$, so that $Q_{1}\subsetneq Q_{0}$.
Let $K$ be the self-similar set associated with $\{f_{i}\}_{i\in S}$,
i.e., the unique non-empty compact subset of $\mathbb{R}^{N}$ such that
$K=\bigcup_{i\in S}f_{i}(K)$, which exists and satisfies $K\subsetneq Q_{0}$ thanks to
$Q_{1}\subsetneq Q_{0}$ by \cite[Theorem 1.1.4]{Kig01}, and set $F_{i}:=f_{i}\vert_{K}$ for each
$i\in S$, so that $\GSC(N,l,S):=(K,S,\{F_{i}\}_{i\in S})$ is a self-similar structure
by \cite[Theorem 1.2.3]{Kig01}.
Let $d\colon K\times K\to[0,\infty)$ be the Euclidean metric on $K$ given by $d(x,y):=\lvert x-y\rvert$,
set $d_{\mathrm{f}}:=\log_{l}\# S$, and let $\measure$ be the self-similar measure on
$\GSC(N,l,S)$ with weight $(1/\#S)_{i\in S}$, i.e., the unique Borel probability measure
on $K$ such that $\measure(K_{w})=(\#S)^{-\lvert w\rvert}$ for any $w\in W_{*}$,
which exists by \cite[Propositions 1.5.8, 1.4.3, 1.4.4 and Corollary 1.4.8]{Kig01}.
\end{framework}
Recall that $d_{\mathrm{f}}$ is the Hausdorff dimension of $(K,d)$ and that
$\measure$ is a constant multiple of the $d_{\mathrm{f}}$-dimensional Hausdorff measure
on $(K,d)$; see, e.g., \cite[Proposition 1.5.8 and Theorem 1.5.7]{Kig01}.
Note that $d_{\mathrm{f}}<N$ by $S\subsetneq\{0,1,\ldots,l-1\}^{N}$.

The following definition is essentially due to Barlow and Bass \cite[Section 2]{BB99}. In what follows,
the interior of $A\subset\mathbb{R}^{N}$ in $\mathbb{R}^{N}$ is denoted by $\interior_{\mathbb{R}^{N}}(A)$.
\begin{definition}[Generalized Sierpi\'{n}ski carpet, {\cite[Subsection \textup{2.2}]{BBKT}}]\label{dfn:GSC}
$\GSC(N,l,S)$ is called a \emph{generalized Sierpi\'{n}ski carpet}
if and only if the following four conditions are satisfied:
\begin{enumerate}[label=\textup{(GSC\arabic*)},align=left,leftmargin=*,topsep=5pt,parsep=0pt,itemsep=2pt]
\item \label{it:GSC1}(Symmetry) $f(Q_{1})=Q_{1}$ for any isometry $f$ of $\mathbb{R}^{N}$ with $f(Q_{0})=Q_{0}$.
\item \label{it:GSC2}(Connectedness) $Q_{1}$ is connected.
\item \label{it:GSC3}(Non-diagonality)
	$\interior_{\mathbb{R}^{N}}\bigl(Q_{1}\cap \prod_{k=1}^{N}[(i_{k}-\varepsilon_{k})l^{-1},(i_{k}+1)l^{-1}]\bigr)$
	is either empty or connected for any $(i_{k})_{k=1}^{N}\in\mathbb{Z}^{N}$ and
	any $(\varepsilon_{k})_{k=1}^{N}\in\{0,1\}^{N}$.
\item \label{it:GSC4}(Borders included) $[0,1]\times\{0\}^{N-1}\subset Q_{1}$.
\end{enumerate}
\end{definition}
As special cases of Definition \ref{dfn:GSC}, $\GSC(2,3,S_{\mathrm{SC}})$ and
$\GSC(3,3,S_{\mathrm{MS}})$ are called the \emph{(two-dimensional standard) Sierpi\'{n}ski carpet}
and the \emph{Menger sponge}, respectively, where
$S_{\mathrm{SC}}:=\{0,1,2\}^{2}\setminus\{(1,1)\}$ and
$S_{\mathrm{MS}}:=\bigl\{(i_{1},i_{2},i_{3})\in\{0,1,2\}^{3}\bigm\vert\sum_{k=1}^{3}\one_{\{1\}}(i_{k})\leq 1\bigr\}$
(see Figure \ref{fig:GSCs} above).
\begin{figure}[t]\centering
	\includegraphics[height=80pt]{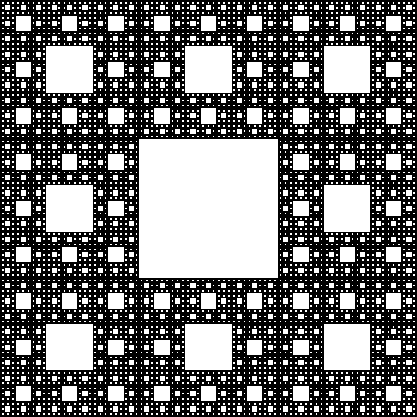}\hspace*{3pt}
	\includegraphics[height=80pt]{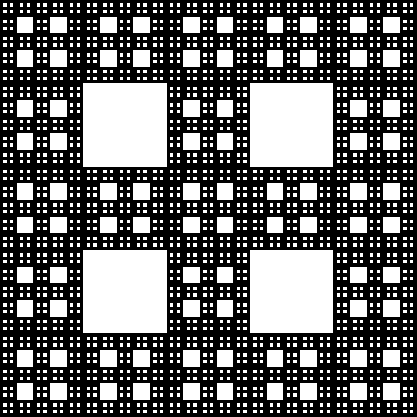}\hspace*{3pt}
	\includegraphics[height=80pt]{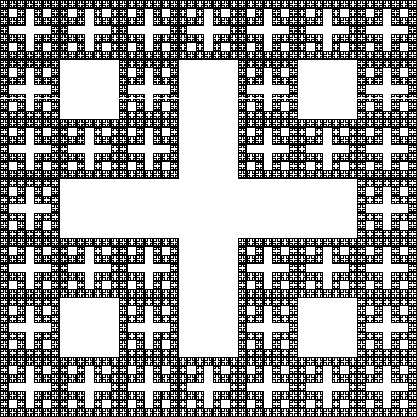}\hspace*{3pt}
	\includegraphics[height=83pt]{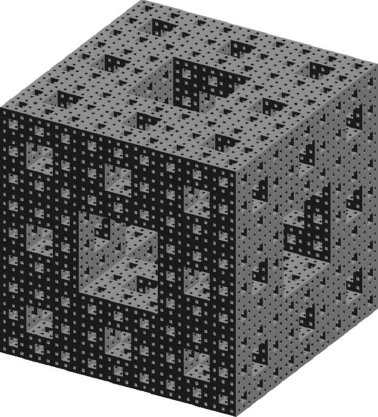}
	\caption{Sierpi\'{n}ski carpet, some other generalized Sierpi\'{n}ski
	carpets with $N=2$ and Menger sponge}\label{fig:GSCs}
\end{figure}

See \cite[Remark 2.2]{BB99} for a description of the meaning of each of the four
conditions \ref{it:GSC1}, \ref{it:GSC2}, \ref{it:GSC3} and \ref{it:GSC4}
in Definition \ref{dfn:GSC}. We remark that there are several equivalent ways of
stating the non-diagonality condition, as in the following proposition.
\begin{proposition}[{\cite[\textsection 2]{Kaj10}}]\label{prop:ND}
Set $\lvert x\rvert_{1}:=\sum_{k=1}^{N}\lvert x_{k}\rvert$ for $x=(x_{k})_{k=1}^{N}\in\mathbb{R}^{N}$.
Then \ref{it:GSC3} is equivalent to any one of the following three conditions:
\begin{enumerate}[label=\textup{(NDF)},align=left,leftmargin=*,topsep=2pt,parsep=0pt,itemsep=2pt]
\item[\hypertarget{NDN}{\textup{(ND)$_{\mathbb{N}}$}}]$\interior_{\mathbb{R}^{N}}\bigl(Q_{1}\cap \prod_{k=1}^{N}[(i_{k}-1)l^{-n},(i_{k}+1)l^{-n}]\bigr)$
	is either empty or connected for any $n\in\mathbb{N}$ and any $(i_{k})_{k=1}^{N}\in\{1,2,\ldots,l^{n}-1\}^{N}$.
\item[\hypertarget{ND2}{\textup{(ND)$_{2}$}}]$\interior_{\mathbb{R}^{N}}\bigl(Q_{1}\cap \prod_{k=1}^{N}[(i_{k}-1)l^{-2},(i_{k}+1)l^{-2}]\bigr)$
	is either empty or connected for any $(i_{k})_{k=1}^{N}\in\{1,2,\ldots,l^{2}-1\}^{N}$.
\item[\hypertarget{NDF}{\textup{(NDF)}}]For any $i,j\in S$ with $f_{i}(Q_{0})\cap f_{j}(Q_{0})\not=\emptyset$
	there exists $\{n(k)\}_{k=0}^{\lvert i-j\rvert_{1}}\subset S$ such that $n(0)=i$,
	$n(\lvert i-j\rvert_{1})=j$ and $\lvert n(k)-n(k+1)\rvert_{1}=1$ for any $k\in\{0,\ldots,\lvert i-j\rvert_{1}-1\}$.
\end{enumerate}
\end{proposition}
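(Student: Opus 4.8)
The plan is to prove the chain of implications
$$\textup{(ND)}_{\mathbb{N}}\ \Longrightarrow\ \textup{(ND)}_{2}\ \Longrightarrow\ \textup{(NDF)}\ \Longrightarrow\ \textup{(GSC3)}\ \Longrightarrow\ \textup{(ND)}_{\mathbb{N}},$$
the first of which is immediate by specializing to $n=2$. The common thread of the remaining three is a translation of the topological non-diagonality conditions into purely combinatorial statements about $S$. Given a point $p$ lying on the grid $l^{-1}\mathbb{Z}^{N}$ in a set $D\subseteq\{1,\dots,N\}$ of its coordinates (and in the interior of the corresponding coordinate intervals for $c\notin D$), the intersection of $Q_{1}$ with a small box straddling the grid hyperplanes through $p$ in the directions in some $D'\subseteq D$ and lying generically inside single cells in the remaining directions looks, near $p$, like a product of a small box with a union of $|D'|$-dimensional closed coordinate orthants, one for each $m\in S$ whose cell $f_{m}(Q_{0})$ meets the box. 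I would first record the elementary but crucial lemma that the interior of a finite union of closed coordinate orthants at a corner is connected if and only if the index set is \emph{face-connected}, i.e.\ connected in the graph on $\{0,1\}^{|D'|}$ joining pairs that differ in exactly one coordinate. Via this lemma, $\textup{(ND)}_{2}$, $\textup{(GSC3)}$, and each window appearing in $\textup{(ND)}_{\mathbb{N}}$ all become families of assertions of the form ``such-and-such slice of $S$ inside a combinatorial box is face-connected''.

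For $\textup{(ND)}_{2}\Rightarrow\textup{(NDF)}$, fix $i,j\in S$ with $f_{i}(Q_{0})\cap f_{j}(Q_{0})\neq\emptyset$, so that $|i_{c}-j_{c}|\le 1$ for all $c$, and argue by induction on $d:=|i-j|_{1}$. Let $D:=\{c:i_{c}\neq j_{c}\}$ and let $p$ be a common corner of the two cells lying on the $l^{-1}$-grid exactly in the directions of $D$; since $l\ge 3$ one can place, for every nonempty $D'\subseteq D$ and every fixing of the coordinates in $D\setminus D'$, a scale-$l^{-2}$ symmetric window as in $\textup{(ND)}_{2}$ straddling $p$ precisely in the directions of $D'$, whence $\textup{(ND)}_{2}$ and the orthant lemma give that every slice of $T:=S\cap\bigl(\prod_{c\in D}\{i_{c},j_{c}\}\times\prod_{c\notin D}\{i_{c}\}\bigr)$ obtained by fixing some coordinates in $D$ is face-connected in the remaining sub-cube. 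In particular $T$ itself is face-connected, so some face-path in $T$ runs from $i$ to $j$; at the step where it crosses a chosen coordinate $c^{*}\in D$ we obtain $m,m'\in T$ differing only in $c^{*}$, with $m$ in the $i_{c^{*}}$-half and $m'$ in the $j_{c^{*}}$-half of $T$. Each half, with the slice conditions inherited from those for $T$, satisfies the inductive hypothesis, so there are geodesic chains in $T$ from $i$ to $m$ and from $m'$ to $j$; concatenating them through the bridge $m\to m'$ and using $|i_{c}-j_{c}|=1$ for $c\in D$ to verify that no length is wasted yields a geodesic (length-$d$) chain in $S$ from $i$ to $j$, which is $\textup{(NDF)}$.

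For $\textup{(NDF)}\Rightarrow\textup{(GSC3)}$ I would observe that the chains produced by $\textup{(NDF)}$ are monotone in each coordinate, hence stay inside the combinatorial box spanned by their endpoints; therefore, for any window $W$ appearing in $\textup{(GSC3)}$, the filled cells meeting $W$ form a face-connected family (any two, even diagonal neighbours, are joined by an $\textup{(NDF)}$-chain contained in $W$), and the orthant lemma shows $\interior(Q_{1}\cap W)$ is connected or empty. Finally, for $\textup{(GSC3)}\Rightarrow\textup{(ND)}_{\mathbb{N}}$, the case $n=1$ is literally contained in $\textup{(GSC3)}$ (take all $\varepsilon_{k}=1$), and for $n\ge 2$ the window in $\textup{(ND)}_{\mathbb{N}}$ has side $2l^{-n}\le 2l^{-2}<l^{-1}$ (again using $l\ge 3$), so it meets the $l^{-1}$-grid hyperplanes only in some subset $D$ of directions and sits inside single cells otherwise; by the orthant lemma the connectedness of $\interior(Q_{1}\cap W)$ is equivalent to the face-connectedness of the corresponding $|D|$-dimensional slice of $S$, which is exactly the content of the $\textup{(GSC3)}$-window with $\varepsilon_{k}=1$ for $k\in D$ and $\varepsilon_{k}=0$ otherwise.

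The main obstacle is the combinatorial core of $\textup{(ND)}_{2}\Rightarrow\textup{(NDF)}$: one must genuinely use the lower-dimensional straddling windows (not merely the full-dimensional one at $p$) to obtain \emph{all} the slice-wise face-connectivity statements, and then extract a \emph{geodesic} chain — a face-connected subset of a Boolean box containing two prescribed vertices need not contain a shortest path between them, so the induction on $|i-j|_{1}$ splitting along the two halves in a coordinate $c^{*}\in D$, together with the bridge argument, is indispensable. A secondary nuisance is the bookkeeping for windows abutting $\partial Q_{0}$ (where some index may equal $0$ or $l^{n}$), which is handled by the same reductions since such windows only see fewer cells. With all details carried out, this equivalence is due to Kajino \cite[\S2]{Kaj10}.
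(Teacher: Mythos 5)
The paper itself gives no proof of this proposition; it is quoted verbatim from Kajino \cite[\S 2]{Kaj10}, so the only available comparison is with that reference. Your argument is correct and follows essentially the same strategy as the cited source: the dictionary between connectedness of $\interior_{\mathbb{R}^{N}}(Q_{1}\cap W)$ and face-connectedness of the set of cells of $S$ meeting $W$ (your ``orthant lemma''), the chain condition \textup{(NDF)} as the pivot of the cycle of implications, and an induction on $|i-j|_{1}$ with a bridge step to upgrade mere face-connectedness of $S\cap\prod_{c}\{i_{c},j_{c}\}$ to the existence of a \emph{geodesic} chain. You correctly identify the two delicate points: that face-connectedness of a Boolean box does not by itself yield a shortest path (hence the induction is genuinely needed), and that windows whose boundary merely touches a level-$1$ grid hyperplane cause no harm, which follows cleanly from $\interior(Q_{1}\cap W)=\interior(Q_{1})\cap\interior(W)$ so that cells meeting $W$ only in a face of $\partial W$ contribute nothing to the interior.
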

\begin{remark}\label{rmk:GSC}
\begin{enumerate}[label=\textup{(\arabic*)},align=left,leftmargin=*,topsep=5pt,parsep=0pt,itemsep=2pt]
\item Only the case of $n=1$ of \hyperlink{NDN}{\textup{(ND)$_{\mathbb{N}}$}} had been assumed
	in the\ original definition of generalized Sierpi\'{n}ski carpets in \cite[Section 2]{BB99},
	but Barlow, Bass, Kumagai and Teplyaev \cite{BBKT} later realized that it had been
	too weak for \cite[Proof of Theorem 3.19]{BB99} and had to be replaced by
	\hyperlink{NDN}{\textup{(ND)$_{\mathbb{N}}$}} (or equivalently, by \ref{it:GSC3}).
\item In fact, \cite[Subsection 2.2]{BBKT} assumes instead of \ref{it:GSC2}
	the seemingly stronger condition that $\interior_{\mathbb{R}^{N}}Q_{1}$ is connected,
	but it is implied by \ref{it:GSC2} and \ref{it:GSC3}
	in view of \hyperlink{NDF}{\textup{(NDF)}} in Proposition \ref{prop:ND} and is thus
	equivalent to \ref{it:GSC2} under the assumption of \ref{it:GSC3}.
\end{enumerate}
\end{remark}
In the rest of this subsection, we assume that
$\mathcal{L}:=\mathrm{GSC}(N,l,S)=(K,S,\{F_{i}\}_{i\in S})$ is a generalized
Sierpi\'{n}ski carpet. Then we easily see the following proposition and lemma.
\begin{proposition}\label{prop:GSC-V0}
Set $S_{k,\varepsilon}:=\{(i_{n})_{n=1}^{N}\in S\mid i_{k}=(l-1)\varepsilon\}$
for $k\in\{1,2,\dots,N\}$ and $\varepsilon\in\{0,1\}$. Then
$\mathcal{P}_{\mathcal{L}}=\bigcup_{k=1}^{N}(S_{k,0}^{\mathbb{N}}\cup S_{k,1}^{\mathbb{N}})$
and $\overline{V_{0}}=V_{0}=K\setminus(0,1)^{N}\not=K$.
\end{proposition}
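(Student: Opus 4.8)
The statement to prove is Proposition \ref{prop:GSC-V0}: that $\mathcal{P}_{\mathcal{L}}=\bigcup_{k=1}^{N}(S_{k,0}^{\mathbb{N}}\cup S_{k,1}^{\mathbb{N}})$ and that $\overline{V_{0}}=V_{0}=K\setminus(0,1)^{N}\not=K$. The plan is to work directly from the definitions of $\mathcal{C}_{\mathcal{L}}$ and $\mathcal{P}_{\mathcal{L}}$ in \eqref{e:C-P}, together with the geometry of the cubes $f_{i}(Q_{0})$, $i\in S$, inside $Q_{0}=[0,1]^{N}$, and then read off $V_{0}=\pi(\mathcal{P}_{\mathcal{L}})$.

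First I would identify when two distinct cells $K_{i}$ and $K_{j}$, $i,j\in S$, intersect. Since $K\subset Q_{0}$ and $K_{i}=f_{i}(K)$ with $f_{i}(x)=l^{-1}(i+x)$, one has $K_{i}\subset f_{i}(Q_{0})$, and two distinct sub-cubes $f_{i}(Q_{0}), f_{j}(Q_{0})$ of the grid $l^{-1}\{0,\dots,l-1\}^{N}+l^{-1}Q_0$ meet only along a common face, edge, or lower-dimensional facet lying on a hyperplane of the form $\{x_{k}=c\}$ with $c\in l^{-1}\{1,\dots,l-1\}$. The key geometric point, using condition \hyperlink{GSC4}{(GSC4)} (borders included) and the symmetry \hyperlink{GSC1}{(GSC1)}, is that $K$ actually contains the full boundary faces $\{x_{k}=0\}\cap Q_0$ and $\{x_{k}=1\}\cap Q_0$ of $Q_0$ (intersected with $K$), so that along an internal hyperplane $\{x_{k}=c\}$ the intersection $K_{i}\cap K_{j}$ is nonempty and consists precisely of points with $x_{k}=c$. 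Pulling back through $\pi$: a point $\omega\in\Sigma$ lies in $\mathcal{C}_{\mathcal{L}}$ iff $\pi(\omega)$ lies on such an internal hyperplane, i.e., iff $\pi(\omega)$ has some coordinate in $l^{-1}\{1,\dots,l-1\}$ — equivalently $[\omega]_1=i$ with $i_k\neq 0$ and $i_k\neq l-1$ for... — more carefully, $\pi(\omega)$ has $k$-th coordinate equal to $c=i_k l^{-1}$ (from above) or $(i_k+1)l^{-1}$ (from below) for the first letter. I would then apply $\sigma^{n}$: $\sigma^{n}(\mathcal{C}_{\mathcal{L}})$ consists of $\omega$ for which $\pi(\omega)$, after rescaling the $n$-th cell back up, lands on an internal hyperplane. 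The claim $\mathcal{P}_{\mathcal{L}}=\bigcup_{k,\varepsilon}S_{k,\varepsilon}^{\mathbb{N}}$ then amounts to: the only points fixed under arbitrarily deep such "return to an internal hyperplane" structure are those $\omega$ all of whose letters keep $\pi(\sigma^m\omega)$ on a face $\{x_k=0\}$ or $\{x_k=1\}$ for a fixed $k$ — i.e. all letters have $k$-th coordinate $0$ (then $\pi(\omega)_k=0$) or all have $k$-th coordinate $l-1$ (then $\pi(\omega)_k=1$). This is where the combinatorial bookkeeping is needed: one must check the inclusion $\bigcup_{k,\varepsilon}S_{k,\varepsilon}^{\mathbb N}\subseteq\mathcal P_{\mathcal L}$ (each such $\omega$ is $\sigma^n$ of a critical sequence for every $n$, using that a face cube is adjacent to another cube on the opposite side of $Q_0$'s interior via \hyperlink{GSC4}{(GSC4)}) and the reverse inclusion (if $\omega\in\sigma^n(\mathcal C_{\mathcal L})$ for infinitely many $n$, hence — since $\mathcal P_{\mathcal L}$ is $\sigma$-invariant and we want every $n$ — tracking that the coordinate forced onto $\{0,1\}$ cannot change).

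Having pinned down $\mathcal{P}_{\mathcal{L}}$, the computation of $V_0=\pi(\mathcal P_{\mathcal L})$ is then straightforward: $\pi(S_{k,0}^{\mathbb N})$ is exactly $K\cap\{x_k=0\}$ and $\pi(S_{k,1}^{\mathbb N})=K\cap\{x_k=1\}$, because $S_{k,\varepsilon}^{\mathbb N}$ is precisely $\pi^{-1}$ of the self-similar set built from the sub-family $\{f_i\mid i\in S_{k,\varepsilon}\}$, which is $K\cap\{x_k=(l-1)\varepsilon \cdot (\text{in the limit})\,\}$ — i.e. the face of $K$; here \hyperlink{GSC4}{(GSC4)} and \hyperlink{GSC1}{(GSC1)} guarantee each face of $Q_0$ is fully contained in $K$ and is itself a generalized-Sierpiński-carpet-type set. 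Taking the union over $k$ and $\varepsilon$ gives $V_0=\bigcup_{k=1}^N\bigl(K\cap\{x_k=0\}\cup K\cap\{x_k=1\}\bigr)=K\cap\partial Q_0=K\setminus(0,1)^N$. Since each such face is closed, $V_0$ is closed, so $\overline{V_0}=V_0$; and $V_0\neq K$ because $K$ has nonempty interior-in-$Q_0$ points, e.g. $K\cap(0,1)^N\neq\emptyset$ (indeed $f_i(\text{a face of }Q_0)$ for an interior cube $i$ gives points of $K$ off $\partial Q_0$; alternatively $d_{\mathrm f}<N$ forces nothing here, but connectedness plus $\#S\ge 2$ does).

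The main obstacle I expect is the reverse inclusion $\mathcal{P}_{\mathcal{L}}\subseteq\bigcup_{k,\varepsilon}S_{k,\varepsilon}^{\mathbb N}$, precisely because \emph{non-diagonality} \hyperlink{GSC3}{(GSC3)} is essential and subtle here: without it, the intersection of two cells could be a lower-dimensional corner rather than a full facet, and $\mathcal C_{\mathcal L}$ — hence $\mathcal P_{\mathcal L}$ — would be larger and messier. I would use the characterization \hyperlink{NDF}{(NDF)} from Proposition \ref{prop:ND} to control exactly which coordinate gets "frozen" when we pass to $\sigma^n$ and to verify that it stays frozen, so that a point of $\mathcal P_{\mathcal L}$ genuinely forces one coordinate to be identically $0$ or identically $l-1$ across all letters. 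I expect this to be a careful but elementary induction on $n$; the geometric input (faces of $Q_0$ lie in $K$, internal intersections are facets on coordinate hyperplanes) does all the real work, and the rest is bookkeeping with the shift map.
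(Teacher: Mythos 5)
Your overall strategy is the right one (the paper itself offers no proof, calling the statement easily seen): identify $\pi^{-1}(K\cap\{x_{k}=\varepsilon\})=S_{k,\varepsilon}^{\mathbb{N}}$, show $\sigma(\mathcal{C}_{\mathcal{L}})\subseteq\bigcup_{k,\varepsilon}S_{k,\varepsilon}^{\mathbb{N}}$ together with $\sigma\bigl(S_{k,\varepsilon}^{\mathbb{N}}\bigr)=S_{k,\varepsilon}^{\mathbb{N}}$, and produce for each $\tau\in S_{k,\varepsilon}^{\mathbb{N}}$ a one-letter extension lying in $\mathcal{C}_{\mathcal{L}}$. But three of your supporting claims are wrong as stated. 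First, \textup{(GSC1)} and \textup{(GSC4)} do \emph{not} imply that the faces of $Q_{0}$ are contained in $K$: for the Menger sponge, $K\cap\{x_{3}=0\}$ is the Sierpi\'{n}ski carpet, not the full square. What you actually need is only that $K\cap\{x_{k}=0\}\not=\emptyset$ (from \textup{(GSC4)} and the coordinate permutations in \textup{(GSC1)}) and that the reflection in $\{x_{k}=1/2\}$ maps $K\cap\{x_{k}=0\}$ onto $K\cap\{x_{k}=1\}$ (from \textup{(GSC1)}); this is exactly what makes $K_{i}\cap K_{i-e_{k}}=F_{i}(K\cap\{y_{k}=0\})\not=\emptyset$ for a pair $i,i-e_{k}\in S$ ($e_{k}$ the $k$-th standard basis vector), which is the substance of $\bigcup_{k,\varepsilon}S_{k,\varepsilon}^{\mathbb{N}}\subseteq\sigma(\mathcal{C}_{\mathcal{L}})$. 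Second, non-diagonality \textup{(GSC3)} plays no role here, contrary to your ``main obstacle'' paragraph: if $K_{i}\cap K_{j}\not=\emptyset$ with $i\not=j$, then $f_{i}(Q_{0})\cap f_{j}(Q_{0})$ is a product of intervals that degenerates to a point in every coordinate $k$ with $i_{k}\not=j_{k}$, so any $\omega\in\mathcal{C}_{\mathcal{L}}$ has $\pi(\sigma\omega)_{k}\in\{0,1\}$ for such a $k$; a corner intersection only freezes \emph{more} coordinates, so nothing becomes ``larger and messier'' without \textup{(GSC3)}. Note also that $\mathcal{P}_{\mathcal{L}}$ in \eqref{e:C-P} is a union over $n\geq 1$, so membership in $\sigma^{n}(\mathcal{C}_{\mathcal{L}})$ for a single $n$ suffices; your phrases ``for infinitely many $n$'' and ``we want every $n$'' misread the definition.

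Third, your justification of $V_{0}\not=K$ via ``$f_{i}$ of a face for an interior cube $i$'' fails for the standard Sierpi\'{n}ski carpet, where the unique interior index $(1,1)$ is not in $S$, and ``connectedness plus $\#S\geq 2$'' proves nothing since $\partial Q_{0}$ is connected. A correct argument: $[0,1]\times\{0\}^{N-1}\subseteq K$ by \textup{(GSC4)}, each $e_{m}\in S$ by \textup{(GSC4)} and \textup{(GSC1)}, and applying $F_{e_{2}}\circ\dots\circ F_{e_{N}}$ to $(1/2,0,\dots,0)$ yields a point of $K$ all of whose coordinates lie in $(0,1)$. With these repairs your proof goes through.
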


\begin{lemma}\label{l:volume-GSC}
There exist $c_{1},c_{2}\in(0,\infty)$ such that
for any $(x,s)\in K\times(0,\diam_{d}(K)]$,
\begin{equation}\label{e:volume-GSC}
c_{1}s^{d_{\mathrm{f}}}\leq \measure(B_{d}(x,s))\leq c_{2}s^{d_{\mathrm{f}}}.
\end{equation}
\end{lemma}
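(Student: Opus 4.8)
\textbf{Proof plan for Lemma \ref{l:volume-GSC}.}
The statement is the standard Ahlfors regularity of the $d_{\mathrm{f}}$-dimensional Hausdorff measure on a generalized Sierpi\'nski carpet, equipped with the Euclidean metric. The plan is to deduce it directly from the self-similar structure, using that $\measure(K_{w})=(\#S)^{-|w|}=l^{-d_{\mathrm{f}}|w|}$ for all $w\in W_{*}$ together with basic geometric features of the cells $K_{w}=F_{w}(K)$ inside $Q_{0}=[0,1]^{N}$. Since the whole lemma is scale- and location-uniform and $K$ is compact, it suffices to prove the two inequalities for all $s\in(0,\diam(K,d)]$ and all $x\in K$, with constants depending only on $N,l,S$.

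First I would set up the dyadic-type cell structure. For $s\in(0,1]$ let $n=n(s)\in\mathbb{N}\cup\{0\}$ be the integer with $l^{-n-1}<s\le l^{-n}$; then every $w\in W_{n}$ has $\diam(K_{w},d)\le\diam(K,d)\,l^{-n}\le(\sqrt{N}/l)\cdot l^{-n+1}$ and $\measure(K_{w})=l^{-d_{\mathrm{f}}n}$, and the cells $\{K_{w}\}_{w\in W_{n}}$ cover $K$. For the \emph{upper bound}: fix $x\in K$ and note that $B_{d}(x,s)$ meets only those $K_{w}$, $w\in W_{n}$, with $\dist(x,K_{w})<s\le l^{-n}$; since the open $l^{-n}$-cubes $f_{i_{1}}\circ\cdots\circ f_{i_{n}}((0,1)^{N})$ are pairwise disjoint and each $K_{w}$ sits in such a closed cube, a volume-packing argument in $\mathbb{R}^{N}$ shows the number $M_{N,l}$ of such $w$ is bounded by a constant depending only on $N$ and $l$ (at most $(2\sqrt N+3)^{N}$ or so). Hence $\measure(B_{d}(x,s))\le\sum_{w}\measure(K_{w})\le M_{N,l}\,l^{-d_{\mathrm{f}}n}\le M_{N,l}\,l^{d_{\mathrm{f}}}\,s^{d_{\mathrm{f}}}$, giving $c_{2}=M_{N,l}l^{d_{\mathrm{f}}}$.

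For the \emph{lower bound}: fix $x\in K$ and again take $n=n(s)$, but now one level deeper. Pick $w\in W_{n}$ with $x\in K_{w}$; we want a subcell $K_{wv}$ with $|v|$ bounded by a constant $m=m(N,l,S)$ and $K_{wv}\subset B_{d}(x,s)$. The point is that $K_{w}$ is an isometric ($\times l^{-n}$) copy of $K$, which is not all of $Q_{0}$ but does have full topological dimension and, crucially, is ``uniformly perfect'' at the cell scale: using \hyperlink{GSC4}{(GSC4)} (borders included) and \hyperlink{GSC1}{(GSC1)} (symmetry) one checks $K$ contains, within distance $\le \tfrac12$ (say) of any of its points, a full subcell $K_{u}$, $u\in W_{m_{0}}$, for some $m_{0}=m_{0}(N,l,S)$; rescaling by $l^{-n}$ and choosing $m$ large enough that $l^{-m}\sqrt N<s/l^{-n}\le 1$ forces $K_{wv}\subset B_{d}(x,s)$ for a suitable $v\in W_{m}$. (Concretely: since $K_{w}$ has diameter comparable to $l^{-n}\asymp s$, and $K$ contains at least two distinct points of $V_0$ hence a whole subcell near each, one can always nest a subcell of controlled relative size near $x$; alternatively, invoke that $(K,d)$ is uniformly perfect — a consequence of $K$ being a connected, Ahlfors-regular-by-construction self-similar set — to get the same conclusion more cheaply.) Then $\measure(B_{d}(x,s))\ge\measure(K_{wv})=l^{-d_{\mathrm{f}}(n+m)}\ge l^{-d_{\mathrm{f}}m}\cdot l^{-d_{\mathrm{f}}(n+1)}\cdot l^{d_{\mathrm{f}}}\ge l^{-d_{\mathrm{f}}(m+1)}s^{d_{\mathrm{f}}}$, giving $c_{1}=l^{-d_{\mathrm{f}}(m+1)}$. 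Finally, for $s\in(l^{-1}\diam(K,d),\diam(K,d)]$ one has $B_{d}(x,s)\supset$ some fixed subcell and $\measure(B_{d}(x,s))\le\measure(K)=1$, so both bounds hold after adjusting constants.

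The only delicate point — and the main obstacle — is the lower bound: one must quantify, uniformly in $n$, that a Euclidean ball of radius $\asymp l^{-n}$ centered at a point of an $n$-cell $K_{w}$ engulfs a full subcell of controlled depth. For the classical Sierpi\'nski carpet and Menger sponge this is transparent, but to keep it general I would extract it from the uniform perfectness of $(K,d)$, which in turn follows because $K$ is connected (Framework \ref{frmwrk:GSC} with \hyperlink{GSC2}{(GSC2)}) and every cell $K_{w}$ contains two boundary points $F_{w}(p),F_{w}(q)$, $p\ne q\in V_{0}$, at mutual distance $\asymp l^{-|w|}$. I would state and prove this uniform perfectness as a short preliminary observation (it is anyway needed elsewhere, e.g.\ for the analogue of Lemma \ref{l:adapted-RE-pcf} in this setting), and then the two halves of \eqref{e:volume-GSC} drop out of the displayed counting estimates.
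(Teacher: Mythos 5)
Your proof is correct, and it is the standard cell-counting argument that the paper leaves to the reader (the lemma is stated with ``we easily see'' and no written proof). The upper bound via packing level-$n$ grid cubes of side $l^{-n}$ meeting $B_{d}(x,s)$ is exactly right. One remark on the lower bound, which you flag as the ``delicate point'': it is in fact immediate and needs none of (GSC1), (GSC4), connectedness or uniform perfectness. With $l^{-n-1}<s\leq l^{-n}$, simply take any $\tau\in W_{n+m_{0}}$ with $x\in K_{\tau}$, where $m_{0}$ is the least integer with $\sqrt{N}\,l^{-m_{0}}\leq l^{-1}$; then $\diam(K_{\tau},d)\leq\sqrt{N}\,l^{-n-m_{0}}\leq l^{-n-1}<s$, so $K_{\tau}\subset B_{d}(x,s)$ and $\measure(B_{d}(x,s))\geq(\#S)^{-(n+m_{0})}=l^{-d_{\mathrm{f}}m_{0}}l^{-d_{\mathrm{f}}n}\geq l^{-d_{\mathrm{f}}m_{0}}s^{d_{\mathrm{f}}}$. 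Note also that uniform perfectness alone would \emph{not} suffice as the ``cheaper alternative'' you suggest: it only produces a point in an annulus, not a set of definite $\measure$-mass inside the ball, so the subcell argument (in this or your more elaborate form) is genuinely needed for the lower bound.
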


We next recall some basics of the canonical self-similar Dirichlet form on $\GSC(N,l,S)$.
There are two established ways of constructing a non-degenerate $\measure$-symmetric
diffusion without killing inside on $K$, or equivalently, a non-zero, strongly local,
regular symmetric Dirichlet form on $L^{2}(K,\measure)$, one
by Barlow and Bass \cite{BB89,BB99} using the reflecting Brownian motions on the
domains approximating $K$, and the other by Kusuoka and Zhou \cite{KZ92} based on graph
approximations. It had been a long-standing open problem to prove that the constructions
in \cite{BB89,BB99} and in \cite{KZ92} give rise to the same diffusion on $K$,
which Barlow, Bass, Kumagai and Teplyaev \cite{BBKT} have finally solved
by proving the uniqueness of a non-zero conservative regular symmetric
Dirichlet form on $L^{2}(K,\measure)$ possessing certain local symmetry.
As a consequence of the results in \cite{BBKT}, after some additional arguments
in \cite{Hin13,Kaj13,Kaj22} we have the unique existence of a canonical
self-similar Dirichlet form $(\mathcal{E},\mathcal{F})$ on $L^{2}(K,\measure)$, the
heat kernel estimates \hyperlink{hke}{$\on{HKE}(d_{\mathrm{w}})$} with $d_{\mathrm{w}}>2$,
and hence \hyperlink{phi}{$\on{PHI}(d_{\mathrm{w}})$} by Lemma \ref{l:volume-GSC} and
Theorem \ref{t:phichar}, as follows.

\begin{definition}\label{dfn:GSC-isometry}
We define
\begin{equation}\label{eq:GSC-isometry}
\mathcal{I}_{0}:=\{f\vert_{K}\mid\textrm{$f$ is an isometry of $\mathbb{R}^{N}$, $f(Q_{0})=Q_{0}$}\},
\end{equation}
which forms a subgroup of the group of isometries of $(K,d)$ by virtue of \ref{it:GSC1}.
\end{definition}

\begin{theorem}[{\cite[Theorems \textup{1.2} and \textup{4.32}]{BBKT}}, {\cite[Proposition \textup{5.1}]{Hin13}}, {\cite[Proposition \textup{5.9}]{Kaj13}}]\label{thm:GSCDF}
There exists a unique (up to constant multiples of $\mathcal{E}$) regular symmetric Dirichlet
form $(\mathcal{E},\mathcal{F})$ on $L^{2}(K,\measure)$ satisfying $\mathcal{E}(u,u)>0$
for some $u\in\mathcal{F}$, $\one_{K}\in\mathcal{F}$, $\mathcal{E}(\one_{K},\one_{K})=0$,
\eqref{e:SSDF-domain}, \eqref{e:SSDF-form} for some $r\in(0,\infty)$, and the following:
\begin{enumerate}[label=\textup{(GSCDF)},align=left,leftmargin=*,topsep=4pt,parsep=0pt,itemsep=2pt]
\item\label{it:GSCDF}If $u\in \mathcal{F}\cap \contfunc (K)$ and $g\in\mathcal{I}_{0}$
	then $u\circ g\in\mathcal{F}$ and $\mathcal{E}(u\circ g,u\circ g)=\mathcal{E}(u,u)$.
\end{enumerate}
\end{theorem}

Throughout the rest of this subsection, we fix $(\mathcal{E},\mathcal{F})$ and $r$ as given in
Theorem \ref{thm:GSCDF}; note that $r$ is uniquely determined by $(\mathcal{E},\mathcal{F})$,
since $\mathcal{E}(u,u)>0$ for some $u\in\mathcal{F}\cap \contfunc (K)$ by the existence of such
$u\in\mathcal{F}$ and the denseness of $\mathcal{F}\cap \contfunc (K)$ in the Hilbert space
$(\mathcal{F},\mathcal{E}_{1}:=\mathcal{E}+\langle\cdot,\cdot\rangle_{L^{2}(K,\measure)})$.

\begin{definition}\label{dfn:GSCDF}
The regular symmetric Dirichlet form $(\mathcal{E},\mathcal{F})$ on $L^{2}(K,\measure)$ as in
Theorem \textup{\ref{thm:GSCDF}} is called the \emph{canonical Dirichlet form} on $\mathrm{GSC}(N,l,S)$,
and we set $d_{\mathrm{w}}:=\log_{l}(\# S/r)$. Note that $(\mathcal{E},\mathcal{F})$ is
also strongly local by the same argument as \cite[Proof of Lemma 3.12]{Hin05} based on
\eqref{e:SSDF-domain}, \eqref{e:SSDF-form} and $\mathcal{E}(\one_{K},\one_{K})=0$.
\end{definition}

\begin{theorem}[{\cite[Remarks 5.4-1.]{BB99}, \cite[Theorem 2.7]{Kaj22}}]\label{thm:dwSC}
$d_{\mathrm{w}}>2$.
\end{theorem}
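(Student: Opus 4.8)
The plan is to reduce Theorem \ref{thm:dwSC} to a lower bound on the resistance scaling factor of the canonical Dirichlet form $(\mathcal{E},\mathcal{F})$ and then prove that bound by a Nash--Williams cutset estimate. By Definition \ref{dfn:GSCDF} we have $d_{\mathrm{w}}=\log_{l}(\#S/r)$, and since $\#S=l^{d_{\mathrm{f}}}$ the inequality $d_{\mathrm{w}}>2$ is equivalent to $\#S/r>l^{2}$, i.e.\ to $1/r>l^{2-d_{\mathrm{f}}}$. Here $1/r$ is precisely the factor by which the effective resistance between opposite faces of $Q_{0}$ inflates when one passes from level $n$ to level $n+1$ in the self-similar construction; by the uniqueness and renormalization results of Barlow--Bass--Kumagai--Teplyaev \cite{BBKT}, $1/r$ is the crossing resistance of the ``one-level pattern network'', the network obtained by replacing each cell $f_{i}(Q_{0})$, $i\in S$, by a resistor realizing the crossing resistance of $Q_{0}$. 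Setting up this identification carefully in the multi--boundary--point minimax framework of \cite{BBKT} is not free, and I would import it from \cite{BB99,BBKT}.

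First I would establish the nonstrict bound $\#S/r\ge l^{2}$ (equivalently $d_{\mathrm{w}}\ge 2$). Fix a coordinate direction $e_{j}$, and for $k\in\{0,1,\dots,l-1\}$ let $m_{k}:=\#\{i\in S\mid i_{j}=k\}$ be the number of cells in the $k$-th slab, so that $\sum_{k=0}^{l-1}m_{k}=\#S$. The $l$ slabs form $l$ pairwise disjoint cutsets separating the face $\{x_{j}=0\}$ from the face $\{x_{j}=1\}$ in the pattern network, and each cell in the $k$-th slab contributes unit conductance across the $k$-th cutset (with the normalization in which a single cell has unit crossing resistance), so the Nash--Williams inequality gives $1/r\ge\sum_{k=0}^{l-1}m_{k}^{-1}$. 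The Cauchy--Schwarz inequality then yields
\[
\frac{\#S}{r}\ \ge\ \Bigl(\sum_{k=0}^{l-1}m_{k}\Bigr)\Bigl(\sum_{k=0}^{l-1}m_{k}^{-1}\Bigr)\ \ge\ l^{2},
\]
which is consistent because, by the symmetry condition (GSC1), $1/r$ does not depend on the chosen direction $e_{j}$. Applied to the full cube $S=\{0,\dots,l-1\}^{N}$ this is an equality, reflecting $d_{\mathrm{w}}=2$ for $\mathbb{R}^{N}$; for the Sierpi\'{n}ski carpet it already gives $1/r\ge\tfrac13+\tfrac12+\tfrac13=\tfrac76>\tfrac98=l^{2}/\#S$, hence $d_{\mathrm{w}}>2$.

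It remains to upgrade the estimate to a strict inequality in general, and this is the heart of the matter. Equality in Cauchy--Schwarz forces $m_{0}=m_{1}=\dots=m_{l-1}$, and running the argument in every coordinate direction forces the pattern to be ``slab-uniform'' in all directions; I would rule this out using the remaining defining properties of a generalized Sierpi\'{n}ski carpet. By the borders-included condition (GSC4) together with (GSC1), every boundary slab $\{i_{j}=0\}$ (and $\{i_{j}=l-1\}$) contains all cells meeting the $1$-skeleton of the face $[0,1]^{N-1}$, whose number is $\gtrsim (N-1)2^{N-2}(l-2)+2^{N-1}$, whereas an interior slab is forced only to contain the $2^{N-1}$ cells meeting the edges of $Q_{0}$ parallel to $e_{j}$; since $l\ge 3$ and $S\subsetneq\{0,\dots,l-1\}^{N}$, one shows (this is where the non-diagonality condition and a short propagation argument through higher skeleta enter) that some slab is strictly smaller than a boundary slab in that direction, so slab-uniformity fails and the inequality is strict. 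Combining with $d_{\mathrm{w}}\ge 2$ gives $d_{\mathrm{w}}>2$. The main obstacle I expect is not the Cauchy--Schwarz counting but (i) justifying the Nash--Williams lower bound rigorously in the BBKT renormalization setting, where the crossing resistance is genuinely a minimax over boundary potential configurations rather than a scalar, and (ii) proving the failure of slab-uniformity uniformly over all $N$, $l$ and all admissible $S$; these are precisely the technical points carried out in \cite[Remark 5.4]{BB99} and \cite[Theorem 2.9]{Kaj20}, which I would cite for the details.
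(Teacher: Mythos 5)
The paper does not actually prove Theorem \ref{thm:dwSC}; it quotes it from \cite[Remarks 5.4-1.]{BB99} and \cite[Theorem 2.9]{Kaj20}. Your sketch must therefore stand on its own, and its central quantitative step is not justified. First, the identification of $1/r$ with the crossing resistance of a finite ``one-level pattern network'' of unit resistors is not something you can import from \cite{BB99,BBKT}: no such exact identity exists (the resistance scaling factor of the Sierpi\'{n}ski carpet is not even known to be computable from a finite network), and what is available are comparabilities with unspecified constants, which destroy any single-level estimate; one would have to work at level $n$ and extract exponential rates, with the attendant multiplicativity issues. Second, and more seriously, the inequality $1/r\ge\sum_{k}m_{k}^{-1}$ does not follow from Nash--Williams or from shorting. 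The rigorous shorting argument (collapsing the $l-1$ interface hyperplanes) gives $R\ge\sum_{k}\tilde R_{k}$, where $\tilde R_{k}$ is the resistance of the $k$-th slab between its two shorted bounding hyperplanes; but since the $m_{k}$ cells of a slab are glued along their lateral faces, gluing only \emph{decreases} resistance, so $\tilde R_{k}\le rR/m_{k}$, i.e.\ the comparison with ``$m_{k}$ parallel unit resistors'' goes the wrong way and nothing of the form $1/r\ge\sum_{k}m_{k}^{-1}$ results. A correct Nash--Williams application uses edge (interface) cutsets, whose conductances are the interface crossing counts rather than the slab cardinalities; for the standard carpet at one level this gives only $\tfrac12+\tfrac12=1<\tfrac98=l^{2}/\#S$ in the same normalization, so even the rigorous version of your cutset bound is insufficient for the flagship example. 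Thus the inequality from which both $d_{\mathrm{w}}\ge2$ and the strictness are supposed to flow is not established.

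The strictness step is also incomplete. In dimension $N\ge3$ the boundary slabs are not forced to be full (\hyperlink{GSC4}{\textup{(GSC4)}} only forces, together with \hyperlink{GSC1}{\textup{(GSC1)}}, the cells along the $1$-skeleton into $Q_{1}$), so ruling out ``slab-uniformity'' for every admissible $(N,l,S)$ is a genuine combinatorial claim, and your ``short propagation argument through higher skeleta'' is not given; moreover, if slab-uniformity did occur for some pattern, your argument has no fallback, since you do not show that the Nash--Williams slack is then strict. Finally, you close both gaps by asserting that they are ``precisely the technical points carried out in'' \cite{BB99} and \cite{Kaj20}; that is not accurate --- neither reference proceeds by a slab Nash--Williams/Cauchy--Schwarz count, so these steps cannot simply be imported --- and since the theorem in the paper is itself a citation of those references, deferring the essential work back to them makes the proposal circular rather than a proof. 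If you want a network-theoretic argument, it has to be run on the discrete approximations at all levels with careful control of constants and of how the cutset bounds multiply; otherwise the honest route is to follow the argument of \cite{Kaj20} (or \cite{BB99}) rather than to reconstruct it by this heuristic.
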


\begin{theorem}[{\cite[Theorem 1.3]{BB99}, \cite[Theorem 4.30 and Remark 4.33]{BBKT}}]\label{thm:BBKT-HK}
There exists a (unique) continuous version $p=p_{t}(x,y)\colon(0,\infty)\times K\times K\to[0,\infty)$
of the heat kernel of $(K,d,\measure,\mathcal{E},\mathcal{F})$, and there exist
$c_{1},c_{2},c_{3},c_{4}\in(0,\infty)$ such that for any $(t,x,y)\in(0,1]\times K\times K$,
\begin{equation}\label{eq:HK-subGauss-GSC}
\frac{c_{1}}{t^{d_{\mathrm{f}}/d_{\mathrm{w}}}}\exp\biggl(-\Bigl(\frac{d(x,y)^{d_{\mathrm{w}}}}{c_{2}t}\Bigr)^{\frac{1}{d_{\mathrm{w}}-1}}\biggr)
	\leq p_{t}(x,y)
	\leq\frac{c_{3}}{t^{d_{\mathrm{f}}/d_{\mathrm{w}}}}\exp\biggl(-\Bigl(\frac{d(x,y)^{d_{\mathrm{w}}}}{c_{4}t}\Bigr)^{\frac{1}{d_{\mathrm{w}}-1}}\biggr).
\end{equation}
In particular, $(K,d,\measure,\mathcal{E},\mathcal{F})$ satisfies 
\hyperlink{hke}{$\on{HKE}(d_{\mathrm{w}})$} and \hyperlink{phi}{$\on{PHI}(d_{\mathrm{w}})$}
by Lemma \textup{\ref{l:volume-GSC}} and Theorem \textup{\ref{t:phichar}}.
\end{theorem}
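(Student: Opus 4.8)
The final statement to prove is Theorem \ref{thm:BBKT-HK}, which asserts the existence of a continuous heat kernel with sub-Gaussian bounds \eqref{eq:HK-subGauss-GSC} and concludes that $(K,d,\measure,\mathcal{E},\mathcal{F})$ satisfies \hyperlink{hke}{$\on{HKE}(d_{\mathrm{w}})$} and \hyperlink{phi}{$\on{PHI}(d_{\mathrm{w}})$}. This is stated as a citation-backed theorem drawing on \cite{BB99} and \cite{BBKT}, so the plan is essentially to quote and assemble the existing results rather than to reprove them from scratch.

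The plan is as follows. First, I would invoke \cite[Theorem 1.3]{BB99} together with \cite[Theorem 4.30]{BBKT} to obtain the sub-Gaussian heat kernel bounds \eqref{eq:HK-subGauss-GSC} for the canonical Dirichlet form $(\mathcal{E},\mathcal{F})$ of Definition \ref{dfn:GSCDF}; here the role of \cite{BBKT} is to identify the diffusion constructed by Barlow--Bass (for which the estimates were proved) with the unique canonical self-similar Dirichlet form, via the uniqueness result \cite[Theorem 1.2]{BBKT} (and \cite[Remark 4.33]{BBKT} for the matching of the walk-dimension parameter $d_{\mathrm{w}}=\log_l(\#S/r)$). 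Second, I would note that these two-sided bounds immediately give a jointly continuous version $p_t(x,y)$ of the heat kernel by a standard argument (the Gaussian-type off-diagonal decay plus the on-diagonal bound and the semigroup property yield equicontinuity; alternatively this continuity is already recorded in \cite[Theorem 4.30]{BBKT}), and that its uniqueness as a continuous version is clear since two continuous functions agreeing $\measure\times\measure$-a.e.\ on $(0,\infty)\times K\times K$ coincide, $\measure$ having full support. Third, I would observe that \eqref{eq:HK-subGauss-GSC}, being exactly the form of \eqref{e:uhke}--\eqref{e:nlhke} with $\beta=d_{\mathrm{w}}$ once the volume estimate \eqref{e:volume-GSC} of Lemma \ref{l:volume-GSC} is used to replace $t^{d_{\mathrm{f}}/d_{\mathrm{w}}}$ by $\measure(B_d(x,t^{1/d_{\mathrm{w}}}))$ up to constants, is precisely \hyperlink{hke}{$\on{HKE}(d_{\mathrm{w}})$}. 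Finally, Lemma \ref{l:volume-GSC} also gives \ref{VD} (with constant depending on $d_{\mathrm{f}}$) and \ref{RVD} on $(K,d)$, since $\measure$ is comparable to the $d_{\mathrm{f}}$-dimensional Hausdorff measure and $\diam(K,d)<\infty$; hence Theorem \ref{t:phichar} (the equivalence (a)$\Leftrightarrow$(c)) upgrades \ref{VD}, \ref{RVD} and \hyperlink{hke}{$\on{HKE}(d_{\mathrm{w}})$} to \hyperlink{phi}{$\on{PHI}(d_{\mathrm{w}})$}. One should also record that $d_{\mathrm{w}}\geq 2$ (indeed $d_{\mathrm{w}}>2$ by Theorem \ref{thm:dwSC}), so that the hypothesis $\beta\in[2,\infty)$ of Theorem \ref{t:phichar} is met.

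The only mild subtlety — and the step I would flag as the main one requiring care rather than a literal citation — is matching the normalization: \cite{BB99} state the bounds with $t^{-d_{\mathrm{f}}/d_{\mathrm{w}}}$ (valid for $t\in(0,1]$ because $\diam(K,d)$ and $\measure(K)$ are finite), whereas \hyperlink{hke}{$\on{HKE}(\beta)$} in Definition \ref{d:HKE} is phrased with $m(B(x,t^{1/\beta}))$ in the denominator and no restriction on $t$. Restricting attention to $t\le 1$ (equivalently $t^{1/d_{\mathrm{w}}}\le 1 \le \diam(K,d)$ after a harmless rescaling of the metric, or simply using $\measure(B_d(x,s))\asymp \measure(K)$ for $s\ge\diam(K,d)$), Lemma \ref{l:volume-GSC} gives $m(B_d(x,t^{1/d_{\mathrm{w}}}))\asymp t^{d_{\mathrm{f}}/d_{\mathrm{w}}}$ uniformly, so \eqref{eq:HK-subGauss-GSC} is equivalent to \eqref{e:uhke}--\eqref{e:nlhke}; for $t\ge 1$ the required bounds follow from the $t=1$ case, the semigroup property, and $\measure(K)<\infty$ by a routine iteration. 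With this translation in hand, everything reduces to the cited inputs and Theorem \ref{t:phichar}.

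Thus the proof is essentially an assembly: cite \cite[Theorems 1.2, 4.30 and Remark 4.33]{BBKT} and \cite[Theorem 1.3]{BB99} for the continuous heat kernel and the estimate \eqref{eq:HK-subGauss-GSC}; invoke Lemma \ref{l:volume-GSC} to reformulate it as \hyperlink{hke}{$\on{HKE}(d_{\mathrm{w}})$} and to establish \ref{VD} and \ref{RVD}; invoke Theorem \ref{thm:dwSC} for $d_{\mathrm{w}}>2$; and conclude \hyperlink{phi}{$\on{PHI}(d_{\mathrm{w}})$} by the implication (c)$\Rightarrow$(a) of Theorem \ref{t:phichar}. No genuinely new argument is needed, which is appropriate since this theorem serves only to record, in the notation of the present paper, the known fact that the canonical diffusion on a generalized Sierpi\'{n}ski carpet satisfies the parabolic Harnack inequality with its (anomalous) walk dimension $d_{\mathrm{w}}$.
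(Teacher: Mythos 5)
Your proposal is correct and takes essentially the same route as the paper, which gives no separate proof of this theorem beyond the citations in its header and the closing remark that \hyperlink{hke}{$\on{HKE}(d_{\mathrm{w}})$} and \hyperlink{phi}{$\on{PHI}(d_{\mathrm{w}})$} follow from Lemma \ref{l:volume-GSC} and Theorem \ref{t:phichar}. Your extra care about the normalization (replacing $t^{-d_{\mathrm{f}}/d_{\mathrm{w}}}$ by $\measure(B_{d}(x,t^{1/d_{\mathrm{w}}}))^{-1}$ via \eqref{e:volume-GSC} and extending from $t\in(0,1]$ to all $t>0$) and about $d_{\mathrm{w}}>2$ meeting the hypothesis of Theorem \ref{t:phichar} is exactly the bookkeeping the paper leaves implicit.
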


The following lemma is an immediate consequence of Theorem \ref{thm:BBKT-HK}.
Recall that $\mathcal{F}$ is a Hilbert space under the inner product
$\mathcal{E}_{1}:=\mathcal{E}+\langle\cdot,\cdot\rangle_{L^{2}(K,\measure)}$.

\begin{lemma}\label{l:global-PI-GSC}
The inclusion map $\mathcal{F}\hookrightarrow L^{2}(K,\measure)$ is a compact linear
operator from $(\mathcal{F},\mathcal{E}_{1})$ to $L^{2}(K,\measure)$, and there exists
$C_{\mathrm{P}}\in(0,\infty)$ such that for any $u\in\mathcal{F}$,
\begin{equation}\label{e:global-PI}
\int_{K}\Bigl\lvert u-\int_{K}u\,d\measure\Bigr\rvert^{2}\,d\measure\leq C_{\mathrm{P}}\mathcal{E}(u,u).
\end{equation}
In particular, $\{u\in\mathcal{F}\mid\mathcal{E}(u,u)=0\}=\mathbb{R}\one_{K}$,
$(\mathcal{F}/\mathbb{R}\one_{K},\mathcal{E})$ is a Hilbert space, and the extended
Dirichlet space $\mathcal{F}_{e}$ of $(K,d,\measure,\mathcal{E},\mathcal{F})$
coincides with $\mathcal{F}$.
\end{lemma}

\begin{proof}
The compactness of the inclusion map $\mathcal{F}\hookrightarrow L^{2}(K,\measure)$
follows from Theorem \ref{thm:BBKT-HK} and \cite[Corollary 4.2.3 and Exercise 4.2]{Dav}.
The existence of $C_{\mathrm{P}}\in(0,\infty)$ satisfying \eqref{e:global-PI}
for any $u\in\mathcal{F}$ is implied by Theorems \ref{thm:BBKT-HK} and \ref{t:phichar}
as a special case of \hyperlink{pi}{$\on{PI}(d_{\mathrm{w}})$}, or more elementarily
by \cite[Theorems 4.5.1 and 4.5.3]{Dav} and the fact that
$\{u\in\mathcal{F}\mid\mathcal{E}(u,u)=0\}=\mathbb{R}\one_{K}$
by $\mathcal{E}(\one_{K},\one_{K})=0$, Theorem \ref{thm:BBKT-HK} and \cite[Theorem 2.1.11]{CF}.
The completeness of $(\mathcal{F}/\mathbb{R}\one_{K},\mathcal{E})$ is immediate from
that of $(\mathcal{F},\mathcal{E}_{1})$ and \eqref{e:global-PI}, and they also easily
imply the equality $\mathcal{F}_{e}=\mathcal{F}$ as proved in \cite[Proposition 2.9]{HiKu}.
\end{proof}

Introducing the following space of $\mathcal{E}$-harmonic functions is convenient
for our purpose in this subsection; recall \eqref{e:harmonic} for the definition
of $\mathcal{E}$-harmonicity of functions.

\begin{definition}\label{d:harmonic-GSC}
We define
\begin{equation}\label{e:harmonic-GSC}
\mathcal{H}_{0}:=\biggl\{h\in\mathcal{F}\biggm\vert
	\begin{minipage}{190pt}
	$h$ is $\mathcal{E}$-harmonic on $K\setminus V_{0}$, i.e., $\mathcal{E}(h,v)=0$
	for any $v\in\mathcal{F}\cap \contfunc (K)$ with $\supp\nolimits_{\measure}[v]\subset K\setminus V_{0}$
	\end{minipage}
	\biggr\},
\end{equation}
which is clearly a linear subspace of $\mathcal{F}$ and satisfies
$\mathbb{R}\one_{K}\subset\mathcal{H}_{0}$ by $\mathcal{E}(\one_{K},\one_{K})=0$.
Note that $\mathcal{H}_{0}$ is weakly closed in $(\mathcal{F},\mathcal{E}_{1})$ since
$\mathcal{E}(\cdot,v)$ is a bounded linear functional on $(\mathcal{F},\mathcal{E}_{1})$
for any $v\in\mathcal{F}$.
\end{definition}

As the counterpart of Proposition \ref{p:condition-PHI-pcf} for generalized Sierpi\'{n}ski carpets,
we have the following characterization of the pair $(\theta,\mu)$ of
$\theta\in\mathcal{J}(K,d)$ (recall \eqref{e:cgauge-dfn}) and
$\mu\in\mathcal{A}(K,d,\measure,\mathcal{E},\mathcal{F})$ (recall Definition \ref{d:admissible})
such that $(K,\theta,\mu,\mathcal{E}^{\mu},\mathcal{F}^{\mu})$ satisfies \ref{PHI};
see also \cite{Kig19} for related results.

\begin{theorem}\label{t:condition-PHI-GSC}
Let $\theta\in\mathcal{J}(K,d)$, let $\mu$ be a Radon measure on $K$ with full support and let $\beta\in(1,\infty)$.
Then the following conditions are equivalent:
\begin{enumerate}[label=\textup{(\alph*)},align=left,leftmargin=*,topsep=5pt,parsep=0pt,itemsep=2pt]
\item \label{it:condition-PHI-GSC-a}$\mu\in\mathcal{A}(K,d,\measure,\mathcal{E},\mathcal{F})$ and
	$(K,\theta,\mu,\mathcal{E}^{\mu},\mathcal{F}^{\mu})$ satisfies \ref{PHI}.
\item \label{it:condition-PHI-GSC-b}There exists $C\in(1,\infty)$ such that for any $w\in W_{*}$, with $r_{w}:=r^{\lvert w\rvert}$,
	\begin{equation}\label{e:DM2-GSC}
	C^{-1}(\diam_{\theta}(K_{w}))^{\beta}\leq r_{w}\mu(K_{w})\leq C(\diam_{\theta}(K_{w}))^{\beta}.
	\end{equation}
\end{enumerate}
Moreover, if either of these conditions holds, then
$\mu(F_{w}(V_{0}))=0$ for any $w\in W_{*}$ and $\mu(\{x\})=0$ for any $x\in K$.
\end{theorem}

Theorem \ref{t:condition-PHI-GSC} follows by repeating the same arguments as the proof
of Proposition \ref{p:condition-PHI-pcf}, on the basis of the following proposition concluded
from Theorem \ref{t:phichar} with the help of \cite[Lemma 5.22 and Proposition 6.16]{BCM}.

\begin{proposition}\label{p:condition-PHI-GSC}
Let $\theta\in\mathcal{J}(K,d)$, let $\mu$ be a Radon measure on $K$ with full support and let $\beta\in(1,\infty)$.
Then Theorem \textup{\ref{t:condition-PHI-GSC}-\ref{it:condition-PHI-GSC-a}} is equivalent to the following condition:
\begin{enumerate}[label=\textup{(\alph*)},align=left,leftmargin=*,topsep=5pt,parsep=0pt,itemsep=2pt]
\addtocounter{enumi}{2}
\item \label{it:condition-PHI-GSC-c}$(K,\theta,\mu)$ is \ref{VD} and
	there exists $C\in(0,\infty)$ such that for any $x,y\in K$ with $x\not=y$,
	\begin{equation}\label{e:DM2-d-GSC}
	C^{-1}\theta(x,y)^{\beta}\leq d(x,y)^{d_{\mathrm{w}}-d_{\mathrm{f}}}\mu\bigl(B_{\theta}(x,\theta(x,y))\bigr)\leq C\theta(x,y)^{\beta}.
	\end{equation}
\end{enumerate}
\end{proposition}

\begin{proof}
Note first that, if $\mu$ is admissible with respect to $(K,d,\measure,\mathcal{E},\mathcal{F})$,
then for any Borel subsets $A,B$ of $K$ with $B$ closed in $K$ and $\overline{A}\cap B=\emptyset$,
by \cite[Theorem 5.2.11]{CF} and \cite[Theorem 4.6.2 and Lemma 2.1.4]{FOT} we have
\begin{align}
&\Capa^{\mu}(A,B) \nonumber\\
&=\inf\{\mathcal{E}(f^{+}\wedge 1,f^{+}\wedge 1)\mid\textrm{$f\in\mathcal{F}^{\mu}$, $\overline{A}\cap\supp\nolimits_{\mu}[f-\one_{K}]=\emptyset=B\cap\supp\nolimits_{\mu}[f]$}\} \nonumber\\
&=\inf\biggl\{\mathcal{E}(f,f)\biggm\vert\begin{minipage}{160pt}$f\in\mathcal{F}^{\mu}$, $0\leq f\leq 1$ $\mu$-a.e.,\\$\overline{A}\cap\supp\nolimits_{\mu}[f-\one_{K}]=\emptyset=B\cap\supp\nolimits_{\mu}[f]$\end{minipage}\biggr\} \nonumber\\
&=\inf\{\mathcal{E}(f,f)\mid\textrm{$f\in\mathcal{F}(A,B)$, $0\leq f\leq 1$ $\measure$-a.e.}\} \nonumber\\
&=\inf\{\mathcal{E}(f^{+}\wedge 1,f^{+}\wedge 1)\mid f\in\mathcal{F}(A,B)\}=\Capa(A,B),
\label{e:rel-cap-TC}
\end{align}
where $\Capa^{\mu}(A,B)$ and $\Capa(A,B)$ denote the capacity between $A,B$ with respect to
$(K,\theta,\mu,\mathcal{E}^{\mu},\mathcal{F}^{\mu})$ and $(K,d,\measure,\mathcal{E},\mathcal{F})$,
respectively. Next, since $(K,d,\measure,\mathcal{E},\mathcal{F})$ satisfies
\eqref{e:volume-GSC}, \hyperlink{cap}{$\on{cap}(d_{\mathrm{w}})$} and \ref{EHI} by
Lemma \ref{l:volume-GSC}, Theorems \ref{thm:BBKT-HK} and \ref{t:phichar}, there exist
$C_{1},A_{1},A_{2}\in(1,\infty)$ with $A_{2}\geq 2$ such that for any $(x,s)\in K\times(0,\diam_{d}(K)/A_{2})$,
\begin{equation} \label{e:cap-GSC}
C_{1}^{-1}s^{d_{\mathrm{f}}-d_{\mathrm{w}}}
	\leq \Capa\bigl(B_{d}(x,s),K\setminus B_{d}(x,A_{1}s)\bigr)
	\leq C_{1}s^{d_{\mathrm{f}}-d_{\mathrm{w}}},
\end{equation}
and by the quasisymmetry of $\theta$ to $d$ and Lemma \ref{l:EHI-TC-QS}
we have \ref{EHI} for $(K,\theta,\measure,\mathcal{E},\mathcal{F})$, as well as for
$(K,\theta,\mu,\mathcal{E}^{\mu},\mathcal{F}^{\mu})$ provided
$\mu\in\mathcal{A}(K,d,\measure,\mathcal{E},\mathcal{F})$.

To prove the desired equivalence, assume Theorem \ref{t:condition-PHI-GSC}-\ref{it:condition-PHI-GSC-a},
so that $(K,\theta,\mu,\mathcal{E}^{\mu},\mathcal{F}^{\mu})$ satisfies \ref{VD} and
\ref{eq:capbeta} by Theorem \ref{t:phichar} and therefore
in view of \eqref{e:rel-cap-TC} there exist $C_{2},A_{3},A_{4}\in(1,\infty)$ such that
\begin{equation}\label{e:cap-TC-GSC}
C_{2}^{-1}\frac{\mu(B_{\theta}(x,s))}{s^{\beta}}
	\leq \Capa\bigl(B_{\theta}(x,s),K\setminus B_{\theta}(x,A_{3}s)\bigr)
	\leq C_{2}\frac{\mu(B_{\theta}(x,s))}{s^{\beta}}
\end{equation}
for any $(x,s)\in K\times(0,\diam_{\theta}(K)/A_{4})$.
To verify \eqref{e:DM2-d-GSC}, let $x,y\in K$ satisfy $x\not=y$.
By the quasisymmetry of $\theta$ to $d$, \eqref{e:ann1} and \eqref{e:ann2},
there exist $A_{5}\in(1,\infty)$ determined solely by $d,\theta,A_{1}$ and
$A_{6}\in(1,\infty)$ determined solely by $d,\theta,A_{3}$ such that
\begin{align}\label{e:ann1-GSC}
B_{\theta}(x,\theta(x,y)/A_{5})\subset B_{d}(x,d(x,y))&\subset B_{d}(x,A_{1}d(x,y))\subset B_{\theta}(x,A_{5}\theta(x,y)),\\
B_{d}(x,d(x,y)/A_{6})\subset B_{\theta}(x,\theta(x,y))&\subset B_{\theta}(x,A_{3}\theta(x,y))\subset B_{d}(x,A_{6}d(x,y)).
\label{e:ann2-GSC}
\end{align}
Then by \ref{EHI} for $(K,d,\measure,\mathcal{E},\mathcal{F})$ and
$(K,\theta,\measure,\mathcal{E},\mathcal{F})$, Remark \ref{rmk:harnack} and \cite[Lemma 5.22]{BCM}
(note also \cite[Theorem 5.4 and Lemma 5.2-(e)]{BCM}), there exist $A_{7}\in(A_{2},\infty)$
and $A_{8}\in(A_{4},\infty)$ such that for any $(z,s)\in K\times(0,\infty)$,
\begin{align}\label{e:capDd-GSC}
\begin{split}
	\Capa\bigl(B_{d}(z,s),K\setminus B_{d}(z,A_{1}s)\bigr)\asymp\Capa\bigl(B_{d}(z,s),K\setminus B_{d}(z,A_{6}^{2}s)\bigr)&\\
	\textrm{if $s<\diam_{d}(K)/A_{7}$,}&
\end{split}\\
\begin{split}
	\Capa\bigl(B_{\theta}(z,s),K\setminus B_{\theta}(z,A_{3}s)\bigr)\asymp\Capa\bigl(B_{\theta}(z,s),K\setminus B_{\theta}(z,A_{5}^{2}s)\bigr)&\\
	\textrm{if $s<\diam_{\theta}(K)/A_{8}$.}&
\end{split}
\label{e:capDtheta-GSC}
\end{align}
(To be precise, the definition of capacity between sets in \cite[Section 5]{BCM} is slightly different
from ours, but they are easily seen to be equivalent to each other by virtue of \cite[Lemma 2.2.7-(ii)]{FOT}.)
Moreover, the quasisymmetry of $\theta$ to $d$ again and \eqref{e:diamQS} show that
by taking $A_{8}$ large enough we may further assume that
$\theta(x,y)<\diam_{\theta}(K)/A_{8}$ implies $d(x,y)<\diam_{d}(K)/A_{7}$.
Now, if $\theta(x,y)\geq\diam_{\theta}(K)/A_{8}$, then \eqref{e:DM2-d-GSC} clearly holds
for some sufficiently large $C\in(0,\infty)$ independent of $x,y$ since
$\mu\bigl(B_{\theta}(x,\theta(x,y))\bigr)\asymp\mu(K)$ by \ref{VD} of $(K,\theta,\mu)$
and $d(x,y)\asymp\diam_{d}(K)$ by the quasisymmetry of $\theta$ to $d$ and \eqref{e:diamQS}.
Otherwise $\theta(x,y)<\diam_{\theta}(K)/A_{8}$, which implies $d(x,y)<\diam_{d}(K)/A_{7}$, hence
\begin{align*}
C_{2}\frac{\mu\bigl(B_{\theta}(x,\theta(x,y))\bigr)}{\theta(x,y)^{\beta}}
	\geq\Capa\bigl(B_{\theta}(x,\theta(x,y)),K\setminus B_{\theta}(x,A_{3}\theta(x,y&))\bigr)\\
\geq\Capa\bigl(B_{d}(x,d(x,y)/A_{6}),K\setminus B_{d}(x,A_{6}d(x,y))\bigr)
	&\asymp d(x,y)^{d_{\mathrm{f}}-d_{\mathrm{w}}}
\end{align*}
by \eqref{e:cap-TC-GSC}, \eqref{e:ann2-GSC}, \eqref{e:capDd-GSC} and \eqref{e:cap-GSC}, and similarly
\begin{align*}
C_{1}d(x,y)^{d_{\mathrm{f}}-d_{\mathrm{w}}}
	\geq\Capa\bigl(B_{d}(x,d(x,y)),K\setminus B_{d}(x,A_{1}d(x,y&))\bigr)\\
\geq\Capa\bigl(B_{\theta}(x,\theta(x,y)/A_{5}),K\setminus B_{\theta}(x,A_{5}\theta(x,y))\bigr)
	&\asymp\frac{\mu\bigl(B_{\theta}(x,\theta(x,y))\bigr)}{\theta(x,y)^{\beta}}
\end{align*}
by \eqref{e:cap-GSC}, \eqref{e:ann1-GSC}, \eqref{e:capDtheta-GSC}, \eqref{e:cap-TC-GSC}
and \ref{VD} of $(K,\theta,\mu)$, proving \eqref{e:DM2-d-GSC} and thereby \ref{it:condition-PHI-GSC-c}.

Conversely, assume \ref{it:condition-PHI-GSC-c}. Since $K$ is connected, \ref{VD} of $(K,\theta,\mu)$
implies \ref{RVD} of $(K,\theta,\mu)$ by Remark \ref{r:doubling}-\ref{it:RVD-uniformly-perfect}.
Note that \eqref{e:ann1-GSC} remains valid and that for each $A_{3},A_{4}\in(1,\infty)$
we still have \eqref{e:ann2-GSC}, \eqref{e:capDd-GSC} and \eqref{e:capDtheta-GSC}.
Note also that by the connectedness of $K$ and \cite[Theorem 11.3]{Hei}
there exist $\lambda,\alpha\in[1,\infty)$ such that $\theta$ is
$\eta_{\alpha,\lambda}$-quasisymmetric to $d$ with $\eta_{\alpha,\lambda}$ as in Definition \ref{d:qs}.
Let $x\in K$ and let $s_{1},s_{2}\in(0,\diam_{d}(K)/A_{2})$ satisfy $s_{1}\leq s_{2}$.
Then $d(x,y)=s_{1}$ and $d(x,z)=s_{2}$ for some $y,z\in K$ by the connectedness of $K$,
$\mu(B_{d}(x,s_{1}))\asymp\mu\bigl(B_{\theta}(x,\theta(x,y))\bigr)$ and
$\mu(B_{d}(x,s_{2}))\asymp\mu\bigl(B_{\theta}(x,\theta(x,z))\bigr)$ by
\eqref{e:ann1-GSC} and \ref{VD} of $(K,\theta,\mu)$, and therefore by
\eqref{e:DM2-d-GSC}, \eqref{e:cap-GSC}, the $\eta_{\alpha,\lambda}$-quasisymmetry
of $\theta$ to $d$ and \eqref{e:diamQS},
\begin{equation}\label{e:capacity-good}
\begin{split}
\frac{\mu(B_{d}(x,s_{2}))\Capa\bigl(B_{d}(x,s_{1}),K\setminus B_{d}(x,A_{1}s_{1})\bigr)}{\mu(B_{d}(x,s_{1}))\Capa\bigl(B_{d}(x,s_{2}),K\setminus B_{d}(x,A_{1}s_{2})\bigr)}
	&\asymp\frac{d(x,z)^{d_{\mathrm{w}}-d_{\mathrm{f}}}\mu\bigl(B_{\theta}(x,\theta(x,z))\bigr)}{d(x,y)^{d_{\mathrm{w}}-d_{\mathrm{f}}}\mu\bigl(B_{\theta}(x,\theta(x,y))\bigr)}\\
\asymp\biggl(\frac{\theta(x,z)}{\theta(x,y)}&\biggr)^{\beta}
	\in\biggl[\frac{1}{2\lambda}\Bigl(\frac{s_{2}}{s_{1}}\Bigr)^{1/\alpha},2^{\alpha}\lambda\Bigl(\frac{s_{2}}{s_{1}}\Bigr)^{\alpha}\biggr].
\end{split}
\end{equation}
It follows from \ref{EHI} for $(K,d,\measure,\mathcal{E},\mathcal{F})$,
Remark \ref{rmk:harnack} and \eqref{e:capacity-good} that \cite[Proposition 6.16]{BCM}
is applicable to $\mu$ and implies that $\mu\in\mathcal{A}(K,d,\measure,\mathcal{E},\mathcal{F})$.
In particular, $(K,\theta,\mu,\mathcal{E}^{\mu},\mathcal{F}^{\mu})$ satisfies \ref{EHI} by Lemma \ref{l:EHI-TC-QS}.
Finally, to show \ref{eq:capbeta} for $(K,\theta,\mu,\mathcal{E}^{\mu},\mathcal{F}^{\mu})$,
let $A_{3},A_{4}\in(1,\infty)$ satisfy $A_{4}\geq 2$, and choose
$A_{6},A_{7},A_{8}\in(1,\infty)$ with $A_{7}>A_{2}$ and $A_{8}>A_{4}$
so that \eqref{e:ann2-GSC}, \eqref{e:capDd-GSC} and \eqref{e:capDtheta-GSC} hold.
Again thanks to the quasisymmetry of $\theta$ to $d$ and \eqref{e:diamQS},
by taking $A_{8}$ large enough we may further assume that $d(x,y)<\diam_{d}(K)/A_{7}$
for any $x,y\in K$ with $\theta(x,y)<\diam_{\theta}(K)/A_{8}$.
Let $(x,s)\in K\times(0,\diam_{\theta}(K)/(A_{5}A_{8}))$, so that
$\theta(x,y)=s=\theta(x,z)/A_{5}$ for some $y,z\in K$ by the connectedness of $K$.
Then by \eqref{e:ann2-GSC}, \eqref{e:capDd-GSC}, \eqref{e:cap-GSC} and \eqref{e:DM2-d-GSC},
\begin{align*}
\Capa\bigl(B_{\theta}(x,s),K\setminus B_{\theta}(x,A_{3}s)\bigr)
	\geq\Capa\bigl(B_{d}&(x,d(x,y)/A_{6}),K\setminus B_{d}(x,A_{6}d(x,y))\bigr)\\
\asymp d(x,y)^{d_{\mathrm{f}}-d_{\mathrm{w}}}
	&\asymp\frac{\mu\bigl(B_{\theta}(x,\theta(x,y))\bigr)}{\theta(x,y)^{\beta}}
	=\frac{\mu\bigl(B_{\theta}(x,s)\bigr)}{s^{\beta}},
\end{align*}
and similarly by \eqref{e:capDtheta-GSC}, \eqref{e:ann1-GSC}, \eqref{e:cap-GSC},
\eqref{e:DM2-d-GSC} and \ref{VD} of $(K,\theta,\mu)$,
\begin{align*}
\Capa\bigl(B_{\theta}(x,s),K\setminus B_{\theta}(x,A_{3}&s)\bigr)
	\asymp\Capa\bigl(B_{\theta}(x,\theta(x,z)/A_{5}),K\setminus B_{\theta}(x,A_{5}\theta(x,z))\bigr)\\
	&\leq\Capa\bigl(B_{d}(x,d(x,z)),K\setminus B_{d}(x,A_{1}d(x,z))\bigr)\\
&\asymp d(x,z)^{d_{\mathrm{f}}-d_{\mathrm{w}}}
	\asymp\frac{\mu\bigl(B_{\theta}(x,\theta(x,z))\bigr)}{\theta(x,z)^{\beta}}
	\asymp\frac{\mu\bigl(B_{\theta}(x,s)\bigr)}{s^{\beta}},
\end{align*}
proving \eqref{e:cap-TC-GSC} for $(x,s)\in K\times(0,\diam_{\theta}(K)/(A_{5}A_{8}))$,
namely \ref{eq:capbeta} for $(K,\theta,\mu,\mathcal{E}^{\mu},\mathcal{F}^{\mu})$.
Now $(K,\theta,\mu,\mathcal{E}^{\mu},\mathcal{F}^{\mu})$ satisfies \ref{PHI}
by Theorem \ref{t:phichar}, showing Theorem \ref{t:condition-PHI-GSC}-\ref{it:condition-PHI-GSC-a}.
\end{proof}

\begin{proof}[Proof of Theorem \textup{\ref{t:condition-PHI-GSC}}]
By Proposition \ref{p:condition-PHI-GSC}, it suffices to prove the equivalence of
Proposition \ref{p:condition-PHI-GSC}-\ref{it:condition-PHI-GSC-c} and \ref{it:condition-PHI-GSC-b}.
We can verify it in exactly the same way as the proof of Proposition \ref{p:condition-PHI-pcf},
by considering the scale $\mathscr{S}=\{\Lambda_{s}\}_{s\in(0,1]}$ on $\Sigma$ defined by $\Lambda_{1}:=\{\emptyset\}$ and
\begin{equation}\label{e:scale-GSC}
\Lambda_{s}:=\{w\mid\textrm{$w=w_{1}\dots w_{n}\in W_{*}\setminus\{\emptyset\}$, $l^{1-\lvert w\rvert}>s\geq l^{-\lvert w\rvert}$}\}
\end{equation}
for each $s\in(0,1)$, which clearly satisfies \eqref{e:LF-pcf}, and by using instead of $R_{\mathcal{E}}$
the Euclidean metric $d$ on $K$, which is easily seen to satisfy \eqref{e:adapted-RE-pcf}
with $d$ in place of $R_{\mathcal{E}}$; note that since \eqref{e:DM2-RE-pcf} needs to be
replaced by \eqref{e:DM2-d-GSC} we also need to replace 
$R_{\mathcal{E}}(x,y)\mu\bigl(B_{R_{\mathcal{E}}}(x,R_{\mathcal{E}}(x,y))\bigr)$
in \eqref{e:DM2-volume-RE-pcf} by
$d(x,y)^{d_{\mathrm{w}}-d_{\mathrm{f}}}\mu\bigl(B_{d}(x,d(x,y))\bigr)$.
\end{proof}

By virtue of Theorem \ref{t:condition-PHI-GSC}, the whole of Subsection \ref{ssec:attain-harmonic-func-pcf}
can be easily adapted for the present case, and below we explicitly give
the details of the adaptation for concreteness. We begin with stating
the main result of this subsection, which requires the following definition.
Recall \eqref{eq:HomeoPlus} for $\homeo^{+}$ and
Definition \ref{d:harmonic-GSC} for $\mathcal{H}_{0}$.

\begin{definition}\label{d:GetaC-GSC}
We define $\mathcal{P}(K)$ by \eqref{eq:PK-pcf}, equip $\mathcal{P}(K)$ with the topology of weak convergence,
and for each $(\eta,C)\in\homeo^{+}\times(1,\infty)$ we define $\attainsss(\eta,C)=\attainsss_{N,l,S}(\eta,C)$ by
\begin{equation}\label{e:GetaC-GSC}
\attainsss(\eta,C)
:=\biggl\{(\theta,\mu)\biggm\vert
	\begin{minipage}{242pt}
		$\theta$ is a metric on $K$ and $\eta$-quasisymmetric to $d$,
		$\mu\in\mathcal{P}(K)$,
		$C^{-1}\leq r_{w}\mu(K_{w})/(\diam_{\theta}(K_{w}))^{2}\leq C$
		for any $w\in W_{*}$
	\end{minipage}
	\biggr\},
\end{equation}
which is considered as a subset of $\contfunc(K\times K)\times\mathcal{P}(K)$. We also set
\begin{equation}\label{e:GNlS}
\attainsss:=\attainsss_{N,l,S}:=\bigcup_{(\eta,C)\in\homeo^{+}\times(1,\infty)}\attainsss(\eta,C)
\end{equation}
and for each subset $\mathcal{Z}$ of $\attainsss$ define
$\mathcal{H}_{0}(\mathcal{Z})\subset\mathcal{H}_{0}$ by \eqref{e:H0Z-pcf} and
$\widetilde{\mathcal{H}}_{0}(\mathcal{Z})\subset\mathcal{H}_{0}/\mathbb{R}\one_{K}$
by \eqref{e:H0tildeZ-pcf}.
\end{definition}

Since $\mu$ is a Radon measure on $K$ with full support for any $(\theta,\mu)\in\attainsss$,
it follows from Theorem \ref{t:condition-PHI-GSC} with $\beta=2$ and \eqref{e:cgauge} that
\begin{equation}\label{e:condition-PHI-GSC-G}
\mathcal{G}(K,d,\measure,\mathcal{E},\mathcal{F})
	=\{a\mu\mid\textrm{$(\theta,\mu)\in\attainsss$, $a\in(0,\infty)$}\}.
\end{equation}
In particular,
\begin{equation}\label{e:condition-PHI-GSC-G-attained}
\begin{minipage}{330pt}
\emph{$\mathcal{G}(K,d,\measure,\mathcal{E},\mathcal{F})\not=\emptyset$,
i.e., the infimum in \eqref{e:dcw} is attained for $(K,d,\measure,\mathcal{E},\mathcal{F})$,
if and only if $\attainsss\not=\emptyset$, namely
$\attainsss(\eta,C)\not=\emptyset$ for some $(\eta,C)\in\homeo^{+}\times(1,\infty)$.}
\end{minipage}
\end{equation}
It turns out that in this case $\mathcal{H}_{0}(\attainsss)\not=\emptyset$, i.e.,
$(\theta_{h},\Gamma(h,h))\in\attainsss$ for some $h\in\mathcal{H}_{0}$ and some
$\theta_{h}\in\mathcal{J}(K,d)$, which is the main result of this subsection and stated as follows.
\emph{We take arbitrary $(\eta,C)\in\homeo^{+}\times(1,\infty)$, define
$\tilde{\eta}\in\homeo^{+}$ by $\tilde{\eta}(t):=1/\eta^{-1}(t^{-1})$
($\tilde{\eta}(0):=0$) and fix them throughout the rest of this subsection.}

\begin{theorem}\label{t:attain-harmonic-func-GSC}
If $\attainsss(\eta,C)\not=\emptyset$, then $\mathcal{H}_{0}(\attainsss(\eta,C))\not=\emptyset$,
i.e., there exist $h\in\mathcal{H}_{0}$ and a metric $\theta_{h}$ on $K$
such that $(\theta_{h},\Gamma(h,h))\in\attainsss(\eta,C)$.
\end{theorem}

Moreover, as in Subsection \ref{ssec:attain-harmonic-func-pcf},
a slight addition to our proof of Theorem \ref{t:attain-harmonic-func-GSC}
also shows the following theorem, which will be useful in studying further the problem
of whether the infimum in \eqref{e:dcw} is attained for generalized Sierpi\'{n}ski carpets.
Recall that $(\mathcal{F}/\mathbb{R}\one_{K},\mathcal{E})$ is a Hilbert space
as observed in Lemma \ref{l:global-PI-GSC}.

\begin{theorem}\label{thm:H0GetaC-compact-GSC}
\begin{enumerate}[label=\textup{(\arabic*)},align=left,leftmargin=*,topsep=5pt,parsep=0pt,itemsep=2pt]
\item\label{it:H0GetaC-compact-GSC} $\widetilde{\mathcal{H}}_{0}(\attainsss(\eta,C))$ is compact in norm in $(\mathcal{F}/\mathbb{R}\one_{K},\mathcal{E})$.
\item\label{it:H0GetaC-compact-scaling-GSC} If $h\in\mathcal{H}_{0}(\attainsss(\eta,C))$, then
	$\mathcal{E}(h\circ F_{w},h\circ F_{w})^{-1/2}h\circ F_{w}\in\mathcal{H}_{0}(\attainsss(\eta,C))$
	for any $w\in W_{*}$.
\end{enumerate}
\end{theorem}

We remark that in Theorem \ref{thm:H0GetaC-compact-GSC}-\ref{it:H0GetaC-compact-scaling-GSC}
we have $\mathcal{E}(h\circ F_{w},h\circ F_{w})=r_{w}\Gamma(h,h)(K_{w})>0$ for any $w\in W_{*}$ by
Lemma \ref{l:scaling-energy-meas-GSC} below and the lower inequality in \eqref{e:GetaC-GSC} for $\mu=\Gamma(h,h)$.

The rest of this subsection is devoted to the proof of Theorems \ref{t:attain-harmonic-func-GSC}
and \ref{thm:H0GetaC-compact-GSC}, which goes in exactly the same way as that of Theorem \ref{t:attain-harmonic-func}
and Proposition \ref{p:H0GetaC-compact-pcf} except for some modifications
required due to $\#V_{0}=\infty=\dim\mathcal{H}_{0}$ and explained in detail below.

\begin{proposition}\label{p:GetaC-compact-GSC}
$\attainsss(\eta,C)$ is a compact subset of $\contfunc(K\times K)\times\mathcal{P}(K)$.
\end{proposition}

\begin{proof}
The proof of Proposition \ref{p:GetaC-compact} remains valid also in this case,
except that $R_{\mathcal{E}}$ needs to be replaced by $d$ and that $s$ in the last
paragraph needs to be defined as $s:=l^{-\lvert w\rvert}$.
\end{proof}

\begin{corollary}\label{c:GetaC-compact-GSC}
Let $\{(\theta_{n},\mu_{n})\}_{n\in\mathbb{N}}\subset\attainsss(\eta,C)$,
$\mu\in\mathcal{P}(K)$ and suppose that $\{\mu_{n}\}_{n\in\mathbb{N}}$ converges
to $\mu$ in $\mathcal{P}(K)$. Then there exists a metric $\theta$ on $K$ such that
$(\theta,\mu)\in\attainsss(\eta,C)$.
\end{corollary}

\begin{proof}
The proof of Corollary \ref{c:GetaC-compact} remains valid with
Proposition \ref{p:GetaC-compact-GSC} applied in place of Proposition \ref{p:GetaC-compact}.
\end{proof}

\begin{lemma}\label{l:GetaC-scaling-GSC}
Let $(\theta,\mu)\in\attainsss(\eta,C)$, $w\in W_{*}$ and define
$(\theta_{w},\mu_{w})\in \contfunc(K\times K)\times\mathcal{P}(K)$ by \eqref{e:GetaC-scaling}.
Then $(\theta_{w},\mu_{w})\in\attainsss(\eta,C)$.
\end{lemma}

\begin{proof}
The proof of Lemma \ref{l:GetaC-scaling} remains valid also in this case,
except that $R_{\mathcal{E}}$ needs to be replaced by $d$.
\end{proof}

\begin{lemma}\label{l:Fw-star}
Let $w\in W_{*}$.
\begin{enumerate}[label=\textup{(\arabic*)},align=left,leftmargin=*,topsep=5pt,parsep=0pt,itemsep=2pt]
\item\label{it:Fw-star-measure}$\int_{K}\lvert u\circ F_{w}\rvert\,d\measure=(\#S)^{\lvert w\rvert}\int_{K_{w}}\lvert u\rvert\,d\measure$
	for any Borel measurable function $u\colon K\to[-\infty,\infty]$, and hence a bounded linear operator
	from $L^{2}(K,\measure)$ to itself is defined by $u\mapsto u\circ F_{w}$.
\item\label{it:Fw-star-SSDF}$u\circ F_{w}\in\mathcal{F}$ and \eqref{e:SSDF-form} holds for any $u,v\in\mathcal{F}$.
\item\label{it:Fw-star-H0}$h\circ F_{w}\in\mathcal{H}_{0}$ for any $h\in\mathcal{H}_{0}$.
\end{enumerate}
\end{lemma}
\begin{proof}
\ref{it:Fw-star-measure} is immediate from $\measure=(\#S)^{\lvert w\rvert}\measure\circ F_{w}$,
and \ref{it:Fw-star-SSDF} follows from \eqref{e:SSDF-domain}, \eqref{e:SSDF-form}, the
denseness of $\mathcal{F}\cap \contfunc(K)$ in $(\mathcal{F},\mathcal{E}_{1})$ and
the completeness of $(\mathcal{F},\mathcal{E}_{1})$; see \cite[Proof of Lemma 3.3]{Kaj22}.
To see \ref{it:Fw-star-H0}, let $h\in\mathcal{H}_{0}$, and let
$v\in\mathcal{F}\cap \contfunc(K)$ satisfy $\supp_{\measure}[v]\subset K\setminus V_{0}$.
Then since $K_{\tau}\cap K_{w}=F_{\tau}(V_{0})\cap F_{w}(V_{0})$ for any $\tau\in W_{\lvert w\rvert}\setminus\{w\}$
by \cite[Proposition 1.3.5-(2)]{Kig01}, we can define $v^{w}\in \contfunc(K)$
by $v^{w}\vert_{K_{w}}:=v\circ F_{w}^{-1}$ and $v^{w}\vert_{K\setminus K_{w}}:=0$ and have
$v^{w}\in\mathcal{F}\cap \contfunc(K)$ by \eqref{e:SSDF-domain} and
$\supp_{\measure}[v^{w}]\subset K_{w}\setminus F_{w}(V_{0})=K_{w}\setminus V_{\lvert w\rvert}\subset K\setminus V_{0}$,
and it therefore follows from \eqref{e:SSDF-form} for $h,v^{w}$ and $h\in\mathcal{H}_{0}$ that
$r_{w}^{-1}\mathcal{E}(h\circ F_{w},v)
	=\sum_{\tau\in W_{\lvert w\rvert}}r_{\tau}^{-1}\mathcal{E}(h\circ F_{\tau},v^{w}\circ F_{\tau})
	=\mathcal{E}(h,v^{w})=0$,
proving $h\circ F_{w}\in\mathcal{H}_{0}$.
\end{proof}

\begin{lemma}\label{l:scaling-energy-meas-GSC}
Suppose that $\attainsss\not=\emptyset$. Let $u\in\mathcal{F}$ and $w\in W_{*}$.
Then $\Gamma(u,u)(F_{w}(A))=r_{w}^{-1}\Gamma(u\circ F_{w},u\circ F_{w})(A)$
for any Borel subset $A$ of $K$, and in particular
$\Gamma(u,u)(K_{w})=r_{w}^{-1}\mathcal{E}(u\circ F_{w},u\circ F_{w})$. Moreover, if
$\Gamma(u,u)(K_{w})>0$, then \eqref{e:scaling-energy-meas} holds for any Borel subset $A$ of $K$.
\end{lemma}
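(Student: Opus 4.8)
The statement to prove is Lemma \ref{l:scaling-energy-meas-GSC}, which is the generalized Sierpi\'{n}ski carpet analogue of Lemma \ref{l:scaling-energy-meas}. The plan is to follow the proof of Lemma \ref{l:scaling-energy-meas} verbatim, substituting the ingredients that are specific to the carpet setting. The core of the original argument is the identity $\Gamma(u,u)(F_{w}(A))=r_{w}^{-1}\Gamma(u\circ F_{w},u\circ F_{w})(A)$, which for post-critically finite fractals was identified with \cite[Lemma 4-(i)]{HN}. For generalized Sierpi\'{n}ski carpets the analogous energy-measure image formula is available once one knows that $u\circ F_{w}\in\mathcal{F}$ for $u\in\mathcal{F}$ and that the self-similarity \eqref{e:SSDF-form} holds on all of $\mathcal{F}$ (not merely on $\mathcal{F}\cap C(K)$); this is exactly the content of Lemma \ref{l:Fw-star}, which is why that lemma was stated just before.

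First I would fix $u\in\mathcal{F}$, $w\in W_{*}$, and recall from Lemma \ref{l:Fw-star} that $u\circ F_{w}\in\mathcal{F}$ and, more generally, $v\circ F_{w}\in\mathcal{F}$ with $\mathcal{E}(v_{1},v_{2})=\sum_{i\in S}r_{i}^{-1}\mathcal{E}(v_{1}\circ F_{i},v_{2}\circ F_{i})$ for all $v_{1},v_{2}\in\mathcal{F}$; iterating over the letters of $w$ gives $\mathcal{E}(v_{1},v_{2})=\sum_{|v|=|w|}r_{v}^{-1}\mathcal{E}(v_{1}\circ F_{v},v_{2}\circ F_{v})$, and restricting the sum to the single cell shows $r_{w}^{-1}\mathcal{E}(v_{1}\circ F_{w},v_{2}\circ F_{w})\le\mathcal{E}(v_{1},v_{2})$ when the cross terms are controlled; more to the point, it identifies $r_{w}^{-1}\mathcal{E}(u\circ F_{w},u\circ F_{w})$ with the $K_{w}$-part of a finitely additive ``energy'' that, by the uniqueness characterization of energy measures via \eqref{e:EnergyMeas}, must agree with $\Gamma(u,u)(K_{w})$. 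To upgrade from the equality on cells $K_{w}$ to the image identity on arbitrary Borel sets $A$, I would test against continuous functions: for $g\in\mathcal{F}\cap C(K)$ one has, by \eqref{e:EnergyMeas} applied both to $(\mathcal{E},\mathcal{F})$ on $K$ and pulled back through $F_{w}$ (using $fg\in\mathcal{F}$ for $f,g\in\mathcal{F}\cap L^{\infty}$, \eqref{e:SSDF-form}, and $g\circ F_{w}\in\mathcal{F}$), the relation $\int_{K}(g\circ F_{w})\,d\bigl(\Gamma(u\circ F_{w},u\circ F_{w})\bigr)=r_{w}\int_{K}g\cdot\one_{K_{w}}\,d\Gamma(u,u)=r_{w}\int_{K}g\,d\bigl((F_{w})_{*}^{-1}\Gamma(u,u)|_{K_{w}}\bigr)$, where I use that $F_{w}:K\to K_{w}$ is a homeomorphism so that pushforward/pullback of measures is well defined; a standard truncation and density argument (as in Definition \ref{d:EnergyMeas}, first for $u\in\mathcal{F}\cap L^{\infty}$, then for general $u\in\mathcal{F}$ by the monotone limit) extends this from $\mathcal{F}\cap C(K)$ to all bounded Borel $g$ and yields $\Gamma(u,u)(F_{w}(A))=r_{w}^{-1}\Gamma(u\circ F_{w},u\circ F_{w})(A)$ for every Borel $A\subset K$. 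Choosing $A=K$ gives the ``in particular'' statement $\Gamma(u,u)(K_{w})=r_{w}^{-1}\mathcal{E}(u\circ F_{w},u\circ F_{w})$.

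Finally, assuming $\Gamma(u,u)(K_{w})>0$, I would argue exactly as in the proof of Lemma \ref{l:scaling-energy-meas}: setting $u_{w}:=\mathcal{E}(u\circ F_{w},u\circ F_{w})^{-1/2}\,u\circ F_{w}$, the bilinearity of $\Gamma$ in its two arguments gives $\Gamma(u_{w},u_{w})=\mathcal{E}(u\circ F_{w},u\circ F_{w})^{-1}\Gamma(u\circ F_{w},u\circ F_{w})$, so for any Borel $A\subset K$,
\[
\frac{\Gamma(u,u)(F_{w}(A))}{\Gamma(u,u)(K_{w})}
=\frac{r_{w}^{-1}\Gamma(u\circ F_{w},u\circ F_{w})(A)}{r_{w}^{-1}\mathcal{E}(u\circ F_{w},u\circ F_{w})}
=\Gamma(u_{w},u_{w})(A),
\]
which is \eqref{e:scaling-energy-meas}. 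I would note that the hypothesis $\mathcal{M}_{2}\neq\emptyset$ is used only to guarantee, via Theorem \ref{t:condition-PHI-GSC} and Proposition \ref{p:metmeas}-(b), that $(\mathcal{E},\mathcal{F})$ on $L^{2}(K,\measure)$ behaves as expected (in particular that $\measure$ is a minimal energy-dominant measure, so that all the energy measures $\Gamma(u,u)$ are genuine $\sigma$-finite Borel measures absolutely continuous with respect to $\measure$) — this parallels the remark preceding Lemma \ref{l:Lebesgue-points} — but the identity itself is purely a consequence of the self-similarity of $(\mathcal{E},\mathcal{F})$. The only real subtlety, and the step I expect to need the most care, is the passage from the cell identity to the Borel-set identity: one must be careful that energy measures of $\mathcal{F}$-functions that are not continuous are defined by the truncation/limit procedure of Definition \ref{d:EnergyMeas}, and that the image formula is stable under this procedure; this is where invoking \cite[Lemma 4-(i)]{HN} (whose proof only uses $F_{w}$ being a homeomorphism together with strong locality and the self-similarity \eqref{e:SSDF-form}, both available here) gives the cleanest route, so in the write-up I would simply cite it, mirroring the proof of Lemma \ref{l:scaling-energy-meas}.
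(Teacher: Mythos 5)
There is a genuine gap, and it is precisely at the step you flag as "the only real subtlety": the passage from the self-similar decomposition of $\mathcal{E}$ to the image identity on a single cell. The self-similarity \eqref{e:SSDF-form} (extended to all of $\mathcal{F}$ via Lemma \ref{l:Fw-star}) gives, as in \cite[Lemma 3.11-(ii)]{Hin05}, the decomposition
$\Gamma(u,u)(A')=\sum_{\tau\in W_{|w|}}r_{\tau}^{-1}\Gamma(u\circ F_{\tau},u\circ F_{\tau})(F_{\tau}^{-1}(A'))$
for Borel $A'\subset K$. Taking $A'=F_{w}(A)$, the term $\tau=w$ is the one you want, but the terms with $\tau\neq w$ do not vanish for free: $F_{\tau}^{-1}(F_{w}(A))\subset F_{\tau}^{-1}(K_{\tau}\cap K_{w})\subset V_{0}$, and for a generalized Sierpi\'{n}ski carpet $V_{0}=K\setminus(0,1)^{N}$ is a large (positive-dimensional) set, not a finite set as in the p.c.f.\ case. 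So one must prove $\Gamma(v,v)(V_{0})=0$ for all $v\in\mathcal{F}$ before the cross terms can be discarded. This is exactly where the hypothesis $\mathcal{M}_{2}\neq\emptyset$ enters in the paper's proof: taking $(\theta,\mu)\in\mathcal{M}_{2}$, the measure $\mu$ is minimal energy-dominant (Theorem \ref{t:condition-PHI-GSC} and Proposition \ref{p:metmeas}-(b)) and satisfies $\mu(V_{0})=0$, whence $\Gamma(v,v)\ll\mu$ forces $\Gamma(v,v)(V_{0})=0$. Your concluding claim that "the identity itself is purely a consequence of the self-similarity" and that $\mathcal{M}_{2}\neq\emptyset$ is only needed to make the energy measures well-behaved is therefore backwards: without $\mathcal{M}_{2}\neq\emptyset$ one would have to invoke the difficult result \cite[Proposition 4.15]{Hin13} to kill the boundary contributions (this is the content of Remark \ref{rmk:energy-V0-zero}).

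Two further inaccuracies compound the gap. First, citing \cite[Lemma 4-(i)]{HN} does not transfer to this setting; that lemma lives in the p.c.f.\ world, where the cells meet only in the finite set $F_{w}(V_{0})\cap F_{v}(V_{0})$ and energy measures charge no points, so the overlap issue is invisible there. Second, your parenthetical that the energy measures are "absolutely continuous with respect to $\measure$" is false for generalized Sierpi\'{n}ski carpets: since $d_{\mathrm{w}}>2$ (Theorem \ref{thm:dwSC}), the results of \cite{KM} show $\Gamma(u,u)$ is typically \emph{singular} with respect to the self-similar measure $\measure$; the correct domination is with respect to $\mu$ from a pair $(\theta,\mu)\in\mathcal{M}_{2}$. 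The final normalization step yielding \eqref{e:scaling-energy-meas} from the image identity is fine and matches the paper.
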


\begin{proof}
First, by Lemma \ref{l:Fw-star}-\ref{it:Fw-star-SSDF} and \cite[Lemma 3.11-(ii)]{Hin05}
it holds, \emph{independently of the assumption $\attainsss\not=\emptyset$}, that
for any $v\in\mathcal{F}$ and any $n\in\mathbb{N}\cup\{0\}$,
\begin{equation}\label{e:pre-scaling-energy-meas-GSC}
\Gamma(v,v)(A)=\sum_{\tau\in W_{n}}\frac{1}{r_{\tau}}\Gamma(v\circ F_{\tau},v\circ F_{\tau})(F_{\tau}^{-1}(A))
	\quad\textrm{for any Borel subset $A$ of $K$.}
\end{equation}
By $\attainsss\not=\emptyset$ we can take $(\theta,\mu)\in\attainsss$,
then $\mu$ is a minimal energy-dominant measure of $(\mathcal{E},\mathcal{F})$
by Theorem \ref{t:condition-PHI-GSC} and Proposition \ref{p:metmeas}-\ref{it:conseq-PHI2-meas}, thus
$\Gamma(v,v)\ll\mu$ for any $v\in\mathcal{F}$, and hence
\begin{equation}\label{e:energy-V0-zero}
\Gamma(v,v)(V_{0})=0\qquad\textrm{for any $v\in\mathcal{F}$}
\end{equation}
since $\mu(V_{0})=0$ by Theorem \ref{t:condition-PHI-GSC}.
Recalling that for any $\tau\in W_{\lvert w\rvert}\setminus\{w\}$ we have
$K_{\tau}\cap K_{w}=F_{\tau}(V_{0})\cap F_{w}(V_{0})$ by \cite[Proposition 1.3.5-(2)]{Kig01}
and hence $F_{\tau}^{-1}(K_{w})=F_{\tau}^{-1}(K_{\tau}\cap K_{w})\subset V_{0}$,
we see from \eqref{e:pre-scaling-energy-meas-GSC} and \eqref{e:energy-V0-zero} that
for any Borel subset $A$ of $K$,
\begin{align*}
\Gamma(u,u)(F_{w}(A))&=\sum_{\tau\in W_{\lvert w\rvert}}\frac{1}{r_{\tau}}\Gamma(u\circ F_{\tau},u\circ F_{\tau})(F_{\tau}^{-1}(F_{w}(A)))\\
&=\frac{1}{r_{w}}\Gamma(u\circ F_{w},u\circ F_{w})(A).
\end{align*}
The rest of the proof goes in exactly the same way as that of Lemma \ref{l:scaling-energy-meas}.
\end{proof}

\begin{remark}\label{rmk:energy-V0-zero}
In fact, \eqref{e:energy-V0-zero} holds without supposing $\attainsss\not=\emptyset$
by \cite[Proposition 4.15]{Hin13} and hence so does Lemma \ref{l:scaling-energy-meas-GSC}.
The proof of \eqref{e:energy-V0-zero} presented in \cite[Section 5]{Hin13}, however, is long and difficult,
and since we later use Lemma \ref{l:scaling-energy-meas-GSC} only under the supposition that
$\attainsss\not=\emptyset$, we have decided to suppose it explicitly to keep our
present arguments independent of the demanding result \cite[Proposition 4.15]{Hin13}.
\end{remark}

\begin{lemma}\label{l:Lebesgue-points-GSC}
Suppose that $\attainsss\not=\emptyset$, let $(\theta,\mu)\in\attainsss$,
$u\in\mathcal{F}$ and set $f:=d\Gamma(u,u)/d\mu$. Then $\mu$-a.e.\ $x\in K$ is a
\emph{$(d,\mu)$-Lebesgue point} for $f$, i.e., satisfies
\begin{equation}\label{e:Lebesgue-points-GSC}
\lim_{s\downarrow 0}\frac{1}{\mu(B_{d}(x,s))}\int_{B_{d}(x,s)}\lvert f(y)-f(x)\rvert\,d\mu(y)=0.
\end{equation}
\end{lemma}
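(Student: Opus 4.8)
\textbf{Proof plan for Lemma \ref{l:Lebesgue-points-GSC}.}
The statement is the exact analogue of Lemma \ref{l:Lebesgue-points} for the generalized Sierpi\'{n}ski carpet setting, and the plan is to follow the same three-step argument, replacing the resistance metric $R_{\mathcal{E}}$ by the Euclidean metric $d$ throughout. First I would invoke the hypothesis $\mathcal{M}_{2}\not=\emptyset$ to pick $(\theta,\mu)\in\mathcal{M}_{2}$; by Theorem \ref{t:condition-PHI-GSC} the MMD space $(K,\theta,\mu,\mathcal{E}^{\mu},\mathcal{F}^{\mu})$ satisfies \hyperlink{phi}{$\on{PHI}(2)$}, and hence by Theorem \ref{t:phichar} the triple $(K,\theta,\mu)$ satisfies \ref{VD}. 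Combining this with the quasisymmetry of $\theta$ to $d$ and the two-sided ball inclusions \eqref{e:ann1} (exactly as in the proof of Lemma \ref{l:Lebesgue-points}, and as already used in the proof of Proposition \ref{p:condition-PHI-GSC}), one concludes that $(K,d,\mu)$ is also \ref{VD}.

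Second, I would check that $f:=d\Gamma(u,u)/d\mu$ is a well-defined element of $L^{1}(K,\mu)$. Since $(\theta,\mu)\in\mathcal{M}_{2}$, Theorem \ref{t:condition-PHI-GSC} together with Proposition \ref{p:metmeas}-(b) shows that $\mu$ is a minimal energy-dominant measure of $(\mathcal{E},\mathcal{F})$, so $\Gamma(u,u)\ll\mu$ and the Radon--Nikodym derivative $f$ exists; moreover $\int_{K}f\,d\mu=\Gamma(u,u)(K)=\mathcal{E}(u,u)<\infty$ because $u\in\mathcal{F}$, so $f\in L^{1}(K,\mu)$.

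Third, with $(K,d,\mu)$ shown to be \ref{VD} and $f\in L^{1}(K,\mu)$, I would apply Lebesgue's differentiation theorem in the form \cite[(2.8)]{Hei} (which requires precisely the volume doubling property of the underlying metric measure space) to conclude that $\mu$-a.e.\ $x\in K$ satisfies \eqref{e:Lebesgue-points-GSC}. This completes the argument. I expect no genuine obstacle here: the only points requiring care are the transfer of \ref{VD} from $(K,\theta,\mu)$ to $(K,d,\mu)$ via quasisymmetry — which is entirely routine given \eqref{e:ann1} and already carried out verbatim in the p.c.f.\ case — and the density/integrability bookkeeping for $f$, which is identical to the p.c.f.\ case. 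In fact the proof can be stated in one short paragraph mirroring the proof of Lemma \ref{l:Lebesgue-points}, with the single change that the Euclidean metric $d$ plays the role of $R_{\mathcal{E}}$ and that the \ref{VD} of $(K,\theta,\mu)$ now comes from Theorem \ref{t:condition-PHI-GSC} rather than Proposition \ref{p:condition-PHI-pcf}.
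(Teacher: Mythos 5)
Your proposal is correct and follows exactly the paper's argument: the paper's proof of this lemma simply states that the proof of Lemma \ref{l:Lebesgue-points} carries over with $R_{\mathcal{E}}$ replaced by $d$, i.e., one deduces \ref{VD} of $(K,d,\mu)$ from \ref{VD} of $(K,\theta,\mu)$ (via Theorem \ref{t:condition-PHI-GSC}, Theorem \ref{t:phichar}, quasisymmetry and \eqref{e:ann1}) and then applies Lebesgue's differentiation theorem \cite[(2.8)]{Hei} to $f\in L^{1}(K,\mu)$. Your additional remark that $f$ is well defined because $\mu$ is a minimal energy-dominant measure is exactly the justification the paper records just before the p.c.f.\ version of the lemma.
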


\begin{proof}
The proof of Lemma \ref{l:Lebesgue-points} remains valid also in this case,
except that $R_{\mathcal{E}}$ needs to be replaced by $d$.
\end{proof}

\begin{lemma}\label{l:Lebesgue-pt-scaling-GSC}
Suppose that $\attainsss\not=\emptyset$, let $(\theta,\mu)\in\attainsss$, $u\in\mathcal{F}$,
let $f\colon K\to[0,\infty)$ be a Borel measurable $\mu$-version of $d\Gamma(u,u)/d\mu$ and let $x\in K$
satisfy \eqref{e:Lebesgue-points-GSC}. Then \eqref{e:Lebesgue-pt-scaling} holds
for any $\omega\in\pi^{-1}(x)$ and any $w\in W_{*}$.
\end{lemma}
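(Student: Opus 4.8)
The statement to prove is Lemma \ref{l:Lebesgue-pt-scaling-GSC}, which asserts that under the supposition $\mathcal{M}_{2}\not=\emptyset$, for $(\theta,\mu)\in\mathcal{M}_{2}$ and $u\in\mathcal{F}$ with Borel measurable $\mu$-version $f$ of $d\Gamma(u,u)/d\mu$, and a point $x\in K$ that is a $(d,\mu)$-Lebesgue point for $f$, the limit $\lim_{n\to\infty}\Gamma(u,u)(K_{[\omega]_{n}w})/\mu(K_{[\omega]_{n}w})=f(x)$ holds for any $\omega\in\pi^{-1}(x)$ and $w\in W_{*}$. This is the generalized Sierpi\'{n}ski carpet counterpart of Lemma \ref{l:Lebesgue-pt-scaling}, and the plan is to imitate that proof line by line, substituting the Euclidean metric $d$ for the resistance metric $R_{\mathcal{E}}$ and invoking the already-established GSC analogues of the supporting facts.

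First I would fix $\omega\in\pi^{-1}(x)$, $w\in W_{*}$, and $n\in\mathbb{N}\cup\{0\}$, and set $s_{n}:=\diam(K_{[\omega]_{n}},d)$, noting $\lim_{n\to\infty}s_{n}=0$ since $\diam(K_{[\omega]_{n}},d)\leq l^{-n}\diam(K,d)\to 0$ (this replaces the appeal to Lemma \ref{l:RE-scaling-pcf}). The key geometric input is the chain of comparisons: by Theorem \ref{t:condition-PHI-GSC} and Theorem \ref{t:phichar}, $(K,\theta,\mu)$ is \ref{VD}, hence so is $(K,d,\mu)$ by the quasisymmetry of $\theta$ to $d$ and \eqref{e:ann1} (this is exactly what is noted in the proof of Lemma \ref{l:Lebesgue-points-GSC}). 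From \eqref{e:DM2-GSC} together with \ref{VD} of $(K,d,\mu)$, and the fact that $\mathscr{S}=\{\Lambda_{s}\}_{s\in(0,1]}$ defined by \eqref{e:scale-GSC} is locally finite with respect to $\mathcal{L}$ (which holds here since \eqref{e:LF-pcf} is clear for the Euclidean scale), one obtains a comparison $\mu(K_{[\omega]_{n}w})\geq c^{|w|}\mu(K_{[\omega]_{n}})\geq c^{|w|}c'\mu(B_{d}(x,2s_{n}))$ for constants $c,c'\in(0,\infty)$ depending only on $\mathcal{L},(N,l,S),(\theta,\mu)$. Here the first inequality uses that \eqref{e:DM2-GSC} forces $\mu(K_{vj})\asymp\mu(K_{v})$ for all $(v,j)\in W_{*}\times S$ — this is precisely \eqref{e:ELm-pcf}, which appears in the proof of Theorem \ref{t:condition-PHI-GSC} — iterated $|w|$ times, and the second uses that $K_{[\omega]_{n}}\subset B_{d}(x,2s_{n})$ has measure comparable to that of the ball, which is the GSC analogue of \eqref{e:meas-cell-REball-comparable} (valid by Lemma \ref{l:adapted-RE-pcf} with $d$ in place of $R_{\mathcal{E}}$, \eqref{e:LF-pcf}, \eqref{e:GE-pcf}, and \ref{VD} of $(K,d,\mu)$, all of which are established in the adapted proof of Theorem \ref{t:condition-PHI-GSC}).

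With this in hand, the computation is routine: since $K_{[\omega]_{n}w}\subset K_{[\omega]_{n}}\subset B_{d}(x,2s_{n})$ and $f=d\Gamma(u,u)/d\mu$, I would write
\begin{align*}
\biggl|\frac{\Gamma(u,u)(K_{[\omega]_{n}w})}{\mu(K_{[\omega]_{n}w})}-f(x)\biggr|
&=\biggl|\frac{1}{\mu(K_{[\omega]_{n}w})}\int_{K_{[\omega]_{n}w}}(f(y)-f(x))\,d\mu(y)\biggr|\\
&\leq\frac{1}{\mu(K_{[\omega]_{n}w})}\int_{K_{[\omega]_{n}w}}|f(y)-f(x)|\,d\mu(y)\\
&\leq\frac{(c^{|w|}c')^{-1}}{\mu(B_{d}(x,2s_{n}))}\int_{B_{d}(x,2s_{n})}|f(y)-f(x)|\,d\mu(y),
\end{align*}
and the right-hand side tends to $0$ as $n\to\infty$ by \eqref{e:Lebesgue-points-GSC}, since $2s_{n}\downarrow 0$. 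This proves \eqref{e:Lebesgue-pt-scaling}, completing the proof. Note that $\Gamma(u,u)\ll\mu$ throughout is guaranteed because $\mu$ is a minimal energy-dominant measure of $(\mathcal{E},\mathcal{F})$ by Theorem \ref{t:condition-PHI-GSC} and Proposition \ref{p:metmeas}-(b), which is exactly where the supposition $\mathcal{M}_{2}\not=\emptyset$ is used (and it is also needed to apply Lemma \ref{l:scaling-energy-meas-GSC} implicitly, though here we only manipulate $\Gamma(u,u)$ and $\mu$ directly).

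The main obstacle — or rather the main bookkeeping point — is to make sure that the volume comparison $\mu(K_{[\omega]_{n}w})\gtrsim\mu(B_{d}(x,2s_{n}))$ genuinely goes through with the Euclidean metric, i.e., that all the ingredients \eqref{e:ELm-pcf}, \eqref{e:GE-pcf}, the GSC analogue of \eqref{e:meas-cell-REball-comparable}, and local finiteness \eqref{e:LF-pcf} of the scale \eqref{e:scale-GSC} are available. All of these are supplied by (the proof of) Theorem \ref{t:condition-PHI-GSC} and the adapted version of Lemma \ref{l:adapted-RE-pcf}: the latter holds with $d$ in place of $R_{\mathcal{E}}$ by an elementary argument using $\diam(K_{w},d)\leq l^{-|w|}\diam(K,d)$ for the outer inclusion and, for the inner inclusion, the fact that $K_{v}\cap K_{\tau}=\emptyset$ for $v,\tau\in\Lambda_{s}$ forces $d(x,y)\gtrsim s$ for $(x,y)\in K_{v}\times K_{\tau}$ — this last point following from the non-diagonality and connectedness of the generalized Sierpi\'{n}ski carpet, which ensure that disjoint cells of comparable size are separated by a distance comparable to their size. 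Since the excerpt already sets up the entire adaptation of Subsection \ref{ssec:pfcsss-preliminaries} to the GSC case through Theorem \ref{t:condition-PHI-GSC}, no new analysis beyond citing these facts is required.
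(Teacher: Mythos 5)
Your proof is correct and follows essentially the same route as the paper, which simply declares that the proof of Lemma \ref{l:Lebesgue-pt-scaling} remains valid with $R_{\mathcal{E}}$ replaced by $d$; you have spelled out the same volume comparison $\mu(K_{[\omega]_{n}w})\geq c^{|w|}c'\mu(B_{d}(x,2s_{n}))$ and the same Lebesgue-point estimate, and your accounting of where \eqref{e:ELm-pcf}, \eqref{e:GE-pcf}, \eqref{e:LF-pcf} and the $d$-version of \eqref{e:adapted-RE-pcf} come from matches the paper's setup in the proof of Theorem \ref{t:condition-PHI-GSC}.
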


\begin{proof}
The proof of Lemma \ref{l:Lebesgue-pt-scaling} remains valid also in this case,
except that $R_{\mathcal{E}}$ needs to be replaced by $d$.
\end{proof}

\begin{proposition}\label{p:Lebesgue-pt-scaling-GSC}
Suppose that $\attainsss\not=\emptyset$, let 
$(\theta,\mu)\in\attainsss$, $u\in\mathcal{F}$, let $f\colon K\to[0,\infty)$
be a Borel measurable $\mu$-version of $d\Gamma(u,u)/d\mu$, let $x\in K$
satisfy \eqref{e:Lebesgue-points-GSC} and $f(x)>0$, and let $\omega\in\pi^{-1}(x)$.
For each $n\in\mathbb{N}\cup\{0\}$,
define $\mu_{n}:=\mu_{[\omega]_{n}}\in\mathcal{P}(K)$ by \eqref{e:GetaC-scaling}
with $w=[\omega]_{n}$ and, noting that $\Gamma(u,u)(K_{[\omega]_{n}})>0$ by
Lemma \textup{\ref{l:Lebesgue-pt-scaling-GSC}}, define $u_{n}:=u_{[\omega]_{n}}\in\mathcal{F}$
by \eqref{e:scaling-energy-meas} with $w=[\omega]_{n}$. If $v\in\mathcal{F}$ and
$\{n_{k}\}_{k\in\mathbb{N}}\subset\mathbb{N}$ is strictly increasing and satisfies
$\lim_{k\to\infty}\mathcal{E}(v-u_{n_{k}},v-u_{n_{k}})=0$, then $\Gamma(v,v)\in\mathcal{P}(K)$
and $\{\mu_{n_{k}}\}_{k\in\mathbb{N}}$ converges to $\Gamma(v,v)$ in $\mathcal{P}(K)$.
\end{proposition}

\begin{proof}
The proof of Proposition \ref{p:Lebesgue-pt-scaling} remains valid also in this case,
except that it is because of $\attainsss\not=\emptyset$, \eqref{e:energy-V0-zero} and
Lemma \ref{l:scaling-energy-meas-GSC} that $\Gamma(v,v)(F_{w}(V_{0}))=0$ for any $w\in W_{*}$.
\end{proof}

So far, except for the issue of the validity of \eqref{e:energy-V0-zero} treated
in the proof of Lemma \ref{l:scaling-energy-meas-GSC} and Remark \ref{rmk:energy-V0-zero},
our current discussion has been almost the same as the corresponding part of
Subsection \ref{ssec:attain-harmonic-func-pcf}. On the other hand, the concluding parts
of the proofs of Proposition \ref{p:H0GetaC-compact-pcf}-\ref{it:H0GetaC-compact-pcf}
and Theorem \ref{t:attain-harmonic-func} rely on the compactness of
$\{h\in\mathcal{H}_{0}/\mathbb{R}\one_{K}\mid\mathcal{E}(h,h)=1\}$
implied by $\dim\mathcal{H}_{0}/\mathbb{R}\one_{K}<\infty$ and thereby cannot be
extended directly to the present case of a generalized Sierpi\'{n}ski carpet,
where $\dim\mathcal{H}_{0}/\mathbb{R}\one_{K}=\infty$ by $\#V_{0}=\infty$.
We overcome this difficulty by establishing Proposition \ref{p:harmonic-func-compact-GSC}
below on the basis of Lemma \ref{l:global-PI-GSC} and the following lemma and applying it with
the help of the compactness of $\attainsss(\eta,C)$ from Proposition \ref{p:GetaC-compact-GSC}.

\begin{lemma}[Reverse Poincar\'e inequality]\label{l:reverse-PI-GSC}
There exists $C_{\mathrm{RP}}\in(0,\infty)$ such that for any $(x,s)\in K\times (0,\infty)$,
any $a\in\mathbb{R}$ and any function $h\in\mathcal{F}$ that is
$\mathcal{E}$-harmonic on $B_{d}(x,2s)$,
\begin{equation}\label{e:reverse-PI-GSC}
\int_{B_{d}(x,s)}d\Gamma(h,h)
	\leq\frac{C_{\mathrm{RP}}}{s^{d_{\mathrm{w}}}}\int_{B_{d}(x,2s)\setminus B_{d}(x,s)}\lvert h-a\rvert^{2}\,d\measure.
\end{equation}
\end{lemma}

\begin{proof}
This is a special case of \cite[Lemma 3.3]{KM} with $\Psi(s)=s^{d_{\mathrm{w}}}$,
whose assumption $\on{CS}(\Psi)$ formulated in \cite[Definition 2.6-(b)]{KM} is implied
in the current situation by Lemma \ref{l:volume-GSC}, Theorem \ref{thm:BBKT-HK}
and \cite[Theorem 5.5]{AB}; see also Remark \ref{rmk:reverse-PI-GSC} below.
\end{proof}

\begin{remark}\label{rmk:reverse-PI-GSC}
To be precise, \cite[Lemma 3.3]{KM} assumes additionally that $h\in L^{\infty}(X,\measure)$,
but this assumption can be dropped by replacing \cite[the first four lines of (3.9)]{KM}
with the following, where $h_{n}:=(-n)\vee(h\wedge n)$ for $n\in\mathbb{N}$:
\begin{align*}
0&=\lim_{n\to\infty}\mathcal{E}(h,h_{n}\varphi^{2})
	=\lim_{n\to\infty}\Gamma(h,h_{n}\varphi^{2})(X)
	\qquad\textrm{(by \cite[(3.7) and (2.4)]{KM})}\\
&=\lim_{n\to\infty}\biggl(\int_{X}\varphi^{2}\,d\Gamma(h,h_{n})+2\int_{X}\varphi h_{n}\,d\Gamma(h,\varphi)\biggr)
	\quad\textrm{(by \cite[Lemma 3.2.5]{FOT})}\\
&\geq\limsup_{n\to\infty}\Biggl(\int_{X}\varphi^{2}\,d\Gamma(h,h_{n})-2\sqrt{\int_{X}\varphi^{2}\,d\Gamma(h,h)\int_{X}h_{n}^{2}\,d\Gamma(\varphi,\varphi)}\Biggr)\\
&\qquad\textrm{(by \cite[Proof of Lemma 5.6.1]{FOT})}\\
&=\int_{X}\varphi^{2}\,d\Gamma(h,h)-2\sqrt{\int_{X}\varphi^{2}\,d\Gamma(h,h)\int_{X}h^{2}\,d\Gamma(\varphi,\varphi)}\\
&\qquad\begin{minipage}{245pt}(by the Cauchy--Schwarz inequality for $\textstyle\int_{X}\varphi^{2}\,d\Gamma(\cdot,\cdot)$ and\\\hspace*{3pt}\cite[Theorem 1.4.2-(iii)]{FOT}).\end{minipage}
\end{align*}
\end{remark}

\begin{proposition}\label{p:harmonic-func-compact-GSC}
Let $\{h_{k}\}_{k\in\mathbb{N}}\subset\mathcal{H}_{0}$ converge weakly in
$(\mathcal{F},\mathcal{E}_{1})$ to $h\in\mathcal{F}$\textup{(, so that $h\in\mathcal{H}_{0}$
since $\mathcal{H}_{0}$ is weakly closed in $(\mathcal{F},\mathcal{E}_{1})$)},
and assume that there exist a Radon measure $\nu$ on $K$ and $n_{0}\in\mathbb{N}$ such that
$\nu(F_{w}(V_{0}))=0$ and $\limsup_{k\to\infty}r_{w}^{-1}\mathcal{E}(h_{k}\circ F_{w},h_{k}\circ F_{w})\leq\nu(K_{w})$
for any $w\in W_{*}$ satisfying $\lvert w\rvert\geq n_{0}$ and $K_{w}\cap V_{0}\not=\emptyset$.
Then $\lim_{k\to\infty}\mathcal{E}_{1}(h-h_{k},h-h_{k})=0$.
\end{proposition}

\begin{proof}
Thanks to the weak convergence in $(\mathcal{F},\mathcal{E}_{1})$
of $\{h_{k}\}_{k\in\mathbb{N}}$ to $h$, we have
\begin{equation}\label{e:harmonic-func-weak-L2-GSC}
\lim_{k\to\infty}\int_{K}\lvert h-h_{k}\rvert^{2}\,d\measure=0
\end{equation}
by the compactness of the inclusion map from $(\mathcal{F},\mathcal{E}_{1})$ to
$L^{2}(K,\measure)$ stated in Lemma \ref{l:global-PI-GSC} and
\cite[Chapter 21, Theorem 9]{Lax}, and for any $w\in W_{*}$,
$\{h_{k}\circ F_{w}+\mathbb{R}\one_{K}\}_{k\in\mathbb{N}}\subset\mathcal{H}_{0}/\mathbb{R}\one_{K}$
converges weakly in $(\mathcal{F}/\mathbb{R}\one_{K},\mathcal{E})$
to $h\circ F_{w}+\mathbb{R}\one_{K}$ since
$u\mapsto u\circ F_{w}+\mathbb{R}\one_{K}$ is a bounded linear operator from
$(\mathcal{F},\mathcal{E}_{1})$ to $(\mathcal{F}/\mathbb{R}\one_{K},\mathcal{E})$
by Lemma \ref{l:Fw-star}-\ref{it:Fw-star-SSDF}. In particular,
\begin{equation}\label{e:harmonic-func-weak-Fw-star-GSC}
\mathcal{E}(h\circ F_{w},h\circ F_{w})
	\leq\liminf_{k\to\infty}\mathcal{E}(h_{k}\circ F_{w},h_{k}\circ F_{w})
	\qquad\textrm{for any $w\in W_{*}$.}
\end{equation}
Moreover, recalling that $K\setminus V_{0}$ is open in $K$ and non-empty and
letting $\tau\in W_{*}$, $z_{\tau}\in K_{\tau}$ and $s_{\tau}\in(0,\infty)$ satisfy
$K_{\tau}\subset B_{d}(z_{\tau},s_{\tau})\subset B_{d}(z_{\tau},2s_{\tau})\subset K\setminus V_{0}$,
we see from \eqref{e:pre-scaling-energy-meas-GSC} with $A=K_{\tau}$ and
Lemma \ref{l:reverse-PI-GSC} with $a=0$ that for any $v\in\mathcal{H}_{0}$,
\begin{equation}\label{e:reverse-PI-GSC-apply}
\frac{1}{r_{\tau}}\mathcal{E}(v\circ F_{\tau},v\circ F_{\tau})
	\leq\Gamma(v,v)(K_{\tau})\leq\Gamma(v,v)(B_{d}(z_{\tau},s_{\tau}))
	\leq\frac{C_{\mathrm{RP}}}{s_{\tau}^{d_{\mathrm{w}}}}\int_{K}v^{2}\,d\measure,
\end{equation}
which with $v=h-h_{k}$ for $k\in\mathbb{N}$, together with
\eqref{e:harmonic-func-weak-L2-GSC}, shows that
\begin{equation}\label{e:harmonic-func-norm-interior-GSC}
\lim_{k\to\infty}\mathcal{E}((h-h_{k})\circ F_{\tau},(h-h_{k})\circ F_{\tau})=0.
\end{equation}

Now let $n\in\mathbb{N}$ satisfy $n\geq n_{0}$, set
$V_{0,n}:=\bigcup_{w\in W_{n},\,K_{w}\cap V_{0}\not=\emptyset}K_{w}$ and
$W^{\circ}_{n}:=\{\tau\in W_{n+N}\mid K_{\tau}\not\subset V_{0,n}\}$, so that
$\{w\in W_{n}\mid K_{w}\cap V_{0}\not=\emptyset\}\cup W^{\circ}_{n}$
is a partition of $\Sigma$ (recall Definition \ref{d:partition}-\ref{it:partition})
and each $\tau\in W^{\circ}_{n}$ satisfies \eqref{e:harmonic-func-norm-interior-GSC}
since $K_{\tau}\subset B_{d}(z_{\tau},l^{-n-1})\subset B_{d}(z_{\tau},2l^{-n-1})\subset K\setminus V_{0}$
for any $z_{\tau}\in K_{\tau}$. Then since $\nu(F_{w}(V_{0}))=0$ and
$\limsup_{k\to\infty}r_{w}^{-1}\mathcal{E}(h_{k}\circ F_{w},h_{k}\circ F_{w})\leq\nu(K_{w})$
for any $w\in W_{n}$ with $K_{w}\cap V_{0}\not=\emptyset$ and hence
$\sum_{w\in W_{n},\,K_{w}\cap V_{0}\not=\emptyset}\nu(K_{w})=\nu(V_{0,n})$ and
$\nu(V_{0})=\nu\bigl(V_{0}\cap\bigcup_{w\in W_{n},\,K_{w}\cap V_{0}\not=\emptyset}F_{w}(V_{0})\bigr)=0$
by \cite[Proposition 1.3.5-(2)]{Kig01} and $V_{0}\subset V_{n}$,
it follows from Lemma \ref{l:Fw-star}-\ref{it:Fw-star-SSDF},
\eqref{e:harmonic-func-norm-interior-GSC} for $\tau\in W^{\circ}_{n}$
and \eqref{e:harmonic-func-weak-Fw-star-GSC} that
\begin{align*}
&\limsup_{k\to\infty}\mathcal{E}(h-h_{k},h-h_{k})\\
&=\limsup_{k\to\infty}\sum_{\tau\in\{w\in W_{n}\mid K_{w}\cap V_{0}\not=\emptyset\}\cup W^{\circ}_{n}}\frac{1}{r_{\tau}}\mathcal{E}((h-h_{k})\circ F_{\tau},(h-h_{k})\circ F_{\tau})\\
&=\limsup_{k\to\infty}\sum_{w\in W_{n},\,K_{w}\cap V_{0}\not=\emptyset}\frac{1}{r_{w}}\mathcal{E}(h\circ F_{w}-h_{k}\circ F_{w},h\circ F_{w}-h_{k}\circ F_{w})\\
&\leq\limsup_{k\to\infty}\sum_{w\in W_{n},\,K_{w}\cap V_{0}\not=\emptyset}\frac{2}{r_{w}}\bigl(\mathcal{E}(h\circ F_{w},h\circ F_{w})+\mathcal{E}(h_{k}\circ F_{w},h_{k}\circ F_{w})\bigr)\\
&\leq\sum_{w\in W_{n},\,K_{w}\cap V_{0}\not=\emptyset}\frac{2}{r_{w}}\Bigl(\mathcal{E}(h\circ F_{w},h\circ F_{w})+\limsup_{k\to\infty}\mathcal{E}(h_{k}\circ F_{w},h_{k}\circ F_{w})\Bigr)\\
&\leq\sum_{w\in W_{n},\,K_{w}\cap V_{0}\not=\emptyset}\frac{4}{r_{w}}\limsup_{k\to\infty}\mathcal{E}(h_{k}\circ F_{w},h_{k}\circ F_{w})\\
&\leq\sum_{w\in W_{n},\,K_{w}\cap V_{0}\not=\emptyset}4\nu(K_{w})
	=4\nu(V_{0,n})\xrightarrow{n\to\infty}4\nu(V_{0})=0,
\end{align*}
which along with \eqref{e:harmonic-func-weak-L2-GSC} proves $\lim_{k\to\infty}\mathcal{E}_{1}(h-h_{k},h-h_{k})=0$.
\end{proof}

\begin{proof}[Proof of Theorem \textup{\ref{thm:H0GetaC-compact-GSC}}]
Recall \eqref{e:GetaC-GSC} for $\attainsss(\eta,C)$,
\eqref{e:H0Z-pcf} for $\mathcal{H}_{0}(\mathcal{Z})$ and
\eqref{e:H0tildeZ-pcf} for $\widetilde{\mathcal{H}}_{0}(\mathcal{Z})$.
\begin{enumerate}[label=\textup{(\arabic*)},align=left,leftmargin=*,topsep=5pt,parsep=0pt,itemsep=2pt]
\item[\ref{it:H0GetaC-compact-scaling-GSC}]
	The proof of Proposition \ref{p:H0GetaC-compact-pcf}-\ref{it:H0GetaC-compact-scaling-pcf} remains
	valid also in this case, except that Proposition \ref{p:harmonic-pcf}-\ref{it:harmonic-pcf},
	Lemmas \ref{l:scaling-energy-meas} and \ref{l:GetaC-scaling} need to be replaced by
	Lemma \ref{l:Fw-star}-\ref{it:Fw-star-H0}, Lemmas \ref{l:scaling-energy-meas-GSC} and \ref{l:GetaC-scaling-GSC}, respectively.
\item[\ref{it:H0GetaC-compact-GSC}]Let $\{h_{n}\}_{n\in\mathbb{N}}\subset\mathcal{H}_{0}(\attainsss(\eta,C))$,
	so that $\{\Gamma(h_{n},h_{n})\}_{n\in\mathbb{N}}\subset\mathcal{P}(K)$ and hence
	$\{h_{n}\}_{n\in\mathbb{N}}\subset\{h\in\mathcal{H}_{0}\mid(\Gamma(h,h)(K)=)\,\mathcal{E}(h,h)=1\}$.
	Setting $v_{n}:=h_{n}-\int_{K}h_{n}\,d\measure\in\mathcal{H}_{0}$ for each $n\in\mathbb{N}$,
	by $\mathcal{E}(\one_{K},\one_{K})=0$ and Lemma \ref{l:global-PI-GSC} we have
	$\mathcal{E}_{1}(v_{n},v_{n})\leq C_{\mathrm{P}}+1$ for any $n\in\mathbb{N}$,
	therefore by \cite[Chapter 10, Theorem 7]{Lax} there exist $h\in\mathcal{F}$
	and a strictly increasing sequence $\{n'_{j}\}_{j\in\mathbb{N}}\subset\mathbb{N}$ such that
	$\{v_{n'_{j}}\}_{j\in\mathbb{N}}$ converges weakly in $(\mathcal{F},\mathcal{E}_{1})$ to $h$,
	and $h\in\mathcal{H}_{0}$ since $\mathcal{H}_{0}$ is weakly closed in $(\mathcal{F},\mathcal{E}_{1})$.
	Further, recalling that $\mathcal{P}(K)$ is a compact metrizable topological space
	by \cite[Theorems 9.1.5 and 9.1.9]{Str}, we can choose a strictly increasing sequence
	$\{j_{k}\}_{k\in\mathbb{N}}\subset\mathbb{N}$ such that
	$\{\Gamma(h_{n_{k}},h_{n_{k}})\}_{k\in\mathbb{N}}$ converges to some $\nu\in\mathcal{P}(K)$
	in $\mathcal{P}(K)$, where $n_{k}:=n'_{j_{k}}$, and then
	$(\vartheta,\nu)\in\attainsss(\eta,C)$ for some metric $\vartheta$ on $K$ by
	$\{h_{n_{k}}\}_{k\in\mathbb{N}}\subset\mathcal{H}_{0}(\attainsss(\eta,C))$ and
	Corollary \ref{c:GetaC-compact-GSC}. In particular, for any $w\in W_{*}$ we have
	$\nu(F_{w}(V_{0}))=0$ by \eqref{e:GetaC-GSC} and Theorem \ref{t:condition-PHI-GSC} and
	\begin{equation}\label{eq:H0GetaC-compact-GSC-boundary-energy}
	\limsup_{k\to\infty}\frac{1}{r_{w}}\mathcal{E}(v_{n_{k}}\circ F_{w},v_{n_{k}}\circ F_{w})
		\leq\limsup_{k\to\infty}\Gamma(h_{n_{k}},h_{n_{k}})(K_{w})
		\leq\nu(K_{w})
	\end{equation}
	by $\mathcal{E}(\one_{K},\one_{K})=0$ and \eqref{e:pre-scaling-energy-meas-GSC}
	with $A=K_{w}$, thus $\lim_{k\to\infty}\mathcal{E}_{1}(h-v_{n_{k}},h-v_{n_{k}})=0$
	by Proposition \ref{p:harmonic-func-compact-GSC} and hence
	$\lim_{k\to\infty}\mathcal{E}(h-h_{n_{k}},h-h_{n_{k}})=0$ by $\mathcal{E}(\one_{K},\one_{K})=0$.
	It follows that \eqref{e:energy-meas-converge} holds for any Borel subset $A$ of $K$,
	so that $\Gamma(h,h)\in\mathcal{P}(K)$ and $\{\Gamma(h_{n_{k}},h_{n_{k}})\}_{k\in\mathbb{N}}$
	converges to $\Gamma(h,h)$ in $\mathcal{P}(K)$, whence
	$(\vartheta,\Gamma(h,h))=(\vartheta,\nu)\in\attainsss(\eta,C)$.
	Therefore $h\in\mathcal{H}_{0}(\attainsss(\eta,C))$, which together with
	$\lim_{k\to\infty}\mathcal{E}(h-h_{n_{k}},h-h_{n_{k}})=0$ proves that
	$\widetilde{\mathcal{H}}_{0}(\attainsss(\eta,C))$ is (sequentially) compact
	in norm in $(\mathcal{F}/\mathbb{R}\one_{K},\mathcal{E})$.
	\qedhere
\end{enumerate}
\end{proof}

\begin{proof}[Proof of Theorem \textup{\ref{t:attain-harmonic-func-GSC}}]
By the assumption $\attainsss(\eta,C)\not=\emptyset$ we can take
$(\theta,\mu)\in\attainsss(\eta,C)$. Choose $u\in\mathcal{H}_{0}$ so that $\mathcal{E}(u,u)>0$; 
such $u$ exists, e.g., by \cite[Propositions 3.7 and 3.10]{Kaj22}.
Let $f\colon K\to[0,\infty)$ be a Borel measurable $\mu$-version of $d\Gamma(u,u)/d\mu$.
Then $\mu\bigl(f^{-1}((0,\infty))\bigr)>0$ by $\int_{K}f\,d\mu=\Gamma(u,u)(K)=\mathcal{E}(u,u)>0$,
and therefore by Lemma \ref{l:Lebesgue-points-GSC} there exists $x\in K$
with the properties \eqref{e:Lebesgue-points-GSC} and $f(x)>0$.
Let $\omega\in\pi^{-1}(x)$, and for each $n\in\mathbb{N}\cup\{0\}$,
as in Proposition \ref{p:Lebesgue-pt-scaling-GSC} define
$(\theta_{n},\mu_{n}):=(\theta_{[\omega]_{n}},\mu_{[\omega]_{n}})\in \contfunc(K\times K)\times\mathcal{P}(K)$
by \eqref{e:GetaC-scaling} and $u_{n}:=u_{[\omega]_{n}}\in\mathcal{F}$
by \eqref{e:scaling-energy-meas} with $w=[\omega]_{n}$, so that
$\{(\theta_{n},\mu_{n})\}_{n\in\mathbb{N}\cup\{0\}}\subset\attainsss(\eta,C)$
by Lemma \ref{l:GetaC-scaling-GSC} and
$\{u_{n}\}_{n\in\mathbb{N}\cup\{0\}}\subset\{h\in\mathcal{H}_{0}\mid\mathcal{E}(h,h)=1\}$
by Lemma \ref{l:Fw-star}-\ref{it:Fw-star-H0}. Then setting
$v_{n}:=u_{n}-\int_{K}u_{n}\,d\measure\in\mathcal{H}_{0}$ for each $n\in\mathbb{N}\cup\{0\}$,
by $\mathcal{E}(\one_{K},\one_{K})=0$ and Lemma \ref{l:global-PI-GSC} we have
$\mathcal{E}_{1}(v_{n},v_{n})\leq C_{\mathrm{P}}+1$ for any $n\in\mathbb{N}\cup\{0\}$,
hence by \cite[Chapter 10, Theorem 7]{Lax} there exist $h\in\mathcal{F}$
and a strictly increasing sequence $\{n'_{j}\}_{j\in\mathbb{N}}\subset\mathbb{N}$ such that
$\{v_{n'_{j}}\}_{j\in\mathbb{N}}$ converges weakly in $(\mathcal{F},\mathcal{E}_{1})$ to $h$,
and $h\in\mathcal{H}_{0}$ since $\mathcal{H}_{0}$ is weakly closed in $(\mathcal{F},\mathcal{E}_{1})$.
Further, by $\{(\theta_{n'_{j}},\mu_{n'_{j}})\}_{j\in\mathbb{N}}\subset\attainsss(\eta,C)$,
Proposition \ref{p:GetaC-compact-GSC} and the metrizability of
$\contfunc(K\times K)\times\mathcal{P}(K)$ we can choose a strictly increasing sequence
$\{j_{k}\}_{k\in\mathbb{N}}\subset\mathbb{N}$ such that
$\{(\theta_{n_{k}},\mu_{n_{k}})\}_{k\in\mathbb{N}}$ converges to some
$(\vartheta,\nu)\in\attainsss(\eta,C)$ in $\contfunc(K\times K)\times\mathcal{P}(K)$,
where $n_{k}:=n'_{j_{k}}$. Then for any $w\in W_{*}$, we have $\nu(F_{w}(V_{0}))=0$
by \eqref{e:GetaC-GSC} and Theorem \ref{t:condition-PHI-GSC}, and by using
$\mathcal{E}(\one_{K},\one_{K})=0$, \eqref{e:pre-scaling-energy-meas-GSC} with $A=K_{w}$,
\eqref{e:scaling-energy-meas} from Lemma \ref{l:scaling-energy-meas-GSC},
the definition of $\mu_{n_{k}}=\mu_{[\omega]_{n_{k}}}$ from \eqref{e:GetaC-scaling},
\eqref{e:Lebesgue-pt-scaling} from Lemma \ref{l:Lebesgue-pt-scaling-GSC}, $f(x)\in(0,\infty)$
and the convergence of $\{\mu_{n_{k}}\}_{k\in\mathbb{N}}$ to $\nu$ in $\mathcal{P}(K)$, we obtain
\begin{align}
&\limsup_{k\to\infty}\frac{1}{r_{w}}\mathcal{E}(v_{n_{k}}\circ F_{w},v_{n_{k}}\circ F_{w})\nonumber\\
	&\leq\limsup_{k\to\infty}\Gamma(u_{n_{k}},u_{n_{k}})(K_{w})
	=\limsup_{k\to\infty}\frac{\Gamma(u,u)(K_{[\omega]_{n_{k}}w})}{\Gamma(u,u)(K_{[\omega]_{n_{k}}})}\nonumber\\
&=\limsup_{k\to\infty}\frac{\Gamma(u,u)(K_{[\omega]_{n_{k}}w})/\mu(K_{[\omega]_{n_{k}}w})}{\Gamma(u,u)(K_{[\omega]_{n_{k}}})/\mu(K_{[\omega]_{n_{k}}})}\mu_{n_{k}}(K_{w})\nonumber\\
&=\frac{f(x)}{f(x)}\limsup_{k\to\infty}\mu_{n_{k}}(K_{w})
	\leq\nu(K_{w}).
\label{eq:attain-harmonic-func-GSC-boundary-energy}
\end{align}
Thus $\lim_{k\to\infty}\mathcal{E}_{1}(h-v_{n_{k}},h-v_{n_{k}})=0$
by Proposition \ref{p:harmonic-func-compact-GSC}, hence
$\lim_{k\to\infty}\mathcal{E}(h-u_{n_{k}},h-u_{n_{k}})=0$ by $\mathcal{E}(\one_{K},\one_{K})=0$,
and it follows from Proposition \ref{p:Lebesgue-pt-scaling-GSC} that
$\{\mu_{n_{k}}\}_{k\in\mathbb{N}}$ converges to $\Gamma(h,h)\in\mathcal{P}(K)$ in
$\mathcal{P}(K)$, whence $(\vartheta,\Gamma(h,h))=(\vartheta,\nu)\in\attainsss(\eta,C)$.
\end{proof}

\begin{remark}\label{rmk:Lebesgue-pt-scaling-subseq}
Note that the relatively short proof of Proposition \ref{p:harmonic-func-compact-GSC}
above is enabled by the assumed properties of the Radon measure $\nu$ on $K$.
The existence of such $\nu$ seems difficult to verify for a general sequence
$\{h_{k}\}_{k\in\mathbb{N}}\subset\mathcal{H}_{0}$ converging weakly in
$(\mathcal{F},\mathcal{E}_{1})$, but can be obtained via the compactness of
$\attainsss(\eta,C)$ from Proposition \ref{p:GetaC-compact-GSC} in the situations of
the proofs of Theorems \ref{thm:H0GetaC-compact-GSC} and \ref{t:attain-harmonic-func-GSC}
above as observed in \eqref{eq:H0GetaC-compact-GSC-boundary-energy} and
\eqref{eq:attain-harmonic-func-GSC-boundary-energy}.
In fact, Hino \cite[Proposition 4.18]{Hin13} has proved a similar sequential
compactness \emph{without assuming the existence of such $\nu$},
at the price of its long difficult proof given in \cite[Section 5]{Hin13}.
\end{remark}

\section{Further remarks and open problems} \label{sec:open-problem}
We conclude the present paper with mentioning some related open problems.
\begin{problem}
	Does $\dcw <\infty$ characterize the elliptic Harnack inequality for symmetric jump process?
\end{problem}
\noindent It is not clear if the equivalence between \ref{it:ehichar-a} and \ref{it:ehichar-b}
in Theorem \ref{t:ehichar} extends to jump processes. Despite the progress made in the diffusion case,
the characterization and stability of the elliptic Harnack inequality is still open for jump processes.

Our study of the Gaussian uniformization problem in Section \ref{sec:GUP} gives only partial
answers, both for a general MMD space and for the MMD space of Brownian motion on $\mathbb{R}^{n}$,
except for an explicit answer for $\mathbb{R}$ in Theorem \ref{t:1dgaussian} and
an implicit (unsatisfactory) one for $\mathbb{R}^{2}$ in Proposition \ref{p:strongainf}.
In particular, the following problem is left open.

\begin{problem} \label{p:gaussianadmissible}
	Characterize explicitly all the Gaussian admissible measures for
	the MMD space of Brownian motion on $\mathbb{R}^{n}$, $n \ge 2$
	(see \eqref{e:gaussianadmissible} for the definition).
\end{problem}

For Problem \ref{p:pcf-non-attainment} below, let $(K,R_{\mathcal{E}},\measure,\mathcal{E},\mathcal{F})$
denote the MMD space resulting as in Subsection \ref{ssec:pfcsss-preliminaries}
from a post-critically finite self-similar structure
$\mathcal{L}=(K,S,\{F_{i}\}_{i\in S})$ with $\#S\geq 2$ and $K$ connected and a regular harmonic structure
$(D,\mathbf{r})$ on $\mathcal{L}$. Recall that $\mathcal{H}_{0}$ has been defined in Definition \ref{d:harmonic-pcf}
as $\mathcal{H}_{0}:=\{h\in \contfunc (K)\mid\textrm{$h$ is $0$-harmonic}\}$ under this setting.
\begin{problem} \label{p:pcf-non-attainment}
Provide simple sufficient conditions for the non-attainment of the conformal walk dimension for
the MMD space $(K,R_{\mathcal{E}},\measure,\mathcal{E},\mathcal{F})$.
\end{problem}
\noindent In view of the non-attainment results in Subsection \ref{ssec:examples} for the Vicsek set
(Corollary \ref{cor:Vicsek}) and the higher-dimensional Sierpi\'{n}ski gaskets
(Theorem \ref{thm:SGN-NOT-attained}), it seems natural to expect that
the conformal walk dimension would typically fail to be attained for the MMD space
$(K,R_{\mathcal{E}},\measure,\mathcal{E},\mathcal{F})$ as in Problem \ref{p:pcf-non-attainment}.
This expectation, however, does not seem very easy to verify, since the behavior of
the linear maps $\mathcal{H}_{0}\ni h\mapsto h\circ F_{w}\in\mathcal{H}_{0}$ could be
difficult to analyze for a given pair of $\mathcal{L}$ and $(D,\mathbf{r})$ and might
not allow a proof by contradiction based
on Theorem \ref{t:attain-harmonic-func} and Proposition \ref{p:H0GetaC-compact-pcf}
as achieved in the proof of Theorem \ref{thm:SGN-NOT-attained}.

For Problems \ref{p:SCattainment}, \ref{p:SCdoublingharmonic} and \ref{p:SCenergyfullsupport} below,
let $N,K,d,\measure,V_{0},(\mathcal{E},\mathcal{F}),\mathcal{H}_{0}$ be as introduced
in Subsection \ref{ssec:attain-harmonic-func-GSCs} as pieces of the framework of the canonical
self-similar Dirichlet form on an arbitrary generalized Sierpi\'{n}ski carpet $K$;
in particular, recall that $\mathcal{H}_{0}$ has been defined in Definition \ref{d:harmonic-GSC} as
$\mathcal{H}_{0}:=\{h\in\mathcal{F}\mid\textrm{$h$ is $\mathcal{E}$-harmonic on $K\setminus V_{0}=K\cap(0,1)^{N}$}\}$.
\begin{problem} \label{p:SCattainment}
	Is the conformal walk dimension attained for the canonical MMD space
	$(K,d,\measure,\mathcal{E},\mathcal{F})$ over the generalized Sierpi\'{n}ski carpet $K$?
\end{problem}
\noindent As a matter of fact, the authors have proved in a recent ongoing work that
\emph{the conformal walk dimension is NOT attained for the two-dimensional standard Sierpi\'{n}ski carpet}.
It is therefore likely that the answer to Problem \ref{p:SCattainment} would be
negative also for a given generalized Sierpi\'{n}ski carpet. Note, however, that in view of
Theorem \ref{t:attain-harmonic-func-GSC} the negative answer to Problem \ref{p:SCattainment} would mean
only that the energy measure $\Gamma(h,h)$ of every $h\in\mathcal{H}_{0}\setminus\mathbb{R}\one_{K}$
would fail to satisfy $\Gamma(h,h)\in\mathcal{G}(K,d,\measure,\mathcal{E},\mathcal{F})$,
which is a much stronger requirement than just \ref{VD} of $(K,d,\Gamma(h,h))$.
In particular, the following problem remains non-trivial regardless of the actual
answer to Problem \ref{p:SCattainment}.
\begin{problem} \label{p:SCdoublingharmonic}
	Does there exist $h\in\mathcal{H}_{0}\setminus\mathbb{R}\one_{K}$
	such that its energy measure $\Gamma(h,h)$ satisfies the volume doubling property
	with respect to the Euclidean metric $d$?
\end{problem}
\noindent Problem \ref{p:SCdoublingharmonic} appears very challenging since
we do not know even the answer to the following much simpler question.
\begin{problem} \label{p:SCenergyfullsupport}
	Does there exist $h\in\mathcal{H}_{0}\setminus\mathbb{R}\one_{K}$
	such that its energy measure $\Gamma(h,h)$ has full support?
\end{problem}
\noindent It is tempting to conjecture that the energy measure $\Gamma(h,h)$
of every $h\in\mathcal{H}_{0}\setminus\mathbb{R}\one_{K}$
has full support. This can be viewed as a unique continuation principle
for harmonic functions on a given generalized Sierpi\'{n}ski carpet.

We expect that there is a version of Theorems \ref{t:attain-harmonic-func} and \ref{t:attain-harmonic-func-GSC} for Ahlfors regular conformal dimension on self-similar spaces.
In particular, we expect that if the Ahlfors regular conformal dimension $p>1$ is attained on a self-similar space then there exists a ``$p$-harmonic function'' such that its
``energy measure'' is a $p$-Ahlfors regular measure with respect to a metric in the conformal gauge. This motivates the following problem.
\begin{problem}\label{p:nonlinear-energy}
	Define non-linear analogs of Dirichlet space, energy measure and harmonic functions on self-similar spaces (for example, the Sierpi\'{n}ski carpet).
\end{problem}

Using the theory of Dirichlet forms and its relationship to diffusion process we have a notion of Sobolev space $W^{1,2}$ on the Sierpi\'{n}ski carpet  whose seminorm can be formally thought of as the integral  $(\int \abs{\nabla f}^2 )^{1/2}$ on a dense subspace of $L^2$. For example in \cite{KZ92}, the $W^{1,2}$-seminorm  and the Dirichlet energy $\int \abs{\nabla f}^2$ is constructed as suitably renormailized version of discrete Dirichlet energy  $\sum_{x \sim y} (\hat{f}(x)-\hat{f}(y))^2$, where $\hat{f}$ is a discretization of the function $f$ on a sequence of  graph approximations $V_n$.
A key ingredient in the constructions in \cite{BB89} and \cite{KZ92} is the following submultiplicative and supermultiplicative inequalities for resistance between opposite faces. Let $R_n$ denote the resistance between the opposite faces for the $n$-th level approximation of the Sierpi\'{n}ski carpet. Then the  submultiplicative and supermultiplicative inequalities  are given by $R_n R_m \gtrsim R_{m+n}$ and $R_n R_m \lesssim R_{m+n}$. These inequalities have been generalized in the non-linear context for the Sierpi\'{n}ski carpet by Bourdon and Kleiner for any $p>1$  \cite[Lemma 4.4]{BK13}.  This suggests that one could construct a non-linear version of Dirichlet form with a $W^{1,p}$ seminorm (formally denoted by $\norm{\nabla f}_p$). This seminorm is defined on a dense subspace $\mathcal F_p$ of $L^p$ and is  conjectured to  satisfy the following properties: 
\begin{enumerate}[label=\textup{\arabic*.},align=left,leftmargin=*,topsep=5pt,parsep=0pt,itemsep=2pt]
\item (Closability) If $f_n$ is a Cauchy sequence in the $\norm{\nabla f}_p$-seminorm and if $f_n \to 0$ in $L^p$ then $f_n$ converges to $0$ in the $\norm{\nabla f}_p$-seminorm.
\item (Regularity) 
	Let $K$ denote the Sierpi\'{n}ski carpet. Then
	$\mathcal{F}_p \cap \contfunc(K)$ is dense with respect to the uniform norm in $\contfunc(K)$
	and is dense with respect to the $f \mapsto \norm{\nabla f}_p + \norm{f}_p$ norm. 
\end{enumerate}
This yields the notion of $p$-harmonic functions which are defined as minimizers of the $p$-Dirichlet energy $\norm{\nabla f}_p^p$.
On the basis of Theorem \ref{t:attain-harmonic-func-GSC}, we have the following conjecture:
if the Sierpi\'{n}ski carpet attains the Ahlfors regular conformal dimension, then there exists
a $p$-harmonic function whose energy measure (formally written as the measure $A \mapsto \int_A \abs{\nabla f}^p$)
is an optimal Ahlfors regular measure, where $p$ is the Ahlfors regular conformal dimension of the Sierpi\'{n}ski carpet.
The previous discussion on unique continuation question in the linear case ($p=2$) also applies
to $p$-harmonic functions due to the above mentioned relationship to the attainment problem for the Ahlfors regular conformal dimension.
These conjectures serve as a motivation to develop a theory of non-linear Dirichlet forms on fractals and develop methods to obtain the elliptic Harnack inequality and quantitative unique continuation principle for $p$-harmonic functions.  Added in revision: there has been recent progress on Problem \ref{p:nonlinear-energy}  in \cite{Kig21,Shi}. These results are quite satisfactory when $p$ is strictly larger than the Ahlfors regular conformal dimension.

\begin{acknowledgements}
The authors would like to thank Ryosuke Shimizu for his valuable comments on an
earlier version of this paper, and the anonymous referees for
their careful reading of this manuscript and helpful suggestions.
The authors would also like to express gratitude to Shouhei Honda for having communicated
to the first-named author the idea of taking scaling limits at Lebesgue points to
reduce the analysis to the case of harmonic functions, which the authors have adapted
in Subsections \ref{ssec:attain-harmonic-func-pcf} and \ref{ssec:attain-harmonic-func-GSCs}.
This paper was revised while the second-named author was at the Mathematical Sciences
Research Institute in Berkeley, California, during the Spring 2022 semester on
Analysis and Geometry of Random Spaces, which was supported by
the National Science Foundation under Grant No. DMS-1928930.
\end{acknowledgements}

\begin{appendices}

\section*{Appendix: Index of words, phrases, symbols and abbreviations}
\addcontentsline{toc}{section}{Appendix: Index of words, phrases, symbols and abbreviations}

\subsection*{Words and phrases}

\begin{itemize}[align=left,leftmargin=*,topsep=5pt,parsep=0pt,itemsep=2pt]
\item $(1,p)$-Poincar\'e inequality --- Definition \ref{d:p-poincare}
\item attainment problem --- Problem \ref{prb:attainment-Gaussian-unif}, second paragraph of Section \ref{sec:GUP}
\item $A_{\infty}$-related --- Definition \ref{d:ainf}
\item bi-Lipschitz, bi-Lipschitz equivalent --- Definition \ref{d:qs}
\item boundary of a hyperbolic space --- fourth paragraph of Subsection \ref{s:gromov}
\item caloric --- Definition \ref{d:harmonic}
\item canonical Dirichlet form on $\GSC(N,l,S)$ --- Definition \ref{dfn:GSCDF}
\item conformal gauge --- Definition \ref{d:cgauge}
\item critical set --- Definition \ref{d:V0Vstar}
\item distortion function --- Definition \ref{d:qs}
\item doubling (measure) --- Definition \ref{d:vd-rvd}
\item doubling (metric space) --- first paragraph of Subsection \ref{s:hf}
\item $\mathcal{E}$-harmonic --- Definition \ref{d:harmonic}
\item energy measure --- Definition \ref{d:EnergyMeas}
\item full quasi-support --- Definition \ref{d:admissible}
\item Gaussian admissible measures --- \eqref{e:gauss}, \eqref{e:gaussianadmissible}
\item Gaussian uniformization problem --- Problem \ref{prb:attainment-Gaussian-unif}, second paragraph of Section \ref{sec:GUP}
\item generalized Sierpi\'{n}ski carpet --- Definition \ref{dfn:GSC}
\item gentle --- Definition \ref{d:gentle}
\item Gromov hyperbolic space --- second paragraph of Subsection \ref{s:gromov}
\item harmonic structure --- Definition \ref{d:harmonic-str}
\item $K_D$-doubling (metric space) --- first paragraph of Subsection \ref{s:hf}
\item $K$-gentle, $(K_h,K_v)$-gentle --- Definition \ref{d:gentle}
\item $K_P$-uniformly perfect --- first paragraph of Subsection \ref{s:hf}
\item maximal semi-metric induced by $h$ --- Definition \ref{d:maxmetric}
\item Menger sponge --- paragraph following Definition \ref{dfn:GSC}
\item metric doubling property --- first paragraph of Subsection \ref{s:hf}
\item metric measure space --- first paragraph of Subsection \ref{ssec:MMD-EnergyMeas}
\item MMD (metric measure Dirichlet) space --- second paragraph of Subsection \ref{ssec:MMD-EnergyMeas}
\item minimal energy-dominant measure --- Definition \ref{d:minimal-energy-dominant}
\item nest --- paragraph before Definition \ref{d:admissible}
\item $n$-harmonic ($\mathcal{E}$-harmonic on $K\setminus V_{n}$) --- Definition \ref{d:harmonic-pcf}
\item $N$-dimensional (standard) Sierpi\'{n}ski gasket --- Example \ref{exmp:SGs}
\item partition (of the shift space $\Sigma$) --- Definition \ref{d:partition}
\item post-critical set --- Definition \ref{d:V0Vstar}
\item post-critically finite (p.-c.f.)\ --- Definition \ref{d:V0Vstar}
\item power quasisymmetry --- Definition \ref{d:qs}
\item quasi-closed --- paragraph before Definition \ref{d:admissible}
\item quasi-open --- paragraph before Definition \ref{d:admissible}
\item quasisymmetry --- Definition \ref{d:qs}
\item regular (harmonic structure) --- Definition \ref{d:harmonic-str}
\item resistance form --- paragraph following Definition \ref{d:harmonic-str}
\item resistance metric --- paragraph following Definition \ref{d:harmonic-str}, especially \ref{it:RF4}
\item reverse volume doubling property --- Definition \ref{d:vd-rvd}
\item $(R_{\mathcal{E}},\mu)$-Lebesgue point --- Lemma \ref{l:Lebesgue-points}
\item scale --- Definition \ref{d:scale-pcf}
\item self-similar measure (with weight $(r_{i}^{d_{\mathrm{H}}})_{i\in S}$) --- paragraph before Lemma \ref{l:volume-pcf}
\item self-similar structure --- Definition \ref{d:sss}
\item Sierpi\'{n}ski carpet --- paragraph following Definition \ref{dfn:GSC}
\item smooth measure --- Definition \ref{d:smooth}
\item strong $A_{\infty}$-related --- Definition \ref{d:strongainf}
\item two-dimensional standard Sierpi\'{n}ski carpet --- paragraph following Definition \ref{dfn:GSC}
\item uniformly perfect --- first paragraph of Subsection \ref{s:hf}
\item upper gradient --- Definition \ref{d:uppergrad}
\item Vicsek set --- Example \ref{exmp:Vicsek}
\item volume doubling property --- Definition \ref{d:vd-rvd}
\item $\varepsilon$-net --- Definition \ref{d:net}
\item $\mu$-intrinsic metric --- Definition \ref{d:dint-pcf}
\end{itemize}

\subsection*{Symbols}

\begin{itemize}[align=left,leftmargin=*,topsep=5pt,parsep=0pt,itemsep=2pt]
\item $\one_{A}=\one_{A}^{X}$: indicator function of $A\subset X$ on a set $X$ --- Notation \ref{ntn:intro}-\ref{it:ntn-intro-indicator}
\item $\sA(X,d,m,\sE,\sF)$: admissible measures --- Definition \ref{d:admissible}
\item $B(x,r)=B_{d}(x,r)$: open ball in metric $d$ --- first paragraph of Subsection \ref{ssec:MMD-EnergyMeas}
\item $\overline{B}(x,r)=\overline{B}_{d}(x,r)$: closed ball in metric $d$ --- first paragraph of Subsection \ref{ssec:MMD-EnergyMeas}
\item $c_{R_{\mathcal{E}}}$ --- Lemma \ref{l:RE-scaling-pcf}
\item $\contfunc(X)$: space of continuous functions on $X$ --- Notation \ref{ntn:intro}-\ref{it:ntn-intro-contfunc}
\item $\contfunc_{\mathrm{c}}(X)$: space of continuous functions on $X$ with compact supports --- Notation \ref{ntn:intro}-\ref{it:ntn-intro-contfunc}
\item $\Capa_1(A)$: $1$-capacity of a set $A$ --- \eqref{e:defCap1}
\item $\Capa(A,B)$: capacity between sets $A$ and $B$ --- \eqref{e:defCap}
\item $\mathcal{C}_{\mathcal{L}}$: critical set of $\mathcal{L}$ --- Definition \ref{d:V0Vstar}
\item $\dcw$: conformal walk dimension --- Definition \ref{dfn:dcw}
\item $d_{\mathrm{f}}$: Euclidean Hausdorff dimension of $\GSC(N,l,S)$ --- Framework \ref{frmwrk:GSC}
\item $d_{h}$: maximal semi-metric induced by $h$ --- Definition \ref{d:maxmetric}
\item $d_{\mathrm{H}}$: Hausdorff dimension of $(K,R_{\mathcal{E}})$ --- Lemma \ref{l:volume-pcf} and preceding paragraph
\item $\diam_{d}(A)=\diam(A,d)$: diameter of $A$ in metric $d$ --- first paragraph of Subsection \ref{ssec:MMD-EnergyMeas}
\item $d_{\on{int}}$: intrinsic metric --- Definition \ref{d:dint}
\item $d_{\on{int}}^{\mu}$: $\mu$-intrinsic metric --- Definition \ref{d:dint-pcf}
\item $d_{\mathrm{w}}$: walk dimension of the Dirichlet form on $\GSC(N,l,S)$ --- Definition \ref{dfn:GSCDF}
\item $(D,\mathbf{r})$: harmonic structure --- Definition \ref{d:harmonic-str}
\item $\sD_l(B)$: descendants of generation $l$ --- \eqref{e:defD}
\item $D_{\mathcal{S}}$: combinatorial metric on the hyperbolic filling --- Definition \ref{d:hypfilling}.
\item $\mathcal{E}_{1}$: inner product on $\mathcal{F}$ for a Dirichlet form $(\mathcal{E},\mathcal{F})$ --- second paragraph of Subsection \ref{ssec:MMD-EnergyMeas}
\item $\mathcal{E}^{(n)}$: discrete Dirichlet form on $V_{n}$ --- \eqref{e:E0-pcf}, \eqref{e:Em-pcf}
\item $F_{w}$ --- Definition \ref{d:shift}
\item $\sF_e$: extended Dirichlet space --- Definition \ref{d:harmonic}
\item $\sF_{\on{loc}}$: space of functions locally in $\sF$ --- Definition \ref{d:dint}
\item $\GSC(N,l,S)$ --- Framework \ref{frmwrk:GSC}
\item $\sG(X,d,m,\sE,\sF)$: Gaussian admissible measures --- \eqref{e:gauss}, \eqref{e:gaussianadmissible}
\item $\sG_\beta(X,d,m,\sE,\sF)$: sub-Gaussian admissible measures --- \eqref{e:subgaussadmissible}
\item $\attainsss=\attainsss_{\mathcal{L},(D,\mathbf{r})}$ (for post-critically finite $\mathcal{L}$) --- Definition \ref{d:GetaC-pcf}
\item $\attainsss=\attainsss_{N,l,S}$ (for $\GSC(N,l,S)$) --- Definition \ref{d:GetaC-GSC}
\item $\attainsss(\eta,C)=\attainsss_{\mathcal{L},(D,\mathbf{r})}(\eta,C)$ (for post-critically finite $\mathcal{L}$) --- Definition \ref{d:GetaC-pcf}
\item $\attainsss(\eta,C)=\attainsss_{N,l,S}(\eta,C)$ (for $\GSC(N,l,S)$) --- Definition \ref{d:GetaC-GSC}
\item $\homeo^{+}$: group of homeomorphisms of $[0,\infty)$ --- Definition \ref{d:GetaC-pcf}
\item $\mathcal{H}_{0}$ (for $\GSC(N,l,S)$) --- Definition \ref{d:harmonic-GSC}
\item $\mathcal{H}_{0}(\mathcal{Z})$: set of $0$-harmonic functions attaining $\dcw$ --- \eqref{e:H0Z-pcf} in Definition \ref{d:GetaC-pcf}
\item $\widetilde{\mathcal{H}}_{0}(\mathcal{Z})$ --- \eqref{e:H0tildeZ-pcf} in Definition \ref{d:GetaC-pcf}
\item $\mathcal{H}_{n}$ (for post-critically finite $\mathcal{L}$) --- Definition \ref{d:harmonic-pcf}
\item $\interior_{\mathbb{R}^{N}}(A)$: interior of $A\subset\mathbb{R}^{N}$ in $\mathbb{R}^{N}$ --- paragraph before Definition \ref{dfn:GSC}
\item $\mathcal{I}_{0}$: group of cubic symmetries of $\GSC(N,l,S)$ --- Definition \ref{dfn:GSC-isometry}
\item $\sJ(X,d)$: conformal gauge of $(X,d)$ --- Definition \ref{d:cgauge}
\item $K_{s}(x)$ --- Definition \ref{d:scale-pcf}
\item $K_{w}$ --- Definition \ref{d:shift}
\item $\mathcal{L}=(K,S,\{F_{i}\}_{i\in S})$: self-similar structure --- Definition \ref{d:sss}
\item $\on{Lip}u(x)$: pointwise Lipschitz constant --- Definition \ref{d:lip}
\item $\on{Lip}(X)$: space of Lipschitz functions on $X$ --- Definition \ref{d:lip}
\item $N^{1,p}(X)=N^{1,p}(X,d,m)$ --- Definition \ref{d:uppergrad}
\item $N_{\on{loc}}^{1,p}(X)=N_{\on{loc}}^{1,p}(X,d,m)$ --- Definition \ref{d:uppergrad}
\item $\mathbb{N}$: set of \emph{positive} integers --- Notation \ref{ntn:intro}-\ref{it:ntn-intro-natural-numbers}
\item $\sN$: non-peripheral vertices --- Definition \ref{defE}
\item $\mathcal{P}(K)$: set of Borel probability measures on $K$ --- \eqref{eq:PK-pcf} in Definition \ref{d:GetaC-pcf}
\item $\mathcal{P}_{\mathcal{L}}$: post-critical set of $\mathcal{L}$ --- Definition \ref{d:V0Vstar}
\item $Q_{0}$: $N$-dimensional unit cube $[0,1]^{N}$ --- Framework \ref{frmwrk:GSC}
\item $Q_{1}$ --- Framework \ref{frmwrk:GSC}
\item $r_{w}$ (for post-critically finite $\mathcal{L}$) --- \eqref{e:Em-pcf}
\item $r_{w}$ (for $\GSC(N,l,S)$) --- Theorem \ref{t:condition-PHI-GSC}
\item $R_{\mathcal{E}}$: resistance metric --- paragraph following Definition \ref{d:harmonic-str}, especially \ref{it:RF4}
\item $\supp_{m}[f]$: support of $\lvert f\rvert\,dm$ --- second paragraph of Subsection \ref{ssec:MMD-EnergyMeas}
\item $\sS$: hyperbolic filling --- Definition \ref{d:hypfilling}, Subsection \ref{s:hf}
\item $\mathscr{S}=\{\Lambda_{s}\}_{s\in(0,1]}$: scale on $\Sigma$ associated with $\mathbf{r}=(r_{i})_{i\in S}$ --- Definition \ref{d:scale-pcf}
\item $u_{w}$ --- \eqref{e:scaling-energy-meas} in Lemma \ref{l:scaling-energy-meas}
\item $U_{s}(x)$ --- Definition \ref{d:scale-pcf}
\item $V_{n}$: set of level-$n$ boundary points of $\mathcal{L}$ --- Definition \ref{d:V0Vstar}
\item $V_{*}$: set of arbitrary level boundary points of $\mathcal{L}$ --- Definition \ref{d:V0Vstar}
\item $w^{\infty}$ --- Definition \ref{d:shift}
\item $W_{n}$: set of words of length $n$ --- Definition \ref{d:shift}
\item $W_{*}$: set of words of arbitrary length --- Definition \ref{d:shift}
\item $(X,d,m,\mathcal{E},\mathcal{F})$: MMD (metric measure Dirichlet) space --- Subsection \ref{ssec:MMD-EnergyMeas}
\item $\Gamma_{k+1}(B)$ --- Definition \ref{d:Gamk}
\item $\Gamma(f,f)$: energy measure --- Definition \ref{d:EnergyMeas}
\item $\tilde{\eta}$: multiplicative dual of $\eta$ --- paragraphs before Theorems \ref{t:attain-harmonic-func} and \ref{t:attain-harmonic-func-GSC}
\item $\theta_{w}$ (for $(\theta,\mu)\in\attainsss(\eta,C)$) --- \eqref{e:GetaC-scaling} in Lemma \ref{l:GetaC-scaling}
\item $\Lambda_{s}$ --- Definition \ref{d:scale-pcf}
\item $\Lambda_{s,x}$ --- Definition \ref{d:scale-pcf}
\item $\Lambda^{1}_{s,x}$ --- Definition \ref{d:scale-pcf}
\item $\mu_{w}$ (for $(\theta,\mu)\in\attainsss(\eta,C)$) --- \eqref{e:GetaC-scaling} in Lemma \ref{l:GetaC-scaling}
\item $\pi$ (for a self-similar structure $\mathcal{L}$): projection from $\Sigma$ to $K$ --- Definition \ref{d:sss}
\item $\sigma$ --- Definition \ref{d:shift}
\item $\sigma_{w}$ --- Definition \ref{d:shift}
\item $\Sigma$: (one-sided) shift space --- Definition \ref{d:shift}
\item $\Sigma_{w}$ --- Definition \ref{d:shift}
\item $[\omega]_{n}$ --- Definition \ref{d:shift}
\item $(\cdot)^{+}$: positive part of a $[-\infty,\infty]$-valued quantity --- Notation \ref{ntn:intro}-\ref{it:ntn-intro-max-min}
\item $(\cdot)^{-}$: negative part of a $[-\infty,\infty]$-valued quantity --- Notation \ref{ntn:intro}-\ref{it:ntn-intro-max-min}
\item $(\cdot \vert \cdot)_{\cdot}$: Gromov product --- \eqref{e:gromovproduct}
\item $\#A$: cardinality (number of elements) of a set $A$ --- Notation \ref{ntn:intro}-\ref{it:ntn-intro-cardinality}
\item $\leq$ (for measures) --- Notation \ref{ntn:intro}-\ref{it:ntn-intro-measure}
\item $\leq$ (for elements of $W_{*}$) --- Definition \ref{d:partition}
\item $\leq$ (for partitions of $\Sigma$): refinement --- Definition \ref{d:partition}
\item $\lesssim$: inequality up to constant multiples --- Notation \ref{ntn:intro}-\ref{it:ntn-intro-ineq-mod-const}
\item $\ll$: absolute continuity of a measure with respect to another --- Notation \ref{ntn:intro}-\ref{it:ntn-intro-measure}
\item $\vee$: maximum of two $[-\infty,\infty]$-valued quantities --- Notation \ref{ntn:intro}-\ref{it:ntn-intro-max-min}
\item $\wedge$: minimum of two $[-\infty,\infty]$-valued quantities --- Notation \ref{ntn:intro}-\ref{it:ntn-intro-max-min}
\end{itemize}

\subsection*{Abbreviations}

\begin{itemize}[align=left,leftmargin=*,topsep=5pt,parsep=0pt,itemsep=2pt]
\item $\operatorname{cap}(\beta)$: capacity estimate --- Definition \ref{d:cap}
\item $\operatorname{CS}(\beta)$: cutoff Sobolev inequality --- Definition \ref{d:pi-cs}
\item (E): enhanced subadditivity estimate --- Definition \ref{defE}
\item EHI: elliptic Harnack inequality --- Definition \ref{d:harnack}
\item \textup{(GSC1)},\textup{(GSC2)},\textup{(GSC3)},\textup{(GSC4)}: requirements for $\GSC(N,l,S)$ to be a generalized Sierpi\'{n}ski carpet --- Definition \ref{dfn:GSC}
\item \textup{(GSCDF)} --- Theorem \ref{thm:GSCDF}
\item (H1),(H2),(H3): conditions on weight functions in a hyperbolic filling --- Assumption \ref{a:h1-h3}
\item $\on{HKE}(\beta)$: heat kernel estimate --- Definition \ref{d:HKE}
\item \textup{(ND)$_{\mathbb{N}}$},\textup{(ND)$_{2}$},\textup{(NDF)}: equivalent formulations of \ref{it:GSC3} --- Proposition \ref{prop:ND}
\item $\on{PHI(\beta)}$: parabolic Harnack inequality --- Definition \ref{d:harnack}
\item $\operatorname{PI}(\beta)$: Poincar\'e inequality --- Definition \ref{d:pi-cs}
\item RVD: reverse volume doubling property --- Definition \ref{d:vd-rvd}
\item (S1),(S2): conditions on weight functions in a hyperbolic filling --- Theorem \ref{t:suff}
\item VD: volume doubling property --- Definition \ref{d:vd-rvd}
\end{itemize}

\end{appendices}

\end{document}